\begin{document}
\title[Spaces of actions on the hyperbolic plane]
{Automorphisms of two-generator free groups and 
spaces of isometric actions on the hypebolic plane}
%%
%old proposed replacement title: {Automorphisms of two-generator free groups and the hyperbolic plane}
%{Dynamics of the automorphism group of the
%two-generator free group on the space of
%isometric actions on the hyperbolic plane}

\author[Goldman]{William Goldman}
\address{
{\it Goldman:\/} Department of Mathematics \\
University of Maryland \\
College Park, MD 20742, USA}
\author[McShane]{Greg McShane}
\address{
{\it McShane:\/} Department of Mathematics \\
Institut Fourier \\
Grenoble, France}

\author[Stantchev]{George Stantchev}
\address{
{\it Stantchev:\/}
University of Maryland}

\author[Tan]{Ser Peow Tan}
\address{
{\it Tan:\/} Department of Mathematics \\
National University of Singapore \\
2, Science Drive 2, Singapore 117543, Singapore}
\date{\today}

%\thanks{ Goldman and Stantchev gratefully acknowledge support from
%National Science Foundation grants DMS-070781, DMS-0405605 and
%DMS-0103889 as well as the Department of Mathematics at the
%University of Maryland.  Goldman gratefully acknowledges support
%from the General Research Board at the University of Maryland during
%the Fall semester of 2005. Tan was partially supported by the
%National University of Singapore academic research grant
%R-146-000-115-112.
%We are grateful to the Institute of Mathematical Sciences at the National University
%of Singapore in 2010, Institut Henri Poincar\'e in Paris in 2012 and the Mathematical
%Sciences Research Institute in Berkeley, California in 2015 for their hospitality
%during the final stages of this work. Finally we thank the GEAR Research Network
%in the Mathematical Sciences funded by NSF grant DMS-1107367 for their financial
%support.}
%

\subjclass{57M05 (Low-dimensional topology),
22D40 (Ergodic theory on groups)}

\keywords{character variety, free group, outer automorphism group,
hyperbolic surface, nonorientable surface,  hyperbolic surface, conical singularity,
Nielsen transformation, ergodic equivalence relation, Fricke space, mapping class
group, tree,  Markoff map}

\newtheorem{thm}{Theorem}[subsection]
\newtheorem*{thm*}{Theorem}
\newtheorem{lem}[thm]{Lemma}
\newtheorem{conj}[thm]{Conjecture}
\newtheorem{cor}[thm]{Corollary}
\newtheorem{add}[thm]{Addendum}
\newtheorem{prop}[thm]{Proposition}
\newtheorem*{thm4*}{Theorem~4}
\newtheorem*{thm8*}{Theorem~8}
\newtheorem{definition}[thm]{Definition}
\newtheorem{remark}[thm]{Remark}

\setcounter{tocdepth}{3} %%% adjust table of contents

\newcommand{\Ht}{\mathsf{H}^2} % hyperbolic plane
\newcommand{\Hth}{\mathsf{H}^3} % hyperbolic 3-space
\newcommand{\Hom}{\mathsf{Hom}} % set of homomorphisms
\newcommand{\Fix}{\mathsf{Fix}} % fixed set
\newcommand{\R}{\mathbb{R}}   % fields and rings
\newcommand{\C}{\mathbb{C}}
\newcommand{\Z}{\mathbb{Z}}
\newcommand{\Q}{\mathbb{Q}}
\newcommand{\N}{\mathbb{N}}
\newcommand{\Ft}{{\mathsf F}_2} % rank two free group
\newcommand{\Prim}{{\mathsf{Prim}}(\Ft)} % primitives in \Ft

% tree things
\newcommand{\T}{\mathsf{T}}
\newcommand{\E}{\mathsf{Edge}(\T)}
\newcommand{\Vt}{\mathsf{Vert}(\vT)}
\newcommand{\V}{\mathsf{Vert}(\T)}
\newcommand{\vE}{\overrightarrow{{\mathsf{Edge}}(\T)}}
\newcommand{\vT}{\overrightarrow{\T}}
\renewcommand{\O}{\Omega}
\newcommand{\Coloring}{\Phi} % the \Z/2-character (Stiefel-Whitney)
\newcommand{\kC}{\kappa_\Coloring}
\newcommand{\kCk}{\kC^{-1}(k)}
\newcommand{\kCtwo}{\kC^{-1}(2)}
\newcommand{\kCminusTwo}{\kC^{-1}(-2)}
\renewcommand{\L}{\mathsf{L}}
\newcommand{\BB}{\mathbf{B}} % Bivector field
\newcommand{\B}{\mathfrak{B}}  % Bowditch set
\newcommand{\Fricke}{{\mathfrak F}} % Fricke space
\newcommand{\hGamma}{\widehat{\Gamma}}% hatGamma, the full effective group
\newcommand{\FO}[1]{\mathfrak{O}(#1)} % Fricke orbit
\newcommand{\gFO}[1]{\mathfrak{O}'(#1)} % generalized Fricke orbit

\newcommand{\GL}{\mathsf{GL}}
\newcommand{\SL}{\mathsf{SL}}
\newcommand{\PGL}{\mathsf{PGL}}
\newcommand{\PSL}{\mathsf{PSL}}
\newcommand{\GLtZ}{\GL(2,\Z)}
\newcommand{\PGLtZ}{\PGL(2,\Z)}
\newcommand{\PGLtZc}{\PGLtZ_\Coloring}  % subgroup needed to define Gamma
\newcommand{\PGLtZt}{\PGLtZ_{(2)}} % level two congruence subgroup
\newcommand{\GLtZc}{\GLtZ_\Coloring}  % subgroup needed to define Gamma
\newcommand{\GLtZt}{\GLtZ_{(2)}} 
\newcommand{\SLtR}{\SL(2,\R)}
\newcommand{\SLtRpm}{\SL_{\pm}(2,\R)}
\newcommand{\PSLtR}{\PSL(2,\R)}
\newcommand{\PGLtR}{\PGL(2,\R)}
\newcommand{\GLtR}{\GL(2,\R)}
\newcommand{\PGLtC}{\PGL(2,\C)}
\newcommand{\PSLtC}{\PSL(2,\C)}
\newcommand{\SLtC}{\SL(2,\C)}
\newcommand{\SUt}{\mathsf{SU}(2)}
\newcommand{\pmI}{\{ \pm \Id \}} % plus-or-minus Identity
\newcommand{\pmOne}{\{ \pm 1 \}} % plus-or-minus One I
\newcommand{\Center}{\mathsf{Center}} Ä
\newcommand{\Aut}{\mathsf{Aut}}
\newcommand{\Inn}{\mathsf{Inn}}
\newcommand{\Out}{\mathsf{Out}}    
\newcommand{\Ou}{\Out(\Ft)} % Out(F2) 
\newcommand{\Au}{\Aut(\Ft)} % Aut(F2) 
\newcommand{\In}{\Inn(\Ft)} % Inn(F2) 
\newcommand{\Id}{\mathbb{I}}  % identity matrix
\newcommand{\tr}{\mathsf{tr}}
\newcommand{\Ham}{\mathsf{Ham}}
\renewcommand{\mod}[1]{\big(\mathsf{mod}~#1\big)}  % modulo 
\newcommand{\Image}{\mathsf{Image}}
\newcommand{\Soo}{\Sigma_{1,1}}  % one-holed torus
\newcommand{\Sthz}{\Sigma_{3,0}}  % three-holed sphere
\newcommand{\Coo}{C_{1,1}}  % one-holed Klein bottle
\newcommand{\Czt}{C_{0,2}}  % two-holed cross-surface
\newcommand{\vedge}[1]{\psi_{#1}} % used to be \varepsilon_n
\newcommand{\inv}{^{-1}}  % multiplicative inverse
\newcommand{\PG}{\mathsf{Pants}(\Sigma_{1,1})} % pants graph
\newcommand{\Po}{\mathsf{P}^1}
\newcommand{\RPo}{\Po(\R)} % real projective line
\newcommand{\QPo}{\Po(\Q)} % rational projective  line
\newcommand{\Xx}{\mathfrak{X}}  % character variety
\newcommand{\longdash}{{\rule[.05in]{.2in}{.01in}}}
\newcommand{\llongdash}{{\rule[.05in]{.3in}{.01in}}}
\newcommand{\dd}[1]{\partial_{#1}}
\newcommand{\tdd}[1]{{\widetilde\partial}_{#1}}
\newcommand{\Mod}{\mathsf{Mod}}
\newcommand{\Rep}{\mathsf{Rep}}
\newcommand{\Ker}{\mathsf{Ker}}
\newcommand{\Det}{\mathsf{Det}}
\newcommand{\sgn}{\mathsf{sgn}}
\newcommand{\graph}{\mathsf{graph}}
\newcommand{\RepF}{\Rep\big(\Ft,\SLtC\big)}
\newcommand{\HomF}{\Hom\big(\Ft,\SLtC\big)}
\newcommand{\II}{\mathfrak{I}} % involutions, like Vieta involutions
\newcommand{\tII}{\widetilde{\II}} % their lifts to F2
\newcommand{\kk}{\mathbf{k}} % arbitrary field
\newcommand{\MM}{\mathbf{M}} %notation for matrices
\newcommand{\QQ}{\mathbf{Q}} % notation for quadratic forms
\newcommand{\PP}{\mathsf{P}} % notation for permutations
\newcommand{\SymThree}{\mathfrak{S}_3}
\newcommand{\scalefig}{0.3} % for subfigures

% new notations for F2 lifted from CDG
\newcommand{\e}{\mathfrak{e}}          % elliptic involution
\newcommand{\Inne}{\Inn^\e}   % extended inner automorphism group

\newcommand{\normalsubgroup}{\unlhd} % line under triangle for normal subgp
\newcommand{\area}{\mathsf{area}} 
\newcommand{\Nd}{\mathfrak{N}_{\delta}}

\newcommand{\sltc}{\mathfrak{sl}(2,\C)} % Lie algebra of sl(2,C)
\newcommand{\Ad}{\mathsf{Ad}} % adjoint representation
\newcommand{\Axis}{\mathsf{Axis}} % axis
\newcommand{\Pixy}{\Pi_{xy}} % projection to xy-plane
\newcommand{\EndInv}{\mathcal{E}(\mu)} % set of end invariants

\begin{abstract}
The automorphisms of a two-generator free group $\Ft$ acting on the
space of orientation-preserving isometric actions of $\Ft$ on
hyperbolic 3-space defines a dynamical system.  Those actions which
preserve a hyperbolic plane but not an orientation on that plane is an
invariant subsystem, which reduces to an action of a group $\Gamma$ on
$\R^3$ by polynomial automorphisms preserving the cubic polynomial
\[ \kC(x,y,z) := -x^2 -y^2 + z^2 + x y z -2 \]  
and an area form on the level surfaces $\kCk$.

The Fricke space of marked
hyperbolic structures on the 2-holed projective plane with funnels
or cusps identifies with the subset $\Fricke(C_{0,2})\subset\R^3$
defined by 
\[ z\le -2, \quad  xy + z \ge 2. \]
The generalized Fricke space
of marked hyperbolic structures on the 1-holed Klein bottle with a
funnel, a cusp, or a conical singularity identifies with the subset
$\Fricke'(C_{1,1})\subset\R^3$ defined by 
\[ 
z>2, \quad  x y z \ge x^2 + y^2.
\]
We show that $\Gamma$ acts properly on the subsets
$\Gamma\cdot\Fricke(C_{0,2})$ and
$\Gamma\cdot\Fricke'(C_{1,1})$.
Furthermore for each $k<2$,
the action of $\Gamma$ is ergodic 
on the complement of
$\Gamma\cdot\Fricke(C_{0,2})$ in $\kCk$ for $k < -14$.
In particular, the action is ergodic on all of $\kCk$ for $-14\le k < 2$.

For $k>2$, the orbit $\Gamma\cdot\Fricke(C_{1,1})$ is open and dense in $\kCk$.
We conjecture its complement has measure zero.

\end{abstract}

\maketitle
\newpage
\tableofcontents
\listoffigures
%\listoftables  How does one label tables so they appear in lot?
\section{Introduction}

This paper concerns moduli spaces of actions of discrete groups on
hyperbolic space. Spaces of $\PSLtC$-representations of fundamental
groups of surfaces of negative Euler characteristic are natural
objects upon which mapping class groups and related automorphism
groups act. They arise as deformation spaces of (possibly singular) 
locally homogeneous geometric structures on surfaces. 
We relate the interpretation in terms of geometric structures to
the dynamics of automorphism groups, see also \cite{STY}. 

The rank two free group $\Ft$ is the simplest such surface group.  
It arises as the fundamental group of four non-homeomorphic surfaces: 
the three-holed sphere $\Sigma_{(0,3)}$, 
the one-holed torus $\Soo$, %\Sigma_{(1,1)}$, 
the two-holed projective plane (or {\em cross-surface\/}) $C_{(0,2)}$, 
and the one-holed Klein bottle $C_{(1,1)}$.  
Of these the first two are orientable and the second two are nonorientable.
Furthermore $\Soo$ enjoys the remarkable property that every
automorphism of $\pi_1(\Soo)$ is induced by a homeomorphism
$\Soo\longrightarrow\Soo$. Equivalently, every homotopy-equivalence
$\Soo\longrightarrow\Soo$ is homotopic to a homeomorphism.
For concreteness, we choose free generators $X$ and $Y$ for $\Ft$.  
When $\Ft$ arises as the fundamental group 
$\pi_1(\Sigma)$ of a nonorientable surface $\Sigma$, 
we require that $X$ and $Y$ correspond to orientation-reversing simple closed curves.
This defines a homomorphism 
\begin{equation}\Ft\xrightarrow{~\Phi~}\{\pm 1\}\end{equation}\label{eq:Coloring}
where
$\Phi(X) = \Phi(Y) = -1$. 
Then a loop $\gamma$ preserves orientation if and only if $\Phi(\gamma) = +1$ 
and reverses orientation if and only if $\Phi(\gamma) = -1$. 
%%
%\\m\arginpar{b:  inserted this to placate referee suggestions 3 and 4}

$\Ft$ possesses a rich and mysterious representation theory into the group 
$\PSLtC$ of orientation-preserving isometries of $\Hth$. 
Such representations define  orientation-preserving isometric
actions on $\Hth$, and intimately relate to geometric
structures on surfaces and $3$-manifolds having fundamental group
isomorphic to $\Ft$. 

The dynamical system arises from an action of the
{\em outer automorphism group\/} $\Ou := \Au/\In$.  %,
%where $\Au$ denotes the group of all automorphisms $\Ft\to\Ft$
%and $\In$ denotes its normal subgroup comprising inner automorphisms.
The action on the abelianization $\Z^2$ of $\Ft$ defines an isomorphism
$\Ou \cong \GLtZ$.  
The group $\Ou$ acts on the quotient 
$\RepF$ by conjugation;
the quotient space consists of equivalence classes of orientation-preserving isometric actions of $\Ft$ on 
hyperbolic $3$-space $\Hth$. 
By an old result of Vogt~\cite{MR1508833}   (sometimes also attributed to 
Fricke~\cite{MR0183872}),  
the quotient variety $\RepF$ is isomorphic to $\C^3$,
where the coordinates $\xi,\eta,\zeta\in\C$ correspond to the traces of matrices representing
$\rho(X), \rho(Y), \rho(XY)$ respectively.
%%
%\\m\arginpar{b:  inserted this to placate referee suggestion 2}
We define a group $\hGamma$, arising from $\Ou$, which acts {\em effectively\/} and
{\em polynomially\/}  on $\C^3$,
preserving the cubic polynomial
\[
\kappa(\xi,\eta,\zeta) := \xi^2 + \eta^2 + \zeta^2 - \xi\eta\zeta -2\]
corresponding  to the trace of the commutator $[\rho(X),\rho(Y)]$.
This group is commensurable to $\GLtZ$.
See \S\ref{sec:FaithfulAction} for details.

%
%containing the sign-changes
%$\Sigma := \Hom(\Ft,\pmI)$ and acting effectively on $\RepF\cong\C^3$.
%The action of $\Ou$ on $\C^3$ is {\em not\/} effective: the automorphism defined by
%\begin{align*} 
%X &\xmapsto{~\e~} X\inv \\
%Y &\xmapsto{~\phantom{\e}~} Y\inv,  \end{align*}
%which we call the {\em elliptic involution\/} acts trivially on $\RepF\cong\C^3$.
%Thus the quotient $\hGamma$ of $\Ou \ltimes \Sigma$ by $\langle\e\rangle$
%acts on $\RepF$ and this action is effective.
%
%%

This paper concerns isometric actions of $\Ft$ on the hyperbolic plane $\Ht\subset\Hth$.
%These actions naturally extend to orientation-preserving isometric actions of $\Ft$ on $\Hth$ preserving a totally geodesic plane $\Ht\subset\Hth$.
Isometric actions on $\Ht$ which preserve orientation 
correspond to representations into $\PSLtR\subset\PSLtC$,
and are discussed in detail in Goldman~\cite{MR2026539}. %Gold:pTorus}
In particular they form a moduli space, parametrized by a quotient space of a subset of 
$\R^3\subset\C^3$, upon which $\Ou$ acts, preserving the polynomial $\kappa$,
and \cite{MR2026539} %Gold:pTorus} 
gives the dynamical decomposition of the $\hGamma$-action  on the level sets of $\kappa$.
(See also \cite{MR2264541}.)

The present paper concerns orientation-preserving isometric actions on $\Hth$ which preserve $\Ht$, 
but do {\em not\/}  preserve the orientation on $\Ht$. 
Important cases of such actions arise from hyperbolic structures on the two nonorientable surfaces
$C_{(0,2)}$ and $C_{(1,1)}$.  
As above, we assume that the generators $X,Y$ of their fundamental groups are represented by 
orientation-reversing isometries of $\Ht$,
and the effect on orientation is recorded in the homomorphism $\Phi$ defined in \eqref{eq:Coloring},
which may be interpreted as a Stiefel-Whitney class. 
%\\m\arginpar{b:  inserted this to placate referee suggestion 3,4}

Explicitly they correspond to representations $\rho$ into $\PSLtC$ where
$\rho(X)$ and $\rho(Y)$ are represented by {\em purely imaginary\/} matrices.
This corresponds the case when the trace coordinates $\xi,\eta$ are {\em purely imaginary,\/}
that is, the moduli space is another {\em real form\/}  $i\R\times i\R \times\R \subset \C^3$

The effect on orientation of $\Ht$ is extra information which breaks the symmetry of $\hGamma$. 
This leads to a sligntly smaller subgroup $\Gamma\subset\hGamma$, also commensurable to $\GLtZ$,
which preserves this extra information. 
See
\S\ref{sec:RankTwoFreeGroupAndAutos} 
for a detailed description of $\Gamma$ and its action on 
\[ \R^3 \cong i\R\times i\R \times\R \subset \C^3.  \] 
For now, define $\Gamma$ as the group of automorphisms generated by the three {\em Vieta involutions\/} 
\[
\bmatrix \xi \\ \eta \\ \zeta  \endbmatrix \xmapsto{~\II_1~}
\bmatrix \eta\zeta-\xi \\ \eta \\ \zeta  \endbmatrix,\qquad 
\bmatrix \xi \\ \eta \\ \zeta  \endbmatrix \xmapsto{~\II_2~}
\bmatrix \xi \\ \xi\zeta-\eta \\ \zeta  \endbmatrix, \qquad
\bmatrix \xi \\ \eta \\ \zeta  \endbmatrix \xmapsto{~\II_3~}
\bmatrix \xi \\ \eta \\ \xi\eta -\zeta  \endbmatrix,
\]
the three {\em sign-changes\/}
\[
\bmatrix \xi \\ \eta \\ \zeta \endbmatrix  \xmapsto{~\sigma_1~} \bmatrix \xi \\  -\eta \\ -\zeta \endbmatrix, \qquad
\bmatrix \xi \\ \eta \\ \zeta \endbmatrix  \xmapsto{~\sigma_2~}\bmatrix -\xi \\  \eta \\ -\zeta \endbmatrix, \qquad
\bmatrix \xi \\ \eta \\ \zeta \endbmatrix \xmapsto{~\sigma_3~}\bmatrix -\xi \\  -\eta \\ \zeta \endbmatrix \]
and coordinate permutations preserving $\Phi$.

\subsection{Fricke spaces and Fricke orbits}

Representations 
\[ \Ft\xrightarrow{\rho}\PGLtR \] 
arise naturally
from hyperbolic surfaces with fundamental group isomorphic to $\Ft$.
If $S$ is such a hyperbolic surface, then the composition of
an isomorphism  $\Ft\cong\pi_1(S)$ with the holonomy representation
$\pi_1(S)\hookrightarrow\PGLtR$  a representation $\rho$.
Such representations are characterized as those which are injective with
discrete image, and we call such representations {\em discrete embeddings.\/}
Moreover $\Ou$ acts properly
on the space of equivalence classes of discrete embeddings in $\PGLtR$.

If $\rho$ is a discrete embedding, then the hyperbolic surface 
\[ S := \Ht/\Image(\rho) \] 
has fundamental group $\Image(\rho)\cong \Ft$. 
If $S$ is orientable, then its holonomy representation maps into the group
$\PSLtR$ of {\em orientation-preserving\/} isometries of $\Ht$. 
The corresponding homomorphism 
$\Coloring$ is trivial and is treated in %Goldman~
\cite{MR2026539}. 
This paper concerns the case when $S$ is nonorientable and $\Coloring$ is nontrivial. 

When $S$ is nonorientable, it is homeomorphic to either a
two-holed cross-surface $C_{(2,0)}$
or a one-holed Klein bottle $C_{(1,1)}.$ 
(See, for example, Norbury~\cite{MR2399656} for a lucid description of
these surfaces.) Let $S$ be one of these two surfaces.
The {\em Fricke space\/} of $S$
is defined as the space of marked complete
hyperbolic structures on $\mathsf{int}(S)$.
Here the  ends are either cusps or {\em funnels,\/}
ends of infinite area bounded by closed geodesics.
A homotopy equivalence $\Soo\longrightarrow M$ to a hyperbolic surface 
$M\approx S$ induces a discrete embedding
$\Ft \longrightarrow \PGLtR$
which we assume determines the invariant $\Coloring$.
The set $\FO{S}$ 
of corresponding characters admits an action of $\Gamma$
and is a disjoint union of the copies of the Fricke space $\Fricke(S)$ of $S$.
The mapping class group $\Mod(S)$ acts on $\Fricke(S)$, 
and the components of $\FO{S}$ bijectively correspond to the cosets
$\Mod(S)\backslash\Gamma$.
We call $\FO{S}$ the {\em Fricke orbit\/} of $S$.

The {\em generalized Fricke space\/} $\Fricke'(S)$ of $S$ 
includes hyperbolic structures $S$
where some of the boundary components are replaced by complements of
conical singularities. 
Define the {\em generalized Fricke orbit\/} $\gFO{S}$ as the orbit of
the generalized Fricke space $\Fricke'(S)$.

Some of the components of the $\FO{S}$ have a particularly
simple description, and correspond to the Fricke space $\Fricke(S)$.
In his dissertation~\cite{MR2705402}, Stantchev
computed the Fricke spaces of these surfaces and proved partial results
on the $\Gamma$-action on the level sets of $\kappa$.
This paper presents and extends Stantchev's results.
(Compare also \cite{GoldmanStantchev}.)

If $S\approx C_{(0,2)}$,  take $X$ and $Y$
to be  orientation-reversing simple loops with one intersection, such that $\partial S$
consists of curves in the homotopy classes $Z := XY$ and $Z' := XY^{-1}$.
Then the Fricke space $\Fricke(C_{0,2})$ identifies with the
subset
\[
\Fricke(C_{0,2}) := \{ (x,y,z)\in\R^3 \mid z \le -2, \; x y + z \ge 2\}
\]
with boundary traces $z$ and 
\[ z' := - x y - z. \]
Here $X,Y$ are represented by purely imaginary matrices in $\SLtC$,
with respective traces $ix, iy\in i\R$.

If $S\approx \Coo$, then, taking $X$ and $Y$
to be disjoint orientation-reversing simple loops, 
%which intersect exactly once and intersect transversely, 
$\partial S$ is represented by a curve
in the homotopy class $X^2Y^2$, and the boundary 
trace is 
\[ \delta := x^2 + y^2 - x y z + 2. \]
The Fricke space of $C_{1,1}$ identifies 
with
\[ \Fricke(C_{1,1}) := \{ (x,y,z)\in\R^3 \mid z < -2, \; x y  z > x^2 + y^2 + 4\}. \]
The generalized Fricke space
$\Fricke'(\Coo)$ comprises hyperbolic structures
on the one-holed Klein bottle $\Coo$ % Klein bottle $C_{1,0}$ 
with a boundary component or  a cusp,  or on the Klein bottle $C_{1,0}$
with a {\em conical singularity\/} of cone angle $0 < \theta < 2\pi$.
It identifies with
\[ \Fricke'(C_{1,1}) := \{ (x,y,z)\in\R^3 \mid z < -2, \; x y  z > x^2 + y^2\}.\]
where the boundary trace satisfies
$\delta =  -2\cos(\theta/2)$
for $-2 < \delta < 2$. % verify this
The generalized Fricke orbit $\gFO{\Coo}$ is 
the union of all $\Gamma$-translates of the generalized Fricke space
$\Fricke'(C_{1,1})$.

\subsection{The orientation-preserving case}
Representations $\rho$ with 
$$
\xi,\eta,\zeta \le -2
$$ correspond to hyperbolic structures on $\Sigma_{0,3}$ whose ends are either
collars about closed geodesics or cusps. In particular the
{\em Fricke space\/} $\Fricke(\Sigma_{0,3})$ of 
$\Sigma_{0,3}$ identifies with
\begin{equation*}
\Fricke(\Sigma_{0,3}) := \big(\infty, -2]^3 \subset \R^3 \subset \C^3.
\end{equation*}
Its stabilizer in $\hGamma$ 
consists of the symmetric group
\begin{equation*}
\SymThree\subset \hGamma.
\end{equation*}
Furthermore, if
$\phi\in\hGamma\setminus\SymThree$, then
\begin{equation*}
\Fricke(\Sigma_{0,3}) \,\cap\, \phi\Fricke(\Sigma_{0,3}) \;=\; \emptyset.
\end{equation*}
In particular $\hGamma$ acts properly on
$\hGamma\Fricke(\Sigma_{0,3})$.

Equivalence classes of representations $\rho$ with 
\[ \xi > 2,\  \eta > 2,\  \zeta > 2 \]
and $\kappa(\xi,\eta,\zeta) \le -2$ form the Fricke space
$\Fricke(\Sigma_{1,1})$ corresponding to marked complete hyperbolic structures on the one-holed torus.
The ends are either cusps or {\em funnels,\/} 
that is, collars about closed geodesics or cusps.

More generally, when $-2 < \kappa(\xi,\eta,\zeta)< 2$ and $(\xi,\eta,\zeta)\in [2, \infty)^3$, the representation
$\rho$ corresponds to a hyperbolic structure on a torus with one singularity.
This singularity has a neighborhood isometric to a cone of cone angle 
$\cos\inv(\kappa(\xi,\eta,\zeta)/2).$
We call the collection of marked hyperbolic structures whose ends are either funnels, cusps, or complete to conical singularities 
the {\em generalized Fricke space\/} of $\Sigma$ .
It identifies in trace coordinates with the subset
\begin{equation*}
\Fricke'(\Sigma_{1,1}) := \{ (\xi,\eta,\zeta)\in\ (2,\infty)^3 \mid
\kappa(\xi,\eta,\zeta) < 2 \} \subset \R^3 \subset \C^3
\end{equation*}
%
% {\bf Elaborate on the discussion of the ends: cones, cusps, collars ---
% in particular we must remove the singularities
% and discuss their ends...}
%
If $\gamma\in\hGamma$, then either 
\[\gamma\cdot\Fricke(\Sigma_{1,1}) = \Fricke(\Sigma_{1,1})\] 
or $\gamma$ is a nonzero element of $\Hom(\Ft,\{\pm\Id\})$.
In particular $\hGamma\cdot\Fricke(\Sigma_{1,1})$ has four components,
corresponding to the four elements of $\Hom(\Ft,\{\pm\Id\})$.
Furthermore $\hGamma$ acts properly on
$\hGamma\cdot\Fricke(\Sigma_{1,1})$.

By definition, $\kappa< 2$ on $\Fricke(\Sigma_{1,1})$,
and  $\kappa\ge 18$ on $\Fricke(\Sigma_{0,3})$.
In particular, $\kappa > 2$.

The main results of %%%Goldman
\cite{MR2026539}
may be summarized as follows. 
The dynamics behavior depends crucially on the commutator trace
$k$. 
The classification divides into three cases, depending on whether
$k< 2$, $k= 2$, or $k > 2$, respectively.

\begin{thm*}
Suppose $k<2$. Let
\[  U := [-2,2]^3 \cap \kappa^{-1}(-\infty,2] \subset \R^3  \]
be the subset corresponding to unitary representations.
\begin{itemize}
\item  %If $t < 2$, then 
$\hGamma$ acts properly on $\kappa^{-1}(k) \setminus U$.
\item
% For each $k \le 2$, the 
The action of $\hGamma$ on $U\cap\kappa^{-1}(k)$
is ergodic. 
\item If $k<-2$, then
\begin{equation*}
U\cap\kappa^{-1}(k) = \emptyset
\end{equation*}
\end{itemize}\end{thm*}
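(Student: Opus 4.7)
The plan is to treat the three items separately, as each rests on distinct techniques. Item (3) is a short algebraic observation bounding $\kappa$ on the unitary locus, item (1) calls for a Markoff-style reduction using the Vieta involutions, and item (2), which I expect to be the main obstacle, relies on the symplectic dynamics of the trace functions combined with a Weyl-type equidistribution argument.

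For item (3), one observes that $U=[-2,2]^3\cap\kappa\inv(-\infty,2]$ is exactly the image of the character variety of $\SUt$-representations of $\Ft$ under the trace map $(\xi,\eta,\zeta)\mapsto\big(\tr(\rho(X)),\tr(\rho(Y)),\tr(\rho(XY))\big)$. Since $\kappa(\xi,\eta,\zeta)=\tr\big([\rho(X),\rho(Y)]\big)$ and any element of $\SUt$ has trace in $[-2,2]$, one obtains $\kappa\ge -2$ on $U$, giving $U\cap\kappa\inv(k)=\emptyset$ for $k<-2$. A direct verification via the parametrization $\xi=2\cos\alpha$, $\eta=2\cos\beta$, $\zeta=2\cos\gamma$ confirms this, with equality only on the abelian locus where $[\rho(X),\rho(Y)]=\Id$.

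For item (1), I would follow the Markoff-triple reduction pattern. The key input is a \emph{reduction inequality}: if $|\zeta|>\max(2,|\xi|,|\eta|)$, then applying $\II_3$, which sends $\zeta\mapsto\xi\eta-\zeta$, strictly decreases the size $\max(|\xi|,|\eta|,|\zeta|)$, and similarly for $\II_1,\II_2$. Iterating, every $\hGamma$-orbit in $\kappa\inv(k)\setminus U$ admits a \emph{minimal} representative lying in the compact set $[-2,2]^3\cap\kappa\inv(k)$, and properness follows from the standard principle that a discrete group acting by polynomial diffeomorphisms, each of whose orbits meets a fixed compact set in a finite and uniformly bounded subset, must act properly. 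The main obstacle here is ruling out escape of orbits near $\partial U\cap\kappa\inv(k)$: this needs a careful limiting argument showing that the reduction continues to be strict as $|\zeta|\downarrow 2$ and that no accumulation of distinct orbits arises at the boundary.

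For item (2), I would use the Goldman symplectic form on the smooth locus of $U\cap\kappa\inv(k)$, which carries a finite measure absolutely continuous with respect to Lebesgue measure. The trace functions $\xi,\eta,\zeta$ are Hamiltonian generators of circle actions (twist flows about the simple closed curves $X$, $Y$, $XY$) on a dense open subset, and the Dehn twists $\tau_X,\tau_Y\in\hGamma$ realize as the time-$2\pi$ Hamiltonian flows composed with a rotation whose angle varies nontrivially with the transverse coordinate. By Weyl's equidistribution theorem, iterates of $\tau_X$ are equidistributed on almost every leaf of $\xi=\mathrm{const}$, and combining the $\tau_X$- and $\tau_Y$-dynamics yields joint ergodicity on $U\cap\kappa\inv(k)$. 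The subtle point is the degenerate locus where some coordinate attains $\pm 2$ and the associated circle action collapses; this is handled either by showing directly that the locus has measure zero or by propagating ergodicity across the degeneration via the larger group $\hGamma$.
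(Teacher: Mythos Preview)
This theorem is not proved in the present paper; it is quoted from \cite{MR2026539} as background for the orientation-preserving (real) case, while the present paper treats the imaginary case. Nonetheless, the methods of \cite{MR2026539} are precisely the Bowditch directed-tree theory and the Hamiltonian twist-flow argument developed here in \S 6 and \S 9.4, so your outline can be compared against those.

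Your items (2) and (3) are correct in approach and match the method of \cite{MR2026539}.

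Item (1) contains two related errors. First, your reduction inequality is false as stated. Take $k=-10$ and $(\xi,\eta,\zeta)=(3,-4,-6+\sqrt3)$, which lies on $\kappa\inv(-10)$; here $|\zeta|=6-\sqrt3>\max(2,|\xi|,|\eta|)=4$, yet $\II_3$ sends $\zeta$ to $\zeta'=\xi\eta-\zeta=-6-\sqrt3$, strictly \emph{increasing} the maximum modulus. The difficulty is signs: whenever $\xi\eta$ and $\zeta$ have the same sign, $|\zeta'|=|\xi\eta|+|\zeta|>|\zeta|$. Second, and more fundamentally, for $k<2$ the set $[-2,2]^3\cap\kappa\inv(k)$ is exactly $U$, which is $\hGamma$-invariant; orbits in $\kappa\inv(k)\setminus U$ therefore \emph{never} enter $[-2,2]^3$, so your proposed location for minimal representatives is impossible. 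The actual sinks lie in sign-change translates of the generalized Fricke space $\Fricke'(\Sigma_{1,1})\subset(2,\infty)^3$, where all three trace moduli exceed $2$. The correct mechanism, as in \cite{MR2026539} and in \S 6 here, is the Fork Lemma (Lemma~\ref{lem:Fork}): it guarantees that away from the region where some coordinate has modulus at most $2$ the directed tree $\vT_{\mu}$ has no forks or sources, hence is well-directed toward a finite attractor, and properness then follows from the equivariant map to finite subtrees (Proposition~\ref{prop:ProperAction}).
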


\noindent
The case $k=2$ corresponds to reducible representations  %.
and %
%\begin{thm*}The 
the     %
action of $\hGamma$ on $\kappa^{-1}(2)\setminus U$ is ergodic.
%\end{thm*}
%\pagebreak
\begin{thm*} Suppose $k> 2$.
\begin{itemize}
\item
$\hGamma$ acts properly on
$\hGamma\cdot \Fricke(\Sigma_{0,3})$.

%(	TSP: WE DID NOT DEFINE $\Omega_{(0,3)}$ or $\Omega_{(1,1)}$, ALSO THE LATTER DOES NOT SEEM TO FALL INTO THIS CASE)
\item  % If $k > 2$, then the 
The action of  $\hGamma$ on %the complement
$\kappa^{-1}(k) \setminus \hGamma\cdot \Fricke(\Sigma_{0,3})$
is ergodic.
\item
When $2\le k\le 18$, the action of 
$\hGamma$ on $ \kappa^{-1}(k)$ is ergodic.
\end{itemize}
\end{thm*}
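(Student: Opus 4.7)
The three statements all concern a single level set $\kappa^{-1}(k)$ with $k>2$. The plan is to identify $\hGamma\cdot\Fricke(\Sigma_{0,3})$ as a ``wandering'' set on which the dynamics is proper, show that its complement in $\kappa^{-1}(k)$ carries a recurrent dynamics which is ergodic, and finally observe that for $2\le k\le 18$ the wandering set does not meaningfully meet $\kappa^{-1}(k)$, so the third bullet follows from the second. The argument follows the paradigm already used by Goldman in \cite{MR2026539} for the orientable case.

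For the properness in the first bullet I would begin by computing the $\hGamma$-stabilizer of $\Fricke(\Sigma_{0,3}) = (-\infty,-2]^3$. Each Vieta involution applied to a point with all coordinates $\le-2$ produces one coordinate $>-2$: for instance $\II_3(\xi,\eta,\zeta) = (\xi,\eta,\xi\eta-\zeta)$ satisfies $\xi\eta-\zeta\ge 4+2 = 6$, and symmetrically for $\II_1,\II_2$. A similar case-check shows that each nontrivial sign change sends $\Fricke(\Sigma_{0,3})$ into a different orthant of $\R^3$, so the stabilizer in $\hGamma$ is exactly $\SymThree$ and every other $\hGamma$-translate of $\Fricke(\Sigma_{0,3})$ is disjoint from it. Properness of the $\hGamma$-action on the orbit $\hGamma\cdot\Fricke(\Sigma_{0,3})$ then follows from the standard principle that a discrete group of homeomorphisms whose translates of a closed set are pairwise disjoint modulo a finite stabilizer acts properly on the union of those translates.

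For the ergodicity in the second bullet I would follow the Nielsen/Markoff reduction scheme. Call a triple $(\xi,\eta,\zeta)\in\kappa^{-1}(k)$ \emph{reduced} if no Vieta involution strictly decreases the norm $|\xi|+|\eta|+|\zeta|$. Two claims need to be established: first, every $\hGamma$-orbit disjoint from $\hGamma\cdot\Fricke(\Sigma_{0,3})$ meets the reduced region, and the reduced region is bounded in $\kappa^{-1}(k)$; second, the $\hGamma$-invariant area form on $\kappa^{-1}(k)$ (derived from the Poisson structure on the character variety) induces a finite measure on the reduced region under which the return map of the Vieta dynamics is ergodic. The first claim is an induction on $|\xi|+|\eta|+|\zeta|$, branching on which coordinate is extremal and using the divergence estimates of the previous paragraph to force the induction to terminate either in $\Fricke(\Sigma_{0,3})$ or in the reduced region. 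The second claim is proved by exhibiting a suitably hyperbolic pair of compositions of Vieta involutions on the reduced region, ruling out measurable $\hGamma$-invariant subsets of intermediate measure.

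The third bullet is essentially combinatorial. On $(-\infty,-2]^3$ one computes $\partial_\xi\kappa = 2\xi-\eta\zeta \le -4-4 = -8$, symmetrically for $\eta,\zeta$, so $\kappa$ is monotonically decreasing in each variable and attains its minimum $\kappa(-2,-2,-2)=18$ at the corner. Hence $\Fricke(\Sigma_{0,3})\cap\kappa^{-1}(k) = \emptyset$ for $k<18$, and at $k=18$ the intersection is the single point $(-2,-2,-2)$ together with its $\hGamma$-orbit, which is countable and of measure zero. The main obstacle I foresee is the ergodicity argument of the second bullet: while Nielsen reduction is classical and the measure theory on the reduced region is manageable, proving that the induced return map is mixing rather than merely recurrent requires a hyperbolicity estimate for Vieta compositions that is delicate near the fixed loci of the involutions.
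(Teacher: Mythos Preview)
This theorem is not proved in the present paper; it is quoted in the introduction as a summary of the main results of Goldman~\cite{MR2026539} for the orientation-preserving (real) case. So there is no proof here to compare against directly, but your sketch can be measured against the method of \cite{MR2026539}, which is also the template for the analogous arguments carried out in \S\ref{sec:ImagCharkLessThanTwo} of this paper for the imaginary case.

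Your first and third bullets are essentially correct. The calculation that each Vieta involution and each nontrivial sign-change moves $(-\infty,-2]^3$ off itself is right and pins down the stabilizer as $\SymThree$; the paper records exactly this disjointness of translates in the paragraph preceding the theorem. (The route to properness taken in \cite{MR2026539} and in this paper is phrased differently---one shows that Fricke characters have a sink in the associated directed tree $\vT_\mu$ and invokes Proposition~\ref{prop:ProperAction}---but your direct argument also works.) Your computation that $\kappa\ge 18$ on $\Fricke(\Sigma_{0,3})$, with equality only at the corner $(-2,-2,-2)$, is correct and is stated explicitly in the paper.

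The genuine gap is in the second bullet. You propose to prove ergodicity via a hyperbolic return map on a bounded reduced region, and you flag a mixing estimate as the main obstacle. This is not the mechanism, and I do not see how such an argument would be completed. The method of \cite{MR2026539}, mirrored here in Proposition~\ref{prop:Ergodicity}, rests on \emph{elliptic} rather than hyperbolic dynamics. Trace reduction shows that every character outside $\hGamma\cdot\Fricke(\Sigma_{0,3})$ is $\hGamma$-equivalent to one with some coordinate $\zeta\in(-2,2)$. On that slice the Dehn twist $\nu_Z$ acts on the level ellipse as rotation by $2\cos^{-1}(\zeta/2)$, irrational for almost every $\zeta$; ergodic decomposition then forces any measurable $\hGamma$-invariant function to be a.e.\ invariant under the Hamiltonian flow $\Ham(\mu_Z)$. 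One finishes by checking that $\Ham(\mu_Z)$ and $\Ham(\mu_{Z'})$ span the tangent space of $\kappa^{-1}(k)$ off a null set, so the invariant function is constant. The obstacle you should have anticipated is not a hyperbolicity estimate but the existence of an elliptic primitive along every orbit outside the Fricke region; compare Proposition~\ref{prop:descendingpath} for the analogous step in the imaginary case.
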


\subsection{The Main Theorem}

The two homeomorphism types of nonorientable surfaces with fundamental
group $\Ft$, belong to the two-holed {\em cross-surface\/} (projective plane) $C_{0,2}$ and the
one-holed Klein bottle $C_{1,1}$.
(Note that $C_{0,2}$ and $C_{1,1}$ can each be obtained from
a three-holed sphere $\Sigma_{0,3}$
by attaching 
one or two cross-caps, respectively.)
In his thesis~\cite{MR2705402}, Stantchev
computed the Fricke spaces of these surfaces and proved partial results
on the $\Gamma$-action on the level sets of $\kappa$.
This paper presents and extends Stantchev's results.

In particular the Fricke space $\Fricke(C_{0,2})$ identifies with the
subset
\[\Fricke(C_{0,2}) := \{ (x,y,z)\in\R^3 \mid z \le -2, \; x y + z \ge 2\}\]
with boundary traces $z$ and 
\[ z' := - x y - z, \]
both of which satisfy $z,z'\le - 2$. 
% (TSP: I CHANGED TO $-2$ WHICH SEEMS TO BE THE CORRECT SIGN).
The {\em generalized\/} Fricke space of $C_{1,1}$ identifies
with
\[ 
\Fricke'(C_{1,1}) := \{ (x,y,z)\in\R^3 \mid z < -2, \; x y  z > x^2 + y^2\}.
\]
with boundary trace $\delta := x^2 + y^2 - x y z + 2$.
%\begin{thm*} 
Define
\begin{align*}
\R^3 &\xrightarrow{\kC} \R \\
(x,y,z) &\longmapsto -x^2 -y^2 + z^2  + x y z - 2,
\end{align*} corresponding to the trace of the commutator.
Again we divide the statements into three parts, depending on 
whether the commutator trace $k$ satisfies
$k< 2$, $k= 2$, or $k > 2$, respectively.

\begin{thm*} 
Suppose that $k < 2$. Then:
\begin{itemize}
\item
% The action of $\Gamma$ on $\Gamma\cdot\Fricke(C_{0,2})$ is proper.
$\Gamma$ acts properly on $\Gamma\cdot\Fricke(C_{0,2})$.
% (This only occurs when $k< -14$.)
\item
% Let $t<2$. 
The action of  $\Gamma$ on the complement
$
(\kC)\inv (k)\setminus\Gamma\cdot\Fricke(C_{0,2})
$
is ergodic. %  with respect to Lebesgue measure.
\end{itemize}
When $k=2$, the action is ergodic. 

\noindent
Suppose $k>2$. Then:
\begin{itemize}
\item
$\Gamma$ acts properly on $\Gamma\cdot\Fricke'(C_{1,1})$.
%(This only occurs when $\kappa_\Coloring(x,y,z) > 2$.)
\item $
\mathsf{interior}\Big( \kappa\inv(k)\setminus\Gamma\cdot\Fricke'(C_{1,1})\Big)
= \emptyset $
\end{itemize}
\end{thm*}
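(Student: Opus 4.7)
My plan is to adapt the reduction-theoretic strategy of \cite{MR2026539} (for the orientation-preserving case) to the group $\Gamma$, generated by the three Vieta involutions $\II_1,\II_2,\II_3$, the sign-changes $\sigma_i$, and the $\Coloring$-preserving coordinate permutations. Each $\II_i$ acts as an affine involution on the fiber of the $i$-th coordinate projection (replacing one root of the quadratic in the $i$-th variable by the other), and $\Gamma$ preserves an area form on each level set $\kC^{-1}(k)$. These ``elementary moves'' are the tool on which every assertion of the theorem will rest.

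I would begin with the two properness statements. For $k<2$ one checks that the interior of $\Fricke(C_{0,2})$ is disjoint from its image under every generator of $\Gamma$ outside a finite stabilizer. The defining inequalities $z\le -2$ and $xy+z\ge 2$ force $xy\ge 2-z\ge 4$; applying $\II_3$ replaces $z$ by $xy-z\ge xy+2>2$, so the image violates $z\le -2$. A parallel analysis for $\II_1,\II_2$ and the sign changes shows that the $\Gamma$-translates of the interior are pairwise disjoint. Properness of $\Gamma$ on $\Gamma\cdot\Fricke(C_{0,2})$ then follows from this disjointness together with local finiteness of the translation covering on compacta. For $k>2$ the identical method, now with inequalities $z<-2$ and $xyz>x^2+y^2$, yields properness of $\Gamma$ on $\Gamma\cdot\Fricke'(C_{1,1})$.

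For the ergodicity and density assertions I would construct an explicit reduction procedure on $\kC^{-1}(k)$. Define a height $h(x,y,z):=x^2+y^2+z^2$; then for a point outside $\Gamma\cdot\Fricke(C_{0,2})$ that is not already \emph{reduced} (meaning every $\II_i$ weakly increases $h$), some generator strictly decreases $h$. Iterating, almost every orbit enters a compact \emph{core} of reduced points. On this core one proves ergodicity for $k<2$ by a Hopf--Fubini argument, leveraging that the one-parameter subflows generated by the compositions $\II_i\II_j$ have dense orbits on generic level sets of $\kC$; the case $k=2$ (reducible limit) follows by continuity after excising a measure-zero locus. For $k>2$ the reduction terminates at a point of $\Fricke'(C_{1,1})$ for an open dense set of initial conditions, forcing $\Gamma\cdot\Fricke'(C_{1,1})$ to have dense interior and so an empty-interior complement.

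The main obstacle throughout is controlling the exceptional set on which the reduction fails to terminate --- the analogue of the Bowditch $\mathsf{BQ}$-set. For ergodicity ($k<2$) one must show this set has \emph{measure zero}, which requires precise estimates on the Jacobian of the Vieta involutions relative to the invariant area form and careful tracking of returns to the core. The non-standard signs in $\kC(x,y,z)=-x^2-y^2+z^2+xyz-2$ (negative $x^2,y^2$ terms) change both the geometry of $\kC^{-1}(k)$ --- which may be disconnected, unlike the classical Markoff surface --- and the precise formulas for $\II_i$; verifying monotonicity of $h$ under reduction, compactness of the reduced core, and the measure-zero exceptional locus in this altered setting is the technically hardest step, and is where Stantchev's prior partial results \cite{MR2705402} must be strengthened.
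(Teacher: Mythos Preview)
Your proposal has the right general shape but contains several concrete errors and misses the key mechanisms the paper actually uses.

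First, a computational error with consequences. On the imaginary real form $(ix,iy,z)$, the Vieta involution $\II_3$ sends $z$ to $(ix)(iy)-z = -xy-z$, not $xy-z$. More seriously, the Fricke space $\Fricke(C_{0,2})$ is defined by $z\le -2$ \emph{and} $z':=-xy-z\le -2$, i.e.\ symmetrically in $z$ and $z'$; hence $\II_3$ \emph{preserves} $\Fricke(C_{0,2})$ rather than moving it off itself. Your disjoint-translates argument for properness therefore fails at the first step. The paper instead obtains properness uniformly from Bowditch's theory: it shows that the Fricke orbits lie in the (extended) Bowditch set $\B'$, characterized by the directed tree $\vT_\mu$ having a finite attractor (a sink or an attracting indecisive edge), and properness then follows from the equivariant map to finite subtrees of $\T$. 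This bypasses any direct ping-pong on $\Fricke$.

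Second, your ergodicity sketch for $k<2$ is missing the essential step. The paper does use a height-reduction (descending paths in $\vT_\mu$), but its purpose is not to reach a compact ``core''; it is to locate a region $Z\in\O_\R$ with $|\mu(Z)|<2$, i.e.\ a primitive mapping to an \emph{elliptic}. Only then does the Dehn twist $\nu_Z$ act by rotation on the elliptical level curves of the Hamiltonian $\mu_Z$, and for almost every level this rotation is irrational. Ergodic decomposition against this circle action, combined with a second twist $\nu_{Z'}$ whose Hamiltonian vector field is transverse to $\Ham(\mu_Z)$ off a null set, gives ergodicity. Your ``$\II_i\II_j$ subflows have dense orbits'' does not capture this: without $|z|<2$ the relevant orbits are hyperbolas, not ellipses, and no rotation structure is available. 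The existence of an elliptic primitive is precisely what distinguishes $k<2$ from $k>2$ and is the crux of the argument.

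Third, the case $k=2$ cannot be obtained by ``continuity after excising a measure-zero locus''; ergodicity does not pass to limits. The paper uses the explicit rational parametrization of the reducible locus $\kC^{-1}(2)$ by $\R^2$ and invokes Moore's ergodicity theorem for the lattice $\GLtZc\subset\SLtRpm$ acting on $\R^2\cong\SLtRpm/N$.

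Finally, for $k>2$ the paper does \emph{not} prove that reduction terminates on an open dense set directly; it argues by contradiction. Assuming a nonempty open $U$ in the complement of $\gFO{\Coo}$, the descending path from $U$ is well-defined and constant on $U$, the images $\gamma_n(U)$ have equal positive area and lie in a box of finite area, forcing the path to be eventually periodic with period containing an $\R$-edge; then a uniform decrement in $|z|$ along $\R$-edges drives $\gamma_n(U)$ into an arbitrarily small neighborhood of the dihedral point, contradicting area preservation. Note too that the paper proves only empty interior of the complement, not measure zero --- the latter is stated as a conjecture --- so your plan to show the exceptional set has measure zero goes beyond what is claimed.
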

\noindent
%In the last case,
% $\kappa\inv(t)\setminus\Gamma\cdot\Fricke'(C_{1,1})$ 
In the last case, we conjecture that 
$\kappa\inv(k)\setminus\Gamma\cdot\Fricke'(C_{1,1})$
has measure zero.

% {\bf Elaborate more on this point, and that the orbits outside the
% generalized Fricke spaces accumulate (at the dihedral character).
% Also refer to how these are used in Bowditch in the case of the
% Markoff surface.}

% This paper includes a slightly expanded part of George Stantchev's
% 2003 University of Maryland doctoral dissertation, ``Action of the
% modular group on $\GLtR$-characters of the once-punctured
% torus''.

Each of the surfaces $\Sigma_{0,3}$, $\Sigma_{1,1}$,
$C_{0,2}$, $C_{1,1}$ has fundamental group isomorphic to $\Ft$.
We choose a basis  $(X,Y)$ for $\pi_1(\Sigma)$ represented
by simple closed curves having geometric intersection number $0$ or $1$.
If $\Sigma$ is nonorientable then we choose $X,Y$ to reverse orientation.

\bigskip

These provide further examples, the first being Goldman~\cite{MR2026539},
%Gold:pTorus}, 
Tan-Wong-Zhang~\cite{MR2405161},
%{tan-wong-zhang2004endinvariants}, 
of where the 
$\Gamma$-action is proper although the representations themselves
have dense image. 
Representations of $\Ft$ which are {\em primitive-stable\/} in the sense of Minsky~\cite{MR3038545} are included among the examples
in Tan-Wong-Zhang~\cite{MR2191691}.
%tan-wong-zhang2005gMm} as they satisfy
%the Bowditch conditions.

A notable feature of this classification is that the dihedral characters corresponding to the triples $(0,0,z)$ where $z<-2$ or $z>2$
lie in the closure of the domain of discontinuity. 
These correspond to strong degenerations of a hyperbolic structure
on a Klein bottle with one cone point (as the angle approaches $2\pi$).
These degenerations lie on the boundary of the generalized Fricke orbits, 
but are not generic points on the boundary.

The dynamics of mapping class group actions on character varieties
is quite complicated, already in the case of  $\Rep(\Ft,\PSLtC)$,
where the fractal behavior is reminiscent of holomorphic dynamics in one
complex variable.  However, for $\PSLtR$-representations, this complication
is absent~\cite{MR2026539}.    %Gold:pTorus}. 
Another  notable feature of the present work is the
appearance of fractal-type behavior in  $\Rep(\Ft,\PGLtR)$.

Another new phenomenon is that the closure of the Bowditch set $\B$ contains dihedral characters.

Another motivation for this study is that imaginary characters (for $k=-2$) arise in Bowditch's original investigation~\cite{MR1643429} of Markoff triples, 
in proving that orbits
accumulate at the origin in the Markoff surface.

Maloni-Palesi-Tan~\cite{MR3420542} prove related results for $3$-generator groups.

\section*{Acknowledgements}
We are grateful to Joan Birman, Jean-Philippe Burelle, 
Serge Cantat, Dick Canary,
Virginie Charette,
John H.\ Conway, 
 Todd Drumm, Nikolai Ivanov, Misha Kapovich, 
Sean Lawton, Frank Loray,  Sara Maloni, 
John Millson,
Yair Minsky,
Walter Neumann, Frederic Palesi, Adam Sikora, Domingo Toledo,
Anna Wienhard, 
Maxime Wolff, Scott Wolpert, Yasushi Yamashita,
Eugene Xia, 
and Ying Zhang for helpful
discussions on this material.

Goldman and Stantchev thank the anonymous referee for a careful and
thoughtful reading of the original manuscript~\cite{GoldmanStantchev}
(which was never resubmitted to {\em Experimental Mathmatics\/}), 
and for many useful suggestions and corrections.
Goldman and Stantchev gratefully acknowledge support from
National Science Foundation grants DMS-070781, DMS-0405605 and
DMS-0103889 as well as the Department of Mathematics at the
University of Maryland.  Goldman gratefully acknowledges support
from the General Research Board at the University of Maryland during
the Fall semester of 2005. Tan was partially supported by the
National University of Singapore academic research grant
R-146-000-186-112
%R-146-000-115-112.
We are grateful to the Institute of Mathematical Sciences at the National University
of Singapore in 2010, Institut Henri Poincar\'e in Paris in 2012 and the Mathematical
Sciences Research Institute in Berkeley, California in 2015 for their hospitality
during the final stages of this work. Finally we thank the GEAR Research Network
in the Mathematical Sciences funded by NSF grant DMS-1107367 for their financial
support.
Goldman and Tan also express their gratitude to the Mathematical Sciences Research Institute where much of this work was finally completed in Spring 2015.

\section*{Notation and terminology}
Denote the cardinality of a set $S$ by $\#(S)$.
For a given equivalence relation on a set $S$,
denote the equivalence class of an element $s\in S$
by $[s]$.
If $S \xrightarrow{~A~} S$ is a mapping, 
denote the set of fixed points of $A$ by $\Fix(A)$.
Denote the {\em symmetric group\/} of permutations of $\{1,2,\dots,n\}$ by
$\mathfrak{S}_n$.

If $G$ is a group, denote the group of automorphisms $G\to G$ by $\Aut(G)$.
If $g\in G$, denote the corresponding inner automorphism by:
\begin{align*}
G &\xrightarrow{~\Inn(g)~} G \\
x &\xmapsto{\phantom{~\Inn(g)~}} gxg\inv. \end{align*}
Denote the cokernel of the homomorphism
\[
G \xrightarrow{~\Inn~} \Aut(G)  \]
by 
\[
\Out(G) := \Aut(G)/\Inn(G). \]
If $G, H$ are groups and $G\longrightarrow H$ is a homomorphism,
denote the corresponding semidirect product by $G \rtimes H$ or
$H \ltimes G$. Then $G \normalsubgroup (G\rtimes H)$ where $\normalsubgroup$
denotes the relation of {\em normal subgroup.}
Denote the center of a group $G$ by $\Center(G)$.

Denote the free group of rank two by $\Ft$.

The usual Euclidean coordinates on $\R^3$ (respectively $\C^3$)
will be denoted $x,y,z$ (respectively $\xi,\eta,\zeta$). 
The corresponding coordinate vector fields will be denoted
$\dd{x},\dd{y},\dd{z}$  
(respectively $\partial_\xi,\partial_\eta,\partial_\zeta$).
%$\partial_x,\partial_y,\partial_z$ 
%(respectively $\partial_\xi,\partial_\eta,\partial_\zeta$).

Denote the identity matrix by $\Id$.
Denote the transpose of a matrix $A$ by $A^\dag$.
An {\em anti-involution\/} of a complex algebraic object is
an anti-holomorphic self-mapping of order two.

If $A$ is a ring and $n\in\N$  then $\GL(n,A), \SL(n,A)$ have their usual meanings. 
$\SLtRpm$ denotes the subgroup of $\GLtR$ comprising matrices of determinant $\pm 1$.
Denoting multiplicative subgroup of units in $A$ by $A^\times$;
then $\PGL(n,A)$ denotes the cokernel of the homomorphism
\begin{align*}
A^\times &\longrightarrow   \GL(n,A) \\
a &\longmapsto a \Id 
\end{align*}
and $\PSL(n,A)$ denotes the image of $\SL(n,A)$ under the restriction of the
quotient epimorphism $\GL(n,A)\twoheadrightarrow \PGL(n,A)$.

The {\em level two congruence subgroup\/} $\GLtZ_{(2)}$ is the
kernel of the homomorphism defined by reduction modulo $2$:
\[
\GLtZ \longrightarrow \GL(2,\Z/2) \]
and denote the corresponding subgroup of $\PGLtZ$ by:
\[
\PGLtZt :=  \GLtZ_{(2)} / \{\pm \Id\} \subset \PGLtZ. \]
For any field $\kk$, denote the {\em projective line\/} over $\kk$
by $\Po(\kk)$. The elements of $\Po(\kk)$ are 
one-dimensional linear subspaces of the plane $\kk^2$.
Denote the one-dimensional subspace of $\kk^2$ containing $(a_1,a_2)$ by
$[a_1:a_2]\in\Po(\kk)$. The scalars $a_1,a_2$ are the 
{\em homogeneous coordinates\/} of the point $[a_1:a_2]\in\Po(\kk)$.

If $S$ is a manifold, denote its boundary by $\partial S$.
If $V\subset S$ is a hypersurface, then denote the manifold-with-boundary
obtained by splitting $S$ along $V$ by $S|V$. 
The quotient map $S|V \xrightarrow{~q~} S$
restricts to $q\inv(S\setminus V)$ by a homeomorphism,
and to $q\inv(V)$ by a double covering-space.

\section{The rank two free group and its automorphisms}\label{sec:RankTwoFreeGroupAndAutos}
\noindent 
In this section and the next we describe the group $\Gamma$ which acts 
polynomially on 
\[
\R^3 \xrightarrow{~\cong~} i\R\times i\R \times\R \hookrightarrow \C^3, \]
preserving the  function $\kC$ and the Poisson structure $\BB_\Coloring$ 
defined in \eqref{eq:RealPoissonStructure}.
Both the function $\kC$ and the bivector field $\BB_\Coloring$ are polynomial tensor fields.

The dynamical system $(\Gamma,\R^3)$ arises from automorphisms of the rank two free group $\Ft$ which preserve a nonzero $\{\pm 1\}$-character $\Coloring$ corresponding to the change of orientation of the invariant hyperbolic plane $\Ht\subset \Hth$. 
It also contains a normal subgroup 
$\Sigma \cong \Z/2 \oplus \Z/2$  of {\em sign-changes\/} 
which ``twists'' representations
$\rho$ by homomorphisms $\Ft \longrightarrow \{\pm 1\}$,
where $\{\pm 1\}$ is the center of the group $\SLtC$.
These transformations do {\em not\/} correspond to automorphisms
of $\Ft$ and we postpone their discussion to the next section.
In this section we describe the group arising from $\Au$ which acts faithfully
on the character variety $\RepF$, and its isomorphism to 
the {\em modular group\/} $\PGLtZ$.

%Finally we pass to a quotient to obtain the group $\Gamma$ which acts
%faithfully on the corresponding real slice of the character variety, 
%as will be described in the following section.
%
\subsection{The modular group and automorphisms}
Let $\Ft$ be a free group freely generated by elements $X,Y$. 
We call such an ordered pair $(X,Y)$ of generators an (ordered) {\em basis\/}
of $\Ft$. 
%Denote the group of automorphisms of $\Ft$ by $\Au$, its normal
%subgroup of inner automorphisms by $\In$, 
%and the quotient by \[ \Ou := \Au/\In. \] 
Every automorphism $\phi\in\Au$ induces an automorphism
of the abelianization of $\Ft$, which is the free abelian group $\Z^2$.
The corresponding epimorphism 
\[ \Au  \twoheadrightarrow  \GLtZ \]
has kernel $\In$, and induces  an isomorphism $\Ou\cong\GLtZ$.

The center of $\GLtZ$ is $\{\pm \Id\}$.
The quotient  $\PGLtZ :=  \GLtZ/ \{\pm \Id\}$ 
enjoys the structure of a semidirect product
\[
\SymThree \ltimes (\Z/2 \star \Z/2 \star \Z/2), \]
where the symmetric group $\SymThree$ acts by permuting the three free factors.

Further analysis of  $\Au$ involves a homomorphism
$\Au\longrightarrow\SymThree$ defined by reduction modulo $2$.
The $\Z/2$-vector space 
\begin{equation}\label{eq:PlaneOverZ2}
\Z/2\oplus\Z/2 \ \cong\    \Hom(\Ft, \Z/2). \end{equation}
has three nonzero elements. 
Since the field $\Z/2$ has only one nonzero element, 
this three-element set identifies with the projective line $\Po(\Z/2)$:
\begin{alignat}{2}\label{eq:ThreeElementSet}
[0:1]&\longleftrightarrow\frac{\mathsf{even}}{\mathsf{odd}}&&\longleftrightarrow0 \notag \\
[1:0] &\longleftrightarrow  \frac{\mathsf{odd}}{\mathsf{even}}&&\longleftrightarrow\infty \notag 
\\
[1:1]&\longleftrightarrow\frac{\mathsf{odd}}{\mathsf{odd}}&&\longleftrightarrow1.
\end{alignat}
Every permutation of this set is realized by a unique  projective transformation,
resulting in an epimorphism
\[ \PGLtZ  \twoheadrightarrow \PGL(2,\Z/2) \cong \SymThree.  \]
This homomorphism splits.
Its kernel is the {\em level two congruence subgroup\/}
of $\PGLtZ$, denoted $\PGLtZt$.  
It is freely generated by the three involutions
\[
\II_3 := \bmatrix 1 & 0 \\ 0 & -1 \endbmatrix,\quad
\II_1 := \bmatrix -1 & 0 \\ -2 & 1 \endbmatrix,\quad
\II_2 := \bmatrix -1 & 2 \\ 0 & 1 \endbmatrix. \]
Geometrically these involutions correspond to reflections 
in the sides of an ideal triangle 
in the Poincar\'e upper half-plane $\Ht$ 
with vertices $0,1,\infty$ respectively.
Specifically, on $\QPo$,
\[
\Fix(\II_3) = \{0,\infty\},\quad
\Fix(\II_2) = \{\infty,1\},\quad
\Fix(\II_1) = \{1,0\}. \]
These elements of $\GLtZ$ correspond to automorphisms of $\Ft$ as follows:
The central involution $-\Id\in\GLtZ$ corresponds to the {\em elliptic involution\/}
\begin{align}\label{eq:EllInvolution} 
\Ft &\xrightarrow{~\e~} \Ft \\
X & \longmapsto X\inv \notag\\
Y & \longmapsto Y\inv. \notag
\end{align}
This involution and $\In$  generate
a normal subgroup 
\begin{equation}\label{eq:DefInne}
\Inne(\Ft) := \In \rtimes \langle\e\rangle \ \unlhd\  \Au, \end{equation}
with quotient  $\Au/\Inne(\Ft) \cong \PGLtZ$.

The automorphisms
\begin{equation}\label{eq:Involutions}
\begin{aligned}
X &\xmapsto{~\tII_3~} X \\
Y & \xmapsto{\phantom{~\tII_3~}}  Y\inv 
\end{aligned} \qquad
\begin{aligned}
X &\xmapsto{~\tII_1~} Y\inv X\inv Y\inv \\
Y & \xmapsto{\phantom{~\tII_2~}}  \qquad Y  \\
\end{aligned} \qquad 
\begin{aligned}
X &\xmapsto{~\tII_2~} \ X\inv \\
Y & \xmapsto{\phantom{~\tII_1~}}  X^2Y \end{aligned}\end{equation}
are involutions of $\Ft$,  and they represent the involutions $ \II_3,\II_1,\II_2$ respectively.

Reduction modulo $2$ 
\[ \QPo \longrightarrow \Po(\Z/2) \]
partitions $\QPo$ into the three $\PGLtZt$-orbits, 
represented by $\{0,\infty,1\}$ as in \eqref{eq:ThreeElementSet}.
Points in the orbit of 
\[ 1\longleftrightarrow \frac{\mathsf{odd}}{\mathsf{odd}} \]
we call {\em totally odd;\/} points in the other two orbits we call {\em partially even.\/}

Finally observe that the homomorphism
\[ \GLtZ \xrightarrow{\Det}  \pmOne \]
extends to a homomorphism
\begin{equation}\label{eq:AuDet}  
\Au \xrightarrow{\Det}  \pmOne \end{equation} 
factoring through
\[ \SymThree \xrightarrow{~\sgn~}  \pmOne. \] 
Using the identifications $\Ft \cong \pi_1(\Sigma_{1,1})$
and $\Ou \cong \Mod(\Sigma_{1,1})$, 
this action corresponds to the induced action of $\Mod(\Soo)$ on the relative homology group 
\[H_2\big(\Soo,\partial\Soo; \Z\big)\  \cong\  \Z, \]
defined by the effect on orientation on $\Soo$.

\subsection{The tree of superbases}\label{sec:Graphical Representation}
We associate to $\Ft$ a natural 
{\em a trivalent  tree\/} $\T$ upon which
$\Au$ acts.
(Figure~\ref{fig:tree} depicts this tree.)
The tree $\T$ comes equipped with an embedding in the plane,
and a tricoloring of its edges. 
Associated to a character $[\rho]$ is a natural {\em flow\/} $\vT_{\rho}$ on $\T$
(that is, a directed tree whose underlying tree is $\T$).
This dynamical invariant  encodes the dynamics of the $\Gamma$-action,
and is due to Bowditch~\cite{MR1643429}.

Vertices of the tree correspond to {\em superbases,\/}
which define coordinate systems $\RepF\cong \C^3$.
The automorphisms $\tII_j$ defined in \eqref{eq:Involutions}
correspond to the edges, and generate our dynamical system.

An {\em (ordered) basis\/} is an ordered pair $(X,Y)$ where
$\{X,Y\}$ freely generates $\Ft$.
An element of $\Ft$ is {\em primitive\/} if it lies in a basis.
Denote the set of primitive elements of $\Ft$ by $\Prim$.
Primitive elements $X,Y$ are {\em equivalent\/} if $X$ is conjugate to
either $Y$ or $Y\inv$. This equivalence relation on $\Prim$ arises from 
the $\In\times \Z/2$-action where  
inversion $Y\mapsto Y\inv$ generates the $\Z/2$-action.
Geometrically, equivalence classes of primitives correspond to
isotopy classes of {\em unoriented\/} essential nonperipheral simple closed
curves on $\Soo$.

It is well known that equivalence classes $[Z]$ of primitive elements
correspond to points of $\QPo$: an equivalence class $[Z]$ 
corresponds to $[p:q]$ (that is, $p/q \in \Q$, assuming $q\neq 0$)
if and only if
\[
Z  \equiv  X^p Y^q  \mod{[\Ft,\Ft]}\]
in the abelianization \[\Ft/[\Ft,\Ft] \cong \Z^2 \] 
and \[ \infty \longleftrightarrow [1:0] \in \QPo. \]
Equivalently, the pair $(p,q)\in\Z^2$ corresponds to the homology class
of $Z$ in $H_1(\Ft,\Z)\cong\Z^2$, or the projectivized homology class
of an {\em oriented curve\/} representing the unoriented curve.

Define a {\em basic triple\/} to be an ordered triple $(X,Y,Z)\in 
\Prim\times\Prim\times\Prim$ such that:
\begin{itemize}
\item $(X,Y)$ is a basis of $\Ft$; 
\item $XYZ = \Id$. 
\end{itemize}
Clearly, an ordered basis $(X,Y)\in \Prim\times\Prim$ of $\Ft$ 
extends uniquely to a basic triple, and bases of $\Ft$ correspond bijectively to basic triples. 

A {\em superbasis\/} of $\Ft$ is an equivalence class of basic triples, 
where the equivalence relation is defined as follows.
Basic triples $(X,Y,Z)$ and $(X',Y',Z')$ are {\em equivalent\/} if and only if
\[ 
[X']=[X],\quad  [Y']=[Y], \quad [Z']=[Z]. \] 
Alternatively, a superbasis is an $\Inne(\Ft)$-equivalence class of
basic triples, where $\Inne(\Ft)$ is defined in \eqref{eq:DefInne}
and $\e$ is the elliptic involution defined in \eqref{eq:EllInvolution}.
That is, two basic triples $(X,Y,Z)$ and $(X',Y',Z')$ represent the
same superbasis if and only if 
\begin{equation}
\begin{aligned}
X' & =  W X W\inv \\
Y' & =  W Y W\inv \\
Z' & =  W Z W\inv \end{aligned}
\qquad \text{~or~} \qquad 
\begin{aligned}
X' & =  W X\inv W\inv \\
Y' & =  W Y\inv W\inv \\
Z' & =  W YZ\inv Y\inv W\inv\end{aligned}
\end{equation}
for some $W\in\Ft$. 
(Compare Charette-Drumm-Goldman~\cite{CDG}.)
Geometrically, a superbasis corresponds to an ordered triple of isotopy classes
of unoriented simple loops on $\Soo$ with mutual geodesic intersection numbers $1$.

The notion of {\em superbasis\/} is due to Conway~\cite{MR1478672}
in the context of Markoff triples. 
Our notion is a slight modification to $\Ft$, since Conway consider superbases
of $\Z^2$. 

Now we define the three elementary moves on a superbasis.
These correspond to the edges in the tree $\T$. 
Suppose $\mathfrak{b}$ is a superbasis represented by a basic triple 
$(X,Y,Z)$ as above. 
Then $(X,Y)$ is a basis of $\Ft$, and the pair $([X],[Y])$ correspond to two (unoriented) simple loops intersecting transversely in one point $p$, 
which we can conveniently use as a base point to define the 
fundamental group $\pi_1(\Soo,p)$. 
There are exactly two ways to extend 
$([X],[Y])$ to a superbasis, depending on the the respective orientations of representative elements of $\pi_1(\Soo,p)$. 
That is, there is a unique superbasis $(X',Y',Z')$
where 
\[ [X'] = [X],\quad   [Y'] = [Y], \quad [Z'] \neq [Z]. \]
Explicitly, this corresponds to the transformation of basic triples:
\begin{align*}
X' & = X \\
Y' & = Y\inv \\
Z' & = Y X\inv, \end{align*}
the automorphism $\tII_3$ defined in \eqref{eq:Involutions}.

Similarly there are unique superbases corresponding to basic triples $(X',Y',Z')$
with 
\[ [X'] \neq [X],\quad   [Y'] = [Y], \quad [Z'] = [Z] \]
and 
\[ [X'] = [X],\quad   [Y'] \neq [Y], \quad [Z'] = [Z], \]
which respectively correspond to $\tII_1$ and $\tII_2$.
We call these three superbases the superbases {\em neighboring \/}  
the superbasis $\big([X],[Y],[Z]\big)$.

\begin{figure}
\includegraphics[width=\textwidth]{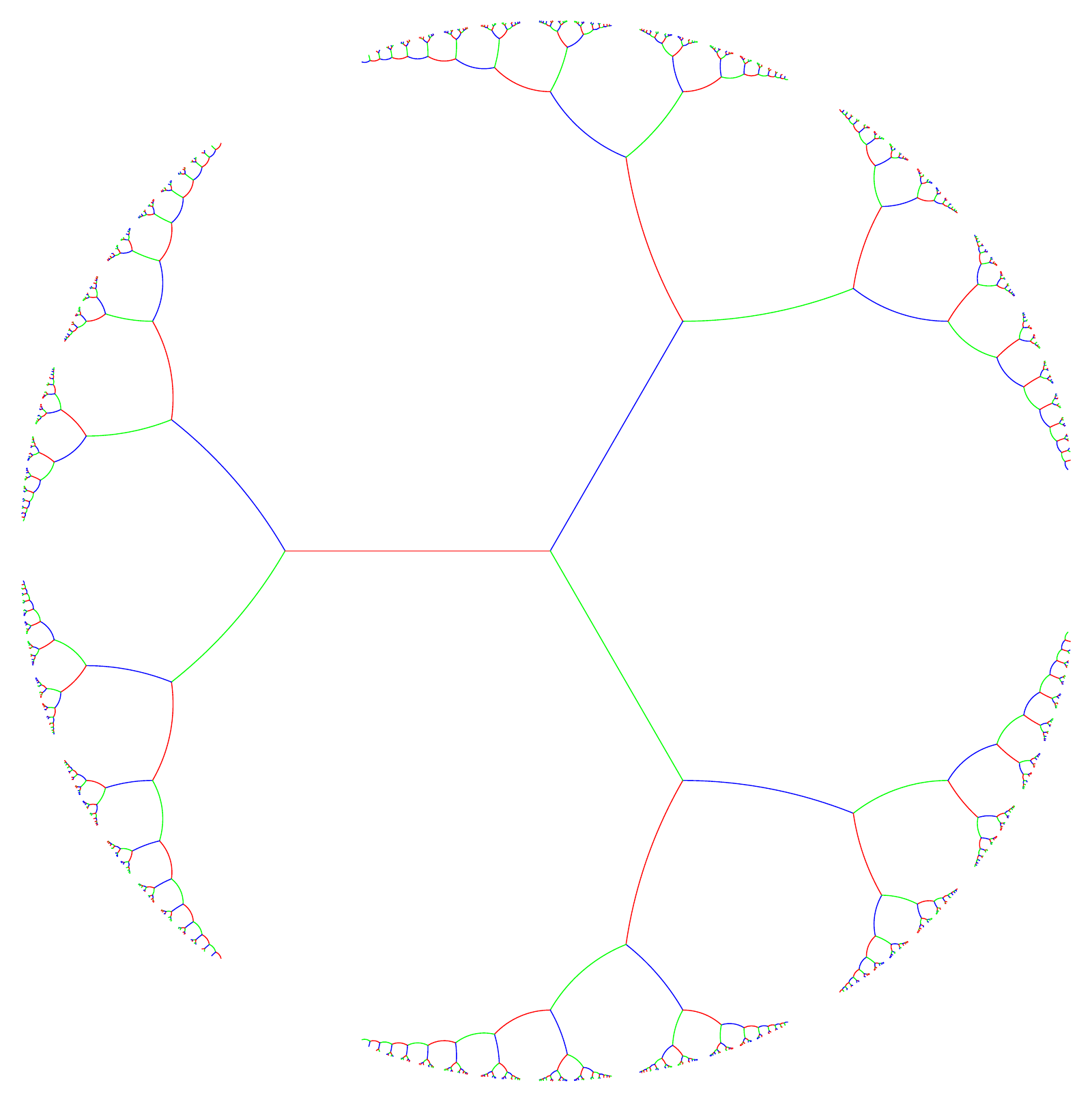}
\caption{A  trivalent tree embedded in $\Ht$}
\label{fig:tree}
\end{figure}

The tree $\T$ is defined as follows. 
Its vertex set $\V$ consists of superbases.
We denote the vertex corresponding to a superbasis
$\big([X],[Y],[Z]\big)$ by \[ v =  v(X,Y,Z) \in \V. \]
Denote the set of edges by $\E$. 
Edges correspond to pairs of neighboring superbases,
or,  equivalently, equivalence classes of bases,
and we write 
\[ e =  e^{X,Y} \in \E \]
for the edge corresponding to the basis $(X,Y)$.
This tree is just the Cayley graph of the {\em rank three free Coxeter group\/}
\[ \Z/2 \star \Z/2 \star \Z/2 \ \cong \PGLtZt. \]

\subsection{The tricoloring and the planar embedding}\label{sec:Tricoloring}
Bowditch begins with the tree $\T$ and a planar embedding of $\T$.
The extra structure of the planar embedding arises naturally as follows.
Since $\T$ is the Cayley graph of $\Z/2 \star \Z/2 \star \Z/2$,
its edges correspond to the three free generators $\II_1,\II_2,\II_3$.
Thus we may color the edges accordingly, 
obtaining a {\em tricoloring.\/}
Choosing a cyclic ordering of these three generators determines the 
germ of an embedding into a regular $2$-dimensional neighborhood of each vertex,
that is, the structure of a {\em fatgraph\/} on $\T$; 
this structure extends to a planar embedding of $\T$.
This planar embedding (and the set $\O$ of components of the complement
of $\T$) plays a central role in Bowditch's theory.

We introduce the following notation and terminology for $\T$.
Let $\E$ denote the set of edges in $\T$ and
$\O$ the set of complementary regions in the plane.
Then $\O$ bijectively corresponds to the set of equivalence classes
of primitive elements of $\Ft$. 
If $\big([X],[Y],[Z]\big)$ is a superbasis, 
then the three complementary regions around $v$ correspond to
$X, Y$ and $Z$. 
If $e = e^{X,Y}$, then we say that the edge $e$ {\em abuts\/} the regions
corresponding to $X$ and $Y$. 
One of the endpoints of $e$ is $v(X,Y,Z)$ and the other endpoint is
$v(X,Y,Z')$ where $\big([X],[Y],[Z']\big)$ is the neighboring superbasis.
We write 
\[ e = e^{X,Y}(Z,Z') \in \E, \] % ~~\hbox{or just}~~~e^{X,Y}.
and say that the edge $e$ {\em ends\/} at the regions corresponding to $Z$ and $Z'$.
Compare Figure~\ref{fig:FourRegions}.

%Each edge $e\in\E$ abuts exactly two complementary
%regions $X,Y\in\O$, and if $Z,Z'$ are the other regions adjacent to the beginning and end of the edge,  
%we write
%\begin{equation*}
%e = e^{X,Y}(Z,Z') \in \E, ~~\hbox{or just}~~~e^{X,Y}.
%\end{equation*}
%If $e\in C(\omega)$, then either $X$ or $Y$ equals $\omega$.
%Each vertex $v\in \V$ abuts exactly three complementary
%regions $X,Y,Z\in\O$, and we write
%\begin{equation*}
%v = v(X,Y,Z) \in \V.
%\end{equation*}

\begin{figure}
\includegraphics[width=\textwidth]{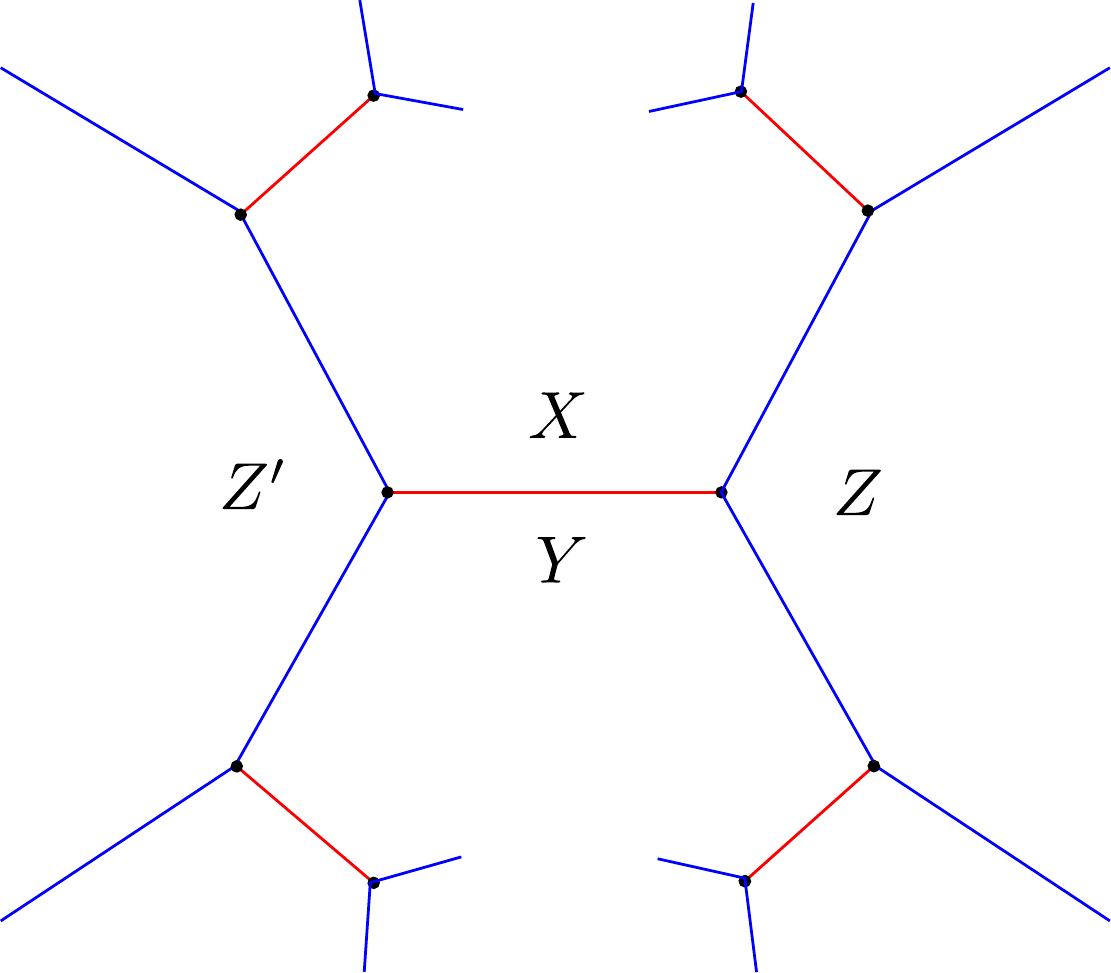}
\caption[Four regions associated to an edge]{The four regions associated to an edge.
The middle edge abuts the two regions labeled $X$ and $Y$, 
ends at the two regions labeled $Z'$ and $Z$.}  
\label{fig:FourRegions}
\end{figure}

Reduction modulo $2$ solidifies the role of the tricoloring.
Since 
\[\O \;:=\; \pi_0(\Ht\setminus\T) \longleftrightarrow 
\big(\Prim/\sim\big) ~\longleftrightarrow~ \QPo, \]
the trichotomy 
\[ \QPo  \twoheadrightarrow \Po(\Z/2) \cong \{\infty, 0, 1\} \]
tricolors $\O$.  
For every basic triple $(X,Y,Z)$, 
the mod $2$ reductions
\[ \{ [X]\mod{2}, [Y]\mod{2}, [Z]\mod{2} \} = \{\infty, 0, 1\}. \]
(This follows readily from the fact that the abelianizations of $X,Y,Z$ generate $\Z^2$.)
Thus for each vertex $v=v(X,Y,Z)$ as above,
the three regions around $v$ have three colors.

For an edge $e = e^{X,Y}(Z,Z')$ as above, then 
%Thus $\T$ is naturally tricolored by the three elements of $\Po(\Z/2) \longleftrightarrow \{\infty,0,1\}$.
where $Z = Y\inv X\inv$ and $Z' = Y\inv X$,
and $Z$ and $Z'$ have the same parity and so determine the same 
element in $\Po(\Z/2) = \{\infty, 0, 1\}$, which is an invariant of the edge.
The resulting map
\[ \E \twoheadrightarrow \{\infty, 0, 1\}. \]
describes a {\em tricoloring\/} of the tree. 
That is, each edge is {\em colored\/} by one of $\infty, 0$, or $1$, as well.

Furthermore each edge $e$ is fixed by a unique involution of $\T$ which is
conjugate to a unique  $\II_j$ where $j=1,2,3$ indexes the tricoloring.
We write $j = j(e)$.

Hu-Tan-Zhang\cite{HuTanZhang} give an alternate approach to this structure,
not using a planar embedding. They define $\O$ using
{\em alternating geodesics,\/} which correspond to {\em alternating words\/} in the $\II_j$,
as described in \S\ref{sec:Paths}.

\subsection{Paths and Alternating Geodesics}\label{sec:Paths}

\begin{definition}\label{def:Path}
By a {\em finite path\/} we mean a sequence
\[ P = \big(
v_0 \stackrel{e_0}{\longdash}v_1
\stackrel{e_1}{\longdash}
\cdots 
\stackrel{e_{n-2}}{\llongdash}v_{n-1}
\stackrel{e_{n-1}}{\llongdash}v_n
\big)
\]
where each $v_i\in\V$, each $e_i\in\E$ and $\partial e_i = \{v_i,v_{i+1}\}$.
An {\em infinite path\/} is an infinite sequence
\[ 
%\cdots 
%\stackrel{e_{-1}}{\rule[.05in]{.2in}{.01in}}
v_0 \stackrel{e_0}{\longdash}v_1
%v_0 \stackrel{e_0}{\rule[.05in]{.2in}{.01in}}v_1  
\stackrel{e_1}{\longdash}%v_2 
%\stackrel{e_1}{\rule[.05in]{.2in}{.01in}}%v_2 
%\stackrel{e_2}{\rule[.05in]{.2in}{.01in}}
\cdots 
%\stackrel{e_{n-1}}{\rule[.05in]{.25in}{.01in}}v_n
\stackrel{e_{n-2}}{\llongdash}v_{n-1}
\stackrel{e_{n-1}}{\llongdash}v_n
\stackrel{e_n}\longdash \cdots
\]
such that each
\[
v_0 \stackrel{e_0}{\longdash}v_1
\stackrel{e_1}{\longdash}
\cdots 
\stackrel{e_{k-2}}{\llongdash}v_{k-1}
\stackrel{e_{k-1}}{\llongdash}v_k
\]
is a finite path as above, for each $k>0$.
A path is a {\em geodesic\/} if and only if $e_i \neq e_{i+1}$ for all $i$. 
When the tree is directed, we shall consider directed geodesics, 
where $e_i$ points from $v_{i-1}$ to $v_i$.
\end{definition}

\noindent
Associated to a finite path in $\T$ is the word 
\[
w(P) := \II_{j(e_{n-1})} \cdots \II_{j(e_0)}  \ \in \  %the group
\Z/2 \star \Z/2 \star \Z/2\ \cong \ 
\PGLtZt. \] 
If the path is a geodesic, then the word is {\em reduced,\/}
that is, the sequence $j(e_0), \dots,  j(e_n) $ contains no repetitions.
If $[(X_0,Y_0,Z_0)]$ is the superbasis corresponding to $v_0$, 
then the superbasis
\begin{equation}\label{eq:ActionOnSuperbases}
\Big[\big(w(P)(X_0),\ w(P)(Y_0),\  w(P)(Z_0)\big) \Big] \end{equation}
corresponds to $v_n$.

Given a complementary region $Z\in\O$,  % $\omega$,
the set  $C(Z)$ of edges abutting $Z$ forms a geodesic
in $\T$.
The corresponding reduced word 
is an alternating sequence of $\II_i$ and $\II_j$ where
$i,j\in \{1,2,3\}$. 
Thus $\O$ defines a collection of preferred geodesics,
called {\em alternating geodesics.\/}
The set $\O$ bijectively corresponds to the set of alternating geodesics.

\subsection{Relation to the one-holed torus}
This tree arises from the topology of $\Soo$ as follows.
The {\em pants graph\/}
$\PG$ of $\Soo$ is the following graph:
\begin{itemize}
\item
The vertex set consists of the set of free homotopy classes of
unoriented essential, non-boundary parallel simple closed curves on $\Soo$
and identifies with $\Omega$.
\item
Two vertices are connected by an edge if and only if the geometric
intersection number of the corresponding curves is one.
\end{itemize}
The valence of every vertex is infinite, and every edge is
contained in precisely two triangles in the graph. The pants graph
identifies with the Farey tessellation on $\Ht$, with its
vertex set identified with $\QPo$.
This identification is fixed once
we fix an identification of $X,Y$ and $Z=(XY)\inv$ with $0$, $\infty$
and $1$, for a fixed basis $X,Y$ of $\Ft$.
The graph dual to $\PG$ is the tree $\T$,
with vertex set $\V$, edge set $\E$ and directed edge set $\vE$.

\subsection{Effect of a $\{\pm 1\}$-character}\label{sec:Effect}

This paper concerns actions $\rho\in\HomF$ which preserve a hyperbolic
plane $\Ht\subset\Hth$ but not an orientation on $\Ht$. 
Thus the image of $\rho$ lies in the stabilizer $\PGLtR$ of $\Ht$.
How $\rho$ fails to preserve an orientation is detected by the composition
\[
\Ft \xrightarrow{~\rho~} \PGLtR \longrightarrow \pi_0\big(\PGLtR\big) \cong 
\{\pm 1\}.\]
%\Z/2. \]
This defines an invariant of $\rho$, which is a nonzero element
\[
\Coloring \in \Hom(\Ft,\{\pm 1\}).\]
%\Coloring \in \Hom(\Ft,\Z/2). \]
The three nonzero elements of %$\Hom(\Ft,\Z/2)$ are permuted by 
$\Hom(\Ft,\{\pm 1\})$ are permuted by 
$\Au$ via the homomorphism to $\SymThree$.
We will choose one of these elements  $\Coloring$ once and for all, 
and define the subgroup %$\Gamma \subset \hGamma$ 
$\PGLtZc$ to be the stabilizer of $\Coloring$  in $\PGLtZ$. 
Clearly $\PGLtZc$ has index three in $\PGLtZ$ and contains $\PGLtZt$ with index two.
Indeed the diagram
\[
\begin{CD}
\PGLtZt @>>> \PGLtZc @>>>  \PGLtZ \\
@VVV @VVV  @VVV \\
1  @>>> \langle (12)\rangle @>>>  \SymThree
\end{CD}
\]
commutes, where the horizontal arrows are inclusions and  the vertical arrows are quotient homomorphisms
by the congruence subgroup \[\PGLtZt = \Ker\big( \PGLtZ \to \SymThree\big).\] % \cong \Z/2 \star \Z/2 \star \Z/2$.

Since $\Ft = \langle X,Y\rangle$, the kernel $\Ker(\Coloring)$ must contain
at least one of $X,Y$. 
We choose $\Coloring$ to be nontrivial on both $X$ and $Y$:
\begin{align}\label{eq:ExampleColoring}
\Ft &\xrightarrow{~\Coloring~}  \{\pm 1\} \\
X &\longmapsto  -1  \notag\\
Y &\longmapsto  -1  \notag\end{align}
$\PGLtZc$ equals the inverse image
of the cyclic group 
$\langle (12) \rangle \subset \SymThree$ under the homomorphism
\[ \hGamma \twoheadrightarrow \SymThree \]
and is generated by $\PGLtZt$ and the automorphism corresponding
to the transposition $(12)\in\SymThree$:
\begin{align*}
\Ft &\xrightarrow{~\mathscr{P}_{(12)}~}  \Ft \\
X &\xmapsto{\phantom{~\mathscr{P}_{(12)}~}}  Y \\
Y & \xmapsto{\phantom{~\mathscr{P}_{(12)}~}}  X \end{align*}

\section{Character varieties and their automorphisms}

Our basic object of study is 
the {\em character variety\/} $\RepF$  of equivalence classes of 
$\SLtC$-representations of $\Ft$. 
Two representations are equivalent if the closures of 
their $\Inn\big(\SLtC\big)$-orbits intersect. 
Here the closures of $\Inn\big(\SLtC\big)$-orbits 
are encoded using the trace function
$\SLtC\xrightarrow{~\tr~}\C$ applied to the representation.

We begin with the spaces and their geometry; 
the main result is Vogt's theorem which identifies $\RepF$ with affine
space $\C^3$. 
This identification $\RepF\longleftrightarrow\C^3$ depends on a superbasis.
We compute the action of the group $\Gamma$ by polynomial automorphisms,
which we interpret as a group of automorphisms of the tree $\T$.
Then we discuss how the $\{\pm 1\}$-character $\Coloring$ defines a
{\em purely imaginary real form\/} of $\RepF$.
Finally we describe the invariant function $\kC$ and the Poisson bivector $\BB$.

\subsection{The deformation space}
The space $\HomF$ of $\SLtC$-representations of $\Ft$ identifies with
the Cartesian product $\SLtC\times\SLtC$ via:
\begin{align*}
\HomF &\longrightarrow \SLtC\times\SLtC \\
\rho\qquad &\longmapsto \big( \rho(X), \rho(Y) \big) \end{align*}
Composition of $\rho$ with an inner automorphism $\Inn(g)$
corresponds to the diagonal action by conjugation of $g$
on $\SLtC\times\SLtC$. 
The {\em character variety\/} is defined as the categorical quotient of 
this action. 

To describe this action, a more convenient presentation of $\Ft$ is the
following redundant presentation corresponding to a basic triple:
\begin{equation}\label{eq:presentation}
\Ft = \langle X, Y, Z \mid X Y Z = \Id \rangle ),
\end{equation}
(so that $Z = (XY)^{-1}$). 
The mapping
\begin{align*}
\HomF &\xrightarrow{~\mathfrak{X}~} \qquad \C^3 \\
\rho\qquad &\longmapsto  \bmatrix
\tr\big(\rho(X)\big) \\
\tr\big(\rho(Y)\big) \\
\tr\big(\rho(Z)\big) \endbmatrix.
\end{align*}
is a {\em categorical quotient\/}  for the 
$\Inn\big(\SLtC\big)$-action, that is:

\begin{prop}[Vogt~\cite{MR1508833}] 
Let 
\[\HomF\xrightarrow{~f~}\C,\]
be an $\Inn\big(\SLtC\big)$-invariant regular function. Then
\[ f(\rho) = F\Big(\tr\big(\rho(X)\big),  \tr\big(\rho(Y)\big),  \tr\big(\rho(Z)\big)\Big) \]
for a unique polynomial function 
\[ F(\xi,\eta,\zeta)\in\C[\xi,\eta,\zeta]. \]  
Furthermore the restriction of $\mathfrak{X}$ to 
the subset of irreducible representations is a quotient
mapping for the action of $\Inn\big(\SLtC)$ on  $\HomF$.
In particular two irreducible representations have the same character
if and only if they are $\SLtC$-conjugate. 
\end{prop}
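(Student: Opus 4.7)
\medskip

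The plan is to prove this in four steps, working from the fundamental trace identities up to the geometric conjugacy statement.

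First, I would establish the basic \emph{trace identities} in $\SLtC$: by Cayley--Hamilton, any $A\in\SLtC$ satisfies $A^2 = \tr(A)\,A - \Id$, which yields the key relation
\[
\tr(AB) + \tr(AB\inv) \;=\; \tr(A)\tr(B)
\]
for all $A,B\in\SLtC$. An easy induction on word length (combined with the cyclic property of trace) then shows that for every word $W$ in two letters, $\tr\big(W(A,B)\big)$ is a polynomial with integer coefficients in the three quantities $\tr(A)$, $\tr(B)$, $\tr(AB)$. Thus the assignment $\mathfrak X$ takes values only in a three-variable polynomial ring.

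Next I would verify that $\mathfrak X$ realizes the full ring of invariants. By general invariant theory (or a direct argument using the fact that $\SLtC\times\SLtC$ contains a Zariski-dense set of irreducible pairs and that $\Inn(\SLtC)$ acts generically with $3$-dimensional orbits on the $6$-dimensional variety $\SLtC\times\SLtC$), any $\Inn(\SLtC)$-invariant regular function lies in the subring generated by trace functions $\tr\big(W(\rho(X),\rho(Y))\big)$. Combined with the previous step, this gives existence of $F\in\C[\xi,\eta,\zeta]$. For uniqueness, I would exhibit explicit families of pairs $(A,B)\in\SLtC\times\SLtC$ realizing arbitrary triples of traces, showing that $\mathfrak X$ is surjective onto $\C^3$ and that $\tr(\rho(X)),\tr(\rho(Y)),\tr(\rho(Z))$ are algebraically independent as regular functions on $\HomF$; hence $F$ is unique.

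Finally, for the irreducibility statement, suppose $\rho_1,\rho_2$ are irreducible representations with $\mathfrak X(\rho_1)=\mathfrak X(\rho_2)$. Irreducibility of $\rho_i$ means that $\rho_i(X)$ and $\rho_i(Y)$ have no common eigenvector. I would show this forces $\rho_i(X)$ to be non-scalar, so after conjugation I may assume $\rho_1(X)=\rho_2(X)$ is in a fixed normal form (diagonal or unipotent) determined by $\tr(\rho(X))$. Then the constraint that $\rho_i(Y)$ shares no eigenvector with $\rho_i(X)$, together with the matched traces $\tr(\rho(Y))$ and $\tr(\rho(X)\rho(Y))$, determines $\rho_i(Y)$ up to conjugation by the centralizer of $\rho_1(X)$ — a direct computation with the three trace equations on the four matrix entries of $\rho(Y)$. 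Conjugating by such a centralizing element aligns $\rho_2(Y)$ with $\rho_1(Y)$, proving $\rho_1$ and $\rho_2$ are $\SLtC$-conjugate; this simultaneously shows $\mathfrak X$ restricted to the irreducible locus is a geometric quotient.

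The main obstacle I anticipate is the second step: one must carefully separate the purely formal reduction of trace polynomials (which is elementary) from the assertion that \emph{every} invariant regular function lies in the trace subring. The cleanest route is to observe that $\Inn(\SLtC)$-orbits on the irreducible locus are of codimension $3$ and are separated by the three trace functions — the final trace-conjugacy argument in step four provides exactly this separation, closing the loop.
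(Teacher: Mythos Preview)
The paper does not actually prove this proposition: it is stated with attribution to Vogt~\cite{MR1508833} and followed only by the parenthetical ``(See Goldman~\cite{MR2497777} for a twenty-first century discussion.)'' So there is no proof in the paper to compare against, and your sketch is not redundant with anything in the text.

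Your outline is the standard route and is essentially correct. A few remarks. Step~1 is fine and is exactly the Fricke--Vogt reduction. In Step~2 you correctly flag the real issue: the reduction of $\tr(W(A,B))$ to a polynomial in the three basic traces is elementary, but the assertion that \emph{every} regular invariant is a polynomial in trace functions is not---this is the First Fundamental Theorem for $\SLtC$ acting diagonally by conjugation (Procesi), and your proposed workaround via orbit codimension and separation by traces on the irreducible locus is more of a heuristic than a proof. It shows the three traces separate generic orbits, but that alone does not give that the invariant ring is \emph{generated} by them (one needs, e.g., normality of $\C^3$ together with the fact that $\mathfrak X$ is dominant with connected generic fibers, or simply to invoke the classical invariant-theoretic result). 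Step~3 is routine: the map $\mathfrak X$ is surjective by writing down explicit pairs $(A,B)$ with prescribed traces, which gives algebraic independence. Step~4 is correct and is the standard normal-form argument; just be careful in the parabolic case $\tr(\rho(X))=\pm 2$, where the centralizer is one-dimensional unipotent rather than a torus, but the counting still works.

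In short: your plan is the classical proof, the paper offers none, and the only place to tighten is the invariant-theoretic input in Step~2.
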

\noindent(See Goldman~\cite{MR2497777} for a twenty-first century discussion.)
\subsection{Sign-changes}
The group $\PGLtZ$ acts faithfully and polynomially on the $\SLtC$-character variety of $\Ft$ and encodes the action of $\Au$. 
It preserves extra structure coming from the function $\kappa$ and the 
Poisson bivector $\BB$. 
However, a small extension of this action preserves all this structure.
% of this action also preserves the algebraic structure as well as $(\kappa,\BB)$. 
This extension includes transformations arising from different ways of lifting representations from
$\PSLtC$ to $\SLtC$. 

Suppose that $\rho_1,\rho_2\in\HomF$ both project to the same representation
in $\Hom\big(\Ft,\PSLtC\big)$. 
Let $\gamma\in\Ft$. 
Since 
\[ \Ker\Big( \SLtC \longrightarrow \PSLtC\Big) = \pmI, \] 
the difference $\rho_1(\gamma)\rho_2(\gamma)\inv \in \pmI$.
Since $\pmI = \Center\big(\SLtC\big)$, 
\begin{align*}
\Ft &\xrightarrow{~\sigma~}  \pmI \\
\gamma &\longmapsto \rho_1(\gamma)\rho_2(\gamma)\inv \end{align*}
is a homomorphism. Conversely, any
homomorphism $\Ft \xrightarrow{~\sigma~}  \pmI$
acts on $\HomF$, preserving the projection
\[
\HomF \longrightarrow \Hom\big(\Ft,\PSLtC\big). \]
Indeed, this projection is a $\Sigma$-principal bundle,
where 
\[ \Sigma := \Hom\big(\Ft,\pmI\big) \cong \pmOne\times\pmOne \]
the {\em Klein four-group,\/}
with three nontrivial elements
$\sigma_1,\sigma_2,\sigma_3$, each of order two.

Let $\rho\in\HomF$. Here is how $\sigma\in\Sigma$ acts on $\rho$:
\begin{equation}\label{eq:SignChanges}
\begin{aligned}
X &\xmapsto{~\sigma_1(\rho)}  \rho(X) \\
Y &\xmapsto{\phantom{~\sigma_1(\rho)}} -  \rho(Y) \end{aligned}
\qquad
\begin{aligned}
X &\xmapsto{~\sigma_2(\rho)}  -\rho(X) \\
Y &\xmapsto{\phantom{~\sigma_2(\rho)}}  \rho(Y) \end{aligned}
\qquad\begin{aligned}
X &\xmapsto{~\sigma_3(\rho)} - \rho(X) \\
Y &\xmapsto{\phantom{~\sigma_3(\rho)}}   -  \rho(Y).\end{aligned}
\end{equation}
We call $\Sigma$ the group of {\em sign-changes.\/}

Both $\Au$ and $\Sigma$ act on $\HomF$, and they  generate an action
of the semidirect product $\Au \ltimes \Sigma$ on $\HomF$. 
Furthermore $\Inne(\Ft)$ remains normal in this group.
The quotient group
\[
\hGamma := 
\big( \Au \ltimes \Sigma\big) / \Inne(\Ft) 
\cong \PGLtZ \ltimes \Sigma
%\cong \PGLtZ \ltimes \big(\Z/2\oplus\Z/2\big).
\]
is the group which acts faithfully on the $\SLtC$-character variety.
Note that the semidirect product is defined by the homomorphism
$\PGLtZ \longrightarrow \SymThree$, where $\SymThree$ acts 
by permutations of $\Sigma\setminus\{1\}$.
In particular $\hGamma$ is a split extension
\[
\PGLtZt \times \Sigma
%\PGLtZt \times \big(\Z/2\oplus\Z/2\big) 
\longrightarrow \hGamma
\longrightarrow \SymThree. \]

Observe that the homomorphism $\Det$ defined in \eqref{eq:AuDet}
extends to a homomorphism
\begin{equation}\label{eq:hGammaDet}
\hGamma \xrightarrow{\Det}  \pmOne. \end{equation}

\subsection{Action of automorphisms}\label{sec:FaithfulAction}
The sign-change group $\Sigma$ and the automorphism group $\Au$ 
act on $\RepF \cong \C^3$ preserving its algebraic structure.
They generate an action of the semidirect product $\Au\ltimes\Sigma$
on $\C^3$ by polynomial automorphisms.
Here we describe this action explicitly on some of the generators.

Observe that $\Au\ltimes\Sigma$ does not act faithfully on
$\RepF$. We describe its kernel. 
First note that the trace function
\[ \SLtC \xrightarrow{~\tr~} \C \] is $\Inn\big(\SLtC\big)$-invariant,
that is, it is a {\em class function.} 
Hence $\Inn(\Ft)$ acts trivially on $\RepF$. 
Therefore the $\Au\ltimes\Sigma$-action factors through
$\Ou\ltimes\Sigma$.

This action is still not faithful, since the 
elliptic involution $\e$ defined in \eqref{eq:EllInvolution}
acts trivially as well. 
First note that $\tr$ is {\em inversion-invariant:\/}\newline
If $A\in\SLtC$, then 
\begin{equation}\label{eq:InversionInvariance}
 \tr(A\inv) = \tr(A).
 \end{equation}
If $\rho\in\HomF$, then 
\[ \e(Z) = X\inv Y\inv = \Inn(X\inv)( Z\inv ),  \]
so:
\begin{align*}
\tr\Big(\rho\big(\e(X)\big) \Big) & = 
\tr\big(\rho(X)\inv \big)  = \tr\big(\rho(X) \big) \\
\tr\Big(\rho\big(\e(Y)\big) \Big) & = 
\tr\big(\rho(Y)\inv \big)  = \tr\big(\rho(Y) \big) \\
\tr\Big(\rho\big(\e(Z)\big) \Big) & = 
\tr\Big(\Inn\big(\rho(X)\inv\big) \rho(Z)\inv \Big)  = \tr\big(\rho(Z) \big).
\end{align*}
Therefore $\e$ acts trivially on $[\rho]$ as claimed.

Therefore the restriction of the $\big(\Au\ltimes\Sigma\big)$-action 
to the subgroup $\Inne(\Ft)\subset\Au$ is trivial, 
and the $\big(\Au\ltimes\Sigma\big)$-action factors through
\[
\hGamma := \big(\Au/\Inne(\Ft)\big)  \ltimes\Sigma \cong \PGLtZ\times\Sigma. \] 

The sign-changes $\Sigma$, defined in \eqref{eq:SignChanges}, 
act on characters by:
\[  
\bmatrix \xi \\ \eta \\ \zeta \endbmatrix \xmapsto{~\sigma_1~}  
\bmatrix \xi \\ -\eta \\ -\zeta \endbmatrix,\qquad
\bmatrix \xi \\ \eta \\ \zeta \endbmatrix \xmapsto{~\sigma_2~} 
\bmatrix -\xi \\ \eta \\ -\zeta \endbmatrix, \qquad 
\bmatrix \xi \\ \eta \\ \zeta \endbmatrix \xmapsto{~\sigma_3~}  \bmatrix -\xi\\ -\eta \\ \zeta \endbmatrix \]
The involutions $\II_i$, defined in \eqref{eq:Involutions}, 
freely generate $\PGLtZt$. 
They act by the following {\em Vieta involutions\/} on $\C^3$: 
\[ 
\begin{aligned}
\bmatrix \xi \\ \eta \\ \zeta \endbmatrix \xmapsto{~\II_1~}  
\bmatrix \eta\zeta - \xi \\ \eta \\ \zeta \endbmatrix \end{aligned}
\qquad
\begin{aligned}
\bmatrix \xi \\ \eta \\ \zeta \endbmatrix \xmapsto{~\II_2~}  
\bmatrix \xi \\ \xi\zeta -\eta \\ \zeta \endbmatrix \end{aligned}
\qquad
\begin{aligned}
\bmatrix \xi \\ \eta \\ \zeta \endbmatrix \xmapsto{~\II_3~}  
\bmatrix \xi \\ \eta \\  \xi\eta-\zeta \endbmatrix \end{aligned}
\]
Although $\PGLtZ\longrightarrow\SymThree$ splits,
the composition 
\[ \Au\longrightarrow\PGLtZ\longrightarrow\SymThree \] 
does not split. 
However, 
a left-inverse $\SymThree\longrightarrow\PGLtZ$ determines the usual
action of $\SymThree$ on $\C^3$ by permuting the coordinates.
For example the transposition $(12)\in\SymThree$ induces the automorphism:
\begin{align*}
\C^3 &\xrightarrow{~\mathscr{P}_{(12)}~} \C^3 \\
\bmatrix \xi \\ \eta \\ \zeta \endbmatrix &\xmapsto{\phantom{~\mathscr{P}_{(12)}~}}
\bmatrix \eta \\ \xi \\ \zeta \endbmatrix \end{align*}

For properties of this action see
Magnus~\cite{MR558891},
Bowditch~\cite{MR1643429}, % bowditch1998plms}, %%%Goldman~
\cite{MR2497777}, %TraceCoords},
% Tan-Wong-Zhang~\cite{}, Goldman-Neumann~\cite{},
Cantat-Loray~\cite{MR2649343} and
Cantat~\cite{MR2553877}.

\subsection{Real forms of the character variety}\label{sec:RealForms}

%%%%%%%%%%%%%%%%%%%%%%%%%%%%%%%%%
Several {\em real structures\/} on $\RepF$ exist.
For example, complex-conjugation on $\SLtC$ induces an
anti-involution on $\HomF$ whose fixed-point set is  $\Hom(\Ft,\SLtR)$.
Another example is the {\em Cartan anti-involution\/}
\begin{align*} \SLtC & \longrightarrow \SLtC \\
A &\longmapsto  (\bar{A}^\dag)\inv, \end{align*}
which fixes $\SUt\subset\SLtC$.
This anti-involution induces an anti-involution of $\HomF$ whose fixed-point set is  $\Hom(\Ft,\SUt)$. 
They induce the same anti-involution
on $\RepF \cong \C^3$ (given by complex-conjugation on $\C^3$)
and whose fixed-point set is $\R^3\subset\C^3$.
This coincidence arises from the fact that
the two anti-involutions on $\SLtC$ are $\Inn\big(\SLtC\big)$-related:
\[
(\bar{A}^\dag)\inv \  = \ J {\bar A} J\inv \  = \  \Inn(J) {\bar A} 
\quad \text{~where~}\quad  J := \bmatrix 0 & -1 \\ 1 & 0 \endbmatrix. \]

We are interested in a slight variation of this real structure, 
when it is {\em twisted\/} by a character $\Coloring\in \Hom(\Ft,\{\pm 1\})$.
Namely, the map associating to a representation $\rho\in\HomF$
the representation
\[
\gamma \longmapsto \Coloring(\gamma) \ \overline{\rho(\gamma)} \]
is an anti-involution of $\HomF$ inducing an anti-involution of
$\RepF$. When $\Coloring$ is trivial, this is just the above anti-involution. 
When $\Coloring$ is the $\{\pm 1\}$-character defined in \eqref{eq:ExampleColoring},
then the corresponding anti-involution of $\RepF \cong \C^3$ fixes
\[ i\R \times i\R \times \R \subset \C^3. \]

\subsection{Real and imaginary characters}
Real characters correspond to representations conjugate to
$\SUt$-representations or $\SLtR$-rep\-re\-sen\-ta\-tions.
In terms of hyperbolic geometry,
$\SUt$-representations correspond to actions
of $\Ft$ on $\Hth$ which fix a point in $\Hth$ and
$\SLtR$-representations correspond to actions
which preserve an oriented plane $\Ht$ in $\Hth$.
A point $(\xi,\eta,\zeta)\in\R^3$ corresponds to an $\SUt$-representation
if and only if
\begin{equation*}
-2 \le \xi,\eta,\zeta \le 2, \quad \xi^2 + \eta^2 + \zeta^2 - \xi \eta \zeta \le 4.
\end{equation*}
Equivalence classes of $\SLtR$-representations correspond to
points in the complement in $\R^3$  of the interior of this set.
(The boundary of this set consists of representations in
$\mathsf{SO}(2) = \SUt \cap \SLtR$.)

The set of $\{\pm 1\}$-characters equals the four-element group 
\[ 
\Hom(\Ft,\{\pm 1\}) \cong H^1(\Ft,\Z/2) \cong \Z/2 \oplus \Z/2,
\]  
under the isomorphism
\begin{align*}
\Z/2 &\xrightarrow{~\cong~} \{\pm1\} \\
n &\xmapsto{\phantom{~\cong~}} (-1)^n. \end{align*}
The group $\Ou\cong\PGLtZ$ acts on the three-element set of nonzero elements, by the homomorphism
\[
\Ou\cong\PGLtZ \twoheadrightarrow \mathsf{PGL}(2,\Z/2) \cong \SymThree. 
\]
%For concreteness, we shall take $\Coloring$ to be the $\{\pm 1\}$-character
%defined by \eqref{eq:ExampleColoring}.
%The stabilizer of $\Coloring$ is the subgroup corresponding to matrices
%\[\bmatrix a & b \\ c & d \endbmatrix,   a, b, c, d \in \Z,  ab-cd = \pm 1 \]
%where $a\equiv 1\mod 2$, $c \equiv 0\mod 2$.

This paper concerns actions which preserve $\Ht\subset\Hth$
but do not preserve orientation on $\Ht$. Since $X,Y$ generate
$\Ft$, at least one of $\rho(X)$, $\rho(Y)$ reverse orientation.
Since
\begin{equation*}
\rho(Z) = \rho(Y)\inv\rho(X)\inv,
\end{equation*}
{\em exactly one\/}
of $\rho(X), \rho(Y), \rho(Z)$ preserves orientation.
Therefore three cases arise, which are all equivalent
under the cyclic permutation of $X,Y,Z$
(apparent from the presentation \eqref{eq:presentation}).
We reduce to the case that $\rho(Z)$ preserves orientation,
so that $\rho(X)$ and  $\rho(Y)$ each {\em reverse\/} orientation.
This is the case
defined in \eqref{eq:ExampleColoring}.
Then the  stabilizer of $\Coloring$ corresponds to the subgroup comprising  matrices
\[\bmatrix a & b \\ c & d \endbmatrix,\quad    a, b, c, d \in \Z,\quad   ab-cd = \pm 1 \]
where $a\equiv 1\mod 2$, $c \equiv 0\mod 2$.

Whether an element of $\Ft$ preserves or reverses orientation is detected
by the homomorphism $\Coloring$ defined in \S\ref{sec:Effect}.
In terms of traces, this means that $\xi = \tr\big(\rho(X)\big)$ and
$\eta = \tr\big(\rho(Y)\big)$ are purely imaginary
and that $\zeta = \tr\big(\rho(Z)\big)$ is real.
Thus we write
\begin{align*}
\tr\big(\rho(X)\big) & = ix \\
\tr\big(\rho(Y)\big) & = iy \\
\tr\big(\rho(Z)\big) & = z
\end{align*}
where $x,y,z\in\R$.
%with invariant function
%\begin{equation}\label{eq:KappaUpsilon}
%\kC(x,y,z) := \kappa(i x,i y, z) = -x^2 - y^2 + z^2 +  xyz - 2
%\end{equation}
%where $\kappa$ is defined in \eqref{eq:Kappa}.
%Define
%$\Out(\Ft,\Coloring)\subset \Ou$
%as the subgroup arising from automorphisms $\phi$ which preserve
%$\Coloring$.
%Since the $\Au$-orbit of $\Coloring$ has three
%elements (the three nonzero elements of $\Hom(\Ft,\{\pm\Id\})$),
%the subgroup $\Out(\Ft,\Coloring)$ has index three in $\Ou$.
%with invariant function
%\begin{equation}\label{eq:KappaUpsilon}
%\kC(x,y,z) := \kappa(i x,i y, z) = -x^2 - y^2 + z^2 +  xyz - 2
%\end{equation}
%where $\kappa$ is defined in \eqref{eq:Kappa}.
%
%To simplify the discussion, restrict to this subgroup
%to obtain a polynomial action on $\R^3$ preserving $\kC$.
%Furthermore this action preserves area on each level set $\kCk$
%with respect to the area form induced by \eqref{eq:PoissonStructure}.

\subsection{Invariants of the action}
The function
\begin{equation}\label{eq:Kappa}
\kappa(\xi,\eta,\zeta) := \xi^2 + \eta^2 + \zeta^2 - \xi\eta\zeta -2
\end{equation}
%plays an important role. For example it 
arises as the
trace of the commutator $[X,Y] := XYX\inv Y\inv$:
\begin{equation*}
\tr\big(\rho([X,Y])\big) = \kappa(\xi,\eta,\zeta).
\end{equation*}
By Nielsen's theorem~\cite{nielsen1917} that $\Au$ preserves $[X,Y]$ up to
conjugacy and inversion (and the fact that $\tr(a) = \tr(a\inv)$
for $a\in\SLtC$), the function $\kappa$  is $\Au$-invariant.

The action also preserves the exterior  $3$-form
$d\xi \wedge d\eta \wedge d\zeta$ on $\C^3$ and its
dual exterior trivector field
%$\partial_\xi \wedge \partial_\eta \wedge \partial_\zeta$ on $\C^3$.
$\dd{\xi} \wedge \dd{\eta} \wedge \dd{\zeta}$ on $\C^3$.
Interior product of this trivector field with the $1$-form
$d\kappa$ defines a bivector field
\begin{equation}\label{eq:PoissonStructure}
\BB := 
(2\zeta -\xi\eta) \ 
%\partial_\xi \wedge\partial_\eta \ + \
\dd{\xi}\wedge\dd{\eta} \ + \
(2\xi - \eta\zeta) \ 
\dd{\eta}\wedge\dd{\zeta}\  + \ 
%\dd{\eta}\wedge\dd{\zeta}\  + \ 
(2\eta - \zeta\xi) \ 
\dd{\zeta}\wedge\dd{\xi}
%\dd{\zeta}\wedge\dd{\xi}
\end{equation}
which defines a complex-symplectic structure on each level
set $\kappa\inv(t)$.
This bivector is $\hGamma$-invariant in the sense that if $\gamma\in\hGamma$,
then 
\begin{equation}\label{eq:InvarianceOfB} 
\gamma_* \BB = \Det(\gamma) \BB \end{equation}
where $\Det$ is the homomorphism defined by \eqref{eq:hGammaDet}.

We now restrict $(\kappa,\BB)$ to the real forms of $\RepF$ defined 
in \S\ref{sec:RealForms}.
Under the substitution
\[ \bmatrix  \xi \\ \eta \\ \zeta \endbmatrix := 
\bmatrix  i x \\ i y \\ z \endbmatrix \]
the restriction of $\kappa$ to $i\R\times i\R \times \R$ equals:
\begin{equation}\label{eq:KappaUpsilon}
\kC(x,y,z) := \kappa(i x,i y, z) = -x^2 - y^2 + z^2 +  xyz - 2
\end{equation}
where $\kappa$ is defined in \eqref{eq:Kappa}.
Similarly the restriction of $\BB$ equals:
\begin{equation}\label{eq:RealPoissonStructure}
\BB_\Coloring := 
%\begin{equation}
(2z + xy) \ 
\dd{x}\wedge\dd{y} \ + \
%\dd{x}\wedge\dd{y} \ + \
(-2 x + y z) \  
\dd{y}\wedge\dd{z}\  + \ 
%\dd{y}\wedge\dd{z}\  + \ 
(-2 y + z x) \ 
\dd{z}\wedge\dd{x}.
%\dd{z}\wedge\dd{x}
\end{equation}
The action of $\Gamma$ restricts to a polynomial action
on $i\R\times i\R \times\R$, which preserves the function $\kappa_\Coloring$.
This action preserves area on each level set $\kCk$ 
with respect to the area form induced by \eqref{eq:PoissonStructure},
in the sense of \eqref{eq:InvarianceOfB}.
It is with respect to these measures that we studied
the ergodicity of the group actions in \cite{MR2026539}.

We first describe the topology of the level sets $\kCk$ and then relate the dynamical system
defined by the $\Gamma$-action to geometric structures on $C_{1,1}$ and $C_{0,2}$.

\section{Topology of the imaginary commutator trace}
\noindent
In this section we consider the {\em purely imaginary real form\/}
\begin{align*}
\RepF^\Coloring & \cong
\R^3 \xrightarrow{\cong}  i\R\times i\R \times \R \\ &\subset
\C^3 \cong \RepF \end{align*}
of $\RepF$ associated to the $\{\pm 1\}$-character $\Coloring$
as defined in \S\ref{sec:RealForms}.
The restriction of the commutator trace function  is the cubic polynomial 
%\begin{align*} \R^3\quad & \xrightarrow{~\kC~} \qquad \R  \\
%(x,y,z) &\xmapsto{\phantom{~\kC~}}  -x^2 - y^2 + z^2 + x y z -2 \end{align*}
$\R^3 \xrightarrow{~\kC~}\R$ defined in \eqref{eq:KappaUpsilon}.
In this section we analyze the topology of the level sets $\kCk$ for $k\in\R$:

\begin{thm4*}\label{thm:TopologyOf LevelSets}
\begin{enumerate}
\item
Suppose $k>-2$. 
%Then $\kCk\subset\R^3$ is a smooth surface with two connected components.
%The restriction of the projection 
%\[\kCk\xrightarrow{~\Pixy~}\R^2\] to each component is a diffeomorphism
%onto $\R^2$.
\begin{itemize}
\item $\kCk$ is a smooth surface with two connected components.
\item The restriction of the projection 
\[\kCk\xrightarrow{~\Pixy~}\R^2\] to each component is a diffeomorphism onto $\R^2$.
\item The components of  $\kCk$ are graphs of  functions
$\R^2 \xrightarrow{~z_\pm~} \R$.
\item
The sign-changes $\sigma_1, \sigma_2$ permute the two components.
\item 
When $k\ge 2$, the region $\R\times\R\times[-2,2]$ separates the two components of
$\kCk$.
\end{itemize}
%\end{thm4*}
\item $\kCminusTwo$ is connected, with one singular point $(0,0,0)$, and
$\sigma_1, \sigma_2$ permute the two components
of $\kCminusTwo\setminus\{(0,0,0)\}$.
\item
Finally, when $k<-2$, the level set $\kCk$ is smooth and connected. 
The restriction of the projection $\Pixy$ to $\kCk$ is a branched double covering of the 
complement of the open topological $2$-disc
\[
\{ (x,y)\in\R^2 \mid (x^2+4)(y^2+4) < 4 (2-k) \}. \]
\end{enumerate}
\end{thm4*}
\noindent 
The only value of $k$ for which the level set $\kCk$ admits a rational parametrization is $k=2$;
compare \S\ref{sec:ImagCharkEqualTwo} for more details.
Figure~\ref{fig:ContoursXY} depicts the contours of the function $(x^2+4)(y^2+4)$.

\subsection{Preliminaries}

Let $\Pixy$ denote projection to the $xy$-plane:
\begin{align}\label{eq:projection}
\R^3 &\xrightarrow{~\Pixy~} \R^2 \notag\\
(x,y,z)& \longmapsto (x,y). \end{align}
Let $ \QQ_z(x,y)  :=   x^2 - z xy + y^2.$

\begin{lem}\label{lem:PosDef}
For $\vert z\vert < 2$,
 the quadratic form $\QQ_z$ is positive definite.
 \end{lem}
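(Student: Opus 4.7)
The plan is to reduce the claim to an elementary positivity statement by completing the square in one variable. Viewing $\QQ_z(x,y) = x^2 - z xy + y^2$ as a quadratic polynomial in $x$ with $y$ as a parameter, I would write
\[
\QQ_z(x,y) \;=\; \Bigl(x - \tfrac{z}{2}\,y\Bigr)^2 \;+\; \Bigl(1 - \tfrac{z^2}{4}\Bigr)\,y^2.
\]
Both terms on the right are non-negative whenever $1 - z^2/4 \ge 0$, i.e.\ whenever $\vert z\vert \le 2$, and strictly positive unless both summands vanish. Under the hypothesis $\vert z\vert < 2$ the coefficient $1 - z^2/4$ is strictly positive, so the second summand vanishes only when $y = 0$, and in that case the first summand is $x^2$, which vanishes only when $x = 0$. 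Hence $\QQ_z(x,y) > 0$ for $(x,y)\neq(0,0)$, which is positive definiteness.

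An equivalent and equally quick route is Sylvester's criterion applied to the symmetric matrix
\[
M_z \;=\; \begin{pmatrix} 1 & -z/2 \\ -z/2 & 1 \end{pmatrix}
\]
representing $\QQ_z$: the leading principal minor is $1 > 0$, and $\det(M_z) = 1 - z^2/4 > 0$ precisely when $\vert z\vert < 2$.

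There is no real obstacle here; the only thing to be careful about is the open versus closed inequality. At $\vert z\vert = 2$ the form degenerates to $(x \mp y)^2$, which is merely positive semidefinite, so the strict inequality in the hypothesis is sharp and matches the stated conclusion. I would present the completion-of-the-square identity as the proof, as it simultaneously exhibits positive-definiteness and makes transparent why the threshold is exactly $\vert z\vert = 2$.
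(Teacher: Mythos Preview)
Your proof is correct and essentially the same approach as the paper's: both exhibit $\QQ_z$ as a positive linear combination of two squares. The paper uses the symmetric decomposition $\QQ_z(x,y) = \frac{2+z}{4}(x-y)^2 + \frac{2-z}{4}(x+y)^2$ rather than your completion of the square, but the argument is otherwise identical.
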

\begin{proof}
Write
\begin{equation}\label{eq:Qz}
\QQ_z(x,y) =
\frac{2+z}4 (x-y)^2 +
\frac{2-z}4  (x+y)^2. \end{equation}
Since $\vert z\vert < 2$, this is a positive linear combination of two squares.
\end{proof}
\begin{lem}\label{lem:CriticalPointsOfKappaColoring}
$(0,0,0)$ is the only critical point of $\R^3\xrightarrow{~\kC~}\R$.
\end{lem}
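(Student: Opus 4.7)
The plan is to compute the gradient of $\kC$ and solve the resulting system of three quadratic equations.  Since
\[
\kC(x,y,z) = -x^2 - y^2 + z^2 + xyz - 2,
\]
the critical-point equations are
\[
\partial_x\kC = -2x + yz = 0,\qquad \partial_y\kC = -2y + xz = 0,\qquad \partial_z\kC = 2z + xy = 0,
\]
equivalently $yz = 2x$, $xz = 2y$, $xy = -2z$.

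The key algebraic trick is to multiply the three equations together.  This yields
\[
(xyz)^2 = (yz)(xz)(xy) = (2x)(2y)(-2z) = -8\,xyz,
\]
so $xyz(xyz + 8) = 0$.  Since $x,y,z$ are real, the factor $xyz+8$ cannot vanish without forcing a negative square: indeed if $xyz = -8$, then $2x = yz = -8/x$ gives $x^2 = -4$, which is impossible.  Hence $xyz = 0$.

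Once $xyz = 0$, at least one coordinate vanishes, and substituting back into any two of the three equations forces the remaining two coordinates to vanish as well (e.g.\ $x = 0$ with $yz = 2x = 0$ gives $y = 0$ or $z = 0$, and then the third equation kills the last coordinate).  Thus $(0,0,0)$ is the unique critical point.  There is no real obstacle here; the only mild subtlety is ruling out the spurious branch $xyz = -8$, which is handled immediately by the reality of $x$.
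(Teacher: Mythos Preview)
Your proof is correct. The paper argues by a direct case split on whether $z=0$: if $z\neq 0$ then the first two equations give $x/y=y/x=z/2$, forcing $z=\pm 2$ and then $x^2=-4$; if $z=0$ the remaining equations collapse immediately. Your route---multiplying the three equations to obtain $xyz(xyz+8)=0$ and ruling out $xyz=-8$ via $x^2=-4$---is a slightly different organization of the same algebra, arriving at the identical contradiction. The multiplication trick is a bit more symmetric in the three variables, while the paper's case split is marginally more direct; neither buys anything substantive over the other. One small remark: in your final step, once (say) $x=0$, the second and third equations give $y=0$ and $z=0$ outright, so you need not pass through $yz=0$ first.
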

\begin{proof}
Since
\begin{equation}\label{eq:DifferentialOfKappaColoring}
d\kC = (-2 x + yz ) dx + (-2 y + z x ) dy + (2 z + x y) dz,\end{equation}
the point $p= (x,y,z)\in\R^3$ is critical if and only if
\begin{align}%\label{eq:ddx}
-2 x + y z &= 0 \label{eq:ddx}\\
-2 y + z x &= 0 \label{eq:ddy}\\
2 z + x y &= 0 \label{eq:ddz}. \end{align}

First suppose that $z\neq 0$. 
Then \eqref{eq:ddz} implies $x y \neq 0$, that is,
$x\neq 0$ and $y\neq 0$. 
Apply \eqref{eq:ddx} and \eqref{eq:ddy} to obtain:
\[ \frac{x}{y} = \frac{y}{x} = \frac{z}2, \]
whence $x/y = \pm 1$, that is, $x = \pm y$ and $z=\pm 2$.
Now \eqref{eq:ddz} implies $x^2 = -4$, contradicting $x\in\R$.

Thus $z=0$. By \eqref{eq:ddx}, $x=0$ and
by \eqref{eq:ddy}, $y=0$. Thus $p=(0,0,0)$ as desired.
\end{proof}

\subsection{Projection when $k>-2$}

%\begin{prop}\label{prop:SmoothSurface}
%Suppose $k>-2$. 
%Then $\kCk\subset\R^3$ is a smooth surface with two connected components.
%The restriction of the projection 
%\[\kCk\xrightarrow{~\Pixy~}\R^2\] to each component is a diffeomorphism
%onto $\R^2$. \end{prop}
\noindent
The proof crucially uses: %  the following elementary lemma.
\begin{lem}\label{lem:BasisTangentSpace}
Suppose that  $\kC(x,y,z) = k$ and $2z + xy \neq 0$. 
Then the vector fields
\begin{equation}\label{eq:BasisTangentSpace}
\tdd{x}\  := \ 
\dd{x}\  +\  \frac{2 x - y z}{2 z + x y}\   \dd{z},
\qquad
\tdd{y}\ := \ 
\dd{y}\ +\ \frac{2 y - x z}{2 z + x y}\   \dd{z}
\end{equation}
form a basis of the tangent space $T_{(x,y,z)}\big(\kCk\big)$.
Furthermore 
$\tdd{x}, \ \tdd{y}$ project under
$d\Pixy$ to the coordinate basis 
\[\dd{x},\ \dd{y} \in T_{(x,y)}\R^2.\]
\end{lem}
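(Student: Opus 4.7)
The plan is to exploit the fact that $2z+xy$ is precisely the $\partial/\partial z$-component of $d\kC$, so that the nonvanishing hypothesis is exactly the statement that $(x,y,z)$ is a regular point of $\kC$. Once this is clear, both assertions reduce to short direct verifications from the explicit formula \eqref{eq:DifferentialOfKappaColoring}.

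First I would observe that by \eqref{eq:DifferentialOfKappaColoring},
\[
d\kC \;=\; (yz - 2x)\,dx \;+\; (xz - 2y)\,dy \;+\; (2z + xy)\,dz,
\]
so the hypothesis $2z+xy\neq 0$ implies $d\kC_{(x,y,z)}\neq 0$. Hence $(x,y,z)$ is a regular point of $\kC$, the level set $\kCk$ is a smooth surface near this point, and its tangent space there coincides with $\ker(d\kC_{(x,y,z)})$, a two-dimensional subspace of $T_{(x,y,z)}\R^3$.

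Next I would substitute the defining formulas \eqref{eq:BasisTangentSpace} into $d\kC$ and verify by a one-line computation that
\[
d\kC\bigl(\tdd{x}\bigr) \;=\; (yz - 2x) + (2z+xy)\cdot\frac{2x-yz}{2z+xy} \;=\; 0,
\]
and similarly $d\kC(\tdd{y})=0$. This shows that both vector fields lie in $\ker d\kC = T_{(x,y,z)}(\kCk)$. The second claim, that $d\Pi_{xy}$ sends $\tdd{x}\mapsto \dd{x}$ and $\tdd{y}\mapsto\dd{y}$, is immediate from the definition of $\Pi_{xy}$ in \eqref{eq:projection}: it simply forgets the $\dd{z}$-components, while preserving the $\dd{x}$ and $\dd{y}$ components, which are exactly the coordinate vectors in $T_{(x,y)}\R^2$.

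Finally, since the pushforwards $\dd{x},\dd{y}$ form a basis of $T_{(x,y)}\R^2$, the lifts $\tdd{x},\tdd{y}$ are linearly independent, and hence span the $2$-dimensional tangent space $T_{(x,y,z)}(\kCk)$. There is no real obstacle: the only subtlety is recognizing that the hypothesis $2z+xy\neq 0$ is precisely what is needed both to guarantee regularity of the level set and to make the denominators in \eqref{eq:BasisTangentSpace} well-defined, after which the proof is a direct substitution.
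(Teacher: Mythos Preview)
Your proof is correct and follows essentially the same approach as the paper: verify that $\tdd{x},\tdd{y}$ annihilate $d\kC$ using \eqref{eq:DiffdKc}, observe that their $\Pixy$-projections are the coordinate basis of $T_{(x,y)}\R^2$, and conclude linear independence. Your version is slightly more explicit in noting that $2z+xy\neq 0$ is exactly the regularity condition making $T_{(x,y,z)}(\kCk)=\ker d\kC$ two-dimensional, which the paper leaves implicit here (deferring smoothness to the surrounding discussion).
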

\begin{proof}
Since
\begin{equation}\label{eq:DiffdKc} 
d\kC = 
(-2 x + y z)\  dx + (-2 y + z x)\  dy + (2 z + x y)\  dz, \end{equation}
the vector fields $\tdd{x}, \tdd{y}$
lie in the kernel of $d\kC$. 
Therefore they are tangent to $\kCk$. 
Their projections under $d\Pixy$  form the coordinate basis
of $T_{(x,y)}\R^2 \cong \R^2$. 
Thus they are linearly independent and base 
\[ T_{(x,y,z)}\big(\kCk\big)  = \Ker\big(d\kC\big) \] 
as claimed.
\end{proof}

\begin{prop}\label{prop:SmoothSurface}
$\kCk$ is a smooth surface and the restriction of $\Pixy$ is a local diffeomorphism.
\end{prop}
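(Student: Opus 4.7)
The plan is to first establish smoothness of the level set, then apply Lemma~\ref{lem:BasisTangentSpace} to upgrade to the local diffeomorphism statement, with the key step being the observation that the hypothesis $2z+xy\neq 0$ of that lemma holds automatically when $k>-2$.

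First I would show that $\kCk$ is a smooth surface. By Lemma~\ref{lem:CriticalPointsOfKappaColoring}, the only critical point of $\kC$ is the origin, and $\kC(0,0,0)=-2$. Since $k>-2$, no critical point of $\kC$ lies on $\kCk$, so the implicit function theorem guarantees $\kCk$ is a smooth embedded $2$-manifold in $\R^3$.

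Next, for the local diffeomorphism assertion, I would use Lemma~\ref{lem:BasisTangentSpace}. The restriction of $d\Pixy$ to $T_{(x,y,z)}\kCk$ is an isomorphism onto $T_{(x,y)}\R^2$ precisely when the vertical vector $\dd{z}$ does not lie in $T_{(x,y,z)}\kCk=\Ker(d\kC)$; by \eqref{eq:DiffdKc} this is equivalent to $2z+xy\neq 0$. When this condition holds, Lemma~\ref{lem:BasisTangentSpace} furnishes an explicit basis $\tdd{x},\tdd{y}$ of the tangent space that projects to the standard basis of $\R^2$, so $\Pixy$ is a local diffeomorphism at $(x,y,z)$ by the inverse function theorem.

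The main step, then, is to verify that $2z+xy\neq 0$ at every point of $\kCk$ when $k>-2$. I would argue by contradiction: suppose $(x,y,z)\in\kCk$ with $2z+xy=0$, so $z=-xy/2$. Substituting into the definition \eqref{eq:KappaUpsilon} of $\kC$ gives
\[
\kC(x,y,z)\;=\;-x^2-y^2+\tfrac{x^2y^2}{4}+xy\cdot\bigl(-\tfrac{xy}{2}\bigr)-2
\;=\;-x^2-y^2-\tfrac{x^2y^2}{4}-2\;\le\;-2,
\]
which contradicts $\kC(x,y,z)=k>-2$. Hence $2z+xy\neq 0$ everywhere on $\kCk$, completing the proof. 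The only subtlety is noticing the clean sign cancellation in the substitution; the rest is routine.
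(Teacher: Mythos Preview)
Your proof is correct and follows essentially the same approach as the paper: smoothness via Lemma~\ref{lem:CriticalPointsOfKappaColoring}, then local diffeomorphism via Lemma~\ref{lem:BasisTangentSpace} once $2z+xy\neq 0$ is verified. The only cosmetic difference is that the paper packages the key computation as the identity $(2z+xy)^2=(x^2+4)(y^2+4)+4(k-2)$ rather than substituting $z=-xy/2$ directly, but these are the same calculation.
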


\begin{proof}
Since $\kC(0,0,0) = -2< k$, Lemma~\ref{lem:CriticalPointsOfKappaColoring}
implies that $\kCk$ contains no critical points of $\kC$.
Thus $\kCk\subset\R^3$ is a smooth surface.

If $2 z + x y \neq 0$, then 
Lemma~\ref{lem:BasisTangentSpace}
implies that $\Pixy$ restricts to a diffeomorphism.
Thus it suffices to show  $2 z + x y \neq 0$.

To this end, suppose $2 z + x y = 0$. 
Then $\kC(x,y,z) = k$ implies:
\[
(2 z + x y)^2 = (x^2 + 4)(y^2 + 4) + 4 (k-2).  \]
Since $k > -2$, 
\begin{align*}
0 &= (2 z + x y)^2 = (x^2 + 4)(y^2 + 4) + 4 (k-2) \\ 
& \ge 16 + 4 (k-2)  = 4 (k+2) > 0, \end{align*}
a contradiction. 
\end{proof}

\begin{prop}\label{prop:twocomponents}
Suppose that $k> -2$. 
Then the plane $\R\times\R\times\{0\}$ separates $\kCk\subset\R^3$ 
into two components.
\begin{itemize}
\item Each component projects diffeomorphically onto $\R^2$ under $\Pixy$.
\item Each of the sign-changes $\sigma_1,\sigma_2$ permutes the two components.
\item When $k > 2$, 
the  region $\R\times\R\times [-2,2]$ separates the two components of $\kCk\subset\R^3$. 
\item 
The open region $\R\times\R\times (-2,2)$ separates  the two components of $\kCtwo$.
\end{itemize}
\end{prop}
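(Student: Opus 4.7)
The plan is to view $\kCk$ as two graphs over the $xy$-plane and read off every conclusion from explicit formulas, with the sole subtlety being the $k=2$ case.

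First I would rewrite $\kC(x,y,z)=k$ as a quadratic in $z$:
\[
z^{2} + (xy)\,z - (k + x^{2}+y^{2}+2) = 0,
\]
whose discriminant is
\[
D(x,y) = (xy)^{2}+4(k+x^{2}+y^{2}+2) = (x^{2}+4)(y^{2}+4)+4(k-2).
\]
Since $(x^{2}+4)(y^{2}+4)\ge 16$ and $k>-2$ gives $4(k-2)>-16$, we have $D(x,y)>0$ everywhere on $\R^{2}$. Hence the two smooth branches
\[
z_{\pm}(x,y) \ := \ \tfrac{1}{2}\Bigl(-xy \pm \sqrt{D(x,y)}\Bigr)
\]
are defined on all of $\R^{2}$, and by construction their graphs exhaust $\kCk$. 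By Proposition~\ref{prop:SmoothSurface} the projection $\Pixy$ is already a local diffeomorphism on $\kCk$, and each graph is tautologically mapped bijectively to $\R^{2}$, so the restriction to each graph is a global diffeomorphism.

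Next, Vieta gives $z_{+}(x,y)\,z_{-}(x,y) = -(k+x^{2}+y^{2}+2)$, which is strictly negative for $k>-2$ since $k+x^{2}+y^{2}+2\ge k+2>0$. Thus $z_{-}(x,y)<0<z_{+}(x,y)$ for every $(x,y)\in\R^{2}$, so the plane $\{z=0\}$ separates the two graphs, which are therefore the two connected components of $\kCk$. The sign-changes $\sigma_{1}(x,y,z)=(x,-y,-z)$ and $\sigma_{2}(x,y,z)=(-x,y,-z)$ preserve $\kC$ and negate $z$, so they interchange the graphs of $z_{+}$ and $z_{-}$.

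For the separation statements when $k\ge 2$ I would switch to horizontal slicing. A direct substitution shows
\[
\kC(x,y,t) = t^{2}-2 - \QQ_{t}(x,y),\qquad \QQ_{t}(x,y) := x^{2}-t\,xy+y^{2},
\]
so $\kCk\cap\{z=t\} = \QQ_{t}^{-1}(t^{2}-2-k)$. For $|t|<2$ Lemma~\ref{lem:PosDef} makes $\QQ_{t}$ positive definite, hence this slice is empty unless $t^{2}\ge k+2$. When $k>2$ this forces $|t|>2$, so $\kCk$ is disjoint from the closed slab $\R\times\R\times[-2,2]$, which therefore separates its two components. When $k=2$ the condition becomes $t^{2}\ge 4$, so the open slab $\R\times\R\times(-2,2)$ is disjoint from $\kCtwo$ and separates its components; the boundary planes $z=\pm 2$ are genuinely met by $\kCtwo$ (by \eqref{eq:Qz}, $\QQ_{2}=(x-y)^{2}$ and $\QQ_{-2}=(x+y)^{2}$, giving the lines $\{(x,x,2):x\in\R\}$ and $\{(x,-x,-2):x\in\R\}$), which is exactly why the slab must be taken open in this case.

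The main obstacle is the borderline $k=2$ case: one has to notice that although $\QQ_{t}$ becomes only positive semi-definite at $t=\pm 2$, those degenerate slices still reduce the statement to the open slab by identifying the extra solutions on $z=\pm 2$. All remaining steps are formal consequences of the discriminant formula and Vieta's relations.
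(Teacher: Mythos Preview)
Your proof is correct and follows essentially the same route as the paper: both reduce to the identity $\kC(x,y,z)=z^{2}-2-\QQ_{z}(x,y)$ for the slab-separation statements, and both rely on Proposition~\ref{prop:SmoothSurface} together with the explicit branches $z_{\pm}$ for the diffeomorphic projection. Your use of Vieta's product formula $z_{+}z_{-}=-(k+x^{2}+y^{2}+2)<0$ to get $z_{-}<0<z_{+}$ is a slightly cleaner variant of the paper's argument (which instead shows $z\neq 0$ via $z^{2}=k+2+\QQ_{z}(x,y)>0$), and your explicit graph construction makes the bijectivity of $\Pixy$ on each component more transparent than the paper's somewhat terse invocation of Proposition~\ref{prop:SmoothSurface}.
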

\begin{proof}%[Proof of Proposition~\ref{prop:twocomponents}]
The identity
\begin{equation}\label{eq:KappaUpsilonInTermsOfz} 
\kC(x,y,z)  = z^2 - 2 - \QQ_z(x,y),
\end{equation}
implies (since $k > -2$),
\[ z^2 = k + 2 + \QQ_z(x,y) > \QQ_z(x,y). \]
If $\vert z\vert < 2$, then Lemma~\ref{lem:PosDef} implies
that $\QQ_z(x,y) \ge 0$ and $ z^2 > 0$. 

Thus $z\neq 0$. 
Consequently $\kCk\subset\R^3$ is the disjoint union of the two
subsets where $z> 0$ and $z< 0$ respectively.
These two subsets are interchanged by $\sigma_i$.

Suppose that $k>2$. 
If $\vert z\vert \le 2$, then, as above, Lemma~\ref{lem:PosDef} implies
$\QQ_z(x,y) \ge 0$, and 
\[ z^2 = k + 2 + \QQ_z(x,y) \ge  k+2 >  4, \]
contradicting $\vert z\vert \le 2$.
Thus $\R\times\R\times [-2,2]$ separates the two components of $\kCk$.
Proposition~\ref{prop:SmoothSurface} implies $\Pixy$ restricts to a diffeomorphism, as claimed.
The case $k=2$ is handled similarly.
\end{proof}
% Applying a sign-change automorphism which changes the sign of $z$, 
% we may assume that $z > 2$. 
Writing
%\[ \kappa_\Coloring(x,y,z) = (z + xy/2)^2 -  (x^2+4)(y^2+4)/4  + 2.  \]
\begin{equation*}%\label{eq:KappaUpsilonInTermsofxy}
\kC(x,y,z) =\Big(z + \frac{xy}2\Big)^2 -  \frac{(x^2+4)(y^2+4)}4  + 2,  
\end{equation*}
the two solutions of  $\kC(x,y,z) = k$ are:
%\[ z = -xy/2 \pm \sqrt{(x^2+4)(y^2+4)}/2. \]
\begin{equation}\label{eq:zPlusOrMinus} 
z = z_{\pm}(x,y) :=  \frac{-xy \pm \sqrt{(x^2+4)(y^2+4) + 4(k-2) }}2. \end{equation}
Since $\vert z\vert > 2$, and
\begin{align*}
-xy + \sqrt{(x^2+4)(y^2+4)+4(k-2)} & >  0, \\
-xy - \sqrt{(x^2+4)(y^2+4) +4(k-2)} & <  0,
\end{align*}
the solution $z=z_+(x,y)$ satisfies $z > 2$ and the solution $z=z_-(x,y)$ satisfies $z < -2$.
Therefore $\kCk$ consists of two components,
which are, respectively, the graphs of the functions $z_+, z_-$ defined in \eqref{eq:zPlusOrMinus}.

Solving \eqref{eq:KappaUpsilonInTermsOfz}, the level set
\[ \kCk \cap 
\big( \R\times\R\times\{z\}\big) \]
is a (nondegenerate) hyperbola for $z \neq \pm \sqrt{k+2}$
and the union of two crossing lines (a degenerate hyperbola) for
$z = \pm \sqrt{k+2}$.

First suppose that $-2<k<2$.
Then 
\begin{equation}\label{eq:WhySolnsDistinct}
(x^2 + 4)(y^2 + 4) \ge 16 > 4 (2 -k) \end{equation}
since $k>-2$. 

Suppose that $(x,y)\in\R^2$.
Then the preimages $\Pixy^{-1}(x,y)$ are points
$(x,y,z)$ where $z$ is one of the two solutions
$z = z_\pm(x,y)$ given by \eqref{eq:zPlusOrMinus}.
(These two solutions are distinct by \eqref{eq:WhySolnsDistinct}.)

Thus 
\[ 
\kCk = \graph(z_+) \coprod  \graph(z_-)  \]
and the sign-change
\begin{equation}\label{eq:xzSignChange}
\bmatrix x \\ y \\ z \endbmatrix \xmapsto{~\sigma_2~}
\bmatrix -x \\ y \\ -z \endbmatrix \end{equation}
interchanges these two preimages.

For $-2 < z < 2$, 
Lemma~\ref{lem:PosDef} implies that
the quadratic form $\QQ_z$ is positive definite.
As in the proof of Proposition~\ref{prop:twocomponents} again,
\eqref{eq:KappaUpsilonInTermsOfz} expresses 
$\kCk$ as a level set of $\QQ_z$ for fixed $z$.
Thus the nonempty level sets 
\[
 \kCk \cap z^{-1}(z_0) \]
are ellipses whenever $-2 < z_0 < 2$.

Now suppose $-2 < k$ as before. 
Since $\sqrt{2+k} < 2$, the expression \eqref{eq:KappaUpsilonInTermsOfz} 
of $\kCk$ in terms of $\QQ_z$ implies that
$\kCk$ does not intersect the level set $z=z_0$ when 
\[ \vert z_0\vert < \sqrt{2+k} < 2.\]
Thus the region $\R\times\R\times \big(-\sqrt{2+k},\sqrt{2+k}\big)$ separates
the two components of $\kCk$, 
which are interchanged by the involution 
$\sigma_2$ defined in \eqref{eq:xzSignChange}.

\subsection{The invariant area form}

Proposition~\ref{prop:twocomponents} implies that the coordinate functions
$x,y$ give global coordinates on $\kCk$ when $k > -2$. 
In these coordinates the invariant area form has the following simple expression:
\begin{equation}\label{eq:AreaForm}
dA_k\ :=\ 
\frac{\mathsf{sign}(z)\  dx \wedge dy}{\sqrt{ (x^2+4)(y^2+4) + 4(k-2)}}\end{equation}
\begin{lem}\label{lem:AreaForm}
The exterior $2$-form $dA_k$  defined in \eqref{eq:AreaForm} defines
a real-analytic $\Gamma$-invariant area form on $\kCk$. 
\end{lem}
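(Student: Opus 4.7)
The plan is to identify $dA_k$ with the symplectic form on $\kCk$ induced by the Poisson bivector $\BB_\Coloring$ from \eqref{eq:RealPoissonStructure}, and then deduce $\Gamma$-invariance (as an area density) from the transformation rule \eqref{eq:InvarianceOfB}.

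First, I would verify the algebraic identity
\[ (2z + xy)^2 \;=\; (x^2 + 4)(y^2 + 4) + 4(\kC - 2), \]
a routine expansion using $\kC = -x^2 - y^2 + z^2 + xyz - 2$. Restricted to $\kCk$, this gives $|2z+xy| = \sqrt{(x^2+4)(y^2+4)+4(k-2)}$. Moreover, from the explicit formula \eqref{eq:zPlusOrMinus} for $z = z_\pm(x,y)$ one sees $\mathsf{sign}(2z+xy) = \mathsf{sign}(z)$ on both components of $\kCk$: on $\mathsf{graph}(z_+)$ both quantities are positive, and on $\mathsf{graph}(z_-)$ both are negative. Combining these observations,
\[ dA_k \;=\; \frac{dx \wedge dy}{2z + xy}\qquad\text{on } \kCk, \]
and the right-hand side is real-analytic and nowhere-vanishing on $\kCk$ since $2z+xy \neq 0$ there (see the proof of Proposition~\ref{prop:SmoothSurface}).

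Next I would recognize this expression as the symplectic form on $\kCk$ induced by $\BB_\Coloring$. The function $\kC$ is a Casimir: contraction of $d\kC$ with $\BB_\Coloring$ vanishes, as a direct calculation from \eqref{eq:DifferentialOfKappaColoring} and \eqref{eq:RealPoissonStructure} shows, so the level sets $\kCk$ are the symplectic leaves. When $(x,y)$ give local coordinates on such a leaf---which is exactly the content of Lemma~\ref{lem:BasisTangentSpace}---the induced symplectic form is the standard expression $\frac{dx\wedge dy}{\{x,y\}}$, and from \eqref{eq:RealPoissonStructure} one reads $\{x,y\} = 2z + xy$. Equivalently, using $d\kC \equiv 0$ on tangent vectors to $\kCk$, the three formulas
\[ \frac{dx \wedge dy}{2z+xy} \;=\; \frac{dy \wedge dz}{-2x + yz} \;=\; \frac{dz \wedge dx}{-2y + zx} \]
all agree on $\kCk$, confirming the identification of $dA_k$ with the symplectic form of the leaf.

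The $\Gamma$-invariance then follows from \eqref{eq:InvarianceOfB}: since $\gamma_*\BB_\Coloring = \Det(\gamma)\BB_\Coloring$ and the symplectic form on a leaf is dual to the Poisson bivector, one obtains $\gamma^* dA_k = \Det(\gamma)^{-1}\, dA_k = \Det(\gamma)\, dA_k$, using $\Det(\gamma)\in\{\pm 1\}$. Thus $dA_k$ is invariant up to sign, and as an area (density) form it is genuinely $\Gamma$-invariant. The main subtlety is the sign convention: for instance, $\II_3$ fixes $x,y$ and sends $2z+xy \mapsto -(2z+xy)$, directly yielding $\II_3^* dA_k = -dA_k = \Det(\II_3)\, dA_k$, with analogous direct verifications for the remaining Vieta involutions and sign-changes. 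This density interpretation of ``area form'' is precisely what is required for the ergodicity arguments of later sections.
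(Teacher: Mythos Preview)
Your proposal is correct and follows essentially the same approach as the paper: identify $dA_k$ with the leaf-symplectic form dual to $\BB_\Coloring$ via the relation $2z+xy=\mathsf{sign}(z)\sqrt{(x^2+4)(y^2+4)+4(k-2)}$, and deduce invariance from \eqref{eq:InvarianceOfB}. The paper carries this out by computing $\Ham(x)$ and $\Ham(y)$ explicitly, whereas you invoke the standard formula $\omega=\frac{dx\wedge dy}{\{x,y\}}$; you also make the up-to-sign invariance and the area-density interpretation more explicit than the paper does.
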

\begin{proof}
Since $k>-2$ and $x,y\in\R$,
\[
(x^2+4)(y^2+4) + 4(k-2) \ge 4 (k+2) > 0 \]
so $dA_k$ is a real-analytic area form.

Since the complex exterior  bivector field $\BB$
on $\RepF$ defined by \eqref{eq:PoissonStructure}
is $\hGamma$-invariant, it suffices to show
that, on the real subvariety $\kCk$, the complex bivector field $\BB$ restricts to the real bivector field \begin{equation}\label{eq:AreaFormDual}
(dA_k)\inv\ :=\   
\mathsf{sign}(z)\  \sqrt{ (x^2+4)(y^2+4) + 4(k-2) }\ \dd{x} \wedge \dd{y}
%\mathsf{sign}(z)\  \sqrt{ (x^2+4)(y^2+4) + 4(k-2) }\ \dd{x} \wedge \dd{y}
\end{equation}
dual to $dA_k$. That is, we prove the embedding
\[
\R^2 \xrightarrow{~(\Pixy)\inv~} \kCk \subset \R^3 \]
pushes $(dA_k)\inv$ forward to $\BB_\Coloring$.

To this end, \eqref{eq:RealPoissonStructure} implies that $d\Pixy$ maps the Hamiltonian vector fields
\begin{alignat*}{2}
\Ham(x) &= -(2 z +  x y)\  \dd{y} +   (x z - 2y)\  \dd{z} 
%\Ham(x) &= -(2 z +  x y) \dd{y} +   (x z - 2y) \dd{z} 
&& %\xmapsto{~d\Pixy}~ -(2z + xy) \dd{y} \\
= -(2z + xy)\  \tdd{y} \\
\Ham(y) &= (2 z +  x y)\  \dd{x} +   (2 x - y z)\  \dd{z} 
%\Ham(y) &= (2 z +  x y) \dd{x} +   (2 x - y z) \dd{z} 
&& %\xmapsto{~d\Pixy}~ -(2z + xy) \dd{y} \\
= (2z + xy) \tdd{x}  \end{alignat*}
%\begin{lem}\label{lem:AreaForm}
to $-(2z+xy)\ \dd{y}$ and $(2z+xy)\  \dd{x}$ respectively.
Now \eqref{eq:zPlusOrMinus} implies
\[
2 z + xy =  \mathsf{sign}(z)\  \sqrt{ (x^2+4)(y^2+4) + 4(k-2) }, \]
completing the calculation.
\end{proof}
\noindent Similar calculations show:
\begin{align}\label{eq:Hamz}
\Ham(z) & = (2 y - x z )\ \dd{x}  + (y z - 2 x) \ \dd{y}  \\ &= 
(2 y - x z ) \ \tdd{x}  + (y z - 2 x)\  \tdd{y} .\notag
\end{align}

%%\noindent
%% Proposition~\ref{prop:basiconimaginary} and   
%Proposition \ref{prop:twocomponents} implies:
%\begin{cor}\label{lem:zBiggerThan2}
%If $\mu$ is an unexceptional imaginary trace labeling with $k>2$ and $Z \in \O_{\R}$, 
%then $\vert z\vert >2$ where  $z=\mu(Z)$.
%\end{cor}
%
%This implies that, if $\gamma$ is a primitive with $\Phi(\gamma) = 0$, then
%$\rho(\gamma)$ is hyperbolic.

% \subsection{Topology of the level sets when $k<2$}

\subsection{The level set for  $k\le -2$}
Here more interesting topologies arise. % when $k<2$.

\begin{prop}\label{prop:twocomponentsorannulus}
Suppose that $k\le-2$. 
\begin{itemize}
%\item If  $-2<k<2$, then $\kCk$ 
%has two components, each projecting
%diffeomorphically to the plane under 
%$\Pixy$. 
\item If $k=-2$, then $\kCminusTwo$ is connected, with one singular point  $(0,0,0)$.  
Each of the two components of $\kCk\setminus\{(0,0,0)\}$ 
projects diffeomorphically to $\R^2\setminus\{(0,0)\}$ under $\Pixy$.
 \item If $k<-2$, then $\kCk$ is homeomorphic to a cylinder. 
\end{itemize}
%When $k<2$, the intersection
%\[
%\kCk \cap \big(\R\times\R\times (-2,2)\big) \]
%is nonempty. 
%Furthermore the nonempty horizontal level sets 
%\[ \kCk \cap \big(\R\times\R\times \{z\}\big) \]
%are ellipses when $-2 < z < 2$.
\end{prop}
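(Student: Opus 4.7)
The plan is to parametrize $\kCk$ by solving $\kC(x,y,z)=k$ as a quadratic in $z$. Writing $\kC(x,y,z)=\bigl(z+\tfrac{xy}{2}\bigr)^2-\tfrac14(x^2+4)(y^2+4)+2$, the two roots are
\[
z_\pm(x,y)\ =\ \frac{-xy\pm\sqrt{\Delta_k(x,y)}}{2},\qquad \Delta_k(x,y)\ :=\ (x^2+4)(y^2+4)+4(k-2),
\]
so the fiber of $\Pixy\vert_{\kCk}$ over $(x,y)$ consists of the real values among $z_\pm(x,y)$. The whole argument reduces to analyzing the sign-locus of $\Delta_k$ in $\R^2$.

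For $k=-2$ I would first observe that $\Delta_{-2}(x,y)=(x^2+4)(y^2+4)-16\ge 0$ on all of $\R^2$, with equality if and only if $(x,y)=(0,0)$; hence $\Pixy(\kCminusTwo)=\R^2$. At the origin both roots collapse to $z_\pm=0$, recovering the unique critical point of $\kC$ from Lemma~\ref{lem:CriticalPointsOfKappaColoring}, and a punctured neighborhood of $(0,0,0)$ in $\kCminusTwo$ breaks into two components according to $\mathsf{sign}(z)$, so that point is genuinely singular. Away from the origin $z_+>z_-$ are distinct smooth functions on $\R^2\setminus\{(0,0)\}$ whose graphs are the two sheets of $\kCminusTwo\setminus\{(0,0,0)\}$; since $2z+xy=\pm\sqrt{\Delta_{-2}}\neq 0$ there, Lemma~\ref{lem:BasisTangentSpace} applies and each sheet projects diffeomorphically to $\R^2\setminus\{(0,0)\}$ as in Proposition~\ref{prop:SmoothSurface}. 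The two sheets are swapped by $\sigma_2$ and meet only at the origin, so $\kCminusTwo$ is connected.

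For $k<-2$ I would set $f(x,y):=(x^2+4)(y^2+4)$, so that $\Delta_k\ge 0\iff f(x,y)\ge 4(2-k)$ and $4(2-k)>16$. Because $f$ is smooth and proper with unique critical point $(0,0)$, a nondegenerate minimum of value $16$, a standard Morse-theoretic argument shows that the open sublevel set $U_k:=\{f<4(2-k)\}$ is a bounded topological $2$-disk bounded by the smooth simple closed curve $\partial D_k=\{f=4(2-k)\}$. Hence $D_k:=\R^2\setminus U_k$ is homeomorphic to the closed half-cylinder $S^1\times[0,\infty)$. The projection $\Pixy$ carries $\kCk$ onto $D_k$, is two-to-one on $\mathrm{int}(D_k)$ via the distinct smooth graphs $z_\pm$, and is one-to-one on $\partial D_k$ where $z_+=z_-=-xy/2$.

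Putting the pieces together for $k<-2$, the surface $\kCk$ is obtained from two disjoint copies of $D_k\cong S^1\times[0,\infty)$ glued along their common boundary circle, and this double is homeomorphic to $S^1\times\R$, i.e.\ a cylinder. The only nonroutine ingredient is the Morse-theoretic identification of $U_k$ with an open disk; I expect this to be the main obstacle, but it follows from the fact that $(0,0)$ is the unique critical point of the proper function $f$ on the entire plane and is a nondegenerate minimum, so every regular sublevel set of $f$ at a value above $16$ is obtained from a $2$-disk without attaching further handles.
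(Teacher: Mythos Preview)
Your argument is correct. For $k=-2$ your treatment is essentially the same as the paper's: both identify the two sheets with the graphs of $z_\pm$ and note they meet only at the singular origin.

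For $k<-2$ you take a genuinely different route. The paper slices by the $z$-coordinate, using \eqref{eq:KappaUpsilonInTermsOfz} to see that the slices $\kCk\cap\{z=z_0\}$ are ellipses for $\vert z_0\vert<2$ (since $\QQ_{z_0}$ is positive definite there), degenerate to pairs of parallel lines at $z_0=\pm 2$, and become hyperbolas for $\vert z_0\vert>2$; it then asserts that these fit together into a cylinder. You instead exploit $\Pixy$ as a two-sheeted branched cover of the region $\{\Delta_k\ge 0\}$, use a short Morse-theory argument on $f(x,y)=(x^2+4)(y^2+4)$ to identify that region with a closed half-cylinder $S^1\times[0,\infty)$, and then recognize $\kCk$ as the double along the branch circle. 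Your approach is closer in spirit to the branched-cover description announced in the theorem summarizing this section, and it gives a cleaner global homeomorphism without having to argue how the ellipse, line-pair, and hyperbola regimes glue. The paper's slicing, on the other hand, makes the geometry of individual $z$-levels more transparent, which is used elsewhere. One small point worth making explicit in your write-up: the continuous bijection from the double of $D_k$ to $\kCk$ is a homeomorphism by invariance of domain, since $k<-2$ is a regular value of $\kC$ (Lemma~\ref{lem:CriticalPointsOfKappaColoring}) and hence $\kCk$ is a smooth $2$-manifold.
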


\begin{figure}[htb]
\includegraphics[scale=.5]{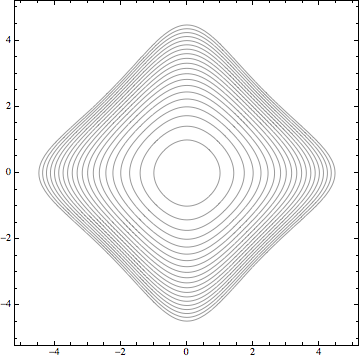}
\caption{Contours of $(x^2+4)(y^2+4)$ on $\R^2$ }
\label{fig:ContoursXY}
\end{figure}

\begin{proof}
%First suppose that $-2<k<2$.
%Then 
%\begin{equation}\label{eq:WhySolnsDistinct}
%(x^2 + 4)(y^2 + 4) \ge 16 > 4 (2 -k) \end{equation}
%since $k>-2$. 
%
%Suppose that $(x,y)\in\R^2$.
%Then the preimages $\Pixy^{-1}(x,y)$ are points
%$(x,y,z)$ where $z$ is one of the two solutions
%$z = z_\pm(x,y)$ given by \eqref{eq:zPlusOrMinus}.
%(These two solutions are distinct because of \eqref{eq:WhySolnsDistinct}.)
%
%Thus 
%\[ 
%\kCk = \graph(z_+) \coprod  \graph(z_-)  \]
%and the sign-change
%\begin{equation}\label{eq:xzSignChange}
%\bmatrix x \\ y \\ z \endbmatrix \xmapsto{~\sigma_2~}
%\bmatrix -x \\ y \\ -z \endbmatrix \end{equation}
%interchanges these two preimages.
%
%For $-2 < z < 2$, 
%Lemma~\ref{lem:PosDef} implies that
%the quadratic form $\QQ_z$ is positive definite.
%As in the proof of Proposition~\ref{prop:twocomponents} again,
%\eqref{eq:KappaUpsilonInTermsOfz} expresses 
%$\kCk$ as a level set of $\QQ_z$ for fixed $z$.
%Thus the nonempty level sets 
%\[
% \kCk \cap z^{-1}(z_0) \]
%are ellipses whenever $-2 < z_0 < 2$.
%
%Now suppose $-2 < k$ as before. 
%Since $\sqrt{2+k} < 2$, the expression \eqref{eq:KappaUpsilonInTermsOfz} 
%of $\kCk$ in terms of $\QQ_z$ implies that
%$\kCk$ does not intersect the level set $z=z_0$ when 
%\[ \vert z_0\vert < \sqrt{2+k} < 2.\]
%Thus the region $\R\times\R\times \big(-\sqrt{2+k},\sqrt{2+k}\big)$ separates
%the two components of $\kCk$, 
%which are interchanged by the involution 
%$\sigma_2$ defined in \eqref{eq:xzSignChange}.

When $k=-2$, the preimage $\kC^{-1}(-2)$ 
is defined by the equation
\[ \QQ_z(x,y) = z^2 \] 
which is singular only at $(0,0,0)$.
The intersection $\kC^{-1}(-2)\setminus\R^2\times\{0\}$ 
consists of two components, each being the graph of 
one of the functions $z_+, z_-$ defined in \eqref{eq:zPlusOrMinus}.
These two components of the complement of the origin are interchanged by $\sigma_2$.
This {\em purely imaginary real form\/} of the Markoff surface
plays an important role in Bowditch's paper~\cite{MR1643429}.

Finally consider the case $k < -2$. 
In this case the image $\Pixy\big(\kCk)\big)$ 
is the region of $\R^2$ defined by 
\[
\bigg( 1 + \Big(\frac{x}2\Big)^2 \bigg)
\bigg( 1 + \Big(\frac{y}2\Big)^2 \bigg)
\ge \frac{2-k}4 > 1.
\]
(Figure~\ref{fig:ContoursXY} depicts the level sets of this function.)
For $-2< z_0< 2$, the ellipses defined by $z= z_0$ form a cylinder.
As $ z_0 \rightarrow \pm 2$, their eccentricities tend to $\infty$,
and the limits (defined by $z= \pm 2$ 
are degenerate conics consisting of two parallel lines
 \[
(x \mp y)^2 = 2-k. \]
For $\vert z_0\vert > 2$, the level sets are hyperbolas. 
The entire level set $\kC^{-1}(-2)$ is homeomorphic to a cylinder.
\end{proof}

\section{Generalized Fricke spaces}
Hyperbolic structures on a surface $\Sigma$ with fundamental group $\Ft$
determine representations of $\Ft$ into the isometry group of $\Ht$.
This group identifies with the disconnected group $\PGLtR$,
and as before we lift to the double covering which we may identify with
\begin{equation*}
\SLtR \bigcup  \big( i \GLtR \cap \SLtC \big).
\end{equation*}
The homomorphism $\Coloring$ is the composition
\[ 
\Ft \xrightarrow{~\rho~} \PGLtR \twoheadrightarrow \pi_0\big(\PGLtR\big) \cong \Z/2. \]
It is trivial if and only if  $\Sigma$ is orientable.
In that case $\rho$ preserves orientation on $\Ht$, 
and the lift lies in $\SLtR$. 
This case is analyzed in \cite{MR2026539}. 
Otherwise, exactly one of $\rho(X)$,
$\rho(Y)$ and $\rho(Z)$ maps to $\SLtR$; by applying a permutation
in $\Au$, we may assume that $\rho(Z)\in\SLtR$,
that is, $\Coloring$ is the $\{\pm 1\}$-character with 
\[ \Coloring(X) = \Coloring(Y) = 1 \mod 2, \qquad \Coloring(Z) = 0 \mod 2. \]

In this case, if the structure is complete, the
representation is a {\em discrete embedding,\/} that is, an isomorphism
of $\Ft$ onto a discrete subgroup of $\PGLtR$ and  the hyperbolic
surface is the quotient $\Ht/\rho(\Ft)$.

There are two cases for the topology of the surface $\Ht/\rho(\Ft)$:
either it is homeomorphic to a $1$-holed Klein bottle $C_{1,1}$,
which corresponds to the case when $k > 2$, or a $2$-holed cross-surface
$C_{0,2}$, which corresponds to the case when $k < 2$. 
(See Charette-Drumm-Goldman~\cite{CDG} for more information
on the Fricke space of $C_{0,2}$.)
In the former case, we consider more general geometric structures, 
the boundary component of $C_{1,1}$ is replaced by a cone point;
from our viewpoint it is more natural to allow such an isolated singularity.
In this particular case the singular hyperbolic surfaces behave dynamically
like nonsingular complete hyperbolic structures on $C_{1,1}$.
\bigskip

\begin{tabular}{|c|c|c|}%\label{table:PeripheralTraces}
  \hline
Surface $\Sigma$ & Peripheral elements & Boundary traces \\
  \hline
$\Sigma_{0,3}$ & $X, Y, Z :=(XY)^{-1}$ & $x, y, z$ \\
  \hline
$\Sigma_{1,1}$ & $K := XYX^{-1}Y^{-1}$ &  $ k:= x^2 + y^2 + z^2 + x y z - 2$\\
  \hline
$C_{0,2}$ & $Z := XY, \quad Z' := Y^{-1}X$ 
& $ z,\  z' := (i x) (i y)  - z$\\
  \hline
$C_{1,1}$ & $D := X^2 Y^2$ & $ \delta := 2 - (i x)^2 - (i y)^2 + (i x) (i y) z$\\
  \hline
\end{tabular}
\bigskip

\subsection{Geometric structures on the two-holed cross-surface}
We begin with analyzing the Fricke space $\Fricke(C_{0,2})$ of marked hyperbolic structures on $C_{0,2}$ in trace coordinates. 
Later,  in \S\ref{sec:ImagCharkLessThanTwo}, 
we characterize the corresponding  characters dynamically 
as those for which the flow 
possesses an attractor (see Definition~\ref{defn:Attractor}, 
in the case when $k < 2$).
Compare Goldman~\cite{MR2497777} and Charette-Drumm-Goldman~\cite{MR3180618} 
for further discussion.

\begin{prop}
Suppose $\rho$ is the holonomy representation for a hyperbolic structure $M$
on $C_{0,2}$. 
Then for some choice of basis for $\pi_1(M)$, 
the character of $\rho$ satisfies
\begin{align}\label{eq:2hx}
z & \le -2 \notag\\
z' &= - xy - z \le -2 . 
\end{align}
Conversely, if $(ix,iy,z)$ is a purely imaginary character satisfying
\eqref{eq:2hx}, then it corresponds to a complete hyperbolic structure
on $C_{0,2}$.
\end{prop}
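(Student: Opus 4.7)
The two implications require rather different techniques. For necessity, assume $\rho$ is the holonomy of a complete hyperbolic structure on $C_{0,2}$, with basis $X, Y$ of $\pi_1(M)$ chosen as orientation-reversing simple loops meeting once, so that $Z = XY$ and $Z' = XY^{-1}$ represent the two boundary components. The relation $z' = -xy - z$ is immediate from the Fricke trace identity $\tr(MN) + \tr(MN^{-1}) = \tr(M)\tr(N)$ in $\SLtC$ applied to $M = \rho(X), N = \rho(Y)$: since $\tr\rho(X) = ix$ and $\tr\rho(Y) = iy$, we obtain $z + z' = (ix)(iy) = -xy$. The bound $|z|, |z'| \ge 2$ follows because, for a complete hyperbolic structure with funnel or cusp ends, the boundary classes $Z, Z'$ project to hyperbolic or parabolic elements of $\PSLtR$, forcing the traces of their $\SLtR$-lifts to have absolute value at least $2$.

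The subtle step is showing $z, z' \le -2$ rather than $\ge 2$. Writing $\rho(X) = iA$ and $\rho(Y) = iB$ with $A, B \in \GLtR$ of determinant $-1$, one computes
\[
\rho(Z) = (iA)(iB) = -AB, \qquad \rho(Z') = (iA)(iB)^{-1} = AB^{-1},
\]
so $z = -\tr(AB)$ and $z' = \tr(AB^{-1})$. The condition $z \le -2$ becomes $\tr(AB) \ge 2$, and $z' \le -2$ becomes $\tr(AB^{-1}) \le -2$. To fix these signs, I would use that for a discrete faithful action with smooth surface quotient, $A$ and $B$ act on $\Ht$ as glide-reflections with ultraparallel axes; a direct computation in the upper half-plane model (normalizing $A$'s axis to be the imaginary axis and parametrizing $B$ by distance and twist between axes) shows that of the two lifts related by the sign-change $\sigma_3$ (which preserves $(z, z')$), one has the claimed signs. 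Alternatively, pass to the orientation double cover $\widetilde{M} \to M$, which is a four-holed sphere $\Sigma_{0,4}$ equipped with an orientation-reversing involution, and reduce to the known Fricke space of $\Sigma_{0,3}$ from the preceding subsection, which lies in $(-\infty, -2]^3$: the two boundary classes $Z, Z'$ of $C_{0,2}$ lift to boundary curves in $\Sigma_{0,4}$ that are themselves boundary curves of an equivariant pants decomposition, whose traces satisfy the $\le -2$ sign convention.

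For sufficiency, given $(x, y, z)$ with $z \le -2$ and $xy + z \ge 2$, I would first construct explicit $A, B \in \GLtR$ of determinant $-1$ realizing $\tr A = x$, $\tr B = y$, $\tr(AB) = -z \ge 2$, and $\tr(AB^{-1}) = -(xy+z) \le -2$; such a pair exists and is unique up to $\GLtR$-conjugation (a $\GLtR$-analog of Vogt's theorem, proved by normalizing $A$ and solving linearly for the entries of $B$). Then I would verify that $\rho: X \mapsto iA,\ Y \mapsto iB$ defines a discrete embedding whose quotient is homeomorphic to $C_{0,2}$. Discreteness and faithfulness follow from a Schottky-type ping-pong argument exploiting the ultraparallel axes of the glide-reflections $A, B$ (guaranteed by the trace bounds, which force both $AB$ and $AB^{-1}$ to be hyperbolic in $\PSLtR$ with translation axes disjoint from the axes of $A, B$). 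The quotient is then identified with $C_{0,2}$ by its Euler characteristic $\chi = -1$, the two peripheral conjugacy classes $[Z], [Z']$, and nonorientability (forced by $X, Y$ acting as glide-reflections).

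The main obstacle is the sign assertion in necessity: all other steps reduce either to formal trace calculations or to standard Schottky-type arguments, but the sign reflects a nontrivial interplay between the factor $i^2 = -1$ in the lifts $\rho(X) = iA,\ \rho(Y) = iB$ and the topology of $C_{0,2}$, and requires either an explicit geometric normalization or the detour through the orientation cover described above.
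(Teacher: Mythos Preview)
Your approach differs substantially from the paper's, which handles both directions by splitting $C_{0,2}$ along the one-sided curve $X$ to obtain a three-holed sphere $\Sigma_{0,3}$ and reducing everything to the known Fricke space $\Fricke(\Sigma_{0,3}) = (-\infty,-2]^3$.

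For necessity, the paper's key observation is that the new boundary component created by the splitting lies in the class of $X^{-2}$, with trace $(ix)^2 - 2 = -x^2 - 2 < -2$. Since the holonomy of the resulting hyperbolic pair of pants has character in the sign-change orbit of $(-\infty,-2]^3$, and one coordinate is already strictly less than $-2$, the other two boundary traces --- which are precisely $z$ and $z'$ --- must have the \emph{same} sign; a single sign-change then makes both $\le -2$. Your sign argument has a genuine gap here: you invoke $\sigma_3$, but $\sigma_3$ fixes both $z$ and $z'$, so choosing between the two $\sigma_3$-related lifts cannot help. The relevant sign-changes are $\sigma_1$ and $\sigma_2$, and they flip the signs of $z$ and $z'$ \emph{simultaneously}. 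What must actually be ruled out is the mixed case $z \le -2,\ z' \ge 2$ (or vice versa), and neither your direct-computation sketch nor the double-cover outline addresses this. The $\Sigma_{0,4}$ route could in principle be completed, but it is strictly more work than the paper's $\Sigma_{0,3}$ reduction.

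For sufficiency, your ping-pong rests on the axes of the glide-reflections $A,B$ being ultraparallel. But $X,Y$ are chosen with geometric intersection number one, so their geodesic representatives intersect and hence so do the axes of $A$ and $B$; no Schottky fundamental domain is available for this pair as stated. The paper avoids this entirely: the triple $(-x^2-2,\ z,\ z') \in (-\infty,-2]^3$ determines a complete hyperbolic pair of pants, and attaching a cross-cap to the boundary component of trace $-x^2-2$ yields the desired $C_{0,2}$ with holonomy character $(ix,iy,z)$.
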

\begin{proof}
Suppose $M$ is a complete hyperbolic surface diffeomorphic to $C_{0,2}$.
Let $X$ be a one-sided simple curve on $M$ and let $M|X$ be the hyperbolic
surface obtained by splitting $M$ along the closed geodesic corresponding to $X$. 
Since $X$ is one-sided, $M|X$ is connected and has one more boundary
component than $M$. Furthermore $\chi(M|X) = \chi(M) = -1$ and 
\[ \#\pi_0\big(\partial(M|X)\big) = \#\pi_0(\partial M) + 1 = 3. \]
By the classification of surfaces $M|X \approx \Sigma_{0,3}$.
Thus $M$ is obtained from the three-holed sphere $M|X$ 
by attaching a cross-cap to a boundary component $C\subset \partial(M|X)$.
Let $A,B$ denote the other two components of $\partial(M|X)$, 
which correspond to the components of $\partial M$.

The Fricke space of $\Sigma_{0,3}$ is parametrized by the three boundary traces 
$(a,b,c)$ where $a = \tr\big(\rho(A)\big)$, $b = \tr\big(\rho(B)\big)$ correspond
to the boundary components of $M$ and $C$ corresponds to $X^{-2}$. 
Since 
\begin{align*}
 \tr\big(\rho(C)\big) = \tr\big(\rho(X)^2)\big)&  = \tr\big(\rho(X)\big)^2 - \tr (\Id) \\ & =
-x^2 -2\; < -2, \end{align*}
the other two boundary traces $\tr\big(\rho(A)\big)$, 
$\tr\big(\rho(A)\big)$ have the same sign. 
Applying a sign-change automorphism if necessary, 
we can assume they are both negative. 

Let $Y$ be another one-sided simple curve with $i(X,Y) = 1$. 
Then, with appropriate choices of basepoint and representative curves,
we may assume that $A = XY$ and $B = Y^{-1}X$ (so that $ABC = I$). 
Their traces satisfy:
\[ a + b = (ix) (iy) = -xy. \]
These traces correspond to the traces $z, z'$ of $\rho(Z) = \rho(Y\inv X\inv)$ and
$\rho(Z') = \rho(Y X\inv)$, since 
\begin{align*} a & = \tr\big(\rho(A)\big)  = \tr\big(\rho(XY)\big)  \quad = z, \\ 
b & = \tr\big(\rho(B)\big)  = \tr\big(\rho(X\inv Y)\big)  = z', \end{align*}
as desired. 

Conversely, suppose that $(ix,iy,z)$ is a purely imaginary character satisfying
\eqref{eq:2hx}.  
Define \[ z' := - z - x y. \] 
Since 
\begin{align*} z & \le -2, \\
z' &\le -2, \\ 
-x^2 - 2 &< -2, \end{align*} 
there exists a complete hyperbolic surface $N$ diffeomorphic to 
$\Sigma_{0,3}$ with boundary traces $(-x^2 - 2, z, z')$.
The boundary components $A,B,C$ correspond to traces 
\begin{align*} a &= z, \\ b &= z', \\ c &= -x^2 -2. \end{align*}

Let $M$ be the complete hyperbolic surface obtained by attaching a cross-cap to $C$. 
Then $\pi_1(M)$ is obtained from \[\pi_1(N) = \langle A, B, C \mid A B C = I\rangle\]
by adjoining a generator $X$ satisfying $C = X^{-2}$. 
Define $Y := X^{-1} A$, so that
\begin{align*} XY &= A, \\ Y^{-1}X  &=  B \end{align*}
and the holonomy of $M$ has character $(ix,iy,z)$ as desired.
\end{proof}

\subsection{Geometric structures on the one-holed Klein bottle}
\label{sec:OHKB}
We now determine the Fricke space and generalized Fricke space for
$C_{1,1}$ in trace coordinates. 
The surface $C_{1,1}$ is rather special in that it contains a unique isotopy class
of nontrivial nonperipheral $2$-sided simple closed curve. 
We consider a marking of $C_{1,1}$ in which this curve has homotopy class $Z$,
where $(X,Y,Z)$ is a basic triple.
%form a triple in $\pi_1(C_{1,1})\cong\Ft$ with $XYZ=1$ 
% (a ``superbasis'' in the terminology of \cite{CDG}). 
Represent $X$ and $Y$ by $1$-sided simple loops
intersecting tangentially at the basepoint (so $i(X,Y) = 0$).
With respect to a suitable choice of basepoint and defining loops,
$D = X^2 Y^2$ is an element of $\pi_1(C_{1,1})$ generating 
$\pi_1(\partial C_{1,1})$. For a given representation $\rho$
defining a hyperbolic structure on $C_{1,1}$, we denote the corresponding
trace function by 
\[ \delta = \tr\big(\rho(D)\big). \]
The basic trace identity implies that
\[ \delta + k = z^2. \]

Suppose that $\mu \longleftrightarrow (ix,iy,z)\in i\R\times i\R \times \R$ is a character with $k>2$, which corresponds to the holonomy of a hyperbolic structure on $\Coo$. 
Proposition~\ref{prop:twocomponents} implies that $\vert z \vert > 2$.
Furthermore both $x,y \neq 0$ since otherwise $0 > z^2 - (k+2) = \QQ_z(x,y) > 0$.

\begin{prop}\label{prop:GeneralizedFrickeSpaceOfCoo}
 Suppose that $k > 2$ and $\vert z \vert < \sqrt{k+2}$. 
Let \[ \delta := z^2 - k < 2.\] 
Then $\mu$ corresponds to the holonomy of a hyperbolic structure on $C_{1,1}$
\begin{itemize}
\item with geodesic boundary if $\delta < -2$, 
and the boundary has length $2 \cosh^{-1}(-\delta/2)$;
\item with a puncture if $\delta = -2$;
\item with a conical singularity of cone angle $\cos^{-1}(-\delta/2)$ if $-2<\delta < 2$.
\end{itemize}
\end{prop}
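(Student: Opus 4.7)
The plan is to extract the geometry from the inequalities in three logical stages: first pin down the hyperbolicity of $\rho(Z)$ and the non-vanishing of $xy$; then compute the boundary trace $\delta$; finally realize $\rho$ as the holonomy of an honest (possibly singular) hyperbolic structure by decomposing $C_{1,1}$ along the two disjoint one-sided simple loops representing $X$ and $Y$.

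First I would replace $(x,y,z)$ by $\sigma_1(x,y,z)=(x,-y,-z)$ if necessary so that $z<0$. Writing $\kC(x,y,z)=z^2-2-\QQ_z(x,y)=k$, positive-definiteness of $\QQ_z$ for $|z|\le 2$ (Lemma~\ref{lem:PosDef}) would force $z^2\ge k+2>4$, contradicting $|z|\le 2$; hence $z<-2$ and $\rho(Z)$ is a hyperbolic element of $\PGLtR$. The hypothesis $|z|<\sqrt{k+2}$ is equivalent to $\QQ_z(x,y)<0$, which forces $xy\ne 0$ since $\QQ_z(x,0)=x^2$ and $\QQ_z(0,y)=y^2$ are nonnegative. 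In particular $\rho(X)^2$ and $\rho(Y)^2$ are orientation-preserving hyperbolic isometries whose traces $-x^2-2$ and $-y^2-2$ are both strictly less than $-2$.

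Next, invoking the commutator/boundary identity $\delta+k=z^2$ recorded in \S\ref{sec:OHKB}, I obtain $\delta=z^2-k$, so the hypothesis $|z|<\sqrt{k+2}$ is exactly $\delta<2$. The trichotomy in the proposition now reduces to the classical correspondence between an isometry's trace and its dynamics on $\Ht$: $\rho(D)$ is hyperbolic, parabolic, or elliptic according as $\delta<-2$, $\delta=-2$, or $-2<\delta<2$, and the translation length, cusp structure, or rotation angle is read off via the standard trace-length formulas.

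Finally, the geometric realization itself: I would cut $C_{1,1}$ along the disjoint one-sided simple loops representing $X$ and $Y$; this yields a three-holed sphere $\Sigma_{0,3}$ whose boundary components carry the conjugacy classes of $X^2$, $Y^2$, and $D=X^2Y^2$, with traces $-x^2-2$, $-y^2-2$, and $\delta$ respectively. The first two traces are $<-2$, giving genuine geodesic boundaries, while the third yields a geodesic, a cusp, or a cone point according to $\delta$. Such a (possibly singular) hyperbolic pair of pants exists and is unique up to isometry given these three invariants. To reassemble $C_{1,1}$, I glue each of the $X^2$- and $Y^2$-boundaries of the pants to itself via an orientation-reversing involution whose square is the respective boundary holonomy: these involutions are precisely $\rho(X)$ and $\rho(Y)$, realized as glide reflections of $\Ht$. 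A moduli count shows that the three parameters $(x^2,y^2,\delta)$ exactly parametrize the Teichm\"uller space of such structures on $C_{1,1}$, so the construction is non-degenerate. The principal obstacle I expect is the rigorous verification that the glued representation is discrete and faithful (or, in the elliptic case, that the resulting singular structure is globally complete with the specified cone angle); this should follow from a Klein combination or ping-pong argument exploiting disjointness of the axes $\Axis(\rho(X^2))$ and $\Axis(\rho(Y^2))$ and the compatible glide-reflection data forced by $\QQ_z(x,y)<0$, after which the quotient topology and end type are determined by Euler-characteristic bookkeeping and by the conjugacy class of $\rho(D)$.
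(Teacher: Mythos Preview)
Your proposal is correct and follows essentially the same route as the paper: both arguments realize the character $(-x^2-2,\,-y^2-2,\,\delta)$ as the holonomy of a (possibly singular) hyperbolic structure on $\Sigma_{0,3}$ and then attach cross-caps along the $X^2$- and $Y^2$-boundaries to obtain $C_{1,1}$. Your preliminary verifications that $|z|>2$ and $xy\ne 0$ are exactly what the paper records in the paragraph immediately preceding the proposition, and your residual concern about the rigorous cross-cap gluing is dispatched in the paper by a citation to \cite{MR2497777} rather than by an explicit ping-pong argument.
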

\begin{proof}
Depending on the three cases for $\delta$,
the character \[ (-2-x^2,-2-y^2,\delta)\] corresponds to the holonomy of
a hyperbolic surface $S \approx \Sigma_{0,3}$ where the boundary with trace $\delta$ 
either is a closed geodesic, a puncture, or a conical singularity,
respectively, depending on whether
$\delta < -2$, $\delta = -2$ or $2 > \delta > -2$ respectively.
Attaching a cross-cap to the boundary components corresponding to $X$ and $Y$
yields a hyperbolic surface $S'\approx C_{1,1}$ whose holonomy has character 
$(ix,iy,z)$. 
(Compare \cite{MR2497777} for a discussion of attaching cross-caps to geodesic boundary components of hyperbolic surfaces.)
\end{proof}
\begin{cor}\label{cor:GenFrSp}
 The generalized Fricke space of the one-holed Klein bottle
$C_{(1,1)}$ is defined by the inequalities
\begin{align*}
\vert z \vert \quad &> 2 \\
x^2 - z x y + y^2\  & < 0.
\end{align*}
Here the boundary trace $\delta$ and commutator trace $k$ are:
\begin{align*}
\delta & := \big( x^2 - z x y + y^2\big) + 2 \\
k &:= z^2  - \delta  \end{align*}
\end{cor}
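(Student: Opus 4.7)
The plan is to reformulate the hypotheses of Proposition~\ref{prop:GeneralizedFrickeSpaceOfCoo} in terms of the two inequalities claimed by the corollary. By definition, the generalized Fricke space of $\Coo$ consists of characters $(ix,iy,z)$ arising as holonomies of complete hyperbolic structures on $\Coo$ whose unique end is either a geodesic boundary, a cusp, or a conical singularity of angle $\theta\in(0,2\pi)$. By Proposition~\ref{prop:GeneralizedFrickeSpaceOfCoo} these three possibilities correspond respectively to $\delta<-2$, $\delta=-2$, and $-2<\delta<2$, where the boundary trace is $\delta = x^2 - xyz + y^2 + 2$. Their union is precisely the half-line $\delta<2$, which is equivalent to the single inequality $x^2 - zxy + y^2 < 0$.

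For the forward direction, suppose $(ix,iy,z)$ lies in the generalized Fricke space. Since any hyperbolic structure on $C_{1,1}$ has commutator trace $k>2$ (this persists under continuous deformation to cone-point structures, as is immediate from $k = z^2 - \delta$ with $\delta < 2$ and $z^2>4$), Proposition~\ref{prop:twocomponents} forces $|z|>2$. The condition $\delta<2$ translates directly to $x^2 - zxy + y^2 < 0$, yielding both inequalities of the corollary.

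For the converse, assume $|z|>2$ and $x^2 - zxy + y^2 < 0$. Setting $\delta := x^2 - zxy + y^2 + 2 < 2$ and $k := z^2 - \delta$ gives $k > z^2 - 2 > 2$, while the trace identity $\delta + k = z^2$ yields $z^2 < k+2$, i.e.\ $|z| < \sqrt{k+2}$. The hypotheses of Proposition~\ref{prop:GeneralizedFrickeSpaceOfCoo} are then satisfied, so $(ix,iy,z)$ corresponds to a hyperbolic structure on $\Coo$ of one of the three admissible types and therefore lies in the generalized Fricke space. The whole argument reduces to bookkeeping with the identities $\delta + k = z^2$ and $\delta = x^2 - xyz + y^2 + 2$; the genuine geometric content, namely the construction of a hyperbolic $\Coo$ realizing the prescribed boundary trace by attaching a cross-cap to a three-holed sphere, was already carried out in Proposition~\ref{prop:GeneralizedFrickeSpaceOfCoo}, so there is no real obstacle.
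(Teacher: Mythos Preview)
Your approach is exactly what the paper intends: the corollary is stated without proof, as an immediate reformulation of Proposition~\ref{prop:GeneralizedFrickeSpaceOfCoo} via the identities $\delta = \QQ_z(x,y)+2$ and $k=z^2-\delta$. Your converse direction is clean and correct.

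There is, however, a circularity in your forward direction. You want to establish $|z|>2$, and you argue: first $k>2$, then Proposition~\ref{prop:twocomponents} gives $|z|>2$. To justify $k>2$ for cone-point structures you write ``as is immediate from $k=z^2-\delta$ with $\delta<2$ and $z^2>4$'' --- but $z^2>4$ is precisely the conclusion you are trying to reach. The continuity heuristic (``persists under deformation'') does not close this gap without additional input.

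The fix is short and already available in the paper. Once you know $\delta<2$ (which is the geometric content of the three end types), you have $\QQ_z(x,y)=\delta-2<0$. Now invoke Lemma~\ref{lem:PosDef} together with the boundary case in \eqref{eq:Qz}: for $|z|\le 2$ the quadratic form $\QQ_z$ is positive semidefinite, so $\QQ_z(x,y)<0$ forces $|z|>2$ directly. This bypasses Proposition~\ref{prop:twocomponents} entirely and removes the circularity. With this one-line correction your argument is complete and matches the paper's intended derivation.
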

The mapping class group of $C_{1,1}$ is realized as the subgroup of 
$\Au$ generated by the elliptic involution $\e$, the transposition $(12)\in\SymThree$ and the involution $\II_1$ defined in \eqref{eq:Involutions}. 
As $\e$ acts trivially on $\RepF$, the action on $\kCk$ 
of the subgroup of $\Gamma$ corresponding to 
$\Mod(C_{1,1})$ is generated by the transposition $\mathscr{P}_{(12)}$ and the Vieta involutions $\II_1$ and $\II_2$ defined in \S\ref{sec:FaithfulAction}.
Figure~\ref{fig:GenFrickeSpaces} depicts %this surface.
these spaces.

\subsection{Lines on the Markoff surface}\label{sec:MarkoffSurface}
Here we interpret lines on the imaginary real form on  the 
Markoff surface ($k=-2$) as Fricke spaces of a certain $2$-orbifold.
Lee and Sakuma~\cite{MR283209,MR3109863} 
studied the complex lines on the Markoff surface. We consider the real lines on the real form and show they contain
characters of non-injective representations with discrete image
and identify the quotient hyperbolic orbifold.

Recall that the Markoff surface is defined by $(\xi, \eta,\zeta)\in\C^3$
satisfying $\xi^2 + \eta^2 + \zeta^2 = \xi\eta\zeta$. 
We consider the lines defined by the vanishing of one of the coordinates,
for example $\xi = 0$. In that case the lines are defined by $\zeta = \pm i \eta$,
which yield imaginary characters $(0,iy, y)$, where $y\in\R$. 
We show that, when $y\ge 2$, these correspond to representations of $\Ft$
with discrete image.

Write $(\xi,\eta,\zeta) = \big(0,2 i\sinh(\ell/2), \sinh(\ell/2)\big)$,
so that $Y$ corresponds to a glide-reflection of
displacement $\ell$, or a reflection when $\ell = 0$. 
Representative matrices are, for example:
\begin{align*}
\rho(X) & = i \bmatrix -1 & 0 \\ 0 & 1 \endbmatrix, \\
\rho(Y) & = i \bmatrix 2\sinh(\ell/2) & 1 \\ 1 & 0 \endbmatrix, \\
\rho(Z)   & = \bmatrix 0 & -1 \\ -1 & 2\sinh(\ell/2) \endbmatrix \end{align*}
Then $\rho(X)$ represents the reflection and
$\rho(Y)$ represents a glide-reflection (or reflection when $\ell=0$),
whose respective fixed-point sets are:
\begin{align*}
\Fix\big(\rho(X)\big) & = \{0,\infty\} \\
\Fix\big(\rho(Y)\big) & = \{e^{\ell/2},-e^{-\ell/2}\} \end{align*}
If $\ell > 2\log(1+\sqrt{2})$, then $\rho(Z)$ is a transvection 
with
\[ \Fix\big(\rho(Z)\big)  = \Big\{-\sinh(\ell/2)\pm\sqrt{\cosh^2(\ell)-3/2}\Big\}.
 \]
When $\ell = 2\log(1+\sqrt{2})$, then $\rho(Z)$ is a parabolic fixing $-1$.

In these latter two cases, $\Image(\rho)$ is discrete, and the quotient
hyperbolic orbifold $\Ht/\Image(\rho)$ is a punctured disc with a mirrored
arc on its boundary. Its orbifold double covering-space 
\[  \Ht/\langle \rho(Y),\,\rho(X)\rho(Y)\rho(X)\rangle\] is a two-holed cross-surface $\Czt$
with reflection symmetry defined by the reflection $\rho(X)$.
When $\ell = 2\log(1+\sqrt{2})$, the corresponding character is $(2i,2i,2)$. 
The closed ray defined by $\ell\ge 
2\log(1+\sqrt{2})$ corresponds to the Fricke space of this nonorientable
hyperbolic orbifold $\Ht/\Image(\rho)$.

\section{Bowditch theory}

Bowditch~\cite{MR1643429} introduced objects
(which he called {\em Markoff maps\/})
\begin{equation*}
\O \xrightarrow{~\mu~} \C
\end{equation*}
which are equivalent to  characters in $\RepF\cong\C^3$
where the commutator of a pair of free generators has trace $-2$.
(Tan-Wong-Zhang~\cite{MR2370281} extended this to arbitrary
characters in $\RepF\cong\C^3$, calling them {\em generalized Markoff maps.\/})
%%%In this section, we introduce what Bowditch calls
%which allows one 
Bowditch used Markoff maps to study the orbit of a
character under the action of the automorphism group $\Au$.
Departing from Bowditch's terminology, we 
call these maps {\em trace labelings.\/}  
We apply Bowditch's approach to the dynamics of the $\Gamma$-action 
on  {\em purely imaginary characters\/}
\[ 
(\xi, \eta, \zeta)\in i\R\times i\R \times \R \subset \C^3.\]  
This section reviews the theory in the more general setting of $\SLtC$-characters,
that is, when $(\xi, \eta, \zeta)\subset \C^3$ before specializing to the
purely imaginary characters.

Bowditch's method was used and extended by Tan, Wong and Zhang in
\cite{MR2247658,MR2191691,MR2405161} and Maloni-Palesi-Tan~\cite{MR3420542}
% \cite{tan-wong-zhang2004necsuf},\cite{tan-wong-zhang2005gMm} and
%\cite{tan-wong-zhang2004endinvariants} 
to study various aspects of the
dynamics of this action, and obtain variations of 
McShane's identities. See the above papers for more details.

Following Bowditch~\cite{MR1643429},
encode the dynamics of the action of $\Ou$ 
on characters, described by trace labelings  $\mu\in\Rep(\Ft,\SLtC)$, 
in terms of a dynamical system on the tree $\T$.
The character provides a way of directing $\T$, and the dynamics of the associated 
directed tree $\vT_\mu$ (the {\em flow\/}) is Bowditch's invariant. 

\subsection{The trace labeling associated to a character}

Suppose that 
\begin{equation*}
\Ft \xrightarrow{~\rho~} \SLtC
\end{equation*}
is a representation. 
Define the corresponding %%% Markoff map
{\em trace labeling\/} $\mu_\rho$:
\begin{align*}
% \mu:=\mu_{(\xi, \eta, \zeta)}: \QPo \longrightarrow \C
\O  &\xrightarrow{~\mu_\rho~} \C \\
\omega & \longmapsto \tr\big(\rho(\gamma_\omega)\big)
\end{align*}
where $\gamma_\omega$ is an element of $\Ft$ 
% corresponding to the simple loop 
corresponding to $\omega$.
%
% where $ \QPo $ identifies with the vertex set of $\PG$
% and the set $\mathsf{\Omega} = \O$ of complementary regions of the  tree
% $\T$ as above. It is convenient to think of $\mu$ as a map from
% $\O$ to $\C$. So $\mu$ assigns a complex number to each
% complementary region of $\T$. The set of triples of values about the
% vertices $v\in \V$ is precisely the orbit of the character $(\xi,
% \eta, \zeta)$ under the action of $\hGamma$. In addition, the
% structure of the group $\hGamma$ is encoded in the combinatorics
% dof $\T$.
%
%\medskip
The trace labeling $\mu$ associated to a character in 
$\RepF\cong\C^3$
satisfies a equation for each edge and an equation for each vertex as follows.

First, fix the parameter 
\[ 
k = \kappa(\mu):=%\kappa(\mu_{(\xi, \eta, \zeta)})=
\kappa(\xi, \eta, \zeta)
\]
throughout the discussion. Vertex Relation \eqref{eqn:vertexrelation} 
implies that $\kappa$  is constant on vertices of $\T$ and
Edge Relation \eqref{eq:edgerelation} provides the basic inductive step
for navigating through $\T$.
In particular, once $\mu$ is defined on the three regions around a vertex,
then \eqref{eq:edgerelation} forces a unique extension 
$\O  \to \C$.

Suppose that $e^{X,Y}(Z,Z') \in \E$ is an edge. Then
\begin{equation}\label{eq:edgerelation}
   \zeta+\zeta'=\xi\eta
\end{equation}
Suppose $v = v(X,Y,Z)\in \V$ is a vertex.
Then
\begin{equation}\label{eqn:vertexrelation}
    \xi^2+\eta^2+\zeta^2-\xi\eta\zeta -2\;=\; k
\end{equation}
where %$k=\kappa(\xi, \eta, \zeta)$, and 
the lower case Greek letters represent the trace labeling of the region denoted by the corresponding upper case Roman letters.
Figure~\ref{fig:LabeledTree} depicts an example of a trace labeling.

The Edge Relation \eqref{eq:edgerelation} and the Vertex Relation
\eqref{eqn:vertexrelation} intimately relate. 
Fixing a pair $X,Y\in\O$ which abut one another, 
there are exactly {\em two\/} elements of $\O$ which abut $X$ and $Y$; 
these are $Z$ and $Z'$ respectively. 
Their corresponding traces $\zeta,\zeta'$ are the two roots of the quadratic equation
obtained from \eqref{eqn:vertexrelation} by fixing $\xi,\eta$.
This follows directly from \eqref{eq:edgerelation}.

The trace labeling $\mu$ also intimately relates to the $\Gamma$-action on the character variety $\RepF$.
In particular, the triple of trace values about each vertex is the image of $\mu$ under an element of $\Gamma$.
This is the action of $\Gamma$ on superbases described in 
\eqref{eq:ActionOnSuperbases}
where the superbasis determines a coordinate system $\RepF\cong\C^3$.

Conversely, for any $\gamma\in\Gamma$, 
the image $\gamma(\mu)$ is a triple of trace values around some vertex of the tree.

\begin{figure}
\includegraphics[width=\textwidth]{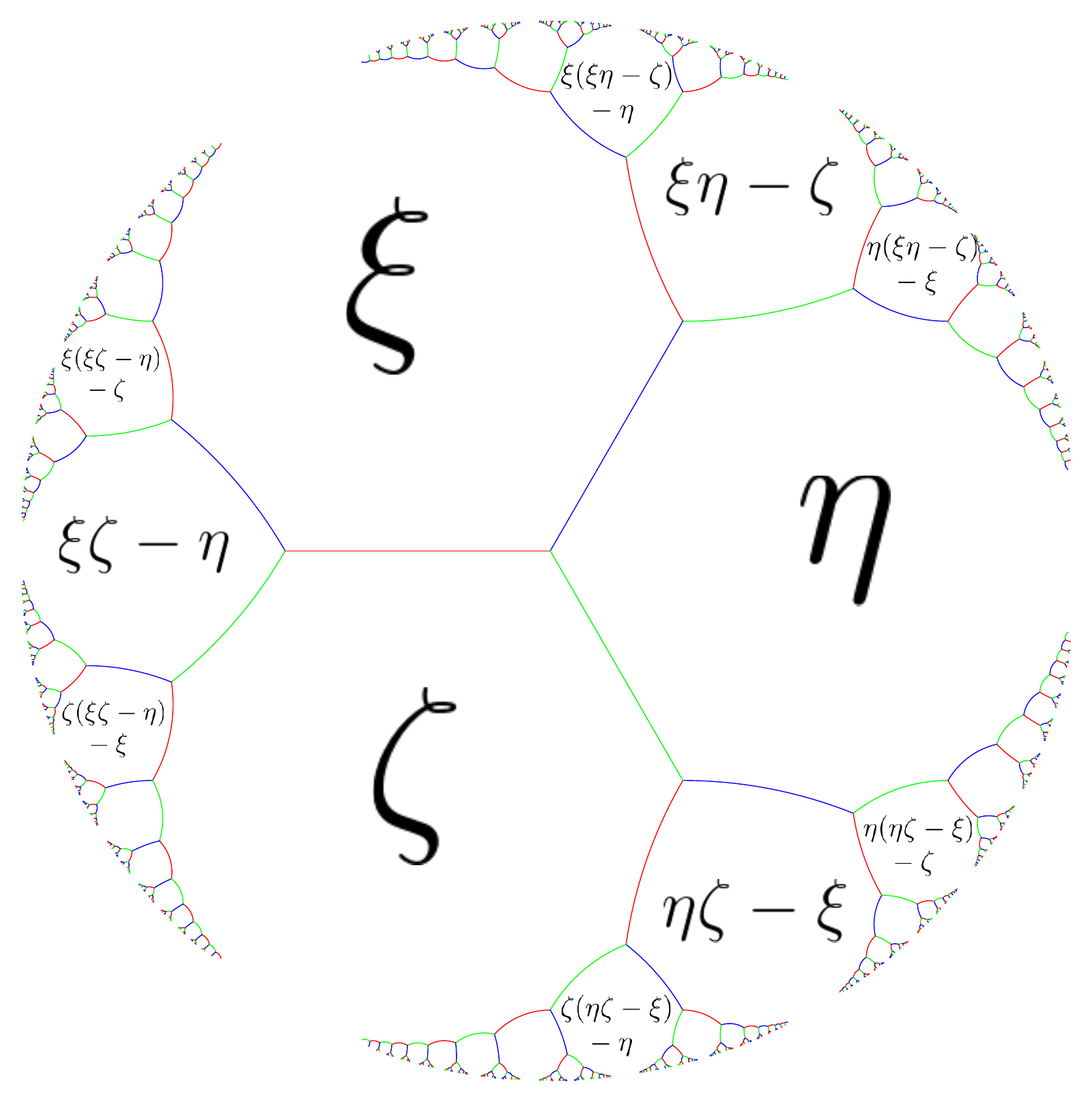}
\caption{Labeling the  trivalent tree by traces}
\label{fig:LabeledTree}
\end{figure}

\subsection{The flow associated to a character}
From the trace labeling $\mu$ associated to a character
%$\chi\in\Rep(\Ft,\SLtC)$, 
in $\Rep(\Ft,\SLtC)$, 
Bowditch~\cite{MR1643429} associates a directed tree $\vT$ 
(which Tan-Wong-Zhang~\cite{MR2370281} call a {\em flow\/} on $\T$)
as follows.

Suppose that 
\[ e = e^{X,Y}(Z,Z')\in\E \]
is an edge.
Say that $e$ is {\em decisive\/} (relative to $\mu$) if
\begin{equation*}
\vert \zeta \vert  \neq \vert \zeta' \vert;
\end{equation*}
in that case direct the edge as follows:
\begin{equation*}
\begin{cases}
e^{X,Y}(Z\rightarrow Z') & :\Longleftrightarrow \vert \zeta \vert  > \vert \zeta' \vert \\
e^{X,Y}(Z\leftarrow Z')  & :\Longleftrightarrow \vert \zeta \vert  < \vert \zeta' \vert.
\end{cases}
\end{equation*}
Otherwise, say that $e$ is {\em indecisive\/} and assign either direction to the edge $e$.
Say that an edge $e$  {\em points decisively\/} towards a vertex $v$
if and only if $e$ is decisive and points towards $v$.
%In \cite{MR2370281}, Tan-Wong-Zhang call $\vT$ a {\em flow\/} on $\T$.

If every edge is decisive, say that $\mu$ is {\em completely decisive.\/}
If every edge is indecisive, say that $\mu$ is {\em completely indecisive.\/} 
For many cases of interest, only finitely many edges are indecisive.
The ambiguity at an indecisive edge does not affect the subsequent discussion. 
In particular, if the arrow %is directed 
points  %
from $Z$ to $Z'$, then $|\zeta| \ge |\zeta'|$.
Denote the above directed  tree by $\vT_{(\xi,\eta, \zeta)}$ or $\vT_{\mu}$.

Sign-change automorphisms do not affect the directed  tree 
$\vT_\mu$ associated to a character $\mu$.

\bigskip
The vertices $\Vt$ admit the following classification:

\bigskip
\begin{tabular}{|c|c|}%\label{table:VertexTypes}
  \hline
  % after \\: \hline or \cline{col1-col2} \cline{col3-col4} ...
  Number of edges pointing towards $v$ & Type of $v$ \\
  \hline
  0 & source \\
  \hline
  1 & fork \\
  \hline
%%%  2 & generic \\
  2 & merge \\
  \hline
  3 & sink \\
  \hline
\end{tabular}
\medskip\bigskip %%%%%
% \marginpar{b: Picture illustrating 4 types of vertices.}
\begin{figure}
\centering
        \subfloat[][Source]{\includegraphics[scale=\scalefig]{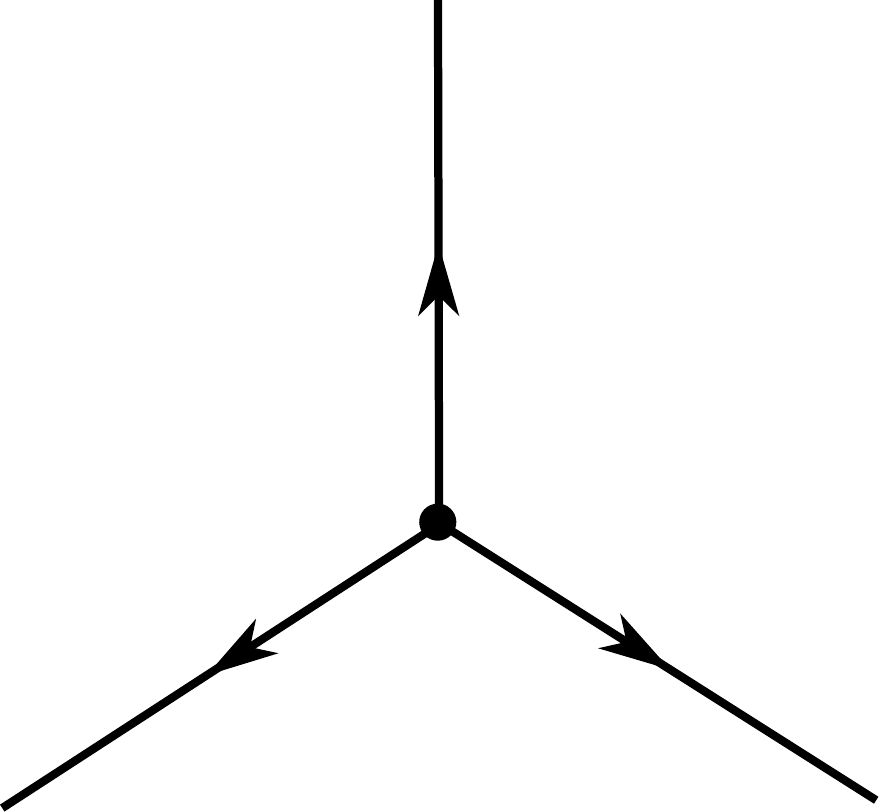}\label{F:Source}} \quad
        \subfloat[][Fork]{\includegraphics[scale=\scalefig]{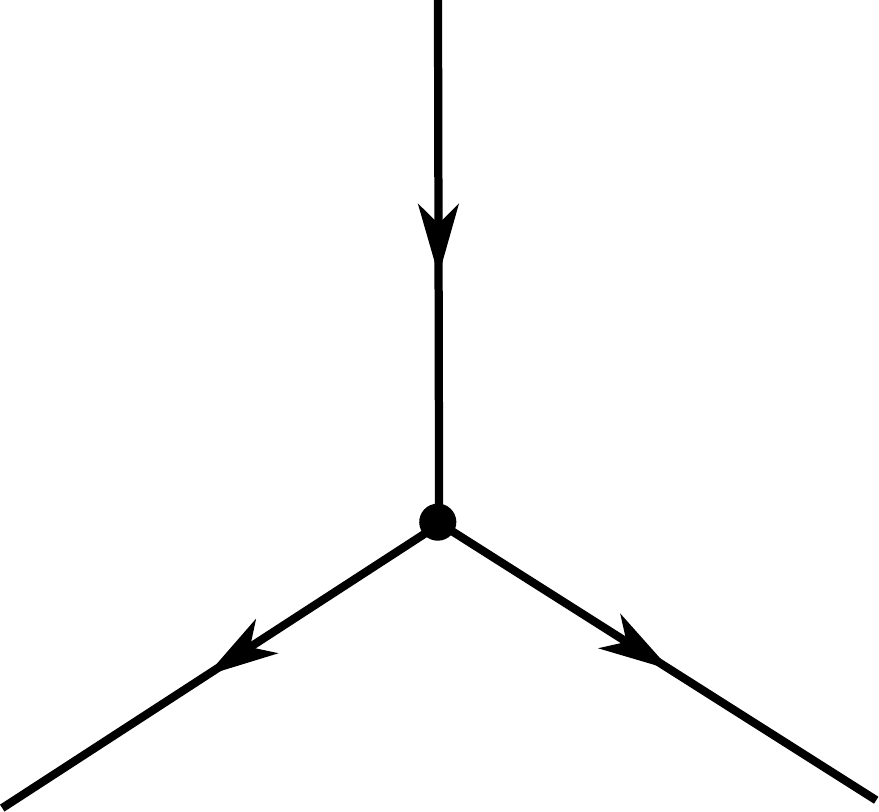}
\label{F:Fork}} \quad
        \subfloat[][Merge]{\includegraphics[scale=\scalefig]{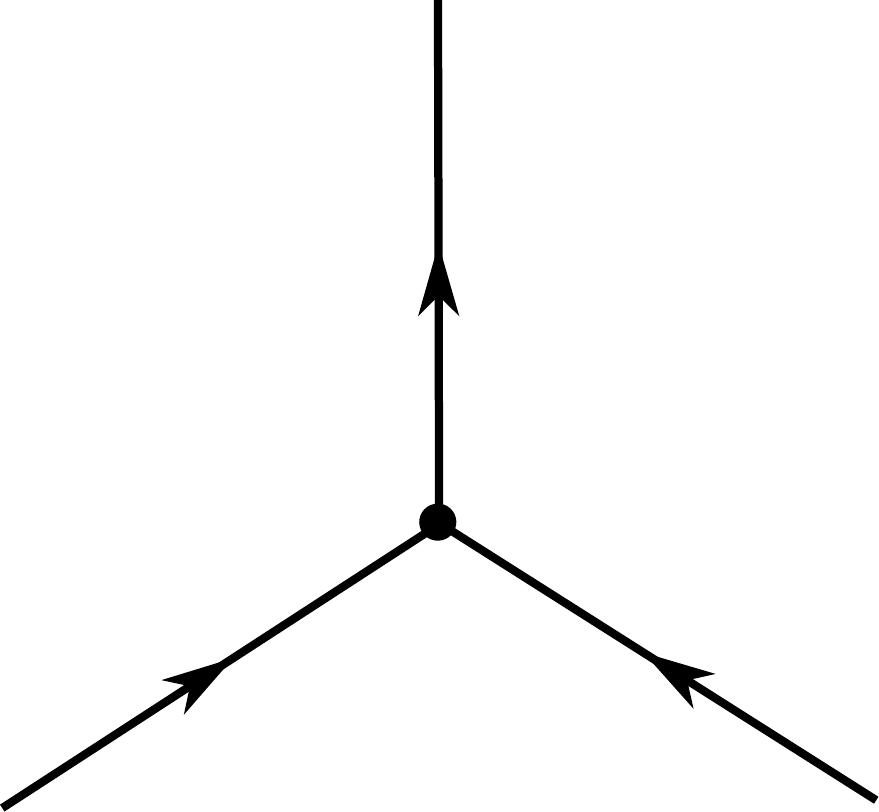}\label{F:Merge}} \quad
        \subfloat[][Sink]{\includegraphics[scale=\scalefig]{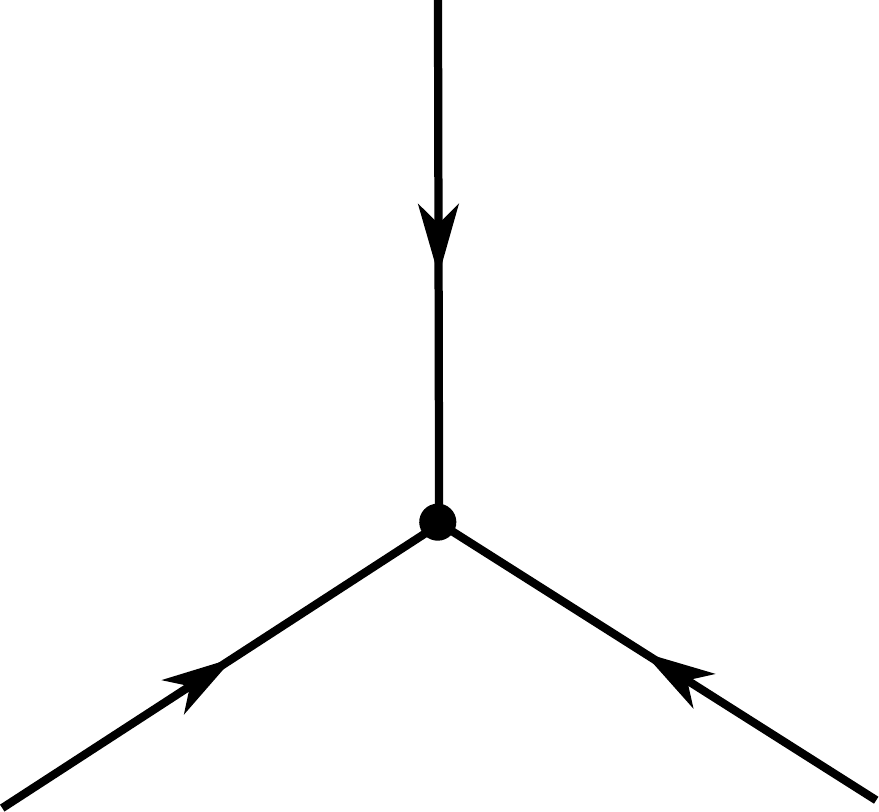}
\label{F:Sink}}
\caption{Types of vertices in a directed tree}
\end{figure}

\noindent
Each merge possesses a unique trace-reducing direction
since exactly one  directed edge incident to $v$ points away from $v$.

The following definition is due to Tan-Wong-Zhang~\cite{MR2370281}, \S 2.18 and
is a crucial idea in Bowditch~\cite{MR1643429}:

\begin{definition}\label{defn:Attractor}
Let $\vT$ be a directed tree. 
An {\em attractor\/} in $\vT$ is a minimal subtree $\vec{\mathsf{A}}\subset\vT$ satisfying
the following two properties:
\begin{itemize}
\item $\vec{\mathsf{A}}$ is finite;
\item Every edge $e \notin \vec{\mathsf{A}}$ points inward towards
$\vec{\mathsf{A}}$.
\end{itemize}
\end{definition}
\noindent
Examples of attractors include sinks 
and {\em attracting indecisive edges.\/}
For imaginary characters,
these are the only types of attractors.

\begin{prop}\label{prop:ProperAction}
$\Gamma$ acts properly on the set $\mathscr{S}$ of characters $\mu$ for which
$\vT_\mu$ has an attractor. 
\end{prop}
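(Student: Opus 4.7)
The plan is to show that the attractor $\vec{\mathsf{A}}_\mu$ is a $\Gamma$-equivariant discrete invariant of $\mu\in\mathscr{S}$, and then to reduce properness of the $\Gamma$-action on $\mathscr{S}$ to the combinatorial properness of the $\Gamma$-action on finite subtrees of $\T$.

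First I would establish the equivariance $\vec{\mathsf{A}}_{\gamma\mu}=\gamma\cdot\vec{\mathsf{A}}_\mu$. By construction the $\Gamma$-action on characters implements a relabeling of $\mu$ on $\O$ via the induced action on $\T$, as recorded in \eqref{eq:ActionOnSuperbases}. Consequently $\gamma$ carries the flow $\vT_\mu$ to $\vT_{\gamma\mu}$: decisive edges go to decisive edges with matching directions, and the minimal finite inward-pointing subtree is sent to the minimal finite inward-pointing subtree. (Any residual ambiguity at indecisive edges is irrelevant because the attractor is defined intrinsically in terms of the comparison $|\zeta|\neq|\zeta'|$.)

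Second, and this is the analytic core of the argument, I would show that on every compact $K\subset\mathscr{S}$, only finitely many distinct subtrees occur as $\vec{\mathsf{A}}_\mu$ for $\mu\in K$. The mechanism is growth of traces away from the attractor: along any directed geodesic ray in $\vT_\mu$ emanating from $\vec{\mathsf{A}}_\mu$, the edge relation \eqref{eq:edgerelation} forces $|\mu|$ to grow at least as fast as a Fibonacci-type recursion once a threshold on the trace values is exceeded. Fixing a base vertex $v_0\in\V$, this growth controls both the combinatorial distance from $v_0$ to $\vec{\mathsf{A}}_\mu$ and the diameter of $\vec{\mathsf{A}}_\mu$ in terms of $\max_{\omega\ni v_0}|\mu(\omega)|$. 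Since the trace labeling depends continuously on $\mu$ and $K$ is compact, these quantities are uniformly bounded on $K$, so the attractors range over a finite collection $\mathcal{F}$ of finite subtrees of $\T$.

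Third, I would combine Steps 1 and 2 with the fact that $\Gamma$, being commensurable with $\PGLtZ$ and acting on $\T$ essentially as its Cayley-graph structure via $\PGLtZt$, has finite pointwise stabilizers of vertices, and hence finite setwise stabilizer of any finite subtree of $\T$. Suppose $\mu_n\in K$ and $\gamma_n\mu_n\in K$. By Step 2, both $\vec{\mathsf{A}}_{\mu_n}$ and $\vec{\mathsf{A}}_{\gamma_n\mu_n}$ belong to the finite set $\mathcal{F}$; by Step 1, $\gamma_n$ maps one to the other. Hence
\[
\{\gamma_n\}\subset\bigcup_{\vec{\mathsf{A}}',\vec{\mathsf{A}}''\in\mathcal{F}}\{\gamma\in\Gamma:\gamma\vec{\mathsf{A}}'=\vec{\mathsf{A}}''\},
\]
a finite union of cosets of finite subgroups, hence finite. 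This is precisely properness of the $\Gamma$-action on $\mathscr{S}$.

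The main obstacle is the uniform bound in Step 2. Bowditch's basic growth lemma is pointwise in $\mu$, and converting it into a uniform estimate across a compact family of characters requires care: one must handle the transition between decisive and indecisive edges, and make the Fibonacci-type amplification effective in terms of data bounded on $K$. Once this uniform version of Bowditch's growth lemma is in place, Steps 1 and 3 are essentially formal.
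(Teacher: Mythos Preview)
Your approach is the paper's: the $\Gamma$-equivariant assignment $\mu\mapsto\vec{\mathsf{A}}_\mu$ lands in the set of finite subtrees of $\T$, on which $\Gamma$ acts properly, and properness pulls back. The paper's proof consists of exactly those two sentences and omits your Step~2 entirely. Your Step~2 (only finitely many attractors arise over any compact $K\subset\mathscr{S}$) is precisely what makes the pullback rigorous---an equivariant map to a discrete target transfers properness only if compact sets have finite image---so you have written out the analytic content that the paper leaves implicit. The Fibonacci-type growth estimate you sketch is the standard Bowditch mechanism and is the correct way to fill this in.
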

\begin{proof}
Denote by 
 $\mathsf{Finite}\big(\V\big)$ the set of finite subsets of $\V$.
Then the map 
\[\mathscr{S} \longrightarrow  \mathsf{Finite}\big(\V\big)\]
which associates to $\mu\in\mathscr{S}$ the attractor
of $\vT_\mu$ is $\Gamma$-equivariant.
Since $\Gamma$ acts properly on $\mathsf{Finite}\big(\V\big)$, 
it acts properly on $\mathscr{S}$.
\end{proof}
\begin{definition}
A {\em descending path\/} in the directed tree $\vec\T_{\mu}$ is
a path in $\T$ as above, where each edge $f_i$ points
away from $v_i$ and towards $v_{i+1}$:
\begin{equation}\label{eq:Path}
\vec P = \big( u_0 \xrightarrow{f_0} u_1 \xrightarrow{f_1}  \cdots   \xrightarrow{f_{n-1}} u_n \xrightarrow{f_n} \cdots
 \big) \end{equation}
where $u_i\in\V$ and $f_i\in\E$, which forms a geodesic in $\T$.
\end{definition}
If $P$ is a descending path as above, then the sequence of 
values $(\xi_n,\eta_n,\zeta_n)$ at $u_n$ 
are decreasing in absolute value, that is:
\begin{align}\label{eq:DescendingDecreasing}
\vert \xi_{n+1} \vert & \le  \vert \xi_n \vert \notag \\
\vert \eta_{n+1} \vert & \le  \vert \eta_n \vert \notag \\
\vert \zeta_{n+1} \vert & \le  \vert \zeta_n \vert. \end{align}
This follows since at each step, two of the three coordinates $\xi,\eta,\zeta$
remain constant and the absolute value of the remaining coordinate
does not increase.

\begin{figure}
\centerline{\includegraphics[width=\textwidth]{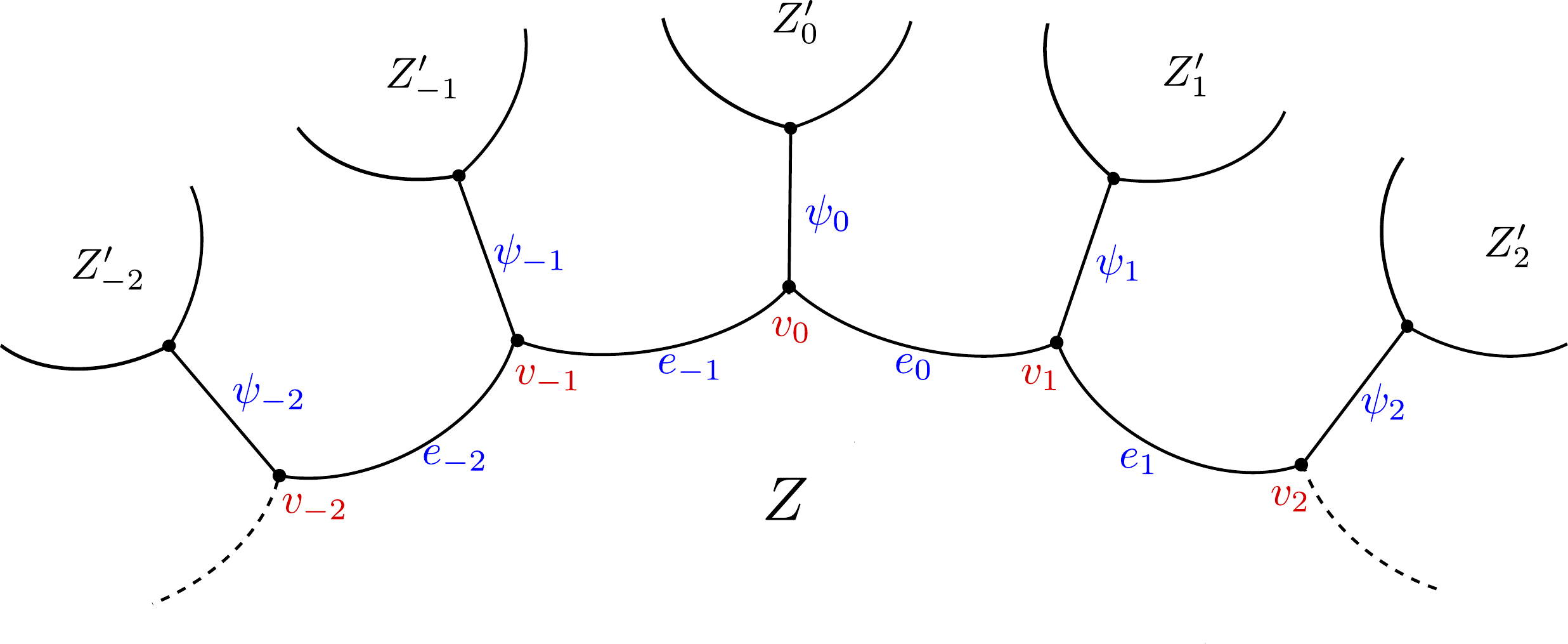}}
\caption{The regions around an alternating geodesic}
\label{fig:GeodesicAbuttingZ}
\end{figure}

\subsection{Exceptional characters}
We state here some of the basic results associated to the trace labelings 
$\mu$ as well as the directed trees $\vec \T_{\mu}$;  see
\cite{MR1643429}, %bowditch1998plms},
\cite{MR2405161},%tan-wong-zhang2004endinvariants},
and \cite{MR2370281} % tan-wong-zhang2005gMm} 
for more details. 
For ease of exposition, we exclude
certain exceptional cases from our discussion below.
 
To begin, exclude the characters arising from
{\em reducible representations,\/} namely characters in 
$\kappa\inv(2)$. 
The dynamics on the set of reducible characters is
discussed in 
\cite{MR2497777}.
Next, exclude the $\hGamma$-orbit 
\[ \hGamma \cdot \big( \{0\}\times\C\times\C\big).\]
For each $k\in \C$, this set
intersects $\kappa\inv(k)$ in a set of measure zero, and hence does not
affect our claims of ergodicity. Nonetheless this case presents quite
interesting dynamics. 

Among the exceptional characters are {\em dihedral characters.\/} 
These are characters of representations where both elements of some basis
map to involutions in $\SLtC$. 
Taking that basis to be $(X,Y)$, the corresponding character equals
$(0,0,\zeta)$ for some $\zeta\in\C$. 
Therefore dihedral characters lie in the $\Gamma$-orbit of the coordinate
line $\{0\}\times\{0\}\times\C$.
The orbit of $(0,0,\zeta)$ equals 
\[ \big\{ (0,0,\zeta),(0,0,-\zeta),(0,\zeta,0),(0,-\zeta,0),(\zeta,0,0),(-\zeta,0,0) \big\}. \]
Dihedral characters are the most degenerate characters since 
directing the tree $\T$ is completely arbitrary:
{\em The directed trees corresponding to dihedral characters are completely indecisive\/}.

We call the elements of the set
\[ \kappa \inv (2)\  \cup\  (\hGamma \cdot(0,\eta,\zeta))\  \subset\  \C^3 \]
{\em exceptional} characters, and elements of its
complement {\em unexceptional} characters. 
The associated
trace labelings and directed trees are called {\em exceptional} or {\em
unexceptional} accordingly. Note that an {\em unexceptional} trace labeling
$\mu$ takes values only in $\C \setminus \{0\}$.

%Proposition~\ref{prop:twocomponents} immediately implies:
%\begin{cor}\label{lem:zBiggerThan2}
%If $\mu$ is an unexceptional imaginary trace labeling with 
%$k>2$ and $Z \in \O_{\R}$, 
%then $\vert z\vert >2$ where  $z=\mu(Z)$.
%\end{cor}

\subsection{The Fork Lemma}
The following fundamental result is proved in %%%Bowditch
\cite{MR1643429}      % bowditch1998plms} 
(Lemma 3.2(3)) for characters $(\xi,\eta,\zeta) \in \kappa^{-1}(-2)$
and extended in \cite{MR2370281}     % tan-wong-zhang2005gMm} 
(Lemma~3.7) 
and \cite{MR2405161} (Proposition~3.3) 
for general characters.
Recall that if the two edges are directed away from a vertex $v$, 
then that vertex must either be a fork or a source.
We will see that such vertices are rare in our applications.

We supply a brief proof for the reader's convenience.

\begin{lem}[The Fork Lemma]\label{lem:Fork}
Suppose that $\mu$ is a trace labeling with directed tree $\vec\T_{\mu}$. 
If for the vertex $v(X,Y,Z) \in\V$, the edges opposite
to $X$ and $Y$ are directed away from $X$ and $Y$ respectively, 
then $|\zeta| \le 2$ or $\xi=\eta=0$.
\end{lem}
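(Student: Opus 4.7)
The plan is to translate the hypothesis into two inequalities in the complex trace variables, combine them after squaring, and finish with Cauchy--Schwarz and AM--GM.

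First, set $v = v(X,Y,Z)$ with trace triple $(\xi,\eta,\zeta)$. The edge opposite to $X$ is $e^{Y,Z}(X,X')$, and connects $v$ to $v(X',Y,Z)$; by the Edge Relation \eqref{eq:edgerelation}, $\xi' = \eta\zeta - \xi$. The hypothesis that this edge is directed away from $X$ (whether decisively or by a chosen orientation when indecisive) gives $|\xi|\ge|\xi'|$, i.e.
\[
|\xi|^{2} \;\ge\; |\eta\zeta-\xi|^{2}.
\]
Expanding the right-hand side via $|w|^{2}=w\bar w$ yields the equivalent inequality
\[
2\,\mathrm{Re}(\bar\xi\,\eta\zeta) \;\ge\; |\eta|^{2}|\zeta|^{2}.
\]
The hypothesis applied to the edge opposite to $Y$ gives $\eta' = \xi\zeta - \eta$ and, symmetrically,
\[
2\,\mathrm{Re}(\bar\eta\,\xi\zeta) \;\ge\; |\xi|^{2}|\zeta|^{2}.
\]

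Next, add the two inequalities. The key observation is that $\bar\xi\eta+\xi\bar\eta = 2\,\mathrm{Re}(\bar\xi\eta)$ is a \emph{real} number, so
\[
2\,\mathrm{Re}(\bar\xi\,\eta\zeta) + 2\,\mathrm{Re}(\bar\eta\,\xi\zeta)
= 2\,\mathrm{Re}\!\big((\bar\xi\eta+\xi\bar\eta)\zeta\big)
= 4\,\mathrm{Re}(\bar\xi\eta)\,\mathrm{Re}(\zeta).
\]
Thus
\[
4\,\mathrm{Re}(\bar\xi\eta)\,\mathrm{Re}(\zeta) \;\ge\; \bigl(|\xi|^{2}+|\eta|^{2}\bigr)\,|\zeta|^{2}.
\]

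To finish: if $\xi\eta\ne 0$, then by $|\mathrm{Re}(w)|\le|w|$ the left side is at most $4|\bar\xi\eta|\cdot|\zeta| = 4|\xi||\eta||\zeta|$, while AM--GM bounds the right side below by $2|\xi||\eta|\,|\zeta|^{2}$. Dividing by the positive quantity $2|\xi||\eta||\zeta|$ (assuming also $\zeta\ne 0$, else the conclusion is trivial) gives $|\zeta|\le 2$, as desired. If instead $\xi\eta=0$, say $\xi=0$, then the first inequality $2\,\mathrm{Re}(\bar\xi\eta\zeta)\ge|\eta|^{2}|\zeta|^{2}$ becomes $0\ge|\eta|^{2}|\zeta|^{2}$, forcing either $\eta=0$ (so $\xi=\eta=0$, the second alternative) or $\zeta=0$ (so $|\zeta|=0\le 2$); the case $\eta=0$ is symmetric.

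The only real subtlety is step two: arranging the two complex inequalities so that they combine into something controllable. The trick is that even though $\bar\xi\eta\zeta$ and $\bar\eta\xi\zeta$ are individually complex, their sum is $2\,\mathrm{Re}(\bar\xi\eta)\cdot\zeta$, whose real part factors as the product of two real numbers---and this factorization is exactly what lets a final application of $|\mathrm{Re}(\cdot)|\le|\cdot|$ together with AM--GM produce the clean bound $|\zeta|\le 2$. The degenerate branch $\xi\eta=0$ is handled by inspection of one of the two original inequalities.
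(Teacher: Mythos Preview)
Your proof is correct, but the paper's argument is considerably shorter and avoids the squaring-and-expanding step entirely. From $|\xi|\ge|\xi'|$ the paper simply applies the triangle inequality:
\[
2|\xi| \;\ge\; |\xi|+|\xi'| \;\ge\; |\xi+\xi'| \;=\; |\eta\zeta| \;=\; |\eta|\,|\zeta|,
\]
and symmetrically $2|\eta|\ge|\xi|\,|\zeta|$. Adding these gives $2\bigl(|\xi|+|\eta|\bigr)\ge|\zeta|\bigl(|\xi|+|\eta|\bigr)$, so either $|\zeta|\le 2$ or $|\xi|+|\eta|=0$, i.e.\ $\xi=\eta=0$. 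This bypasses your real-part factorization and the AM--GM step altogether; the triangle inequality does all the work in one line. Your route---squaring, exploiting that $\bar\xi\eta+\xi\bar\eta$ is real, then bounding---reaches the same conclusion and is a perfectly valid alternative, but it trades the paper's single inequality for several. The only practical advantage of your approach is that the intermediate inequality $2\,\mathrm{Re}(\bar\xi\eta\zeta)\ge|\eta|^2|\zeta|^2$ is sharper information than $2|\xi|\ge|\eta||\zeta|$, though that extra precision is not needed here.
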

\begin{proof}
Write $e%_1
^{Y,Z}(X \rightarrow X')$ and $e%_1
^{X,Z}(Y \rightarrow Y')$ for the two edges directed away from $v$,
with corresponding traces $\xi,\eta,\zeta,\xi',\eta'$.
By assumption,
\begin{align*}
\vert \xi \vert &\ge \vert \xi'\vert \\
\vert \eta \vert &\ge \vert \eta'\vert,
\end{align*}
which implies:
\begin{align*}
 2\vert \xi\vert \ge  \vert \xi \vert + \vert \xi' \vert  &\ge  \vert \xi+\xi' \vert = \vert \eta\zeta \vert   \\
 2 \vert \eta \vert  \ge  \vert \eta \vert + \vert \eta' \vert  &\ge  \vert \eta+\eta' \vert = \vert \xi\zeta \vert  \end{align*}
Summing these two equations yields:
\[   2\  \big( \vert \xi \vert + \vert \eta \vert \big)\; \ge \;  \vert \zeta \vert\  
\big( \vert \xi \vert + \vert \eta \vert \big)  \]
Thus $2 \ge\vert \zeta\vert$ or $\xi=\eta=0$.
\end{proof}
%
%
%\begin{alignat*}{3}
%% \nonumber to remove numbering (before each equation)
%  2|x| &\ge |x|+|x'| &\ge |x+x'|=|yz|  &\\
%  2|y| &\ge |y|+|y'| &\ge |y+y'|=|xz| &\Longrightarrow   \\
%&  2 (|x|+|y|) &\ge |z|(|x|+|y|)  &\Longrightarrow \\ 
% &\qquad2 \ge |z| &\quad {\rm or} \quad x=y=0 &
%\end{alignat*}
%\begin{eqnarray*}
%% \nonumber to remove numbering (before each equation)
%  2|x| &\ge& |x|+|x'| \ge |x+x'|=|yz| \\
%  2|y| &\ge& |y|+|y'| \ge |y+y'|=|xz| 
%  \Longrightarrow 
%  \\
%  2(|x|+|y|) &\ge& |z|(|x|+|y|) %\\
% \Longrightarrow \\ 2 &\ge& |z| \quad {\rm or} \quad x=y=0
%\end{eqnarray*}

\noindent
Since characters vanishing on a pair of two adjacent complementary regions 
are exceptional, a region $Y$ adjacent to a source or a fork arising from an unexceptional character must satisfy
$\vert\mu(Y)\vert \le 2$. 

%\subsection{Properness criteria}
We prove properness of the action using the
two {\em Q-conditions\/}
introduced by Bowditch in \cite{MR1643429}.
These conditions describe an open subset of the % type-preserving
relative character variety with proper $\Gamma$-action.
Tan-Wong-Zhang~\cite{MR2370281} extended this set 
%tan-wong-zhang2005gMm} 
to an open subset of the {\em full\/} character variety 
with proper $\Gamma$-action.
For our purposes, we relax the conditions
slightly, allowing finitely many parabolics among the set of primitive elements
as in \cite{MR2247658}.
The resulting subset is no longer open, 
but the $\Gamma$-action remains proper.

\begin{definition}
Given a character $\mu$ and constant $C>0$, define $\Omega_{\mu}(C)$
to be the union of the set of closures of complementary regions $Y\in\O$ with
$\vert \mu(Y)\vert \le C$.
 \end{definition}
\medskip

\noindent
Lemma~\ref{lem:Fork} immediately implies:

\begin{lem}
[Bowditch~\cite{MR1643429}, Theorem 1(2))]
\label{lem:quasiconvexity}
Suppose $C\ge 2$. 
For every character $\mu$, the set $\Omega_{\mu}(C)$ is connected.
\end{lem}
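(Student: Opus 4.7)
Plan: Argue by contradiction. Suppose $\Omega_\mu(C)$ is disconnected. Since the closures of two regions in $\Omega$ intersect iff they share a bounding edge of $\T$, disconnectedness of $\Omega_\mu(C)$ in the plane is equivalent to disconnectedness of the nerve subgraph on $\{Y\in\Omega : |\mu(Y)|\le C\}$ with adjacency given by shared bounding edges. Pick small regions $Y_1,Y_2\in\Omega_\mu(C)$ in distinct components minimizing the full nerve distance $n=d(Y_1,Y_2)$ on $\Omega$. Then $n\ge 2$, and any shortest path $Y_1=W_0,W_1,\ldots,W_n=Y_2$ has every intermediate $W_j$ big ($|\mu(W_j)|>C$); otherwise a small intermediate would yield a strictly closer pair of small regions in distinct components.

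The main tool is Lemma~\ref{lem:Fork} applied along an alternating geodesic. Focus on $Z=W_1$ and its alternating geodesic $C(Z)$, which contains both edges $e^{Y_1,Z}$ and $e^{Z,W_2}$. At each vertex $v=v(X,Y,Z)$ on $C(Z)$, the Fork Lemma applied with $|\mu(Z)|>C\ge 2$ forbids the two $C(Z)$-edges $e^{X,Z}$ and $e^{Y,Z}$ from both pointing away from $v$ (barring the exceptional case $\mu(X)=\mu(Y)=0$). Hence $C(Z)$ has no source vertex under $\vec\T_\mu$, and its flow is either uniformly directed or possesses a unique sink toward which all arrows converge.

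Consider the sequence $R_0=Y_1,R_1,R_2,\ldots$ of regions adjacent to $Z$ along $C(Z)$, where consecutive $R_j,R_{j+1}$ share a vertex of $C(Z)$ and hence a bounding edge of $\T$. The Edge Relation yields the linear recurrence $\mu(R_{k+1})=\mu(Z)\,\mu(R_k)-\mu(R_{k-1})$ whose characteristic roots $\lambda_\pm=(\mu(Z)\pm\sqrt{\mu(Z)^2-4})/2$ satisfy $|\lambda_+|>1>|\lambda_-|$ since $|\mu(Z)|>2$. Combined with the descending-flow monotonicity~\eqref{eq:DescendingDecreasing} along $C(Z)$: in the no-sink case, the sequence $|\mu(R_k)|$ decays like $|\lambda_-|^k$ in one direction so eventually some $R_{k^*}$ is small; in the sink case, the smallness of $R_0=Y_1$ forces the sink-vertex regions (where $|\mu|$ is minimal along $C(Z)$) to be no larger, yielding a small $R_{k^*}$ on the $W_2$-side of $Y_1$.

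The chain $Y_1=R_0,R_1,\ldots,R_{k^*}$ along $C(Z)$ gives a path of nerve-adjacent regions; if every intermediate $R_j$ is small, it places $Y_1$ and $R_{k^*}$ in the same component of $\Omega_\mu(C)$. Since $R_{k^*}$ is adjacent to $Z$ on the $W_2$-side, its nerve distance to $Y_2$ is strictly less than $n$, contradicting minimality. The main obstacle is handling big intermediate $R_j$'s on this chain; these are resolved by a secondary recursion applying the same analysis to $R_j$'s alternating geodesic, with a complexity measure (for instance the pair $(n,\sum_{j:\,W_j\text{ big}}|\mu(W_j)|)$ ordered lexicographically) strictly decreasing at each step to guarantee termination. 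The exceptional characters where the Fork Lemma degenerates ($\mu(X)=\mu(Y)=0$ on an adjacent pair, dihedral orbits) must be checked directly, but they lie in a single $\hGamma$-orbit of measure zero and can be verified by inspection.
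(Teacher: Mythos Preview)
The paper's own ``proof'' is simply the sentence ``Lemma~\ref{lem:Fork} immediately implies,'' deferring the short argument to Bowditch's original paper. So there is no detailed argument in the paper to compare against. That said, your attempt is considerably more elaborate than what is needed, and it has genuine gaps.

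The central gap is in your distance claim. You assert that once you locate a small region $R_{k^*}$ adjacent to $Z=W_1$ ``on the $W_2$-side,'' its nerve distance to $Y_2$ is strictly less than $n$. But adjacency to $Z$ only gives $d(R_{k^*},Y_2)\le d(R_{k^*},Z)+d(Z,Y_2)=1+(n-1)=n$, not $<n$. To get a strict decrease you would need $R_{k^*}$ adjacent to $W_2$ itself (i.e.\ $R_{k^*}=R_{m-1}$ in your indexing with $W_2=R_m$), and you have not shown that all of $R_1,\ldots,R_{m-1}$ are small. Indeed they need not be: if the sink on $C(Z)$ lies on the \emph{opposite} side of $Y_1$ from $W_2$, then $|\mu(R_k)|$ is increasing for $k\ge 0$, so the last small $R_{k^*}$ along this segment may occur well before $R_{m-1}$. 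Your ``no-sink'' case has the same problem: the decay direction may point away from $W_2$, so the small regions you find are on the wrong side.

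Your proposed fix---a secondary recursion on the big $R_j$'s with a lexicographic complexity measure $(n,\sum_{W_j\text{ big}}|\mu(W_j)|)$---does not work as stated. The second coordinate is a real number, not an integer, so lexicographic order on such pairs is not well-founded; and in any case you have not shown it decreases. When you pass from $Z=W_1$ to the alternating geodesic of some big $R_j$, neither $n$ nor the sum is obviously smaller.

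The short argument that the paper has in mind runs roughly as follows: if $\Omega_\mu(C)$ were disconnected, take the geodesic arc in $\T$ joining the boundaries of two small regions in different components, and look at its first edge $e^{X,Y}$ leaving $C(A)$ at $v(A,X,Y)$. If both $X,Y$ were big ($>C\ge 2$), the Fork Lemma (applied with $Z=X$ and $Z=Y$) constrains the edge directions at $v$ enough to force one of the neighbouring regions to be small after all, and one then inducts on the length of the arc. The point is that the Fork Lemma is applied at a single vertex to a pair of big regions, not along an entire alternating geodesic; no recurrence analysis or complexity measure is needed.
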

\noindent(See also \cite{MR2370281}, Theorem~3.1(2).)
%tan-wong-zhang2005gMm}

The following {\em (extended) BQ-conditions\/} were first defined by Bowditch in \cite{MR1643429}, and generalized by Tan-Wong-Zhang~\cite{MR2370281,MR2247658}):  
\begin{align}
\mu(X) &\not\in [-2,2] \text{~for all~}X \in \O; \label{eq:BQ1} \\
\mu(X) &\not\in (-2,2) \text{~for all~} X \in \O; \label{eq:BQ1'} \\
\Omega_{\mu}(2) &= \{Y \in \O: |\mu(Y)| \le 2\} \text{~is finite.} \label{eq:BQ2}
\end{align}
Condition~\eqref{eq:BQ1} means that every primitive element of $\Ft$ maps to a loxodromic element,
and Condition~\eqref{eq:BQ1'} means that no primitive element of $\Ft$ maps to an elliptic.
Condition~\eqref{eq:BQ2} is equivalent to the condition that 
$\Omega_{\mu}(C)$ is finite for any $C\ge 2$.
See \cite{MR1643429} and \cite{MR2247658} for details.

\begin{definition}\label{def:BQ}
A character $\mu$ is a {\em Bowditch character\/} if and only if $\mu$  
is irreducible and   satisfies \eqref{eq:BQ1} and \eqref{eq:BQ2} above.
A character $\mu$  is an  {\em extended Bowditch character\/} if and only if $\mu$  
is unexceptional and irreducible and satisfies 
\eqref{eq:BQ1'} and \eqref{eq:BQ2} above.

The {\em Bowditch set $\B$\/} comprises all Bowditch characters,
and the 
{\em extended Bowditch set $\B'$\/} comprises all extended Bowditch characters.
\end{definition}
\noindent
Condition 
\eqref{eq:BQ1'} % \ref{sec:BowditchSet}(i')  
allows for  finitely many such parabolics among the primitives.
The extended Bowditch set $\B'$  is no longer open in $\C^3$ but $\Gamma$ still acts properly on it.

Exceptional characters do not lie in $\B$: 
in the case of the reducible characters where $k=2$ by definition and  otherwise, 
Condition~%\ref{sec:BowditchSet}(i) is not satisfied if 
\eqref{eq:BQ1} is not satisfied if 
\[ \mu \in  \hGamma \cdot (0,\eta,\zeta). \] 
Similarly, if 
\[ \mu(Z) =\pm \sqrt{k+2} \] 
for some $ Z \in \O$, then 
the proof of Proposition~\ref{prop:EdgesAboutX} implies that
$\mu$ cannot satisfy Condition~\eqref{eq:BQ2}.

\medskip
Bowditch~\cite{MR1643429} and Tan-Wong-Zhang~\cite{MR2370281,MR2247658}   
%tan-wong-zhang2005gMm}) 
proved that $\B$ is an open subset of the character variety 
and, for every $\mu\in\B'$,  
the flow $\vT_\mu$ possesses an attractor.
Thus Proposition~\ref{prop:ProperAction} implies:

\begin{thm}[
Bowditch~\cite{MR1643429}, %bowditch1998plms} 
Tan-Wong-Zhang~\cite{MR2370281}]    %tan-wong-zhang2005gMm})
$\Gamma$ acts properly on the extended Bowditch set $\B'$.\end{thm}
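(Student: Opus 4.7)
The plan is to invoke Proposition~\ref{prop:ProperAction}: it suffices to show that for every $\mu\in\B'$, the directed tree $\vT_\mu$ possesses an attractor.

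By Condition~\eqref{eq:BQ2} and Lemma~\ref{lem:quasiconvexity}, $\Omega_\mu(2)$ is a finite connected subset of $\O$. The Fork Lemma~\ref{lem:Fork} then implies that every source or fork vertex in $\vT_\mu$ is adjacent to some region in $\Omega_\mu(2)$: the exceptional alternative $\mu(X)=\mu(Y)=0$ is ruled out since $\mu$ is unexceptional. The difficulty is that each region $R\in\Omega_\mu(2)$ borders an \emph{infinite} alternating geodesic $C(R)\subset\T$, so the set of vertices adjacent to $\Omega_\mu(2)$ is a priori infinite.

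The analytic heart of the argument confines sources, forks, and indecisive edges to a finite subset of each $C(R)$. Label the regions neighboring $R$ along $C(R)$ by $Y_n$ for $n\in\Z$, and set $y_n=\mu(Y_n)$. The edge relation~\eqref{eq:edgerelation} yields the linear recurrence $y_{n+1}=\mu(R)\,y_n-y_{n-1}$, whose characteristic roots $\lambda,\lambda^{-1}$ satisfy $|\lambda|>1$ precisely when $\mu(R)\notin(-2,2)$. Condition~\eqref{eq:BQ1'} ensures this for every region in $\Omega_\mu(2)$; meanwhile Condition~\eqref{eq:BQ2} excludes the degenerate one-dimensional subspaces of solutions that would place infinitely many small $y_n$ into $\Omega_\mu(2)$. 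Hence $|y_n|\to\infty$ in both directions, the arrows along $C(R)$ stabilize outside a finite central segment, and $C(R)$ contributes only finitely many forks, sources, and indecisive edges.

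Let $\vec{\mathsf{A}}\subset\T$ be the minimal subtree containing all forks, sources, and indecisive edges; by the above it is finite. For any edge $e\notin\vec{\mathsf{A}}$, both endpoints are merges or sinks. If $e$ pointed away from $\vec{\mathsf{A}}$, following outgoing arrows (unique at each merge) from its far endpoint would produce an infinite descending path in $\T\setminus\vec{\mathsf{A}}$ along which some coordinate of $(\xi_n,\eta_n,\zeta_n)$ strictly decreases by \eqref{eq:DescendingDecreasing}; the recurrence analysis, applied to the corresponding region along its own alternating geodesic, forces that coordinate eventually to land in $[-2,2]$, placing a region of $\Omega_\mu(2)$ outside $\vec{\mathsf{A}}$ — a contradiction. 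The main obstacle is the quantitative control over the recurrence $y_{n+1}=\mu(R)\,y_n-y_{n-1}$: making precise that Conditions~\eqref{eq:BQ1'} and \eqref{eq:BQ2} together genuinely force $|y_n|\to\infty$ in both directions (ruling out both bounded oscillatory solutions and solutions decaying at infinity along a single eigendirection), and adapting this estimate to the descending-path setting used in the attractor verification.
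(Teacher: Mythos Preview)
Your overall strategy coincides with the paper's: reduce to showing $\vT_\mu$ has an attractor and invoke Proposition~\ref{prop:ProperAction}. The paper, however, gives no argument for the attractor's existence; it simply cites Bowditch and Tan--Wong--Zhang for that fact and applies the proposition. So you are attempting to reconstruct what the paper outsources.

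Your reconstruction has the right architecture through the construction of $\vec{\mathsf{A}}$, but the final paragraph contains a genuine gap. Two points. First, $\vec{\mathsf{A}}$ as you define it need not contain sinks: if a sink $s$ lies outside $\vec{\mathsf{A}}$, the edge on the geodesic from $s$ toward $\vec{\mathsf{A}}$ points toward $s$, yet your ``follow outgoing arrows'' construction stalls immediately since a sink has no outgoing edge. This is repairable (distinct sinks in the same complementary component of $\T\setminus\vec{\mathsf{A}}$ would force a fork or source between them, so each outward-pointing boundary edge of $\vec{\mathsf{A}}$ sees at most one external sink; enlarge $\vec{\mathsf{A}}$ accordingly), but it must be addressed.

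Second, and more seriously, your contradiction from an infinite descending path does not work as stated. The inequalities~\eqref{eq:DescendingDecreasing} give only \emph{non-increasing} absolute values, and a bounded non-increasing sequence of complex moduli has no reason to enter the real interval $[-2,2]$. Nor is the descending path an alternating geodesic in general, so ``the recurrence analysis, applied to the corresponding region along its own alternating geodesic'' has no clear referent. The actual mechanism in Bowditch and Tan--Wong--Zhang is different: one controls the set of edges $e$ whose two abutting regions \emph{both} lie in $\Omega_\mu(2)$ (a finite set, since two regions share at most one edge) and shows this spans an attracting subtree; equivalently, one shows that along any geodesic ray leaving such a finite core the trace labels exhibit at-least-Fibonacci growth, so the ray cannot be descending. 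Your acknowledged ``main obstacle'' is real, but the mechanism you propose for resolving it is not the right one.

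A minor correction: $|\lambda|>1$ holds when $\mu(R)\notin[-2,2]$, not $(-2,2)$; the endpoints $\mu(R)=\pm 2$ give $|\lambda|=1$ and require Proposition~\ref{prop:abutPar} (polynomial growth) separately.
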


For general $\mu\in\RepF$, Tan-Wong-Zhang~\cite{MR2405161} associate 
a closed subset $\mathcal{E}(\mu)$ of the projectivized measured lamination space
of $\Soo$, the set of {\em end invariants\/} of $\mu$. 
(This space identifies with $\RPo$, the completion of $\QPo$.
%Equivalently, $\mathcal{E}(\mu)$ is a set of ends of the tree $\T$. 
A character $\mu$ lies in $\B'$ if and only if 
$\mathcal{E}(\mu) = \emptyset$ (\cite{MR2405161}, Theorem~1.3). 
We complete their computation of $\mathcal{E}(\mu)$ for imaginary characters
in this paper.   %%%
We show that, for $k>2$, either
$\mathcal{E}(\mu) = \emptyset$ (when $\mu$ lies in the domain of discontinuity
of the $\Gamma$-action) or $\mathcal{E}(\mu)$ consists of only one point.

 \subsection
{Alternating geodesics}\label{sec:AlternatingGeodesics}
For a fixed $Z\in\O$, the values of a trace labeling $\mu$ on the other regions abutting 
$C(Z)$ admits an explicit description by inductively iterating \eqref{eq:edgerelation}.

First we establish some notation.
Choose a trace labeling $\mu$ associated to a character $[\rho]\in\RepF$.
Denote the trace $\mu(A)$ associated with $A\in\O$ by the corresponding 
lower-case letter $a$.

Let $ \{ v_n \mid n \in \Z \} $
be the ordered set of vertices adjacent to $Z$,
labeled so that 
\[ v_n = v(Y_{n-1},Y_n,Z), \]
inductively beginning at $Y_0 = X$ and $Y_1 = Y$.
Let  $e_n\in \E$ be the edge between $v_{n}$ and $v_{n+1}$.
We write
% \[ \cdots \xrightarrow{e_{n-1}} v_n \xrightarrow{e_{n}} v_{n+1} \xrightarrow{e_{n+1}} \cdots \]
\[
\cdots 
\stackrel{e_{-1}}{\rule[.05in]{.2in}{.01in}}
v_0 \stackrel{e_0}{\rule[.05in]{.2in}{.01in}}v_1  
\stackrel{e_1}{\rule[.05in]{.2in}{.01in}}v_2 
\stackrel{e_2}{\rule[.05in]{.2in}{.01in}}
\cdots
\]
for this geodesic.
Let  $Y_n \in \O$ be the region adjacent to $Z$ sharing
the edge $e_n$ with $Z$. 
Finally, let  $\vedge{n}$ be the edge incident to $v_n$
not adjacent to $Z$, and $Z_n' \in \O$ the other region
opposite to the edge $\vedge{n}$.
Denote the corresponding traces by $\zeta = \mu(Z)$ and $\eta_n = \mu(Y_n)$.
Compare Figure~\ref{fig:GeodesicAbuttingZ}.

The indices $j(e_n)\in\{1,2,3\}$ for the tricoloring described in
\S\ref{sec:Tricoloring} 
 alternate between two elements in 
$ \{1,2,3\}$, depending on the mod $2$ class associated to $Z$.
This leads to a simple formula  \eqref{eq:ynFormula}
for the set of traces $\eta_n$.
In particular the sequence of characters forms a {\em lattice\/} in a hyperbola.
Recall that the hyperbola $\{ (\xi,\eta) \mid \xi \eta = 1\}$ is naturally an algebraic group
(the multiplicative group of the ground field), 
and a lattice is a discrete subgroup $\Lambda$ with compact quotient; 
such a lattice is necessarily a cyclic group.

\begin{prop}\label{prop:abut}
Suppose $\zeta\neq \pm 2$. Let $\lambda,\lambda^{-1}$ be the two (distinct) solutions
of 
\[ \lambda^2 - \zeta \lambda + 1 = 0 \]
so that 
\[ \zeta = \lambda + \lambda^{-1}.\]
Then, for $n\in\Z$, 
\begin{equation}\label{eq:ynFormula}
\eta_n\  =\;  A \lambda^n\ +\   B \lambda^{-n}  \end{equation}
where
\begin{align*}
A & =  \frac{\eta -\lambda^{-1}\xi}{\lambda - \lambda^{-1}}  \\
B & = \frac{\lambda \xi - \eta}{\lambda - \lambda^{-1}}.
\end{align*}
Furthermore
\[ AB = \frac{\zeta^2-k-2}{\zeta^2-4}. \]
\end{prop}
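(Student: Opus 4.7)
The plan is to reduce everything to a single linear two-term recurrence for the sequence $(\eta_n)$, solve it in closed form using the hypothesis $\zeta \neq \pm 2$, and then compute $AB$ algebraically by invoking the Vertex Relation.

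First, I would apply the Edge Relation \eqref{eq:edgerelation} to each edge $e_n$ along the alternating geodesic $C(Z)$. The edge $e_n$ abuts the two regions $Y_{n-1}$ and $Y_n$ and ends at $Z$ and some fourth region; but by the structure of the neighboring superbases at vertices $v_n = v(Y_{n-1},Y_n,Z)$ and $v_{n+1}=v(Y_n,Y_{n+1},Z)$, the other two regions meeting at the transition from $v_n$ to $v_{n+1}$ are precisely $Y_{n-1}$ and $Y_{n+1}$. Applied to this edge, the Edge Relation yields the three-term recurrence
\begin{equation*}
\eta_{n+1} + \eta_{n-1} \;=\; \zeta\,\eta_n \qquad (n\in\Z).
\end{equation*}

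Next, since $\zeta\neq \pm 2$, the characteristic polynomial $t^2 - \zeta t + 1$ has two distinct roots, which are exactly $\lambda$ and $\lambda^{-1}$. The general solution to the recurrence is therefore $\eta_n = A\lambda^n + B\lambda^{-n}$ for constants $A,B$ uniquely determined by the initial data $\eta_0 = \xi$ and $\eta_1 = \eta$. Solving the linear system
\begin{align*}
A + B &= \xi, \\
A\lambda + B\lambda^{-1} &= \eta
\end{align*}
gives the stated formulas for $A$ and $B$.

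Finally, I would compute $AB$ directly:
\begin{equation*}
AB \;=\; \frac{(\eta - \lambda^{-1}\xi)(\lambda\xi - \eta)}{(\lambda - \lambda^{-1})^2} \;=\; \frac{\xi\eta(\lambda+\lambda^{-1}) - \xi^2 - \eta^2}{(\lambda+\lambda^{-1})^2 - 4} \;=\; \frac{\xi\eta\zeta - \xi^2 - \eta^2}{\zeta^2 - 4}.
\end{equation*}
The Vertex Relation \eqref{eqn:vertexrelation} at $v(X,Y,Z) = v(Y_0,Y_1,Z)$ rearranges to $\xi\eta\zeta - \xi^2 - \eta^2 = \zeta^2 - 2 - k$, whence $AB = (\zeta^2 - k - 2)/(\zeta^2 - 4)$.

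I expect no serious obstacle: the only subtle point is checking that the Edge Relation applied at each edge of $C(Z)$ does indeed give the recurrence $\eta_{n+1} + \eta_{n-1} = \zeta\eta_n$ (and not something twisted by signs or involving $Z_n'$ rather than $Y_{n\pm 1}$); this requires a clean bookkeeping of which two regions play the role of $(Z,Z')$ in \eqref{eq:edgerelation} for the edge $e_n$ along $C(Z)$. Once that is in place, the remainder is a routine solution of a constant-coefficient recurrence together with one symmetric-function identity.
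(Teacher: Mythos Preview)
Your proposal is correct and follows essentially the same approach as the paper: derive the recurrence $\eta_{n+1}+\eta_{n-1}=\zeta\eta_n$ from the Edge Relation along $C(Z)$, then solve it. The paper packages the solution as diagonalizing the companion matrix $\bmatrix \zeta & -1 \\ 1 & 0 \endbmatrix$, whereas you go straight to the characteristic polynomial; these are the same computation. One small correction to your bookkeeping: the edge $e_n$ abuts $Z$ and $Y_n$ (the regions common to both endpoints $v_n,v_{n+1}$) and ends at $Y_{n-1},Y_{n+1}$, not the other way around --- but your recurrence is correct regardless. Your explicit verification of the $AB$ formula via the Vertex Relation is a nice addition; the paper asserts it without writing out the algebra.
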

\begin{proof}
\eqref{eq:edgerelation} implies that $\eta_n$ satisfies the recursion
\[ \eta_{n+1} =  \zeta \eta_n + \eta_{n-1}. \]
Rewrite this recursion as:
\[ \mathbf{Y}_{n+1}  = \mathbf{Z}   \mathbf{Y}_n\]
in terms of a vector variable $\mathbf{Y}_n$ and matrix variable
$\mathbf{Z}$ defined by:
\[ \mathbf{Y}_n := \bmatrix  \eta_n \\ \eta_{n-1} \endbmatrix,\ 
\mathbf{Z}  = \bmatrix   \zeta & -1 \\ 1 & 0 \endbmatrix. \]
Diagonalize $\mathbf{Z}$ as:
\[ \mathbf{Z}  =  \frac1{\lambda-\lambda^{-1}}
\bmatrix \lambda^{-1} & \lambda \\ 1 & 1 \endbmatrix
\bmatrix \lambda^{-1} & 0 \\ 0 & \lambda \endbmatrix
\bmatrix -1 & \lambda \\ 1 & -\lambda^{-1} \endbmatrix \]
where
\[
  \frac1{\lambda-\lambda^{-1}}
\bmatrix \lambda^{-1} & \lambda \\ 1 & 1 \endbmatrix
\bmatrix -1 & \lambda \\ 1 & -\lambda^{-1} \endbmatrix
= \bmatrix 1 & 0 \\ 0 & 1 \endbmatrix.
\]
Then
\[
\mathbf{Y}_n = \mathbf{Z}^n \mathbf{Y}_0 = 
\frac1{\lambda-\lambda^{-1}}
\bmatrix \lambda^{-1} & \lambda \\ 1 & 1 \endbmatrix
\bmatrix \lambda^{-n} & 0 \\ 0 & \lambda^n \endbmatrix
\bmatrix -1 & \lambda \\ 1 & -\lambda^{-1} \endbmatrix
\]
from which the result follows.
\end{proof}
%\noindent As an easy corollary, we get:
%
%\begin{cor}\label{cor:abutgrowth}: 
%\begin{itemize}
%\item[(a)] When $z \not\in [-2,2]\cup \{\pm \sqrt{k+2} \}$, $\vert \zeta \vert \neq 1$ and $AB \neq 0$ so that $\vert y_n \vert \rightarrow \infty$ when $n \rightarrow \pm \infty$.
%
%\item[(b)] When $z \in (-2,2)$, $\vert \zeta \vert =1$ so that $\vert y_n \vert$ is bounded for all $n \in Z$. In particular, when $z=0$, $y_{n+2}=-y_{n}$ for all $n \in \Z$ so that the edges abutting $Z$ are all indecisive.
%\end{itemize}
%
%\end{cor}

\noindent
When $\zeta = \pm 2$, the situation is even simpler:
\begin{prop}\label{prop:abutPar}
If $\zeta = 2$, then 
\[  \eta_n =  \xi + n (\eta-\xi) \]
and if $\zeta = -2$, then
\[ \eta_n =  (-1)^n \big( \xi - n (\xi + \eta) \big). \]
\end{prop}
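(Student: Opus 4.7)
The plan is to parallel the derivation used for Proposition~\ref{prop:abut}, specializing the second-order linear recurrence to the cases where its characteristic polynomial has a repeated root.

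First I would set up the recursion by applying the edge relation \eqref{eq:edgerelation} to the edge $e_n$ of the alternating geodesic $C(Z)$: since $e_n$ abuts $Z$ and $Y_n$, and ends at $Y_{n-1}$ and $Y_{n+1}$, we get
\[ \eta_{n+1} + \eta_{n-1} \;=\; \zeta \, \eta_n. \]
The initial data are $\eta_0 = \xi$ (from $Y_0 = X$) and $\eta_1 = \eta$ (from $Y_1 = Y$), exactly as in the previous proposition.

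Next I would treat the two cases separately. For $\zeta = 2$ the recurrence becomes $\eta_{n+1} - 2\eta_n + \eta_{n-1} = 0$, whose characteristic polynomial $(\lambda - 1)^2$ has $\lambda = 1$ as a double root; the general solution is therefore $\eta_n = A + Bn$ for constants $A,B$. Substituting $n = 0,1$ yields $A = \xi$ and $B = \eta - \xi$, giving the claimed formula $\eta_n = \xi + n(\eta - \xi)$. For $\zeta = -2$ the recurrence becomes $\eta_{n+1} + 2\eta_n + \eta_{n-1} = 0$, whose characteristic polynomial $(\lambda + 1)^2$ has $\lambda = -1$ as a double root; the general solution is $\eta_n = (-1)^n(A + Bn)$. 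Substituting $n = 0, 1$ gives $A = \xi$ and $-(A + B) = \eta$, so $B = -(\xi + \eta)$, yielding $\eta_n = (-1)^n\big(\xi - n(\xi + \eta)\big)$.

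There is no real obstacle here; the content is the standard degeneration of the diagonalization argument in the proof of Proposition~\ref{prop:abut}, where the two eigenvalues $\lambda, \lambda^{-1}$ of the matrix $\mathbf{Z}$ coalesce into a single eigenvalue $\pm 1$ and the matrix becomes a nontrivial Jordan block. Equivalently, one can obtain the $\zeta = \pm 2$ formulas from Proposition~\ref{prop:abut} by taking the limit $\lambda \to \pm 1$ in \eqref{eq:ynFormula} (an application of L'H\^opital's rule in the parameter $\lambda$), which provides an independent sanity check on the constants.
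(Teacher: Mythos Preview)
Your proof is correct. The paper does not actually supply a proof of this proposition at all; it simply prefaces the statement with ``the situation is even simpler'' and moves on. Your argument---solving the degenerate (repeated-root) case of the same linear recurrence $\eta_{n+1} = \zeta\eta_n - \eta_{n-1}$ set up in the proof of Proposition~\ref{prop:abut}---is precisely the intended fill-in, and your remark about the Jordan-block/limiting interpretation is an apt gloss on why the authors considered it routine.
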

\noindent
%The proof is similar to that of Proposition~\ref{prop:abut} and is omitted.

When one of the coordinates equal $\pm 2$, the corresponding level set is a union of two parallel lines. 
For example, $\zeta = \pm 2$ defines the {\em degenerate conic:\/}
\[ (\xi \mp \eta)^2  =  k - 2. \]
The characters corresponding to vertices
$v(Y_{n-1},Y_n,Z)$ then form a lattice on one of these lines
(in the sense of \S\ref{sec:AlternatingGeodesics}).

Propositions~\ref{prop:abut}~and~\ref{prop:abutPar} easily imply:
%As an easy corollary, we get:
\begin{cor}\label{cor:abutgrowth} Suppose that $k\neq 2$.
\renewcommand{\labelitemii}{$\bullet$}
\begin{itemize}
\item[(a)] If either
\begin{itemize}
\item%[\bullet]
$\zeta \not\in [-2,2]$ %\cup \{\pm \sqrt{k+2} \},\quad \vert \zeta \vert \neq 1, \quad AB \neq 0, \] 
and $\zeta^2 \neq k +2$, or
\item%[\bullet]  
$\zeta =\pm 2$,  
\end{itemize}
then $\lim_{n \rightarrow \pm \infty}  \vert \eta_n \vert  = \infty$.
%\item[(b)] If $z =\pm 2$,  
%then $\lim_{n \rightarrow \pm \infty}  \vert y_n \vert  = \infty$ as well.
%when $n \rightarrow \pm \infty$.
\item[(b)] If $\zeta \in (-2,2)$, then $\vert \lambda \vert =1$.
In particular 
$\vert \eta_n \vert$ is bounded for all $n \in \Z$. 
\item[(c)] 
If $\zeta=0$, then $\eta_{n+2}=-\eta_{n}$ for all $n \in \Z$.
Every edge abutting $Z$ is indecisive.
\item[(d)] 
If   $\zeta^2 = k+2$ and $(\xi,\eta) \neq (0,0)$, 
then 
%$\vert z\vert = \sqrt{k+2} > 2$ and $(x,y) \neq (0,0)$, then $AB = 0$.
$\lim_{n \rightarrow + \infty}  \vert \eta_n \vert$ equals either $0$ or $\infty$,
and $\lim_{n \rightarrow - \infty}  \vert \eta_n \vert$ equals either $\infty$ or $0$ respectively.
\end{itemize}
\end{cor}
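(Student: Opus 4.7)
\medskip
\noindent\textbf{Proof proposal.}
The plan is to exploit the closed-form expressions from Propositions~\ref{prop:abut} and \ref{prop:abutPar} together with the identity $AB = (\zeta^2-k-2)/(\zeta^2-4)$. The characteristic roots $\lambda, \lambda^{-1}$ satisfy $\lambda\lambda^{-1}=1$, so $|\lambda|=1$ precisely when $\zeta\in[-2,2]$; otherwise one of $|\lambda|, |\lambda^{-1}|$ strictly exceeds $1$. Throughout, I will freely use the vertex relation at $v(X,Y,Z)$, namely $\xi^2+\eta^2+\zeta^2-\xi\eta\zeta-2=k$.

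First I would dispatch the easy cases. For (b), $\zeta\in(-2,2)$ forces $|\lambda|=1$, hence $|\eta_n|\le |A|+|B|$ by the triangle inequality. For (c), $\zeta=0$ gives $\lambda^2=-1$, so \eqref{eq:ynFormula} yields $\eta_{n+2}=-\eta_n$; in particular $|\eta_{n-1}|=|\eta_{n+1}|$, which by the setup of \S\ref{sec:AlternatingGeodesics} makes each edge $e_n$ abutting $Z$ indecisive.

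Next I would handle (a). When $\zeta\notin[-2,2]$ and $\zeta^2\neq k+2$, pick the labeling so that $|\lambda|>1$; then $AB\neq 0$, so both $A\neq 0$ and $B\neq 0$, and the dominant term in $\eta_n=A\lambda^n+B\lambda^{-n}$ forces $|\eta_n|\sim |A|\,|\lambda|^n$ as $n\to+\infty$ and $|\eta_n|\sim |B|\,|\lambda|^{|n|}$ as $n\to-\infty$, both tending to $\infty$. When $\zeta=\pm 2$, I apply Proposition~\ref{prop:abutPar}: the coefficient of $n$ is $\eta-\xi$ or $\xi+\eta$, and the degenerate conic relation $(\xi\mp\eta)^2=k-2$ derived from the vertex relation shows this coefficient vanishes iff $k=2$, which is excluded by hypothesis; hence $|\eta_n|\to\infty$.

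The subtle case is (d). Here $\zeta^2=k+2$ gives $AB=0$, but I must first rule out $\zeta\in[-2,2]$ in order to have $|\lambda|\neq 1$. Since $k\neq 2$ forces $\zeta^2\neq 4$, the only worry is $\zeta\in(-2,2)$; in that range the vertex relation simplifies to $\xi^2-\zeta\xi\eta+\eta^2=0$, i.e.\ $\QQ_\zeta(\xi,\eta)=0$, and Lemma~\ref{lem:PosDef} then forces $\xi=\eta=0$, contradicting $(\xi,\eta)\neq(0,0)$. So $|\lambda|\neq 1$; normalize to $|\lambda|>1$. Since $AB=0$ but not both $A$ and $B$ can vanish (else $\eta_0=\xi$ and $\eta_1=\eta$ would both be zero), exactly one of $A,B$ is zero. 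If $B=0$ then $\eta_n=A\lambda^n$, so $|\eta_n|\to\infty$ as $n\to+\infty$ and $|\eta_n|\to 0$ as $n\to-\infty$; if $A=0$ the two limits are interchanged. This is the asserted dichotomy.

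The main obstacle is (d): one must recognize that the hypothesis $(\xi,\eta)\neq(0,0)$ is precisely what prevents $\zeta$ from lying in the interval where $\QQ_\zeta$ degenerates, and one must be careful with the labeling convention $|\lambda|>1$ versus $<1$ to correctly identify which of the two limits $n\to\pm\infty$ gives $0$ and which gives $\infty$. The remaining cases are essentially mechanical consequences of the explicit formula.
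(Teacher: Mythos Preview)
Your proof is correct and follows essentially the same approach as the paper's: both extract the asymptotics of $\eta_n$ directly from the closed-form formula \eqref{eq:ynFormula} (and Proposition~\ref{prop:abutPar} for $\zeta=\pm 2$), using $AB=(\zeta^2-k-2)/(\zeta^2-4)$ to decide when $A$ or $B$ vanishes. The paper's own argument is in fact much terser---it treats only (a) and (d) explicitly and in one line each---so your write-up is more complete, not less.

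One remark on case (d): you correctly identify that one must know $|\lambda|\neq 1$, and you rule out $\zeta\in(-2,2)$ via $\QQ_\zeta(\xi,\eta)=0$ together with Lemma~\ref{lem:PosDef}. Be aware that Lemma~\ref{lem:PosDef} is a positive-definiteness statement for \emph{real} arguments, so your argument as written covers the real and purely imaginary character cases (in the latter, $\QQ_\zeta(ix,iy)=-\QQ_z(x,y)$) but not arbitrary complex $(\xi,\eta)$. This is not a defect relative to the paper---the paper does not address this point at all, and every application of the corollary in the paper is in a setting where $|\zeta|>2$ or the characters are real/imaginary---but it is worth being explicit about the scope of the claim.
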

\begin{proof}
For (a), note that $\vert\lambda\vert \neq 1$ and $AB \neq 0$. 
For (d), note that $AB = 0$. 
Now apply \eqref{eq:ynFormula}.
\end{proof}

Here is an important consequence of the Fork Lemma~\ref{lem:Fork}.

\begin{prop}\label{prop:EdgesAboutX}
Let $\mu$ be an unexceptional character with $\kappa(\mu)=k$.
Suppose that  $\vert \zeta\vert =|\mu(Z)|> 2 $ for $Z \in \O$.
Let 
\[ \vec C(Z) :=  \{\vec e_n\}_{n \in \Z}  \]  
be the sequence of directed edges of $\vT_{\mu}$ surrounding $Z$. 
%Suppose that 
%\[
%|z|=|\mu(Z)|> 2.
%\] 
Then either:
\begin{enumerate}
  \item [(a)] %$z \neq \pm \sqrt{k+2}$,
$\zeta^2 \neq k+2$,
and $\partial Z$ contains a vertex $v$ such that 
% there exists a vertex $v$ on the boundary of $Z$ such that 
all $\vec e_n$ point towards $v$, or
  \item [(b)] %$z = \pm \sqrt{k+2}$, and $\vec e_n$ all point in one direction.
$\zeta^2 = k+2$, and $\vec e_n$ all point in one direction.
\end{enumerate}
\end{prop}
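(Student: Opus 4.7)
The plan is to combine the Fork Lemma (Lemma~\ref{lem:Fork}) with the explicit formula of Proposition~\ref{prop:abut} and Corollary~\ref{cor:abutgrowth}. The Fork Lemma will yield a very rigid combinatorial pattern for the arrow directions along the alternating geodesic $C(Z)$, and the asymptotic behavior of the traces $\eta_n$ will then discriminate between cases (a) and (b).

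First I would apply Lemma~\ref{lem:Fork} at each vertex $v_n=v(Y_{n-1},Y_n,Z)$ of $C(Z)$. The two edges of $C(Z)$ incident to $v_n$ are $e_{n-1}$, which abuts $Y_{n-1}$ and $Z$ (hence is opposite $Y_n$), and $e_n$, which abuts $Y_n$ and $Z$ (hence is opposite $Y_{n-1}$). Since $|\zeta|>2$ and $\mu$ is unexceptional, we cannot have $\eta_{n-1}=\eta_n=0$ (otherwise $\mu$ would lie in the $\hGamma$-orbit of a dihedral character), so Lemma~\ref{lem:Fork} forbids $e_{n-1}$ and $e_n$ from both pointing away from $v_n$. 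Calling an edge $e_j$ \emph{backward} if its arrow is directed from $v_{j+1}$ to $v_j$ and \emph{forward} otherwise, this constraint reads: $e_{n-1}$ backward forces $e_n$ backward. Consequently the set $S\subseteq\Z$ of indices with $e_n$ backward is closed under $n\mapsto n+1$, so $S$ is either $\emptyset$, all of $\Z$, or a half-line $\{n\ge m^*\}$ for some $m^*\in\Z$; in the last case a direct check shows that the two edges of $C(Z)$ incident at $v_{m^*}$ both have head $v_{m^*}$ and that every edge of $C(Z)$ points toward $v_{m^*}$ along the geodesic.

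Then I would use Proposition~\ref{prop:abut} and Corollary~\ref{cor:abutgrowth} to separate the two cases. Since $|\zeta|>2$, the roots $\lambda,\lambda^{-1}$ of $\lambda^2-\zeta\lambda+1=0$ satisfy $|\lambda|\neq 1$, so we may take $|\lambda|>1$. In case (a), $\zeta^2\neq k+2$ gives $AB=(\zeta^2-k-2)/(\zeta^2-4)\neq 0$, so both $A$ and $B$ are nonzero, and Corollary~\ref{cor:abutgrowth}(a) yields $|\eta_n|\to\infty$ as $n\to\pm\infty$; this rules out $S=\emptyset$ and $S=\Z$, forcing $m^*$ to be finite, so $v:=v_{m^*}\in\partial Z$ is the required vertex. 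In case (b), $\zeta^2=k+2$ forces $AB=0$, while unexceptionality excludes $A=B=0$; exactly one of $A,B$ then vanishes, $\eta_n$ becomes a pure geometric sequence, and $|\eta_n|$ is strictly monotone in $n$ by Corollary~\ref{cor:abutgrowth}(d). The backward set is therefore $\emptyset$ or $\Z$, and all $\vec e_n$ point in a common direction along $C(Z)$. The main obstacle is really just the combinatorial bookkeeping needed to identify, at each $v_n$, which of the two $C(Z)$-edges is opposite $Y_{n-1}$ versus $Y_n$, and to translate ``both pointing away from $v_n$'' into the forward/backward classification; once this is done, the Fork Lemma removes any need for delicate oscillatory estimates on $A\lambda^n+B\lambda^{-n}$, so the argument goes through uniformly whether $\lambda$ and $\zeta$ are real or complex.
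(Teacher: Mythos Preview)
Your proof is correct and follows essentially the same approach as the paper: both arguments apply the Fork Lemma~\ref{lem:Fork} at each vertex of $C(Z)$ to rule out the configuration where two consecutive edges point away from a common vertex, and then invoke the explicit formula $\eta_n=A\lambda^n+B\lambda^{-n}$ from Proposition~\ref{prop:abut} (together with $AB=(\zeta^2-k-2)/(\zeta^2-4)$) to decide between the two remaining patterns. Your formulation via the ``upper set'' $S$ of backward indices is a clean way to package the combinatorics that the paper handles by a short contradiction argument, but the content is the same.
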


\begin{proof}
Suppose 
that neither the $\vec e_n$ all point in one direction,  nor  is there a vertex on 
$\partial Z$ towards which all the $\vec e_n$ point.
% (a) nor (b) occurs. 
Then $\partial Z$ contains a vertex $v$ which is a fork or source, 
with $Z$ adjacent to two edges directed out of $v$. 
Since $\mu$ is not dihedral and $|\zeta|>2$ by assumption, 
this contradicts Proposition~\ref{lem:Fork}.

Now let $Y_n$, $n \in \Z$ be the (ordered) neighboring regions of $Z$,
with $Y_n$ adjacent to $e_n$. 
By Proposition~\ref{prop:abut}, 
\[ \eta_n=A\lambda^n+B\lambda^{-n}, \]
where
\[ AB=\frac{\zeta^2-k-2}{\zeta^2-4}, \quad \lambda+\lambda^{-1}=\zeta. \]
Note that
$|\lambda| \neq 1$ since $\zeta \not\in [-2,2]$. 
If 
%\[ z \neq \pm \sqrt{k+2},\]
\[ \zeta^2 \neq k+2,\]
then $A,B \neq 0$ so that $|\eta_n| \rightarrow \infty$ as
$n \rightarrow \pm \infty$.
Thus  $\vec e_n$ must eventually point
inwards from both directions.
Hence all 
$\vec e_n$  point towards $v$ 
for a unique vertex $v$ adjacent to $Z$.

If 
% \[ z = \pm \sqrt{k+2},\] 
$\zeta^2 = k+2$,
exactly one of $A$, $B$ equals 0 (since $\mu$ is not dihedral).
Then $|\eta_n|$ is monotone in $n$, increasing to
$\infty$ in one direction and decreasing to $0$ in the other. 
Therefore all $\vec e_n$  point in the same direction.
\end{proof}
\noindent
When $\zeta^2 = k+2$, the $\zeta$-level set on $\kappa^{-1}(k)$ 
consists of two lines which intersect in the dihedral character $(0,0,\zeta)$ as follows.
Choose $\phi$ so that $\phi^2 = k -2$.
Then $\zeta = \lambda + \lambda\inv$ where
\[ \lambda := \frac{\zeta + \phi}2, \qquad
\lambda\inv := \frac{\zeta - \phi}2. \]
Then the solutions of $\kappa(\xi,\eta,\zeta) = k$ for fixed $\zeta$ with $\zeta^2 = k+2$ as above form the degenerate hyperbola
\[ ( \xi  - \lambda  \eta) (\xi  - \lambda\inv  \eta) = 0 \]
which is the union of the two lines
\[ \xi  - \lambda  \eta  = 0, \qquad \xi  - \lambda\inv  \eta = 0. \]
This is a degenerate conic section of the cubic surface
$\kappa^{-1}(k)$.
The sequence of vertices $v(Y_{n-1},Y_{n},Z)$ then correspond to a
lattice in the group $\C^*$ acting on the complement of $(0,0,\zeta)$ 
in one of the above lines.

The case when $\zeta=\pm 2$ is similar to the case where $|\zeta|>2$ using 
Proposition~\ref{prop:abutPar}:  
in this case all $\vec e_n$ point towards some vertex $v \in \partial Z$.

\subsection{Indecisive edges and orthogonality}
Characters where two loxodromic generators have orthogonal axes
play a special role in this theory, especially for imaginary characters.
Recall that the natural complex-orthogonal pairing on $\sltc$
\begin{align*}
\sltc \times \sltc &\longrightarrow \C \\
(X,Y) &\longmapsto X \cdot Y := \tr(XY) \end{align*}
is invariant under the adjoint representation $\SLtC \xrightarrow{~\Ad~} \Aut\big(\sltc\big)$.
The {\em traceless projection\/} 
\begin{align*}
\SLtC &\longrightarrow \sltc \\
X &\longmapsto  X - \frac{\tr(X)}2 \Id \end{align*}
is $\Ad$-equivariant.
We may normalize to obtain $\hat{X}\in\SLtC \cap \sltc$ by dividing by  a square root of
\[
\Det\Big( X - \frac{\tr(X)}2 \Id \Big) = \frac{\tr(X)^2 - 4}4 \]
to obtain the {\em involution\/} fixing the invariant axis $\Axis(X)$ when $X$ is loxodromic;
compare \cite{MR2497777}, \S 3.2.

\begin{definition} Loxodromic elements $X,Y\in\SLtC$ are {\em orthogonal\/} 
if and only if $\Axis(X)$ and $\Axis(Y)$ are orthogonal
lines in $\Hth$. \end{definition}

\begin{prop}\label{prop:OrthogonalAxes}
Let $(\xi,\eta,\zeta)\in\C^3 = \RepF$ and consider the effect of a Vieta involution,
for example
\[ (\xi,\eta,\zeta) \longmapsto 
(\xi,\eta,\zeta') = (\xi,\eta,\xi\eta -\zeta). \]
If $\rho\in\HomF$ is a representation corresponding
to $(\xi,\eta,\zeta)$, then $\zeta' = \zeta$ if and only $\rho(X)$ and $\rho(Y)$ are orthogonal.
\end{prop}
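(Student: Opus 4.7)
The plan is to unwind both sides of the claimed equivalence into explicit conditions on the traces $\xi, \eta, \zeta$, and observe that they coincide.

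First I would dispatch the algebraic side. The Vieta involution sends $\zeta \mapsto \zeta' = \xi\eta - \zeta$, which recovers the classical trace identity $\tr(XY) + \tr(XY^{-1}) = \tr(X)\tr(Y)$, so $\zeta' = \tr\bigl(\rho(XY^{-1})\bigr)$. Therefore the condition $\zeta' = \zeta$ is equivalent to the clean polynomial relation
\begin{equation*}
\xi\eta = 2\zeta.
\end{equation*}

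Next I would turn to the geometric side. Assume $\rho(X)$ and $\rho(Y)$ are loxodromic so that their axes are defined, and recall from the discussion preceding the proposition that each loxodromic element $A \in \SLtC$ determines a traceless involution
\begin{equation*}
\hat{A} = \frac{2}{\sqrt{\tr(A)^2 - 4}}\Bigl(A - \tfrac{\tr(A)}{2}\Id\Bigr) \in \SLtC \cap \sltc,
\end{equation*}
whose fixed point set in $\Hth$ is $\Axis(A)$. Since the $\Ad$-invariant pairing $A \cdot B = \tr(AB)$ on $\sltc$ detects orthogonality of fixed axes (two such involutions anticommute, equivalently their pairing vanishes, exactly when their axes meet at right angles), I would take as the working definition: $\rho(X)$ and $\rho(Y)$ are orthogonal if and only if $\tr\bigl(\hat{X}\hat{Y}\bigr) = 0$.

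The core computation is then to expand
\begin{equation*}
\tr(\hat{X}\hat{Y}) \;=\; \frac{4}{\sqrt{(\xi^2-4)(\eta^2-4)}}\;\tr\!\Bigl((X - \tfrac{\xi}{2}\Id)(Y - \tfrac{\eta}{2}\Id)\Bigr),
\end{equation*}
and using $\tr(X) = \xi$, $\tr(Y) = \eta$, $\tr(XY) = \zeta$, the numerator simplifies to $\zeta - \tfrac{\xi\eta}{2}$. Hence
\begin{equation*}
\tr(\hat{X}\hat{Y}) \;=\; \frac{2(2\zeta - \xi\eta)}{\sqrt{(\xi^2-4)(\eta^2-4)}},
\end{equation*}
which vanishes precisely when $\xi\eta = 2\zeta$. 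This matches the algebraic condition, completing the equivalence. The main subtlety (rather than obstacle) is handling the degenerate cases $\xi = \pm 2$ or $\eta = \pm 2$, where the normalization $\hat{X}$ or $\hat{Y}$ breaks down; there one has a parabolic rather than a loxodromic, the notion of axis degenerates, and the proposition should either be stated in the loxodromic locus or the boundary cases handled separately by a continuity or direct matrix-model argument.
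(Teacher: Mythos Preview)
Your proof is correct and follows essentially the same route as the paper: both reduce orthogonality to the vanishing of $\tr\bigl((\rho(X)-\tfrac{\xi}{2}\Id)(\rho(Y)-\tfrac{\eta}{2}\Id)\bigr) = \zeta - \tfrac{\xi\eta}{2}$ and identify this with $\tfrac{\zeta-\zeta'}{2}$. The paper works directly with the unnormalized traceless projections (so the scalar $\sqrt{(\xi^2-4)(\eta^2-4)}$ never appears), whereas you carry the normalization and then note it is irrelevant to vanishing; your remark on the degenerate cases $\xi=\pm 2$ or $\eta=\pm 2$ is a welcome addition that the paper leaves implicit.
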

\begin{proof} 
The traces of $\rho(X), \rho(Y)$ and $\rho(XY)=\rho(X)\rho(Y)$ are, respectively,
$\xi, \eta$ and $\zeta$. 
Now $\rho(X)$ and $\rho(Y)$ are orthogonal if and only if
\[
\widehat{\rho(X)} \cdot \widehat{\rho(Y)} = 0, \]
or equivalently,
\begin{align*}
0 & = \tr\bigg( 
\Big(\rho(X) - \frac{\tr\big(\rho(X)\big)}2\Id \Big) 
\Big(\rho(Y)- \frac{\tr\big(\rho(Y)\big)}2\Id  \Big) \bigg) \\ & =
\tr\big(\rho(X)\rho(Y)\big)
- \frac{\xi\eta}2 - \frac{\xi\eta}2 + \frac{\xi\eta}4\   \tr(\Id) \\ & = 
\zeta - \frac{\xi\eta}2 = \frac{\zeta - \zeta'}2. \end{align*}
Thus $\zeta' = \zeta $ if and only $\rho(X)$ and $\rho(Y)$ are orthogonal, as desired.
\end{proof}
\noindent
We now construct {\em indecisive edges\/} in $\vT_\mu$, 
and characterize them, at least when $\mu$ is unexceptional and real or imaginary:

\begin{cor}
Suppose that $\mu$ is a real or purely imaginary unexceptional trace labeling.
Then an edge joining $(\xi,\eta,\zeta)$ to 
$(\xi,\eta,\zeta')$ is indecisive if and only if the corresponding generators $\rho(X),\rho(Y)$
are orthogonal.
\end{cor}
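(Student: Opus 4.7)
The plan is to reduce the claim to Proposition~\ref{prop:OrthogonalAxes} by ruling out a second possibility for $|\zeta|=|\zeta'|$. The forward direction is immediate: if $\rho(X)$ and $\rho(Y)$ are orthogonal, Proposition~\ref{prop:OrthogonalAxes} gives $\zeta'=\zeta$, so in particular $|\zeta|=|\zeta'|$ and the edge is indecisive. The content is in the converse.

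For the converse, I would first observe that in both the real and purely imaginary cases, the edge relation $\zeta+\zeta' = \xi\eta$ (from \eqref{eq:edgerelation}) places $\zeta$ and $\zeta'$ in $\R$. In the real case this is trivial. In the purely imaginary case, the $\{\pm1\}$-character $\Coloring$ is invariant along the edge, so the two primitive elements $Z,Z'$ corresponding to the regions sharing $e$ have the same parity under $\Coloring$; since $\xi,\eta\in i\R$ give $\xi\eta\in\R$, we conclude $\zeta+\zeta'\in\R$, forcing both $\zeta$ and $\zeta'$ real. (Concretely, for our chosen $\Coloring$ with $\Coloring(X)=\Coloring(Y)=-1$, we get $\zeta,\zeta'\in\R$ directly.)

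Next, since $\zeta,\zeta'\in\R$ with $|\zeta|=|\zeta'|$, we must have $\zeta'=\zeta$ or $\zeta'=-\zeta$. In the first case, Proposition~\ref{prop:OrthogonalAxes} supplies the orthogonality of $\rho(X)$ and $\rho(Y)$, and we are done. The remaining case $\zeta'=-\zeta$ gives
\[
0 \;=\; \zeta+\zeta' \;=\; \xi\eta,
\]
so at least one of $\xi,\eta$ vanishes. But then, applying a Vieta involution if necessary to place the zero coordinate among the three around a fixed vertex $v(X,Y,Z)$, we see that the character lies in the $\hGamma$-orbit
\[
\hGamma\cdot\bigl(\{0\}\times\C\times\C\bigr),
\]
which is the set of exceptional characters we have excluded. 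This contradicts the hypothesis that $\mu$ is unexceptional and completes the converse.

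The only potential subtlety is confirming that $\zeta$ and $\zeta'$ are genuinely real in the purely imaginary case for every edge of $\T$, not merely for the base edge; this requires that the orientation character $\Coloring$ assigns the same value to the two primitive elements $Z$ and $Z'$ associated to the edge $e^{X,Y}(Z,Z')$, which in turn follows from the identity $Z' \equiv Z \pmod{[\Ft,\Ft]}$ when reduced mod $2$, or equivalently from the fact that reduction modulo $2$ tricolors edges of $\T$ as in \S\ref{sec:Tricoloring}. Once that parity observation is in hand, the argument is purely algebraic.
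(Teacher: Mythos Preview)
Your approach matches the paper's: reduce to $\zeta'=\pm\zeta$, rule out $\zeta'=-\zeta$ via $\xi\eta=0$ contradicting unexceptionality, then invoke Proposition~\ref{prop:OrthogonalAxes}.

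There is one slip in your handling of the purely imaginary case. You assert $\zeta,\zeta'\in\R$ for every edge, arguing from $\xi,\eta\in i\R$. But that premise holds only for $\R$-edges in the sense of Definition~\ref{defn:RealImaginaryEdges}; for an $i\R$-edge exactly one of $\xi,\eta$ lies in $\R$ and the other in $i\R$, so $\xi\eta\in i\R$ and in fact $\zeta,\zeta'\in i\R$. Your parity observation is correct --- $Z$ and $Z'$ carry the same $\Coloring$-value --- but the conclusion it yields is only that $\zeta$ and $\zeta'$ lie in the \emph{same} one-dimensional real subspace (both in $\R$ or both in $i\R$), not that both are real. This weaker conclusion is exactly what is needed and is what the paper uses (``$\zeta,\zeta'\in\R\cup i\R$''): once $\zeta,\zeta'$ lie on the same real line, $|\zeta|=|\zeta'|$ forces $\zeta'=\pm\zeta$, and the rest of your argument goes through unchanged.
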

\begin{proof}
The edge is indecisive if and only if
$\vert\zeta'\vert = \vert\zeta\vert$.
Since $\zeta,\zeta' \in \R \cup i\R$),
this occurs  if and only if $\zeta' = \pm \zeta$.
If $\zeta' = -\zeta$, then $\xi\eta = 0$, contradicting $\mu$ being unexceptional.
Thus $\zeta' = \zeta$ and 
Proposition~\ref{prop:OrthogonalAxes} implies
$\Axis\big(\rho(X)\big) \perp \Axis\big(\rho(Y)\big)$ as claimed.
\end{proof}
\noindent
Attracting indecisive edges and sinks are the only attractors
arising for real and imaginary characters. 
For hyperbolic structures on $\Sthz$, 
the only attractors are sinks, but for $\Soo$ attracting indecisive edges occur
exactly when two simple geodesics intersect orthogonally in one point.
For the nonorientable surfaces $\Coo$ and $\Czt$, the situation is similar:
A hyperbolic structure homeomorphic to $\Coo$ (with a possible cone point) corresponds
to an attracting indecisive edge if and only if the two-sided interior simple geodesic
meets a one-sided simple geodesic once, and orthogonally.
A hyperbolic structure homeomorphic to $\Czt$ corresponds to 
an attracting indecisive edge if and only if the two one-sided simple geodesics
intersect once, and orthogonally.

\section{Imaginary trace labelings}

We apply the results of the previous section to trace labelings arising
from characters of the form $(ix,iy,z)\in \C^3$ where $x,y,z \in \R$
which is the case of interest in this paper. 
These are the trace labelings which are adapted to the
$\{\pm 1\}$-character $\Coloring$ defined by
\begin{align*}
X &\longmapsto -1  \\
Y &\longmapsto -1  \\
Z &\longmapsto 1 . \end{align*}
Following 
%\cite{bowditch1998plms} tan-wong-zhang2005gMm}, 
\cite{MR1643429, MR2370281}, 
we call these {\em imaginary trace labelings\/} (or 
{\em imaginary characters\/}).

When $k>2$, Proposition~\ref{prop:ProperAction2} characterizes imaginary Bowditch characters in terms of the associated directed tree, which is later related to the generalized Fricke space $\Fricke(C_{1,1})$ (\S\ref{sec:ProjectionFor2<k}).
The main step is Theorem~\ref{thm:WellDirectedCriterion}.
Similarly, when $k<2$,  the Bowditch characters are characterized in terms of the associated directed tree which relate to the Fricke space $\Fricke(C_{0,2})$.
Away from $\Fricke(C_{0,2})$ are separating curves mapping to elliptic elements.
The existence of simple closed curves mapping to elliptic elements  implies ergodicity on the complement of the wandering domains.
Existence of {\em elliptic primitives\/} is the main qualitative difference between the cases $k>2$ and $k<2$. 
%% made a few changes in the above paragraph
\subsection{Well-directed trees}
\begin{definition}
A directed tree with no forks or sources is said to be {\em well-directed.\/}
In this case, the edges are either well-directed towards an
attractor or towards a unique end of the tree. 
If the attractor is a sink, then  $\vT$ is said to be 
{\em well-directed towards a sink.}
\end{definition}
Each case yields a tree well-directed towards a sink or an indecisive edge for Bowditch characters.

We adopt the following conventions. 
We use the letters $Z, W$ to denote elements of $\O$ with real trace labels 
and $X$ or $Y$ for elements of $\O$ with purely imaginary trace labels. 
Furthermore, write: 
\begin{align*}
z &:=\mu(Z), \\ 
w &:=\mu(W), \\ ix &:=\mu(X), \\
iy &:=\mu(Y),
\end{align*} 
where $x,y,z,w \in \R$. 

\begin{prop}\label{prop:ProperAction2}
Let $\mathscr{W}$ denote the set of unexceptional imaginary trace labelings $\mu$ for which $\vT_\mu$ is well-directed towards a sink $v(X,Y,Z)$ 
where $z \in \R$,  $|z|\ge 2$. 
Then $\mathscr{W}\subset\B'$.
In particular $\Gamma$ acts properly on $\mathscr{W}$.

Conversely, the directed tree associated to an imaginary Bowditch character is well-directed towards such a sink.  

\end{prop}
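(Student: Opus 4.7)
The plan is to establish the inclusion $\mathscr{W}\subset\B'$ (which immediately gives properness via the Bowditch--Tan-Wong-Zhang theorem, or alternatively via Proposition~\ref{prop:ProperAction}, since in a tree well-directed toward a sink $v_*$ the single vertex $\{v_*\}$ is a finite attractor in the sense of Definition~\ref{defn:Attractor}). To verify condition \eqref{eq:BQ1'}: regions $W\in\O$ whose trace is purely imaginary, $\mu(W)=iw\in i\R$, satisfy $\mu(W)\notin(-2,2)$ automatically, with the degenerate case $w=0$ ruled out by unexceptionality. For real-trace regions $W$ other than $Z$, I would apply Proposition~\ref{prop:EdgesAboutX}: since the tree is well-directed, all edges on the alternating geodesic $C(W)$ point toward the unique vertex $v_W\in\partial W$ closest to $v_*$. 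Combining Proposition~\ref{prop:abut} with Corollary~\ref{cor:abutgrowth}(a), the exponential growth of neighboring trace values forces $|\mu(W)|\ge 2$. For condition \eqref{eq:BQ2}, the same exponential-growth estimate along each $C(W)$ shows that any region $W$ with $|\mu(W)|\le 2$ must have $v_W$ within a bounded combinatorial distance of $v_*$; since only finitely many vertices lie within this distance in the trivalent tree $\T$, the set $\Omega_\mu(2)$ is finite.

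For the converse direction, suppose $\mu\in\B'$ is an unexceptional imaginary Bowditch character. By the finiteness of $\Omega_\mu(2)$ and condition \eqref{eq:BQ1'}, all but finitely many regions $W$ satisfy $|\mu(W)|>2$, and the Fork Lemma~\ref{lem:Fork} (together with unexceptionality) implies that every source or fork vertex $v(X,Y,Z)$ is adjacent to at least one region with trace of absolute value $\le 2$; hence only finitely many source or fork vertices occur. A quasi-convexity argument using Lemma~\ref{lem:quasiconvexity} together with Proposition~\ref{prop:EdgesAboutX} shows that outside a bounded neighborhood of $\Omega_\mu(2)$, every vertex is a merge, and the orientation on that outer region flows inward toward a ``core.'' This forces the attractor of $\vT_\mu$ to be either a sink or an attracting indecisive edge. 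Appealing to the orthogonality characterization of indecisive edges, one reduces an indecisive-edge attractor to a neighboring sink satisfying the required condition.

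The main obstacle is pinning down that the sink has precisely the form $v(X,Y,Z)$ with the real-trace region $Z$ satisfying $|z|\ge 2$, rather than an imaginary-trace region playing that role. Here the coloring $\Coloring$ is essential: exactly one of the three regions at each vertex has real trace by the imaginary-character structure, and at a sink $v_*$ the Vieta relations $\zeta'=\xi\eta-\zeta$ combined with the sink condition $|\mu(W')|>|\mu(W)|$ on each adjacent edge force the extremal trace values to lie on the real-trace side; condition \eqref{eq:BQ1'} applied to $Z$ then yields $|z|\ge 2$. The most delicate step is handling the borderline case $|z|=2$ (corresponding to a parabolic peripheral element admitted by the \emph{extended} Bowditch set) and uniformly treating attracting indecisive edges arising from the orthogonality phenomenon described in the preceding corollary.
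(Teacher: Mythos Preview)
Your verification of \eqref{eq:BQ1'} on real-trace regions $W$ is circular: both Proposition~\ref{prop:EdgesAboutX} and Corollary~\ref{cor:abutgrowth}(a) take $|\mu(W)|>2$ as a \emph{hypothesis}, so they cannot be invoked to conclude $|\mu(W)|\ge 2$. The contrapositive idea---if $|\mu(W)|<2$ then the neighboring traces are bounded and quasi-periodic on an ellipse, which should obstruct well-directedness towards a sink---is plausible, but it is not what you wrote, and making it precise requires care (for instance when $\mu(W)^2=k+2$ every edge on $C(W)$ becomes indecisive).

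The paper takes a structurally different route, splitting on the sign of $k-2$ and treating the two cases separately. For $k>2$ (Proposition~\ref{prop:WellDirectedToSinkImpliesBowditch}), condition \eqref{eq:BQ1} comes for free from Lemma~\ref{lem:zBiggerThan2}: the topology of $\kCk$ forces $|\mu(Z)|>2$ for every $Z\in\O_\R$, with no tree-dynamical input needed. Condition \eqref{eq:BQ2} is then obtained by a concrete increment estimate rather than a combinatorial-distance bound: at the sink one has $|x_0|,|y_0|,|z_0|-2>\varepsilon$ for some $\varepsilon>0$, and Propositions~\ref{prop:mergesaboutZa}--\ref{prop:mergesaboutZc} show that each decisive step away from the sink increases the new trace label by at least $\varepsilon^2$, so $\Omega_\mu(2)$ is finite. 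For $k<2$, the forward inclusion rests on Proposition~\ref{prop:mergesaboutZtwob}, and the converse follows from the descending-path analysis of Proposition~\ref{prop:descendingpath}: every infinite descending path either meets a region $Z\in\O_\R$ with $|\mu(Z)|<2$ (violating \eqref{eq:BQ1'}, so $\mu\notin\B'$) or terminates at a sink of the required form. Your Fork-Lemma/quasiconvexity route for the converse is not what the paper does; instead, the paper establishes well-directedness directly via the case analysis of the alternating geodesics around each $Z\in\O_\R$.
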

\noindent
We defer the proof to the latter part of this section. 
Our analysis separates naturally into two cases, depending on whether $k>2$ or $k<2$ where $k =\kappa(\mu)$. 
Proposition~\ref{prop:ProperAction2} will be proved separately for these two cases.
The case $k>2$ includes the generalized Fricke space for the one-holed Klein bottle $C_{1,1}$ and the case $k<2$ includes the Fricke space of the 
two-holed cross-surface $C_{0,2}$.

We first discuss how a $\{\pm 1\}$-character $\Coloring$ affects the  tree $\T$.

%Recall from \eqref{eq:KappaUpsilon} that
%\begin{equation*}
%\kappa_\Coloring(x,y,z) = -x^2 - y^2 +z^2+ xyz - 2\ =\ k
%\end{equation*}
%We begin by proving some elementary properties of unexceptional imaginary trace labelings. 
%% For the rest of this section, all imaginary trace labelings will be unexceptional.

%\\m\arginpar{Ser: I am not completely sure if this is true or how to prove this in the general situation like this. It is true for the imaginary characters though. Perhaps we should Bowditch Q-conditions and then the result that the action is proper on the set of characters satisfying the Bowditch conditions}

%\subsection{Forks and sources}

 \subsection{$\{\pm 1\}$-characters on $\Ft$}

A nontrivial homomorphism $\Ft \xrightarrow{\Coloring} \Z/2$ divides the
complementary regions in $\O = \Prim$ %(the primitives in $\Ft$) 
into two classes:
\begin{itemize}
\item
$\O_\R$: 
Those in the kernel of $\Coloring$ correspond to orientation-preserving curves, 
and have real trace labels. 
\item 
$\O_{i\R}$:
Those upon which $\Coloring$ is nonzero correspond to orientation-reversing curves
and have purely imaginary trace labels. 
\end{itemize}
%\[ \O = \O_\R \coprod \O_{i\R}.\]
%Denote by $\O_{\R}$ the subset of primitive elements which correspond to orientation-preserving curves so have real trace labels. 

%%% change $\Coloring$-edge to $i\R$-edge (imaginary edge) and 
%%% change non-$\Coloring$-edge to $\R$-edge (real edge). 
%
\noindent
Similarly, $\Coloring$ determines a dichotomy of the edges of $\T$ into
{\em real\/} edges and {\em imaginary\/} edges:

\begin{definition} \label{defn:RealImaginaryEdges}
Let $\Coloring$ be the $\{\pm 1\}$-character and let $e\in\E$ be an edge.
Define:
\begin{itemize}  \item $e$ is {\em real with respect to $\Coloring$\/} 
% formerly   {\em $\R$-edge\/} 
if neither of the two complementary regions in $\Omega$ abutted by $e$ 
lie in $\O_{\R}$. 
\item $e$ is {\em imaginary with respect to $\Coloring$\/} 
% formerly   {\em $\Coloring$-edge\/} 
if exactly one of the two complementary regions in $\Omega$ abutted by $e$  lies in $\O_{\R}$. 
\end{itemize}
\end{definition}
In terms of the tricoloring $\E\xrightarrow{~j~}\{1,2,3\}$ defined in 
\S\ref{sec:Tricoloring}, an edge $e$ is an $\R$-edge if and only if
$j(e) = 3$ and is an $i\R$-edge if and only if $j(e) = 1 \text{~or~} j(e) = 2$.

When $\Coloring$ is understood, we say that $e$ is an {\em $\R$-edge\/} 
if $e$ is real with respect to $\Coloring$ and $e$ is an {\em $i\R$-edge\/} if 
$e$ is imaginary with respect to $\Coloring$.

\begin{figure}[h]
\includegraphics[scale=.5]
{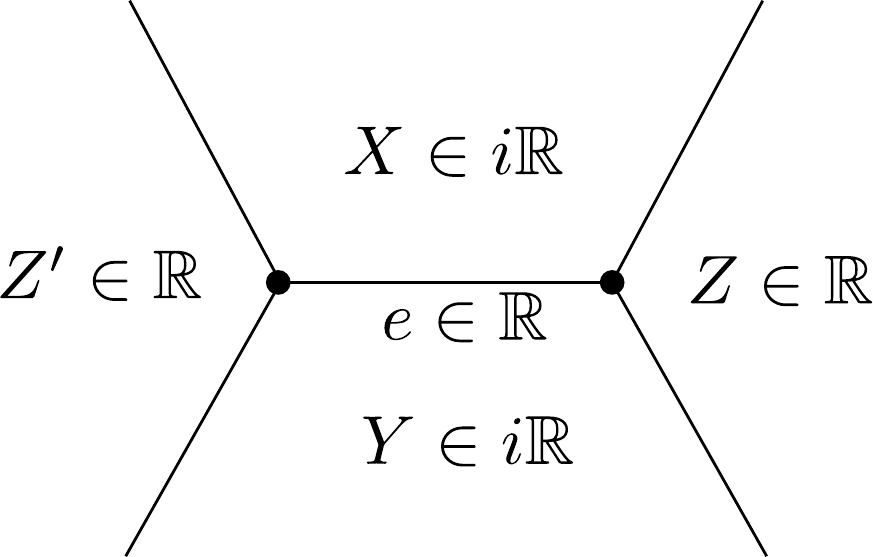}
\qquad \qquad \quad% put some space between the two figures
\includegraphics[scale=.5]
{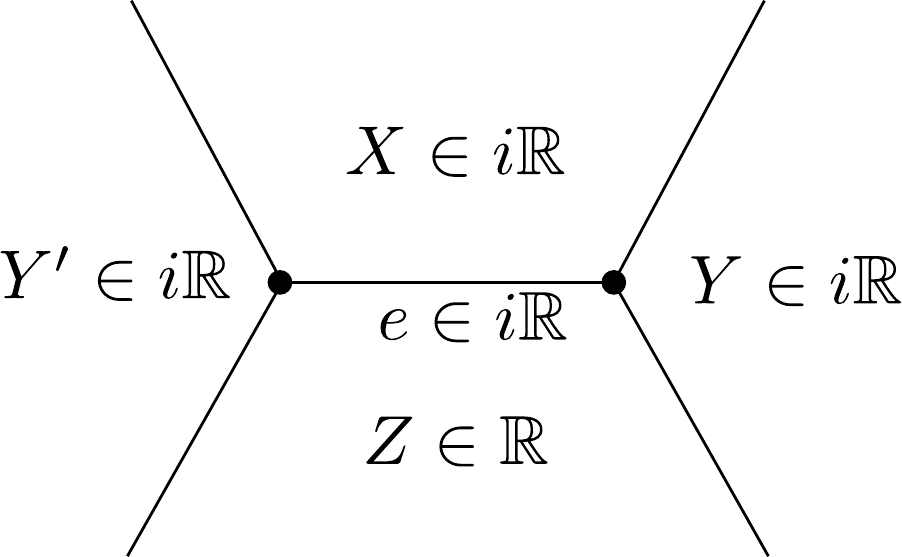}
\caption{An $\R$-edge and an $i\R$-edge}
\label{fig:RealImaginaryEdges}
\end{figure}

\noindent
An $\R$-edge ends at two regions in $\O_{\R}$ and
an $i\R$-edge ends at two regions in $\O_{i\R}$, 
in the sense of \S\ref{sec:Graphical Representation}.
Compare Figure~\ref{fig:RealImaginaryEdges}.
\begin{prop}\label{prop:ColoredEdges}
For every $v\in \V$, exactly two edges incident to $v$ are $i\R$-edges.
The union  of $i\R$-edges is a disjoint union
of geodesics, namely the alternating geodesics corresponding to  $\O_{\R}$.
Thus the edges composing alternating geodesics of elements of $\O_{\R}$ consist 
entirely of  $i\R$-edges.
The edges composing alternating geodesics of elements of $\O_{i\R}$ alternate
between $i\R$-edges and $\R$-edges.
\end{prop}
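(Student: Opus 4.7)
The plan is to reduce the entire proposition to the mod~2 structure already described in \S\ref{sec:Tricoloring}. Because $\Coloring(X)=\Coloring(Y)=-1$, a primitive element $W$ with $W\equiv X^pY^q \pmod{[\Ft,\Ft]}$ satisfies $\Coloring(W)=(-1)^{p+q}$. Hence $W\in\O_{\R}$ iff its mod~2 reduction is $[1:1]\in\Po(\Z/2)$, while the classes $[1:0]$ and $[0:1]$ each correspond to $\O_{i\R}$. I will use repeatedly the fact recorded in \S\ref{sec:Tricoloring} that for every superbasis $(X,Y,Z)$ the three mod~2 reductions $\{[X],[Y],[Z]\}$ exhaust $\{[1:0],[0:1],[1:1]\}$.

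For part (1), at a vertex $v=v(X,Y,Z)$ precisely one of $X,Y,Z$ reduces to $[1:1]$, so precisely one of the three abutting regions lies in $\O_{\R}$ and the other two lie in $\O_{i\R}$. The three incident edges correspond bijectively to the three unordered pairs of these regions: the pair of $\O_{i\R}$-regions yields an $\R$-edge (neither abutted region is in $\O_{\R}$), and the two pairs each containing the unique $\O_{\R}$-region yield $i\R$-edges, by Definition~\ref{defn:RealImaginaryEdges}.

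For parts (2) and (3): every edge $e$ abutting a region $Z\in\O_{\R}$ has, by part (1) applied to either endpoint of $e$, exactly one abutted region in $\O_{\R}$, namely $Z$ itself; so $e$ is an $i\R$-edge, proving (3). Conversely, by (1) each $i\R$-edge abuts a unique element of $\O_{\R}$, hence belongs to precisely one alternating geodesic $C(Z)$ with $Z\in\O_{\R}$; this yields both disjointness and exhaustion in (2).

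For part (4), fix $Y\in\O_{i\R}$ and consider the alternating geodesic $C(Y)$ with its sequence of adjacent regions $Y_n$ as in \S\ref{sec:AlternatingGeodesics}. The edges of $C(Y)$ carry the two tricolor indices in $\{1,2,3\}\setminus\{j\}$, where $j$ is the color of the edge at each $v_n$ not lying on $C(Y)$; consequently the mod~2 class of $Y_n$ strictly alternates between the two elements of $\Po(\Z/2)$ different from the class of $Y$. One of these two classes is $[1:1]$ and so corresponds to $\O_{\R}$, the other to $\O_{i\R}$. Thus consecutive edges of $C(Y)$ alternately abut one $\O_{\R}$-region (giving an $i\R$-edge) and two $\O_{i\R}$-regions (giving an $\R$-edge), proving (4). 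The one mildly delicate step is the strict alternation of the reductions of the $Y_n$; this is immediate from the description in \S\ref{sec:Paths} of $C(Y)$ as a reduced alternating word in two of the three involutions $\II_j$, whose mod~2 images in $\PGL(2,\Z/2)$ act as the three nontrivial transpositions on $\Po(\Z/2)$.
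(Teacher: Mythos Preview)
Your proof is correct and follows essentially the same approach as the paper: both reduce everything to the mod~$2$/tricoloring structure of \S\ref{sec:Tricoloring}. The paper's argument is terser---it simply notes that the unique $\R$-edge at $v$ is the one with $j(e)=3$, deduces part~(1), and then constructs the $i\R$-geodesic through each vertex inductively---leaving the identification with the alternating geodesics of $\O_{\R}$ and the alternation claim for $\O_{i\R}$ largely implicit, whereas you spell out all four parts explicitly via the abutting-region definition.
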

\begin{proof}
The unique $\R$-edge at $v$ is the edge $e$ with $j(e)=3$.
Thus exactly two $i\R$-edges emanate from $v$.
Thus
%
%\\m\arginpar{b:rewrite this in terms of the tricoloring $j$}
%A vertex $v$ corresponds to a basis $(a,b)$ of $\Ft$, 
%and the edges incident to $v$ correspond to classes of primitive elements $a, b,$ and $ab$. 
%Since $\Coloring$ is nontrivial, $\Coloring(a) = 1$ implies that $\Coloring(b) = -1$ and 
%$\Coloring(ab) \neq -1$.
%Similarly, if $\Coloring(b) = 1$, then $\Coloring(a) = -1$ and $\Coloring(ab) = -1$.
%Finally, if $\Coloring(ab)=1$, then $\Coloring(a)=-1$ and $\Coloring(b) =-1$. 
%Thus exactly two edges incident to $v$ are $i\R$-edges, 
%and the other one is a $\R$-edge.
%
the union of the $i\R$-edges
is a disjoint union of geodesics,
one through each vertex of $\V$:
Start at a vertex $v$ as above, and denote it $v_0$.
Denote  the endpoints of the two $i\R$-edges through $v_0$ by $v_{-1}$ and $v_1$ respectively.
Inductively define $v_n$ for integers $\vert n\vert > 1$ as follows. 
If $n > 1$, let $v_n$ be the endpoint of the unique $i\R$-edge through $v_{n-1}$  not containing $v_{n-2}$.
If $n < -1$, let $v_n$ be the endpoint of the unique $i\R$-edge through $v_{n+1}$  not containing $v_{n+2}$.
Then $v_n$ form the vertices of a geodesic through $v$ all of whose edges are 
$i\R$-edges.
Thus $\T$ decomposes into a disjoint union of geodesics, containing all vertices and
all $i\R$-edges. 
\end{proof}

%\begin{figure}[ht]
%\includegraphics{reourpaper/Fig4_Elements_of_OmegaT_phi_edges.pdf}
%\caption{Types of edges relative to $\Coloring$}
%\label{fig:EdgesPhi}
%\end{figure}
%\subsection{Positive and negative vertices}

\subsection{Positive and negative vertices}
Let $v = v(X,Y,Z)$ be a vertex with trace labeling $(ix,iy,z)$ where $x,y,z\in\R$. 
Define $v$ to be {\em positive \/} (respectively {\em negative\/}) if 
$xy z > 0$ (respectively $x y z < 0$). 
If $\mu$ is unexceptional, then every vertex is positive or negative.
Furthermore applying a sign-change automorphism to $\mu$ 
preserves the dichotomy of vertices into positive and negative vertices.

\begin{lem}\label{lem:PosNegVert}
If $v$ is positive, then the $\R$-edge adjacent to $v$ directs inwards.
%\\m\arginpar{b: two $i\R$-edges and only one $i\R$-edge adjacent to a vertex}
\end{lem}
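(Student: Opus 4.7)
The plan is to translate the assertion into a concrete sign inequality on the trace labels around $v$ and then verify it by a one-line algebraic identity combined with an easy sign analysis.

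First I identify the $\R$-edge at $v = v(X,Y,Z)$. Since $\Coloring$ is nontrivial precisely on $X$ and $Y$, we have $X,Y \in \O_{i\R}$ and $Z \in \O_{\R}$, so the $\R$-edge at $v$ is (by Definition~\ref{defn:RealImaginaryEdges} and Proposition~\ref{prop:ColoredEdges}) the unique edge at $v$ whose two abutted regions both lie in $\O_{i\R}$, namely $e^{X,Y}(Z,Z')$, where $v(X,Y,Z')$ is the neighboring vertex obtained by the Vieta involution $\II_3$. By the arrow convention in $\vT_\mu$ (the notation $e^{X,Y}(Z\to Z')$ with head at $Z'$ corresponds to an arrow pointing away from $v=v(X,Y,Z)$), this $\R$-edge directs inwards at $v$ exactly when $|z'|>|z|$, where $z':=\mu(Z')$.

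Next I apply the Edge Relation \eqref{eq:edgerelation} at $e$. Since $\mu(X)=ix$ and $\mu(Y)=iy$, we get $z+z'=(ix)(iy)=-xy$, so $z'=-xy-z$, and therefore
\begin{equation*}
(z')^2 - z^2 \;=\; (z+xy)^2 - z^2 \;=\; xy\,(2z+xy).
\end{equation*}
Thus the inward-direction criterion $|z'|>|z|$ is equivalent to $xy(2z+xy)>0$.

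Finally, under the hypothesis $xyz>0$, neither $xy$ nor $z$ vanishes and they have the same sign; consequently $2z+xy$ has the same sign as $xy$, so $xy(2z+xy)>0$. This gives both decisiveness of the $\R$-edge and the claim that it directs inward at $v$. There is no genuine obstacle here — the argument is essentially a single computation — but the one point to be careful about is the arrow convention: a reversed reading of which endpoint is the head in $e^{X,Y}(Z\to Z')$ would swap inward and outward, so I would double-check that convention against the definitions of sink and descending path before writing the final proof.
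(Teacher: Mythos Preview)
Your proof is correct and follows essentially the same route as the paper: both use the Edge Relation to get $z'=-xy-z$ and the sign hypothesis $xyz>0$ to conclude $|z'|>|z|$, you via $(z')^2-z^2=xy(2z+xy)>0$ and the paper via the direct observation $|{-z-xy}|=|z|+|xy|$. Your reading of the arrow convention is also the one used in the paper, so the caution you flag is unnecessary.
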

\begin{proof}
Apply the Edge Relation to the $\R$-edge so that
$z + z ' = - x y$. 
Since $xy$ has the same sign as $z$,
\[ \vert z'\vert  = \vert- z   -  x y \vert > \vert z \vert\]
as desired. 
\end{proof}

\section{Imaginary characters with $k>2$ } 

\noindent
We summarize the main results of this section:
\begin{thm8*}\label{thm:WellDirectedCriterion}
Suppose that $\mu$ is an unexceptional imaginary trace labeling with $k>2$. 
\begin{itemize}
\item The associated directed tree $\vT_{\mu}$ is well-directed. 
\item $\vT_{\mu}$ is well-directed towards a unique sink 
if and only if %there exists $Z \in \O_{\R}$ such that 
\[ \vert\mu(Z)\vert < \sqrt{k+2} \]
for some $Z \in \O_{\R}$. 
Equivalently, $\mu$ corresponds to a hyperbolic structure on the $1$-holed
Klein bottle $\Coo$ with a funnel, cusp, or cone point. 
\item
The Bowditch set $\B$ 
equals the generalized Fricke orbit $\gFO{\Coo}$,
and intersects the slice $\kCk$ in a dense open subset of $\kCk$.
\end{itemize}
\end{thm8*}

\subsection{Alternating geodesics when $k>2$.}\label{sec:MostEdgesMerge}
The proof begins by analyzing the alternating geodesic $C(Z)$ surrounding
a complementary region $Z\in \O_\R$. 
This geodesic contains a unique sink if and only if $\vert z\vert < \sqrt{k+2}$,
in which case the character lies in the generalized Fricke space
$\Fricke'(\Coo)$. When $\vert z\vert > \sqrt{k+2}$, 
one edge points out of $Z$ towards another $Z'\in\O_\R$
and we obtain a descending path.
If this descending path ends, then $\mu$ lies in $\gFO{\Coo}$.

First we record a useful observation:
\begin{lem}\label{lem:zBiggerThan2}
If $\mu$ is an unexceptional imaginary trace labeling with 
$k>2$ and $Z \in \O_{\R}$, 
then $\vert z\vert >2$ where  $z=\mu(Z)$.
\end{lem}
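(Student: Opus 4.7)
The plan is to reduce the statement to the topological description of the level sets $\kCk$ given in Proposition~\ref{prop:twocomponents}. First I would pick any vertex $v \in \V$ lying on $\partial Z$. By Proposition~\ref{prop:ColoredEdges}, the coloring $\Coloring$ determines exactly one $\R$-edge at $v$, so the three regions meeting at $v$ consist of one element of $\O_{\R}$ (which must be $Z$) together with two elements of $\O_{i\R}$, which I will call $X$ and $Y$. The trace labeling at the superbasis $v = v(X,Y,Z)$ therefore has the purely imaginary form $(ix, iy, z)$ with $x,y,z \in \R$.

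Next, the Vertex Relation \eqref{eqn:vertexrelation} evaluated at $v$ reads $\kappa(ix,iy,z) = k$, which under the substitution of \S\ref{sec:RealForms} becomes $\kC(x,y,z) = k > 2$. Now I would invoke Proposition~\ref{prop:twocomponents}: when $k > 2$, the slab $\R\times\R\times[-2,2]$ separates the two components of the level surface $\kCk \subset \R^3$, so every point of $\kCk$ satisfies $|z| > 2$. This yields the desired inequality.

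A self-contained alternative, if one prefers not to quote the topological result, is to argue directly from the identity \eqref{eq:KappaUpsilonInTermsOfz}: the constraint $\kC(x,y,z) = k$ rewrites as $\QQ_z(x,y) = z^2 - 2 - k$, and if $|z| \le 2$, Lemma~\ref{lem:PosDef} gives $\QQ_z(x,y) \ge 0$, forcing $z^2 \ge k+2 > 4$, a contradiction. Either way, the only real subtlety is the initial combinatorial step, namely that every vertex on $\partial Z$ has both of its other two regions in $\O_{i\R}$; this is exactly the content of Proposition~\ref{prop:ColoredEdges} applied to the choice of $\Coloring$ in \eqref{eq:ExampleColoring}, and I do not expect any other obstacle.
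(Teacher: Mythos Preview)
Your proposal is correct and follows essentially the same approach as the paper, which simply cites Proposition~\ref{prop:twocomponents} in one line. You have spelled out more carefully the combinatorial step (via Proposition~\ref{prop:ColoredEdges}) ensuring that the other two regions at a vertex on $\partial Z$ lie in $\O_{i\R}$, and your self-contained alternative via $\QQ_z$ is just the proof of the relevant part of Proposition~\ref{prop:twocomponents} unpacked; both are fine elaborations of the same idea.
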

\begin{proof}
Proposition~\ref{prop:twocomponents} implies that 
$\rho(\gamma)$ is hyperbolic  if $\gamma$ is primitive and
$\Coloring(\gamma) = 1$. 
\end{proof}
\noindent The discussion divides into the three cases: 
\begin{itemize}
\item $2 <\vert z\vert< \sqrt{k+2}$;
\item $z = \pm \sqrt{k+2}$;%, and 
\item $\vert z\vert > \sqrt{k+2}$. %, respectively.
\end{itemize}
We retain the notations of \S \ref{sec:AlternatingGeodesics} for
$e_n$, $v_n$, $Y_n$, $\vedge{n}$ and $Z_n'$, $n \in \Z$. 
%be as in \S \ref{sec:PeripheralGeodesics}. 

%\item[Case (a)] $ 2 <|z|< \sqrt{k+2}$\newline
%$\vedge{n}$ points towards $Z$ for all $n$, and %there is a unique  $m \in \Z$ such that 
%all the $e_n$'s are directed towards $v_m$
%for a unique  $m \in \Z$.   
% Hence $v_m$ is a sink and all the other $v_i$'s where $i \neq m$ are merges. 
  
\begin{prop}\label{prop:mergesaboutZa}
Suppose $ 2 <|z|< \sqrt{k+2}$.
\begin{itemize}
\item Each $v_n$ is positive;
\item All $e_n$'s point towards $v_m$, for a unique  $m \in \Z$.   
\item Each $\vedge{n}$ points towards $Z$;
 \end{itemize}
$v_m$ is a sink and all the other $v_i$'s where $i \neq m$ are merges. 
\end{prop}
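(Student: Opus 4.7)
The plan is to read off all four assertions from the explicit formula for the labels along $C(Z)$ provided by Proposition~\ref{prop:abut}. Since $|z|>2$, the eigenvalue $\lambda$ solving $\lambda+\lambda^{-1}=z$ is real with $|\lambda|>1$; and since $\mu$ is imaginary, the coefficients $A,B$ of Proposition~\ref{prop:abut} are purely imaginary, so write $A=i\alpha$, $B=i\beta$ with $\alpha,\beta\in\R$. The hypothesis $2<|z|<\sqrt{k+2}$ gives $z^2-4>0$ and $z^2-(k+2)<0$, so
\[
\alpha\beta \;=\; -AB \;=\; -\,\frac{z^2-k-2}{z^2-4} \;>\;0,
\]
and in particular $\alpha,\beta$ are both nonzero of the same sign. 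The label at $Y_n$ is therefore $y_n=\alpha\lambda^n+\beta\lambda^{-n}$, which is never zero. If $z>2$, then $\lambda>0$ and all the $y_n$ share the common sign of $\alpha$; if $z<-2$, then $\lambda<0$ and the $y_n$ alternate in sign. In either case $y_{n-1}y_n$ has the sign of $z$, so $y_{n-1}y_n\,z>0$, proving every $v_n$ is positive.

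The third bullet is then immediate from Lemma~\ref{lem:PosNegVert}: the unique $\R$-edge at the positive vertex $v_n$ is precisely $\psi_n$, and it is directed inward at $v_n$, i.e.\ toward $Z$ in the edge-direction convention.

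For the second and fourth bullets I would analyze
\[
|y_n|^2 \;=\; \alpha^2|\lambda|^{2n}+2\alpha\beta+\beta^2|\lambda|^{-2n},
\]
which is strictly convex and proper as a function of $n\in\R$; since $\alpha\beta>0$ it attains a unique real minimum at $n^{\ast}:=\log(|\beta/\alpha|)/(2\log|\lambda|)$. By the direction rule, the edge $e_n=e^{Y_n,Z}(Y_{n-1},Y_{n+1})$ is directed from $v_n$ to $v_{n+1}$ exactly when $|y_{n-1}|>|y_{n+1}|$, and strict convexity of $|y_n|^2$ implies this happens precisely for $n<n^{\ast}$. Letting $m$ be the unique integer with $n^{\ast}\in(m-1,m)$, it follows that $e_n$ points toward $v_{n+1}$ for $n\le m-1$ and toward $v_n$ for $n\ge m$; in both cases $e_n$ points toward $v_m$. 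Combined with the third bullet, at every $v_n$ with $n\ne m$ the edge $\psi_n$ is incoming while exactly one of $e_{n-1},e_n$ is incoming and the other outgoing (the outgoing one lying on the $v_m$ side), so $v_n$ is a merge; and at $v_m$ all three of $e_{m-1},e_m,\psi_m$ are incoming, so $v_m$ is a sink.

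The main obstacle is the borderline case in which $n^{\ast}$ is itself an integer: then $|y_{n^{\ast}-1}|=|y_{n^{\ast}+1}|$, so $e_{n^{\ast}}$ is indecisive. In this case either consistent choice of direction for $e_{n^{\ast}}$ produces a unique sink among $v_{n^{\ast}}$ and $v_{n^{\ast}+1}$, with all other vertices still merges by the same convexity argument, so the combinatorial conclusion is unchanged.
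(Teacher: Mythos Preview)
Your proof is correct and follows the same template as the paper's: both read everything off the explicit formula of Proposition~\ref{prop:abut} together with Lemma~\ref{lem:PosNegVert}. The paper normalizes via a sign-change to $z>0$ and $a,b>0$ (so all $y_n>0$) and then cites Proposition~\ref{prop:EdgesAboutX} for the second bullet, whereas you handle both signs of $z$ directly and locate the sink explicitly from the formula; these are minor variations on the same argument.

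One small point to tighten: strict convexity alone does not yield $|y_{n-1}|>|y_{n+1}|\Longleftrightarrow n<n^{\ast}$ for an arbitrary convex function. What makes it work here is that $|y_n|^2$ is actually \emph{symmetric} about $n^{\ast}$: with $u=|\lambda|^2$ one checks $|y_{n^{\ast}+s}|^2=|\alpha\beta|\,(u^{s}+u^{-s})+2\alpha\beta$. Equivalently, a one-line computation gives $|y_{n-1}|^2-|y_{n+1}|^2=(u-u^{-1})(\beta^{2}u^{-n}-\alpha^{2}u^{n})$, which is positive exactly when $n<n^{\ast}$.
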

\begin{proof} 
% (a) 
When $ 2 <|z|< \sqrt{k+2}$,
Proposition~\ref{prop:abut} implies:
\[ iy_n=A\lambda^n+B \lambda^{-n}\] where $\lambda+\lambda^{-1}=z$ and 
\[ AB = \frac{z^2-k-2}{z^2-4}. \]
Applying a sign-change if necessary, 
assume
\[ 2<z<\sqrt{k+2},\qquad \lambda>1,
\]
and that
\[ A=ai,\qquad B=bi \] 
where $a,b>0$.  
Then 
\[ y_n=a\lambda^n+b\lambda^{-n}>0 \] for all $n \in \Z$ and 
$y_n \rightarrow \infty$ as $n \rightarrow \pm \infty$. 

Since $z>0$ and all $y_n > 0$, each $v_n$ is a positive vertex.
%in the sense of Lemma~\ref{lem:PosNegVert}.
Lemma~\ref{lem:PosNegVert} implies that  
$\vedge{n}$ points decisively towards $Z$ for all $n$.
%since 
%\[ \vert z_n'\vert =\vert -y_{n-1}y_n-z\vert =y_{n-1}y_n+z>\vert z\vert \] for all $n \in \Z$. 
Proposition \ref{prop:EdgesAboutX} implies all the $e_n$'s point towards a vertex, 
say $v_0(Z, Y_{-1},Y_0)$. 

Hence $v_0$ is a sink and $v_n$ is a merge for $n \neq 0$. 
At most one %of the $e_n$'s 
$e_n$   %
is indecisive. 
For example, if $y_1=y_{-1}$,  
the sink is either $v_0$ or $v_1$
(depending on the direction of $e_0$).
Another case occurs when $y_{-2}=y_0$. 
All other $e_n$'s are decisive.
\end{proof}

%Recall that a {\em peripheral structure\/} on a free group $G$ is a collection of 
%conjugacy classes of cyclic subgroups of $G$ corresponding to the components of $\partial S$, where $S$ is a surface-with boundary with $\pi_1(S) \cong G$. 
%Nielsen proved that an automorphism of $G$ corresponds to a mapping class
%of a surface $S$ if and only if it preserves a peripheral structure on $\pi_1(S)$.
%For example, peripheral structures on $\Ft$ corresponding to $S\approx \Sthz$
%correspond to superbases, 
%and Nielsen's theorem on $\Soo$ is equivalent to the statement that every automorphism
%of $\Ft$ preserves the peripheral structure defined by the simple commutators of bases.
%The peripheral structures on $\Ft$, where $S\approx \Coo$, 
%are described in \S\ref{sec:BowditchSet}.
%
%
\begin{prop}\label{prop:mergesaboutZb}
Suppose $z=\pm \sqrt{k+2}$. 
\begin{itemize}
\item Each vertex $v_n$ is positive;
\item All the $e_n$'s point in the same direction.
\item Each $\vedge{n}$ points towards $Z$ for all $n$;
\end{itemize}
All the vertices $v_n$  are merges. 
\end{prop}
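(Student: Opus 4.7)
The plan is to exploit the explicit formula from Proposition~\ref{prop:abut} in the degenerate case $z^2 = k+2$, where the cross-term $AB$ vanishes. Concretely, since $\mu$ is unexceptional (so $(x,y) \neq (0,0)$) yet $AB = (z^2-k-2)/(z^2-4) = 0$, exactly one of $A,B$ is zero; after swapping indices if necessary, write $iy_n = A\lambda^n$ where $\lambda+\lambda^{-1}=z$ and (by Lemma~\ref{lem:zBiggerThan2}) $|z|>2$, so $\lambda$ is real with $|\lambda|>1$ and $\sgn(\lambda)=\sgn(z)$. Since $y_n$ is real, $A = ai$ for some nonzero real $a$, and $y_n = a\lambda^n$.

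The next step is to verify positivity of every vertex $v_n = v(Y_{n-1},Y_n,Z)$ on $\partial Z$, that is, $y_{n-1}y_n z > 0$. A direct computation gives $y_{n-1}y_n = a^2\lambda^{2n-1}$, whose sign is the sign of $\lambda$, hence of $z$; so $y_{n-1}y_n z = a^2 \lambda^{2n-1} z > 0$ regardless of whether $z = \sqrt{k+2}$ or $z=-\sqrt{k+2}$. This proves every $v_n$ is positive. Since the $\R$-edge incident to $v_n$ is precisely $\vedge{n}$ (its two abutting regions are the imaginary regions $Y_{n-1},Y_n$), Lemma~\ref{lem:PosNegVert} immediately yields that $\vedge{n}$ points inward to $v_n$, which is the meaning of ``$\vedge{n}$ points towards $Z$'' as used in Proposition~\ref{prop:mergesaboutZa}.

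For the direction of the edges $e_n$ along $C(Z)$, I would invoke Proposition~\ref{prop:EdgesAboutX}(b): under the hypothesis $\zeta^2 = k+2$ together with unexceptionality, all $e_n$ point in a single direction along the alternating geodesic. This gives the second bullet directly; the same conclusion can be read off of $|y_n| = |a||\lambda|^n$ being monotone in $n$.

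Finally, to classify each $v_n$ as a merge, I would just count incoming edges: at each $v_n$ the edge $\vedge{n}$ points in (by the positivity argument), and among the two alternating-geodesic edges $e_{n-1}, e_n$ incident to $v_n$ exactly one points in and one points out (since all $e_i$ are oriented coherently along the geodesic). Hence each $v_n$ has indegree two and outdegree one, so is a merge, completing the proof. I do not expect a serious obstacle here: once Proposition~\ref{prop:abut} is specialized to the $AB=0$ case everything is bookkeeping. The one place requiring mild care is keeping the signs straight in the $z<0$ case, where $\lambda<0$ and $y_{n-1}y_n$ alternates sign with $n$ but is always compensated by the sign of $z$, ensuring $y_{n-1}y_n z$ stays positive.
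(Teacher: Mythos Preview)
Your proposal is correct and follows essentially the same approach as the paper: both specialize Proposition~\ref{prop:abut} to the degenerate case $AB=0$, obtain $y_n = a\lambda^n$, verify positivity of each $v_n$, apply Lemma~\ref{lem:PosNegVert} for the $\vedge{n}$, and use monotonicity of $|y_n|$ (the paper directly, you via Proposition~\ref{prop:EdgesAboutX}(b)) for the $e_n$; the paper normalizes by a sign-change to $z>0,\ \lambda>1,\ a>0$, whereas you handle both signs at once. One small slip in your closing parenthetical: $y_{n-1}y_n = a^2\lambda^{2n-1}$ does \emph{not} alternate sign with $n$ when $\lambda<0$ (an odd power of a negative number is always negative), but your earlier computation already had this right, so the argument stands.
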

\begin{proof} 
%b) 
When $ z = \pm \sqrt{k+2}$,
assume  \[ z=\sqrt{k+2}, \qquad  \lambda>1 \]
as in Proposition~\ref{prop:mergesaboutZb} above.
% (as in case (a) above).
Since $AB=0$ and $\mu$ is not dihedral, one of $A, B \neq 0$, 
we may assume $B=0$ and $A=ia$ where $a > 0$. 
Therefore 
\[ y_n\ =\ a\lambda^n\ >\ 0 \] 
for all $n$ and increases monotonically as  $n\to\infty$. 
%As in case (a),  %%%%%%
As in Proposition~\ref{prop:mergesaboutZa}, 
each $v_n$ is positive, 
$\vedge{n}$ points decisively towards $Z$ for all $n$ and 
$e_n$ points towards $e_{n-1}$ for all $n$. 
Hence all the $v_n$'s are merges, as claimed.
\end{proof}

\begin{prop}\label{prop:mergesaboutZc}
Suppose $\vert z \vert> \sqrt{k+2}$.
%There is a unique $k \in \Z$ such that all 
\begin{itemize}
\item $v_m$ is negative for a unique $m \in \Z$;
%There exists  a unique $m \in \Z$, such that  
%There exists  a unique $m \in \Z$, such that  $v_m$ is negative.
\item Each $e_n$ points towards $v_m$;
\item $\vedge{m}$ points away from $Z$ while  all the other  
$\vedge{i}$'s, with $i\neq m$,  point towards $Z$. %. %Hence all 
\end{itemize}
All the $v_n$'s  are merges. 
$v_m$ is the only merge directed away from $Z$.
\end{prop}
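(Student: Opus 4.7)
The plan is to leverage the explicit formula $iy_n = A\lambda^n + B\lambda^{-n}$ from Proposition~\ref{prop:abut}, where $\lambda + \lambda^{-1} = z$ and $AB = (z^2 - k - 2)/(z^2 - 4)$. Since $|z| > \sqrt{k+2} > 2$, a sign-change normalization allows us to assume $\lambda > 1$, and since $y_n \in \R$ while $\lambda \in \R$, we may write $A = ai$, $B = bi$ with $a, b \in \R$. The hypothesis $|z| > \sqrt{k+2}$ forces $AB > 0$, hence $-ab > 0$, so $y_n = a\lambda^n + b\lambda^{-n}$ has coefficients of opposite signs. Viewed as a function of a real variable $t$, $a\lambda^t + b\lambda^{-t}$ is strictly monotonic (its derivative is $\log\lambda \,(a\lambda^t - b\lambda^{-t})$, which never vanishes when $ab < 0$), so the sequence $(y_n)$ is strictly monotonic and changes sign at a unique integer step. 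This yields a unique $m$ with $y_{m-1} y_m < 0$, and combined with the fixed sign of $z$ produces the unique negative vertex $v_m = v(Y_{m-1}, Y_m, Z)$, settling item~(1).

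For item~(3), the edge relation on $\vedge{n}$ gives $z_n' = -z - y_{n-1}y_n$. When $n \neq m$, $y_{n-1}y_n > 0$ shares the sign of $z$, forcing $|z_n'| > |z|$, so $\vedge{n}$ points toward $Z$. When $n = m$, $y_{m-1}y_m < 0$ and the required inequality $|z_m'| < |z|$ reduces to $0 < -y_{m-1}y_m < 2|z|$; the left half is immediate, and the right half follows by substituting the vertex relation at $v_m$ for the product $y_{m-1}y_m z$ and using the coarse estimate $y_{m-1}^2 + y_m^2 \ge 0 > -z^2 - k - 2$ (valid because $k > 2$). Thus $\vedge{m}$ is the unique $\vedge{n}$ pointing away from $Z$.

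For item~(2), Proposition~\ref{prop:EdgesAboutX}(a) already guarantees (since $z^2 \neq k+2$) that the $e_n$'s converge toward some single vertex of $\partial Z$; the task is to identify that vertex as $v_m$. I would argue directly from the formula: the map $t \mapsto |a\lambda^t + b\lambda^{-t}|$ is strictly decreasing on $(-\infty, n_0]$ and strictly increasing on $[n_0, \infty)$, where $n_0$ is the unique real zero of the underlying monotone function and lies strictly between $m-1$ and $m$ by the unexceptional hypothesis. Since the direction of $e_n$ is set by comparing $|y_{n-1}|$ with $|y_{n+1}|$, a short case analysis using this V-shape shows that for every $n$ the comparison favors the endpoint closer to $v_m$; hence every $e_n$ points toward $v_m$, and no $e_n$ is indecisive since $n_0 \notin \Z$.

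Finally, items~(2) and~(3) combine to classify each $v_n$: at any $v_n$ with $n \neq m$, the edge $\vedge{n}$ is incoming and exactly one of $e_{n-1}, e_n$ (the one along $\partial Z$ leading toward $v_m$) is outgoing, making $v_n$ a merge whose outgoing edge stays on $\partial Z$; at $v_m$ both $e_{m-1}$ and $e_m$ are incoming and $\vedge{m}$ is outgoing, so $v_m$ is likewise a merge, but its outgoing edge departs from $\partial Z$. Thus $v_m$ is the unique merge on $\partial Z$ directed away from $Z$. The main obstacle is pinning down the location of the $e_n$-attractor: Proposition~\ref{prop:EdgesAboutX}(a) alone only asserts its existence, and the Fork Lemma~\ref{lem:Fork} only constrains possible forks; it is the explicit monotonicity of $|y_n|$ on either side of $n_0$, together with $n_0 \in (m-1, m)$, that identifies the attractor as the unique negative vertex $v_m$.
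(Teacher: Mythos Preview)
Your proof is correct and follows essentially the same approach as the paper: normalize via a sign-change so that $z>0$, $\lambda>1$, write $A=ia$, $B=bi$ with $ab<0$, deduce strict monotonicity of $y_n$ and a unique sign change giving the unique negative vertex $v_m$, then check the orientations of the $e_n$'s and $\vedge{n}$'s. You are in fact more careful than the paper in two places: the paper simply asserts ``Edge Relations imply that all the $e_n$'s point decisively towards $v_0$'' and that $z_0'<0$, whereas you supply the V-shape (equivalently, $|y(t)|=2c\,|\sinh((t-n_0)\log\lambda)|$) argument for the former and the vertex-relation bound $-y_{m-1}y_m<2z$ for the latter.
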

\begin{proof}
%(c) 
If $ |z |> \sqrt{k+2}$, 
assume (as above)
\[ 
z>\sqrt{k+2},\quad \lambda>1. \]
Since $AB>0$, further assume 
\[ A=ia,\qquad B=ib \]
where $a>0$ and $b<0$. 
Hence $y_n$ increases monotonically and  
%$y_n \rightarrow -\infty$ as $n \rightarrow -\infty$, 
%$y_n \rightarrow \infty$ as $n \rightarrow \infty$. 
\[
\lim_{n\to -\infty} y_n = - \infty, \qquad
\lim_{n\to +\infty} y_n =  +\infty. \]
\noindent
Re-indexing if necessary, assume  $y_n>0$ for all $n \ge 0$ and $y_n<0$ for $n <0$. 
%It follows easily from the edge relations 
Then $v_0$ is negative and all other $v_n$ are positive.
Edge Relations~\eqref{eq:edgerelation} imply
that all the $e_n$'s point  decisively towards $v_0$.
Lemma~\ref{lem:PosNegVert} implies that all the $\vedge{n}$'s  point decisively towards $Z$ for $n \neq 0$. 
For  $\vedge{0}$, 
\[ z_0'=-y_{-1}y_0-z<0, \qquad -y_{-1}y_0>0 
\]
so $|z|>|z_0'|$. 
Hence  $\vedge{0}$ points decisively away from $Z$, 
as claimed.
\end{proof}
\noindent
Proposition~\ref{thm:WellDirectedCriterion} % 8
now follows from Lemma~\ref{lem:zBiggerThan2} %8.1.1
and Propositions~\ref{prop:mergesaboutZa},%  8.1.2
\ref{prop:mergesaboutZb}, %  8.1.3 
\ref{prop:mergesaboutZc}: % 8.1.4. 
Since every vertex is adjacent to some $Z$ with $\vert z\vert > 2$,
there are neither forks nor sources. Thus every vertex is a merge or
a sink and $\vT$ is well-directed.

\subsection{The Bowditch set}\label{sec:BowditchSet}

Recall that a {\em peripheral structure\/} on a free group $G$ is a collection of 
conjugacy classes of cyclic subgroups of $G$ corresponding to the components of $\partial S$, where $S$ is a surface-with boundary with $\pi_1(S) \cong G$. 
Nielsen proved that an automorphism of $G$ corresponds to a mapping class
of a surface $S$ if and only if it preserves a peripheral structure on $\pi_1(S)$.
For example, peripheral structures on $\Ft$ corresponding to $S\approx \Sthz$
correspond to superbases, 
and Nielsen's theorem on $\Soo$ is equivalent to the statement that every automorphism
of $\Ft$ preserves the peripheral structure defined by the simple commutators of bases.
%The peripheral structures on $\Ft$, where $S\approx \Coo$, 
%are described in \S\ref{sec:BowditchSet}.

The peripheral structures on $\Ft$ corresponding to 
a one-holed Klein bottle $S$ with fundamental group $\Ft$ are determined
by the equivalence class in $\Ft$ corresponding to $\partial S$. 
In terms of a basis $(X,Y)$ of $\Ft$, such an equivalence class
is $\Au$-related to that of $X^2Y^2$.
Its homology class in $H_1(\Ft)\cong \Z^2$ is twice that of $XY$, 
which is a primitive element of $\Z^2$.
Therefore the peripheral structures corresponding to one-holed Klein bottles
correspond bijectively to inversion-conjugacy classes of primitive elements
in $\Ft$, that is, to points in $\QPo$.
When the coloring $\Coloring$ is specified, these primitive classes
correspond to 
\[ 1 \longleftrightarrow \frac{\mathsf{odd}}{\mathsf{odd}} \in \QPo\]
under reduction modulo $2$; see  \eqref{eq:ThreeElementSet}.
We refer to such an element of $\QPo$ (or the corresponding primitive class) 
as {\em totally odd.} 

Apply Proposition~\ref{prop:GeneralizedFrickeSpaceOfCoo} to
Proposition~\ref{thm:WellDirectedCriterion} to obtain:

\begin{cor} $\vT_{\mu}$ is well-directed towards a unique sink if and only if
$\mu$ lies in 
$\gFO{C_{1,1}} =  \Gamma\cdot \Fricke'(C_{(1,1)})$.
\end{cor}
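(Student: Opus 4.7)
The plan is to deduce the corollary from Theorem 8 combined with Proposition \ref{prop:GeneralizedFrickeSpaceOfCoo} and Corollary \ref{cor:GenFrSp}, exploiting that the property ``$\vT_\mu$ is well-directed towards a unique sink'' is manifestly $\Gamma$-equivariant. Since both sides of the claimed equivalence are $\Gamma$-invariant, it suffices to match the defining inequalities at a single choice of base superbasis, and then transport by the $\Gamma$-action.

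For the forward direction, I take a representative $(ix,iy,z) \in \Fricke'(C_{1,1})$ at the base vertex. By Corollary \ref{cor:GenFrSp}, this means $|z| > 2$ and $x^2 + y^2 - xyz < 0$. Using the trace identity $\delta + k = z^2$ with $\delta = x^2 + y^2 - xyz + 2$, the inequality $\delta < 2$ rewrites as $z^2 < k+2$, i.e.\ $|z| < \sqrt{k+2}$. Since the base region $Z$ (corresponding to $(XY)^{-1}$) lies in $\O_{\R}$ by the choice of $\Coloring$, Theorem 8 applies: $\vT_\mu$ is well-directed towards a unique sink. As this property is $\Gamma$-invariant, the conclusion extends to all of $\Gamma \cdot \Fricke'(C_{1,1}) = \gFO{C_{1,1}}$.

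For the reverse direction, suppose $\vT_\mu$ is well-directed towards a unique sink $v^*$. Theorem 8 furnishes some $Z_* \in \O_{\R}$ with $|\mu(Z_*)| < \sqrt{k+2}$, and Proposition \ref{prop:mergesaboutZa} places the sink on the alternating geodesic $C(Z_*)$. Hence $v^* = v(X_*, Y_*, Z_*)$ for a basic triple with $Z_* \in \O_{\R}$. Because $\Gamma$ is the stabilizer of $\Coloring$ it preserves the partition $\O = \O_{\R} \sqcup \O_{i\R}$ and acts transitively on superbases whose third region lies in $\O_{\R}$; I choose $\gamma \in \Gamma$ mapping the base superbasis onto $(X_*,Y_*,Z_*)$. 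The transformed character $\gamma^{-1}\mu$ has base coordinates $(ix,iy,z)$ with $|z| < \sqrt{k+2}$, while Lemma \ref{lem:zBiggerThan2} gives $|z| > 2$. Re-expressing $|z| < \sqrt{k+2}$ via $\delta + k = z^2$ as $x^2 + y^2 - xyz < 0$ recovers exactly the defining inequalities of Corollary \ref{cor:GenFrSp}, so $\gamma^{-1}\mu$ lies in the sign-change orbit of $\Fricke'(C_{1,1})$, and hence $\mu \in \gFO{C_{1,1}}$.

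The only real subtlety is ensuring that the sink $v^*$ is adjacent to a (necessarily unique) region of $\O_{\R}$, so that one can identify $Z_*$ with a coordinate in the base superbasis after applying $\gamma$. This follows from Proposition \ref{prop:ColoredEdges}: each vertex of $\T$ has exactly one incident $\R$-edge, hence exactly one adjacent region from $\O_{\R}$. Since $v^*$ lies on $C(Z_*)$ with $Z_* \in \O_{\R}$, this distinguished adjacent region is $Z_*$ itself, closing the argument.
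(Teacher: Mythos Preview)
Your proof is correct and follows essentially the same approach as the paper, which derives the corollary in one line by combining Theorem~8 with Proposition~\ref{prop:GeneralizedFrickeSpaceOfCoo}. You have simply unpacked this: translating the condition $\vert z\vert < \sqrt{k+2}$ via the identity $\delta + k = z^2$ into the defining inequality of Corollary~\ref{cor:GenFrSp}, and making explicit the $\Gamma$-equivariance step that carries the sink vertex $v^*$ back to the base superbasis while respecting the $\O_\R/\O_{i\R}$ dichotomy.
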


\noindent
We identify this Fricke orbit with the Bowditch set:
\begin{prop}\label{prop:WellDirectedToSinkImpliesBowditch}
Suppose that $\mu$ is an unexceptional imaginary  trace labeling with $k>2$. 
Then $\vT_{\mu}$ is well-directed towards a sink 
if and only if  $\mu\in\B$.
\end{prop}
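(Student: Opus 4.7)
By the corollary preceding this proposition, $\vT_\mu$ is well-directed toward a sink if and only if $\mu\in\gFO{C_{1,1}}$; the statement therefore reduces to the set equality $\gFO{C_{1,1}}=\B$ on the unexceptional imaginary locus with $k>2$. The condition \eqref{eq:BQ1} is automatic there: for $W\in\O_{i\R}$ one has $\mu(W)\in i\R\setminus\{0\}$, hence $\mu(W)\notin[-2,2]$, while for $W\in\O_\R$ Lemma~\ref{lem:zBiggerThan2} gives $|\mu(W)|>2$. Both inclusions thus amount to comparing $\gFO{C_{1,1}}$ with the locus where $\Omega_\mu(2)$ is finite.

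For $\gFO{C_{1,1}}\subseteq\B$, let $v_0=v(X_0,Y_0,Z_0)$ be the sink; Proposition~\ref{prop:mergesaboutZa} forces the regime $2<|z_0|<\sqrt{k+2}$ and gives $y_n=a\lambda^n+b\lambda^{-n}$ along $C(Z_0)$ with $a,b>0$ and $\lambda>1$, so $|y_n|\to\infty$ and only finitely many neighbors of $Z_0$ lie in $\Omega_\mu(2)$. To extend to arbitrary $W\in\O$, I plan an induction on the tree-distance from $v_0$ to the nearest vertex of $C(W)$: that path runs against the arrows of $\vT_\mu$, so one trace coordinate strictly grows at each step via \eqref{eq:edgerelation}. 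Iterating and reapplying Proposition~\ref{prop:mergesaboutZa} to each alternating geodesic met along the way shows $|\mu(W)|$ grows without bound as $W$ moves away in the tree, yielding \eqref{eq:BQ2}. Alternatively one may appeal to Proposition~\ref{prop:GeneralizedFrickeSpaceOfCoo} and the discreteness of the simple-length spectrum of the corresponding (possibly cone) hyperbolic structure on $C_{1,1}$, together with the fact that the peripheral class $D=X^2Y^2$ is not primitive.

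For $\B\subseteq\gFO{C_{1,1}}$, assume $\mu\in\B$ and suppose for contradiction that $\vT_\mu$ has no sink. By Theorem~\ref{thm:WellDirectedCriterion} every vertex is then a merge, and iterating the unique out-edge from an arbitrary vertex yields an infinite descending geodesic in $\T$. Such a geodesic either eventually lies inside a single alternating geodesic $C(W)$, or crosses infinitely many distinct ones. In the first case the path descends indefinitely along $C(W)$; Proposition~\ref{prop:abut} together with Corollary~\ref{cor:abutgrowth}(d) then forces $W\in\O_\R$ with $w^2=k+2$ (else $|y_n|$ has a unique interior minimum and further descent is impossible), so $y_n=a\lambda^n\to 0$ in one direction, producing infinitely many imaginary neighbors with trace in $[-2i,2i]$ and contradicting \eqref{eq:BQ2}. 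The remaining case---infinitely many transitions between distinct alternating geodesics---is the principal obstacle; the plan is to show, using Lemma~\ref{lem:PosNegVert} together with Propositions~\ref{prop:mergesaboutZb}--\ref{prop:mergesaboutZc}, that each such transition crosses an $\R$-edge abutting a real region $Z$ with $|z|\ge\sqrt{k+2}$, and contributes, via the recursion of Proposition~\ref{prop:abut}, a fresh imaginary region of trace in $[-2i,2i]$, so \eqref{eq:BQ2} is again violated.
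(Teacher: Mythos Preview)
Your reduction to showing $\gFO{C_{1,1}}=\B$ via the preceding Corollary is fine, and your verification of \eqref{eq:BQ1} matches the paper's. The two directions, however, are handled quite differently from what the paper does.

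\textbf{Forward direction (sink $\Rightarrow\B$).} The paper's argument is much more direct than your induction sketch. It simply fixes $\varepsilon>0$ with $|x_0|,|y_0|,|z_0|-2>\varepsilon$ at the sink $v_0(X_0,Y_0,Z_0)$, and then observes (via Propositions~\ref{prop:mergesaboutZa}--\ref{prop:mergesaboutZc}) that, except possibly for one $\R$-edge adjacent to $v_0$, every edge is decisive and the jump in absolute value across it exceeds $\varepsilon^2$. Thus walking outward from the sink increases some trace label by at least $\varepsilon^2$ at every step after the first, which forces $\Omega_\mu(2)$ to be finite. Your tree-distance induction could presumably be made to work, but it is vaguer and misses this clean quantitative mechanism; your geometric alternative via the discrete simple-length spectrum is valid but imports much more machinery than needed.

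\textbf{Converse direction ($\B\Rightarrow$ sink).} The paper does not argue this directly at all. It invokes the Bowditch/Tan--Wong--Zhang theorem quoted in \S\ref{sec:BowditchSet} (that every $\mu\in\B'$ has an attractor in $\vT_\mu$), together with the already-established well-directedness of $\vT_\mu$: since every vertex is a merge or a sink, an attractor can only be a sink or an attracting indecisive edge, either endpoint of which is a sink. Your direct attempt is therefore unnecessary, and your ``principal obstacle'' case---an infinite descending path crossing infinitely many distinct alternating geodesics---contains a genuine gap. Proposition~\ref{prop:mergesaboutZc} tells you only that at the unique negative vertex $v_m$ along $C(Z)$ the consecutive values $y_{m-1},y_m$ have opposite signs; it gives no reason why either should satisfy $|y|\le 2$. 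So the asserted production of ``a fresh imaginary region of trace in $[-2i,2i]$'' at each transition is unjustified, and the argument as written does not close. The simplest fix is to abandon the direct approach and cite the attractor theorem as the paper does.
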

\begin{proof}
Suppose that $\vT$ is well-directed toward a sink 
$v_0(X_0,Y_0,Z_0)$ where 
\begin{align*}
\mu(X_0)& =ix_0, \\
\mu(Y_0)& =iy_0 \\
\mu(Z_0) & =z_0,
\end{align*}
and $x_0,y_0,z_0 \in \R$. 
We verify the Bowditch conditions.

Since $\mu$ is unexceptional and $|z_0|>2$, 
\[ \vert x_0\vert,\ \vert y_0\vert,\ \vert z_0\vert -2\; >\;\varepsilon \]
for some $\varepsilon>0$.

Propositions~\ref{prop:mergesaboutZa}, \ref{prop:mergesaboutZb} and \ref{prop:mergesaboutZc}
imply that % 
all but possibly one edge of $\T_{\mu}$ 
point decisively towards $v_0$.
Furthermore the difference in the absolute values of the regions at the opposite ends of a decisive edge is $> \varepsilon^2$, 
except possibly for one $\R$-edge adjacent to $v_0$.  
Hence, moving away from the sink, 
each step (except for possibly the first step) increases the absolute value of the new trace label by at least  $\varepsilon^2$. 
Hence $\Omega_{\mu}(2)$ is finite and $\mu$ satisfies Condition~\eqref{eq:BQ2}.

Similarly, since the trace labeling $\mu$ is unexceptional, no trace value is zero. 
Now by Proposition \ref{prop:twocomponents},  $\vert\mu(Z)\vert > 2$ for $Z\in \Omega_\R(\T)$. Hence, no trace value lies in $[-2,2]$ which implies 
Condition~\eqref{eq:BQ1}.
Thus each Bowditch condition is satisfied and $\mu\in\B$, as claimed.

%Conversely, suppose that $\vT$ is not directed towards a sink.
%Since $\vT$ is well-directed, it contains an infinite descending path,
%contradicting Condition~\eqref{eq:BQ2}. 
%If $\mu$ is exceptional, then $\mu(X) = 0$ for  some $X\in\O$,
%violating Condition~\eqref{eq:BQ1}. 
%s
%If $k>2$ and $\mu$ does not satisfy 
%Proposition~\ref{prop:WellDirectedToSinkImpliesBowditch}, 
%either: 
%\begin{itemize} 
%\item
%$\mu$ is exceptional, so $\mu(Y)=0$ for some $Y \in \O$ and 
%Condition~\eqref{eq:BQ1} doesn't hold, or:
%\item 
%$\vT_\mu$ contains no sink so $\vT_\mu$ is directed towards its boundary.
%\end{itemize}
%In the latter case, infinite descending paths exist.
%Furthermore 
%$|\mu(X)|<2$ for at least one $X\in\O$ by a variation of the argument above.
%Hence $|\mu(X)|<2$ for infinitely many $X\in\O$,
%violating Bowditch's  condition  
%\eqref{eq:BQ2}.
\end{proof}
\noindent
Another proof follows from Proposition \ref{prop:ProperAction}.
In this case the only attracting finite subtrees are indecisive attracting 
edges.

%\noindent
%The latter case occurs, for example, when 
%\[ \mu(Z) =\pm \sqrt{k+2} \]
%for some $Z \in \O$ . 
%Then  $\vT$ is directed towards a rational end.
%More interestingly, trace labelings which point towards irrational ends 
%exist as well. 
%In  both cases the values of the triples $(ix,iy,z)$ around the vertices of an infinite descending path ``converge'' to the dihedral character $(0,0, \pm \sqrt{k+2})$.
%Tan-Wong-Zhang~\cite{MR2405161} discuss 
%rational and irrational ends of $\vT$.
%
\subsection{Planar projection of the Bowditch set}
\label{sec:ProjectionFor2<k}
Now we describe in more detail the structure of the Bowditch set (or Fricke
orbit) and its complement in the level surface $\kCk$.
Proposition~\ref{prop:SmoothSurface} implies that the two components of the 
level surface $\kCk$ are graphs of the functions 
$z_{\pm}$ defined in \eqref{eq:zPlusOrMinus}
and the projection $\Pixy$ maps each component diffeomorphically onto
the $xy$-plane.

%The projection $\Pixy$ maps $\B$ onto countably many connected components in the $x,y$-plane,
%all of whose closures contain the origin.
%
%The components are indexed by the rationals $p/q$ where $p,q$ are odd, 
%which corresponds to $\O_{\R}$. 
%Furthermore %there is 
%a cyclic ordering on the set  %which 
%respects the cyclic ordering of the rationals in $\hat{\R}$. 
%The complement of $\Pixy(\B)$ in $\R^2 \setminus \{(0,0)\}$ consists of uncountably many connected components indexed by $\hat{\R}$ and has empty interior. 
%Those components indexed by $p/q \in \Q$ with $p,q$ odd correspond to the boundaries of the components of $\Pixy(\B)$, 
%those indexed by $p/q \in \Q$ with either $p$ or $q$ even correspond to the the exceptional characters $\Gamma_{\Coloring}^{-1}(0, -iy,z)$ 
%and those indexed by an irrational correspond to characters whose 
%trace labeling $\mu$ has directed tree $\vT$ 
%directed towards a fixed irrational end.
%
First observe that the imaginary characters $(ix,iy, \sqrt{k+2})$ 
lie on the lines in the plane $z = \sqrt{k+2}$
\[y=m^\pm  x \]
having slopes
\begin{equation}\label{eq:DefSlopes}
m^\pm = m^\pm(k) :=\frac{\sqrt{k+2}\pm \sqrt{k-2}}{2}. \end{equation}
These lines intersect in the dihedral character $\Delta= (0,0,\sqrt{k+2})$.
%\\m\arginpar{b:Changed notation from p to $\Delta$}
Moreover characters $(ix,iy, z)$ with $2<z<\sqrt{k+2}$ 
project to the sectors in the first and third quadrant bounded by these lines.
%\\m\arginpar{b:is p the best name for the dihedral charcter? (we use $p/q\in\Q$ elsewhere)}
Similarly, the characters $(ix,iy, -\sqrt{k+2})$ lie on lines 
\[ y=-m^\pm x \] 
in the plane $z = -\sqrt{k+2}$. 
These lines intersect in the dihedral character $-\Delta= (0,0,-\sqrt{k+2})$.
In particular the tangent space at $\pm\Delta$ equals the horizontal plane
defined by $z=0$:
\begin{equation}\label{eq:HorizontalTangentSpace}
 T_{\pm\Delta} \big(\kCk\big)  = \R^2 \oplus 0 \subset \R^3. \end{equation}
 
Characters $(ix,iy, z)$ with $-\sqrt{k+2}<z<-2$ project to sectors in the second and fourth quadrant bounded by these lines. 
The infinite dihedral group generated by $\II_1,\II_2$  preserves these sectors.

Each sector identifies with the generalized Fricke space
$\Fricke'(C_{1,1})$ of the one-holed Klein bottle.
The four components represent the orbit of $\Fricke'(C_{1,1})$ 
under sign-change automorphisms. 

Recall %from \S\ref{sec:FaithfulAction}  
that  the group $\Gamma$ is generated by sign-changes $\Sigma$ and
\[
%\Out(\Ft,\Coloring)/\langle\epsilon\rangle \cong \Gamma \subset \PGLtZ,  \]
\PGLtZc\  \cong\  \big(\Z/2\star\Z/2\star\Z/2\big) \ltimes \Z/2 \]
and this group acts faithfully on $i\R\times i\R\times\R$.
The projection $\Pixy(\B)$ is then the projection of the $\Gamma$-orbit of these sectors.
%\\m\arginpar{b:$\hGamma_\Coloring,\Gamma_\Coloring$ only used here}
Each sector is $\hGamma_\Coloring$-invariant,
and therefore $\hGamma_\Coloring$ acts on the set of sectors.
Thus  the pair consisting of $(0,0,\pm z)$  is $\Gamma$-invariant.
These correspond to dihedral characters.
Therefore they meet the boundaries of components of $\Pixy(\B)$. 
All the other components are bounded by curves which are 
the images of the lines $y=m^{\pm} x$ under $\Gamma/\Gamma_{\Coloring}$. 
Except for the lines
$y=m^{\pm} x$,  these curves are not straight lines.

%\subsection{Linearizing the action on $T_1(0,0,\sqrt{k+2})$}\label{ss:linearization}
The projection $\Pixy$ maps $\B$ onto countably many connected components in the $xy$-plane,
all of whose closures contain the origin.

The components are indexed by the {\em totally odd\/} rationals,
that is, rational numbers $p/q$ where both $p,q$ are odd integers.
These components bijectively correspond to the subset  $\O_{\R}\subset\O$.
%\\m\arginpar{Is this correct? These components bijectively correspond to the subset  $\O_{\R}\subset\O$.}
%\\m\arginpar{b: let me check}
Furthermore %there is 
a cyclic ordering on the set  %which 
respects the cyclic ordering of the rationals in $\hat{\R}$. 
The complement of $\Pixy(\B)$ in $\R^2 \setminus \{(0,0)\}$ consists of uncountably many connected components indexed by $\hat{\R}$ and has empty interior.

%\bigskip
%*********************(corrections by Tan from here)*****
% \bigskip

These can be understood in terms of the {\em end invariants} introduced by Tan-Wong-Zhang~\cite{MR2405161} which we describe briefly. 
Recall that the tree of superbases $\T$ can be identified with the dual graph of the Farey triangulation so that the set of complementary regions $ \Omega$ of $\T$ is identified with $\QPo$. 
The trace labeling $\mu$ associated to a character $\rho \in \HomF$ is a map from $\Omega$ to $\C$. 
The equivalence classes of ends of the tree $\T$  identifies %can be identified 
with $\RPo$, where the identification is two-to-one on the rationals 
(corresponding to the two ends of the alternating %peripheral 
geodesic about $Z \in \Omega$) and one-to-one on the irrationals. 
We identify %use this identification of 
the ends of $\T$ with $\RPo$. 
%There is a cyclic ordering on the set of ends induced by the cyclic ordering of $\RPo$ and also a topology on the set of ends induced from the topology of $\RPo$. 
The topology of $\RPo$ induces a topology on the set of ends.
The cyclic ordering on $\RPo$ induces a cyclic ordering on the set of ends. 

Let $\lambda$ denote an end of $\T$. 
Then, following \cite{MR2405161}, call $\lambda$  an {\em end invariant\/} of $\rho$ if,
for some constant $K>0$ and infinitely many distinct $Z_n \in \Omega$ adjacent to $\lambda$,
\[ \vert\mu(Z_n)\vert <K. \]  %for all $n >0$.  
%Then, following \cite{MR2405161}, an end ${\lambda}$ of $\T$ is called an end invariant of $\rho$ if there exists a constant $K>0$ and infinitely many distinct $Z_n \in \Omega$ adjacent to $\lambda$  such that 
%$|\mu(Z_n)|<K$  for all $n >0$. 
Denote the set of end invariants of $\mu$ by $\EndInv \subset \RPo$.

%The set of end invariants of $\mu$ is denoted by $\mathcal{E}(\mu) \subset \RPo$.

%For characters on $\kCk$ when $k> 2$, because the tree is well directed, it is easy to see that there is at most one end invariant since in all other directions, the absolute values of the traces of adjacent $Z \in \Omega$ increase by a definite amount with each step. Also, in this case, $\B'=\B$ since for all $Z \in \Omega$ with real trace, $|\mu(Z)|>2$.

%For characters on $\kCk$ when $k> 2$, at most one end invariant exists.
%This follows from the fact that tree is well-directed. 
%Consider a vertex $v\in\V$. 
%In the incoming edges at $v$, the absolute values of the traces of adjacent $Z \in \Omega$ increase by a definite amount with each step.

%%% ser's insertion begins here

For characters on $\kCk$ when $k>2$, at most one end invariant exists.
%If the tree is well directed towards a sink,  
%then  $\EndInv = \emptyset$. 
%If the tree is well-directed towards an irrational end, 
%$\EndInv$ consists of one irrational point.
%If the tree is well directed towards a region $X \in \Omega$,
%then $\EndInv$ consists of one rational point, 
%corresponding to $X$.
This follows from the fact that the tree is well-directed.
%because when moving towards any other end of the tree, 
%the traces of each new adjacent $Z \in \Omega$ increase by  definite amounts with each step.
%Therefore, for all other equivalence classes of ends of the tree cannot be end invariants of $\mu$.
%%If the tree is well directed towards a region $X \in \Omega$ 
%(so that $\mu(X)=0$ or $\pm \sqrt{k+2}$), 
%then 
%%% 
Note also that,  in this case, $\B'=\B$ since for all $Z \in \Omega$ with real trace, $|\mu(Z)|>2$.

As proved in \cite{MR2405161}, $\mathcal{E}(\mu)=\emptyset$ if and only if
$\mu\in\B=\B'$. 
Otherwise $\mathcal{E}(\mu)= \{\lambda\}$ for a unique  $\lambda\in\RPo$.  %
End invariants of characters on the boundary of $\B$ correspond to totally odd rationals.
In particular end invariants of characters corresponding to points on the lines of slope $m^{\pm}$
correspond to $1\in\QPo$
%In particular, %the 
%characters corresponding to points on the lines of slope $m^{\pm}$ have end invariant corresponding to  
%%the rational number 
%$1\in\QPo$.
These lines bound the two sectors $\B_0$ defined by 
\[  2 < z < \sqrt{k+2},\] 
or, equivalently,  
\[ m^- < y/x < m^+\] 
The other components of $\B$
correspond to the images of $\B_0$ under $\gamma\in\Gamma$, and are indexed
by the totally odd rational numbers $\gamma(1)$. Points on the boundary of these components correpond to characters with end invariant the corresponding totally odd rational numbers $\gamma(1)$.

%Other rational end invariants correspond to the limits of images of $\B$ under the compositions $\II_3\circ\II_1$ and $\II_3\circ\II_2$. 
The complement of $\Pixy(\B)$ in $\R^2 \setminus \{(0,0)\}$ %now 
consists of uncountably many connected components easily %which can be 
described in terms of the end invariants: %.
Components %Those components 
with end invariant indexed by totally odd $p/q \in \Q$ ($p,q$ odd)
correspond to the boundaries of the components of $\Pixy(\B)$, 
Components % those 
with end invariant indexed by $p/q \in \Q$ with either $p$ or $q$ even correspond to the the exceptional characters $\Gamma_{\Coloring}(0, -iy,z)$.
%$\Gamma_{\Coloring}^{-1}(0, -iy,z)$.
%\\m\arginpar{$\Gamma_{\Coloring}^{-1}(0, -iy,z)$?}
Components % Those 
with end invariant indexed by an irrational correspond to characters whose 
trace labeling $\mu$ has directed tree $\vT$ 
directed towards a fixed irrational end.
%\\m\arginpar{b:end of insertion $\#16$}
%\bigskip
%
%***end of Tan corrections******
%
%\bigskip

As in \S\ref{sec:FaithfulAction}, 
three involutions  $\II_1,\II_2,\ \sigma_1\circ\II_3\ \subset\Gamma$ act on 
$\R^3$ by:
\[
\bmatrix x \\ y \\ z  \endbmatrix \xmapsto{~\II_1~}
\bmatrix yz-x \\ y \\ z  \endbmatrix,\qquad 
\bmatrix x \\ y \\ z  \endbmatrix \xmapsto{~\II_2~}
\bmatrix x \\ xz-y \\ z  \endbmatrix, \qquad
\bmatrix x \\ y \\ z  \endbmatrix\xmapsto{~\sigma_1\circ\II_3~}
\bmatrix x \\ -y \\ xy+z  \endbmatrix
\]
(We compose the Vieta involution $\II_3$ with the sign-change $\sigma_1$
to preserve each of the two components of $\kCk$.)
Each involution fixes the line $\{0\}\times\{0\}\times\R$.
This line consists of dihedral characters.
Their derivatives at $p = (0,0,\pm\sqrt{k+2})$ determine three linear involutions
as follows. 
Since both $\{0\}\times\{0\}\times\R$ and $\kCk$ are invariant
and intersect transversely at $p$, the tangent spaces decompose as direct sums:
\begin{alignat*}{3}
T_\Delta(\R^3) & =  T_\Delta\Big(\kCk\Big)\; &&\oplus\; T_\Delta\Big(\{(0,0)\}\times\R\Big)  \\
& = \; \Big( \R^2 \oplus 0\Big) &&\bigoplus \; \Big( 0\oplus 0 \oplus \R\Big) 
\end{alignat*}
With respect to this decomposition, $\II_1,\II_2,\sigma_1\circ\II_3$ act infinitesimally at $\Delta$ by
their derivatives:
\begin{align*}
D_\Delta(\II_1) & = \bmatrix -1 & \sqrt{k+2}\\ 0  & 1 \endbmatrix 
\oplus \bmatrix1\endbmatrix 
\\
D_\Delta(\II_2) & = \bmatrix 1 & 0 \\ \sqrt{k+2} & -1 \endbmatrix 
\oplus \bmatrix1\endbmatrix 
\\
D_\Delta(\sigma_1\circ\II_3) &=\bmatrix 1 & 0 \\ 0 & -1 \endbmatrix
\oplus \bmatrix1\endbmatrix  
\end{align*}
Projectivizing these $2\times 2$ matrices yield  reflections of the 
Poincar\'e upper halfplane $\Ht$:
\[
z \longmapsto \sqrt{k+2} - \bar{z} , \qquad
z \longmapsto \frac{\bar{z}}{\sqrt{k+2}\ \bar{z}-1} , \qquad
z \longmapsto -\bar{z}, \]
respectively. 
These are reflections in the geodesics with pairs of endpoints:
\[
\{ \infty, \sqrt{k+2}/2 \},\qquad
\{ 2/\sqrt{k+2},0  \},\qquad
\{ 0,\infty \}, \]
respectively. 
These generate a Fuchsian group whose quotient is doubly covered by a disc with
two cusps and geodesic boundary of length $2\cosh^{-1}(k/2)$.
(Compare Figure~\ref{fig:FuchsianGroup}.)
\begin{figure}[thb]
\centerline{\includegraphics[scale=.7]{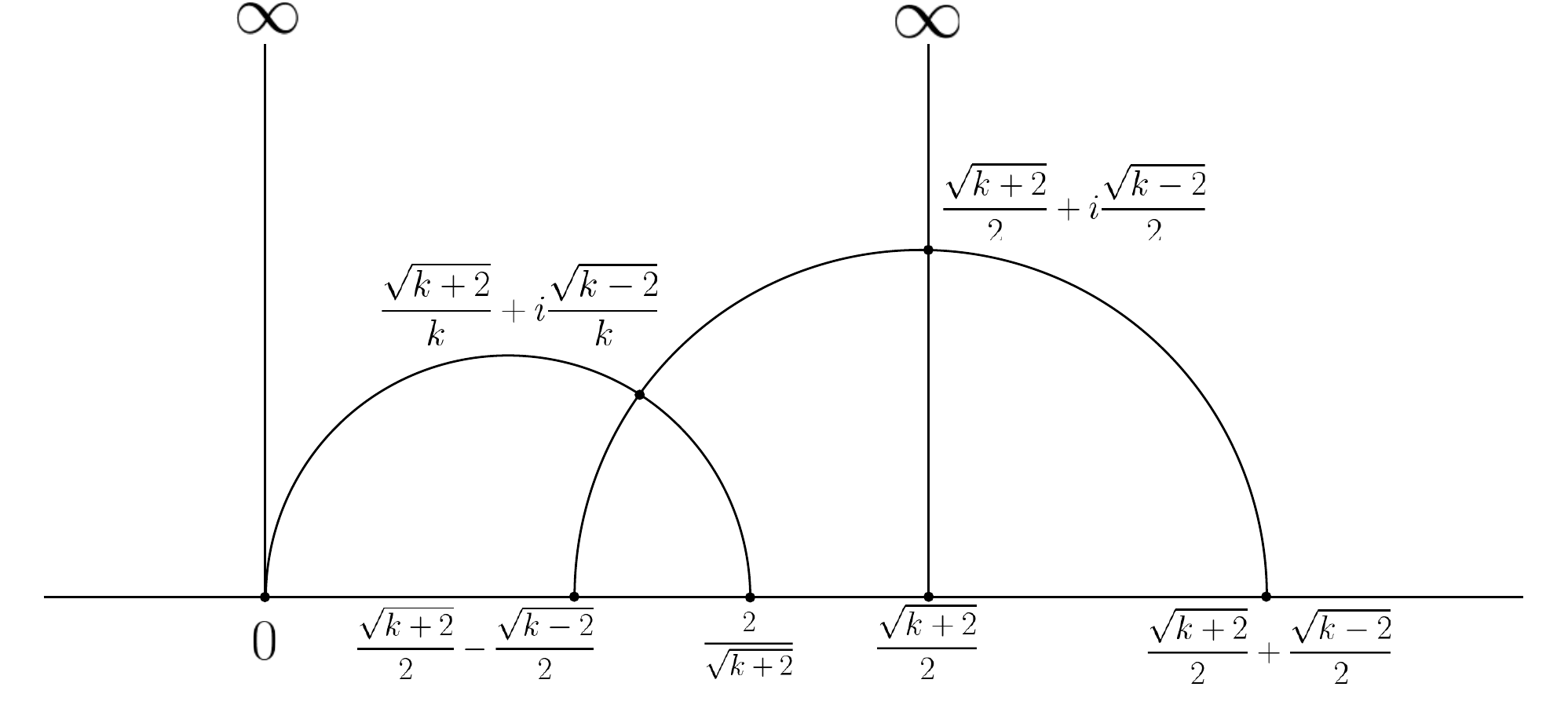}}
\caption{The Fuchsian group generated by Vieta involutions}
\label{fig:FuchsianGroup}
\end{figure}

\begin{figure}[thb]
\centerline{\includegraphics[width=\textwidth]{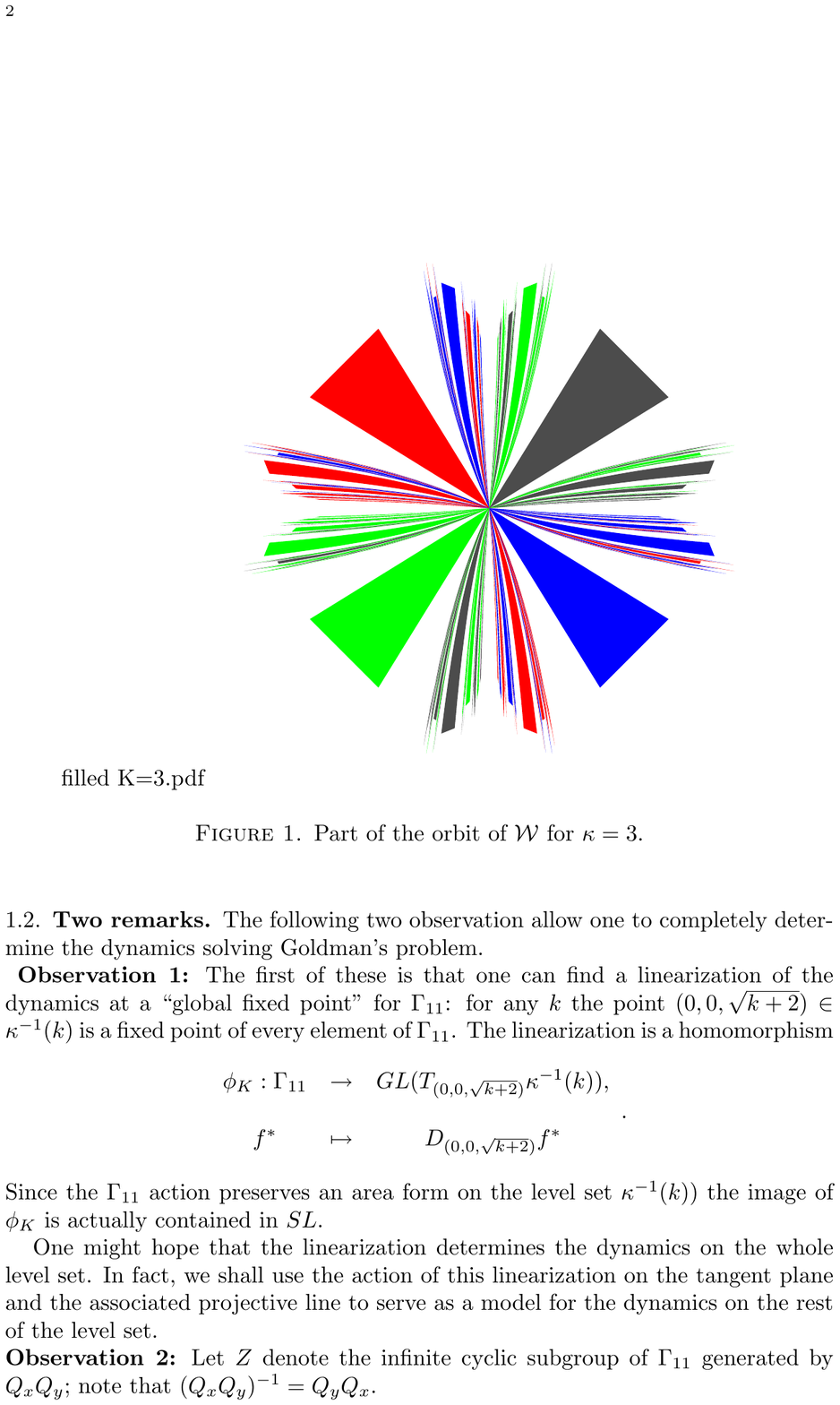}}
\caption{$xy$-projection of domain of discontinuity for $k=3$}
\label{fig:xyDisc}
\end{figure}

The limit set $\Lambda$ is a Cantor set in $\RPo$. 
Its complement is a countable collection of open intervals,
corresponding to the orbit of  $\Fricke'(C_{1,1})$.
The fixed points of the composition %$dP\cdot dQ$  
\[D_\Delta(\II_1)\circ D_\Delta(\II_2) \]
exactly correspond to the slopes of the lines $y=m^\pm x$ 
bounding $\Fricke'(C_{1,1})$ 
(where $m^\pm$ is defined as in \eqref{eq:DefSlopes}).
The points of $\Lambda$ correspond to 
the components of the complement of the domain of discontinuity.
% \[ C=\frac{\sqrt{k+2}\pm \sqrt{k-2}}{2}. \]

%\begin{remark}
For an exceptional imaginary trace labeling 
\[ \mu\ \in \ \hGamma \cdot (0, iy, z), \]
where $y \neq 0$ and $|z|>2$, only one region $X\in\Omega$ with
$\mu(X) = 0$ exists. 
Furthermore all edges of $\vec T_{\mu}$ 
not adjacent to $X$ point towards $X$. 
Edges adjacent to $X$ are indecisive and can therefore be directed arbitrarily. 
%\end{remark}

%\subsection{Geometric structures on one-holed Klein bottle}
%Suppose that $\mu = (ix,iy,z)\in i\R\times i\R \times \R$ is a character with $k>2$. 
%Proposition~\ref{prop:twocomponents} implies that $\vert z \vert > 2$.
%Furthermore both $x,y \neq 0$ since otherwise $0 > z^2 - (k+2) = \QQ_z(x,y) > 0$.
%
%\begin{prop} Suppose that $\vert z \vert < \sqrt{k+2}$. 
%Let \[ \delta := z^2 - k < 2.\] 
%Then $\chi$ corresponds to the holonomy of a hyperbolic structure on $C_{1,1}$
%\begin{itemize}
%\item with geodesic boundary if $\delta < -2$, 
%and the boundary has length $2 \cosh^{-1}(-\delta/2)$;
%\item with a puncture if $\delta = -2$;
%\item with a cone point of angle $\cos^{-1}(-\delta/2)$ if $-2<\delta < 2$.
%\end{itemize}
%\end{prop}
%\begin{proof}l
%Depending on the three cases for $\delta$,
%the character \[ (-2-x^2,-2-y^2,\delta)\] corresponds to the holonomy of
%a hyperbolic surface $S \approx \Sigma_{0,3}$ where the boundary with trace $\delta$ 
%either is a closed geodesic, a puncture, or a cone point respectively, depending on whether
%$\delta < -2$, $\delta = -2$ or $2 > \delta > -2$ respectively.
%Attaching a cross-cap to the boundary components corresponding to $X$ and $Y$
%yields a hyperbolic surface $S'\approx C_{1,1}$ whose holonomy has character $(ix,iy,z)$. 
%(Compare \cite{MR2497777} for a discussion of attaching cross-caps to geodesic boundary
%components of hyperbolic surfaces.)
%\end{proof}
\subsection{Density of the Bowditch set}
By Bowditch~\cite{MR1643429} and Tan-Wong-Zhang~\cite{MR2370281}, 
$\B$ is open. 
Now we prove that it is dense:

\begin{thm}\label{thm:Density}
The  interior of the complement of the Bowdtich set $\B$
% generalized Fricke orbit  $\gFO{\Coo}$ 
is empty.
\end{thm}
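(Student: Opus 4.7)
The plan is to reduce to a local analysis of the $\Gamma$-action at the dihedral characters $\pm\Delta = (0,0,\pm\sqrt{k+2})$, where the linearization is a Fuchsian Coxeter group whose limit set is a Cantor set. By Propositions~\ref{prop:SmoothSurface} and \ref{prop:twocomponents}, $\Pixy$ restricts to a diffeomorphism on each component of $\kCk$, so it suffices to show that $\R^2 \setminus \Pixy(\B)$ has empty interior. I would argue by contradiction: suppose some nonempty open $V \subset \kCk \setminus \B$ exists; since $\B$, and hence its complement, is $\Gamma$-invariant, every translate $\gamma V$ also lies in the complement.

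First, for $\mu \in V$, Theorem~\ref{thm:WellDirectedCriterion} asserts that $\vT_\mu$ is well-directed, and Proposition~\ref{prop:WellDirectedToSinkImpliesBowditch} ensures it is not well-directed towards a sink. I would then follow the unique descending ray from any vertex and apply Propositions~\ref{prop:mergesaboutZa}--\ref{prop:mergesaboutZc} together with Corollary~\ref{cor:abutgrowth} to produce a sequence $\gamma_n \in \Gamma$ along which the absolute values of the trace coordinates at $\gamma_n \cdot \mu$ are monotonically non-increasing and accumulate at a dihedral configuration. After composing with an additional $\Gamma$-element one may arrange that $\gamma_n V$ accumulates at $\pm\Delta$ itself.

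Next, near $\pm\Delta$ the tangent space to $\kCk$ is horizontal by \eqref{eq:HorizontalTangentSpace}, and the derivatives $D_{\pm\Delta}(\II_1)$, $D_{\pm\Delta}(\II_2)$, $D_{\pm\Delta}(\sigma_1\circ\II_3)$ projectivize to the three reflections generating the Fuchsian Coxeter group $F$ described in \S\ref{sec:ProjectionFor2<k}. Since two of its defining geodesics are ultraparallel when $k > 2$, the group $F$ is Fuchsian of the second kind and its limit set $\Lambda \subset \RPo$ is a Cantor set. The angular sector corresponding to $\Fricke'(C_{1,1})$ is, to first order, a fundamental interval for $F$ on $\RPo\setminus\Lambda$, and its $F$-translates exhaust this open set. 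Because $\Lambda$ is nowhere dense in $\RPo$, a Fubini-type argument on the radial foliation at $\pm\Delta$ would show that the union of arcs emanating from $\pm\Delta$ in directions lying in $\Lambda$ has empty interior in $\R^2$, contradicting $\gamma_n V$ being a nonempty open set accumulating at $\pm\Delta$.

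The hard part will be passing from the first-order picture at $\pm\Delta$ to the full nonlinear $\Gamma$-action elsewhere: one must verify that the components of $\kCk\setminus\B$ remain one-dimensional arcs at every scale, not merely infinitesimally. This will be controlled by $\Gamma$-equivariance together with the properness of the $\Gamma$-action on $\B$ furnished by Proposition~\ref{prop:ProperAction} applied to the attractor structure, since any $2$-dimensional bubble in the complement would, after transport by $\Gamma$ into a neighborhood of $\pm\Delta$, contradict the Cantor-set structure of the infinitesimal limit set.
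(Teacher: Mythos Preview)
Your overall strategy---follow the descending ray, accumulate at the dihedral point $\pm\Delta$, then invoke the Cantor limit set of the linearized Fuchsian group---does not close. The ``hard part'' you flag at the end is not a detail to be filled in; it is the entire content of the theorem, and the tools you propose (properness of $\Gamma$ on $\B$, $\Gamma$-equivariance) say nothing about the shape of an open set in the \emph{complement} of $\B$. The linearization at $\pm\Delta$ controls only tangent directions of arcs through that single point; it gives no a priori reason why a translate $\gamma_n V$ approaching $\pm\Delta$ must be confined to a thin sector whose angular measure tends to zero. Area-preserving dynamics can stretch an open set into long thin tongues, so ``accumulates at $\pm\Delta$'' does not imply ``eventually contained in every neighbourhood of $\pm\Delta$,'' and without that containment your Fubini argument on radial directions has no traction.

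There is also a gap earlier: you assert that the trace coordinates along the descending ray accumulate at a dihedral configuration, but this is not automatic. For a single character $\mu$ the descending ray may head toward an irrational end, and nothing forces $|z_n|$ to drop to $\sqrt{k+2}$ or $|x_n|,|y_n|$ to drop to $0$. The paper handles this by bringing in the ingredient you never use: the $\Gamma$-invariant area form $dA_k$ of Lemma~\ref{lem:AreaForm}. Area preservation forces the translates $\gamma_n(U)$ to visit only finitely many components of the complement (otherwise infinite disjoint area inside a finite-area box), hence the descending word is eventually \emph{periodic}; the period must contain an $\R$-edge $\II_3$, and each pass through the period drops $|z|$ by a definite amount $\epsilon^2$ on the part of $\gamma_n(U)$ bounded away from the origin. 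This forces all of $\gamma_n(U)$ into an arbitrarily small neighbourhood $\Nd$ of $\pm\Delta$, and then $\area(U)=\area(\gamma_n U)\le\area(\Nd)<\area(U)$ is the contradiction. Your argument should be reorganized around the area form rather than the infinitesimal Fuchsian picture.
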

The proof involves the $\Gamma$-invariant smooth measure
provided by \linebreak Lemma~\ref{lem:AreaForm} arising from the 
Poisson bivector $\BB_\Coloring$ defined in \eqref{eq:RealPoissonStructure}.
Denote the measure of a Lebesgue-measurable subset $S\subset\kCk$ by $\area(S)$. 

\begin{proof}
Suppose not; let 
\[
U\subset\kCk \setminus \big( %\gFO{\Coo}
\B \cup \{(0,0,z)\} \big)
\] 
be a nonempty connected open set.
%Since $U\cap%\gFO{\Coo}
%\B =\emptyset$ and $U$ is connected,  
%the infinite descending paths for points in $U$ are all equal. 
%Let $\gamma_n$ denote the sequence in $\Gamma$ defined by this path. 

%\\m\arginpar{b:rewrote this to address \#17}

Since $U\cap%\gFO{\Coo}
\B =\emptyset$, a character $u\in U$ determines a descending path
\begin{equation*}
{\vec P}(u) = \big( u =  u_0 \xrightarrow{f_0} u_1 \xrightarrow{f_1}  \cdots   \xrightarrow{f_{n-1}} u_n \xrightarrow{f_n} \cdots
 \big) \end{equation*}
 where each $f_i$ corresponds to $\II_{j(i)}$ for some $j(i) = 1,2,3$ and $j(i)\neq j(i+1)$.

Moreover, $\vec P$ depends continuously on the character $u$. 
 %%%%
Suppose $\mu$ is a character lying two different translates of $\B_0$.
Then there are two different descending paths leading to two different $Z,Z'\in \Omega$ 
with $\vert z\vert,\vert z'\vert <\sqrt{k+2}$. 
That would imply existence of two sinks, a contradiction.
Therefore $\vec P$ depends continuously on the character $u$.
 %%%%
Since $U$ is connected, $\vec P$ is independent of $u\in U$.
%\\m\arginpar{b:is this OK even when one of $u_i$ is indecisive?}
 
 Then $\vec P$ determines a sequence 
 \[
 \gamma_n := \II_{j(n)} \circ \dots \circ \II_{j(1)} \circ \II_{j(0)} \in \Gamma. \]
% The proof of Theorem~\ref{thm:Density} involves this sequence.
%and $U$ is connected,  
%$the infinite descending paths for points in $U$ are all equal. 
%Let $\gamma_n$ denote the sequence in $\Gamma$ defined by this path. 

\begin{lem}
The  $\gamma_n(U)$ lie in finitely many connected components of the complement  of $\B'$. 
\end{lem}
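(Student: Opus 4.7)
\emph{Plan.} The strategy combines the coordinate-monotonicity of the descending path with the $\Gamma$-invariance of the area form $dA_k$ on $\kCk$ provided by Lemma~\ref{lem:AreaForm}.

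First I would show that $\bigcup_n \gamma_n(U)$ lies in a compact subset $K\subset\kCk$. Along the common descending path $\vec P = (v_0 \to v_1 \to \cdots)$, each edge is a Vieta involution $\II_{j(i)}$, and being descending means this involution strictly decreases the absolute value of exactly one of the three trace coordinates while fixing the other two. Iterating gives
\[
\bigl|\gamma_n(u)_i\bigr| \;\le\; |u_i|, \qquad i = 1,2,3,
\]
for every $u\in U$ and every $n\ge 0$. Since $U$ is bounded in $\R^3$, this confines $\bigcup_n \gamma_n(U)$ to a bounded subset of $\kCk$; together with the automatic lower bound $|z|\ge 2$ for $k>2$ (Lemma~\ref{lem:zBiggerThan2}), this yields the desired compact set $K$.

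Next, because the defining conditions \eqref{eq:BQ1'} and \eqref{eq:BQ2} are invariant under the $\Gamma$-action on trace labelings, the complement $\kCk\setminus\B'$ is $\Gamma$-invariant. Each $\gamma_n(U)$ is open and connected (as the image of $U$ under a polynomial diffeomorphism) and, since $U$ is disjoint from $\B'$, so is each $\gamma_n(U)$. Hence $\gamma_n(U)$ is contained in a unique connected component $C_n$ of $\kCk\setminus\B'$. Now I would invoke Lemma~\ref{lem:AreaForm}: the area form $dA_k$ is $\Gamma$-invariant, so $\area(\gamma_n(U)) = \area(U) > 0$ for every $n$. Distinct components of $\kCk\setminus\B'$ are disjoint, so each time a new $C_n$ appears, the set $\gamma_n(U)\subset C_n\cap K$ contributes another disjoint piece of area $\area(U)$ to $K$. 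Since $\area(K)<\infty$, at most $\area(K)/\area(U)$ distinct components $C_n$ can occur, which is the desired conclusion.

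\emph{Main obstacle.} The most delicate point is verifying that $U$, and hence each $\gamma_n(U)$, actually lies in $\kCk\setminus\B'$ rather than merely $\kCk\setminus\B$, since only then does the component structure of $\kCk\setminus\B'$ apply in Step~2. The thin set $\B'\setminus\B$ consists of characters for which some primitive trace lies in $(-2,2)\setminus\{0\}$; one handles this by passing from $U$ to an open dense subset avoiding $\B'\setminus\B$, which preserves positivity of $\area(U)$ and leaves the descending-path analysis intact.
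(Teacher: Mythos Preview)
Your proposal is correct and follows essentially the same route as the paper: confine all $\gamma_n(U)$ to a finite-area region using the coordinate monotonicity \eqref{eq:DescendingDecreasing} of the descending path, then observe that images landing in distinct components of the complement are pairwise disjoint and each has invariant area $\area(U)>0$, forcing finiteness. The paper packages the confining region as the box $\Box_R=\{|x|,|y|\le R\}$ rather than a compact set, but the mechanism is identical.

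Two small remarks. First, ``strictly decreases'' should read ``does not increase'': indecisive edges are permitted in a descending path, so only the weak inequality \eqref{eq:DescendingDecreasing} holds, which is all you need. Second, your ``main obstacle'' is not an obstacle here: the lemma sits inside the proof of Theorem~\ref{thm:Density}, where $k>2$, and in that regime the paper has already noted (end of \S\ref{sec:ProjectionFor2<k}) that $\B'=\B$, since every $Z\in\O_\R$ has $|\mu(Z)|>2$ by Lemma~\ref{lem:zBiggerThan2}. So $U\subset\kCk\setminus\B=\kCk\setminus\B'$ automatically, and no further trimming of $U$ is required.
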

%\\m\arginpar{I explicitly stated this as a lemma.}
\begin{proof}
We may assume that $U$ lies in 
\[ \Box_R := \Big\{ \big(x,y,z_\pm(x,y)\big) \in \kCk \ \Big|\  \vert x\vert, \vert y\vert \le R \Big\} \]
for some $R > 0$. 
Observe that $\area(\Box_R) < \infty$.
%moving every point of $U$
% arbitrarily near the dihedral character $(0,0,\sqrt{k+2})$. 
By \eqref{eq:DescendingDecreasing}, 
the coordinates $\vert x\vert, \vert y\vert$ are non-increasing,
so that $\gamma_n(U) \subset \Box_R$ for all $n > 0$.

Suppose first that the $\gamma_n(U)$ lie in infinitely many components of the complement of $\B'$. Choosing a subsequence if necessary, we may assume these components are all distinct. 
Then  the $\gamma_n(U)$ are all disjoint. 
Since 
$\area\big(\gamma_n(U)\big) = \area(U) > 0$,
\[
\area(\Box_R) \ge \sum_{n=1}^\infty \area\big(\gamma_n(U)\big)  = \infty,\]
a contradiction. 
%Thus all  $\gamma_n(U)$ lie in finitely many connected components of the complement  of $\B'$. 
\end{proof}
%\\m\arginpar{b:\#18 referee understands neither statement nor proof.
%is $\gamma_n$ the descending path corresponding to the component of the
%complement?}
\begin{lem}
The descending path corresponding to the component of the complement containing $U$ is infinite.
\end{lem}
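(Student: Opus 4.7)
The plan is to argue by contradiction: suppose $\vec P$ terminates at some vertex $v_N$ after finitely many steps. By the previous lemma, $\vec P$ is independent of the starting character $u\in U$, so $v_N$ is the terminus of the descending path in $\vT_\mu$ for every $\mu\in U$.

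The first step I would take is to replace $U$ by its intersection with the set of unexceptional characters. The exceptional characters inside $\kCk$ form a countable union of proper algebraic subvarieties of $\kCk$ (the $\hGamma$-orbit of $\{0\}\times i\R\times\R$ together with the already-excluded dihedral line $\{(0,0,z)\}$), so they have empty interior and $U$ still contains a nonempty open set of unexceptional characters. I pick $\mu\in U$ unexceptional to work with.

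Next, Theorem~8 guarantees that $\vT_\mu$ is well-directed for every unexceptional imaginary $\mu$ with $k>2$; hence every vertex of $\vT_\mu$ is either a merge (exactly one outgoing edge) or a sink (no outgoing edge). A descending path terminates at $v_N$ precisely when every edge incident to $v_N$ points inward, which is exactly the condition that $v_N$ be a sink of $\vT_\mu$. But then Proposition~\ref{prop:WellDirectedToSinkImpliesBowditch} forces $\mu\in\B$, contradicting $U\cap\B=\emptyset$. Therefore $\vec P$ must be infinite.

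The main subtlety is in ruling out the possibility that the descending path stops at $v_N$ merely because some edge at $v_N$ is indecisive (and happens to be oriented ``upward''), rather than because $v_N$ is a genuine sink. Propositions~\ref{prop:mergesaboutZa}--\ref{prop:mergesaboutZc} show that for $k>2$ only very specific edges of $\vT_\mu$ can be indecisive (corresponding to orthogonal axes along the alternating geodesics around $\R$-regions), and at most one such edge occurs along each of these geodesics. So by a further generic shrinking of $U$ within the unexceptional locus, one can assume all edges of $\vT_\mu$ are decisive and the termination argument becomes unambiguous.
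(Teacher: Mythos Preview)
Your argument is correct and in fact more transparent than the paper's. You go directly: for an unexceptional $\mu\in U$ the tree $\vT_\mu$ is well-directed (Theorem~8), so a terminating descending path must end at a sink, and Proposition~\ref{prop:WellDirectedToSinkImpliesBowditch} then forces $\mu\in\B$, contradicting $U\cap\B=\emptyset$. The paper instead asserts that a finite path would place the whole component containing $U$ inside a $\Gamma$-translate of the $x$- or $y$-axis, hence inside a set of measure zero, contradicting $\area(U)>0$. The unstated reason behind that assertion is precisely your observation: unexceptional characters outside $\B$ have no sink, so the only characters in $\kCk\setminus\B$ whose descending path can terminate are the exceptional ones, and those lie on (orbits of) coordinate axes. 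Your route isolates the logical core and avoids appealing to the component classification from \S\ref{sec:ProjectionFor2<k}.

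Your final paragraph about indecisive edges is unnecessary and can be dropped. If the path terminates at $v_N$ then by definition all three incident edges point toward $v_N$, so $v_N$ is a sink regardless of whether one of those edges happens to be indecisive; Proposition~\ref{prop:WellDirectedToSinkImpliesBowditch} applies as stated. No further generic shrinking is required beyond passing to an unexceptional $\mu\in U$, which you already did.
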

\begin{proof}
Otherwise $U$ is contained in a component which is the image of either the $x$ or $y$ axis under 
some element of $\Gamma$.
Since these sets have measure zero, this contradicts $\area(U)>0$. 
\end{proof}

%\\m\arginpar{b:referee comment \#19: $\gamma_n$ is eventually periodic}
\begin{lem}
The descending path is eventually periodic.
\end{lem}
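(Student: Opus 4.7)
The plan is to combine the finiteness result of the previous lemma with a pigeonhole argument and the $\Gamma$-equivariance of the descending path construction. First, since all the sets $\gamma_n(U)$, $n\ge 1$, lie in only finitely many connected components of the complement of $\B'$, the pigeonhole principle yields indices $m<n$ such that both $\gamma_m(U)$ and $\gamma_n(U)$ are contained in a single component $C$ of $\kCk\setminus\B'$.

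Next I would argue that on any connected component of $\kCk\setminus\B'$ the descending path is constant. Indeed, the assignment of descending path $\vec P$ depends continuously on the character, while the space of infinite reduced paths in $\T$ is totally disconnected; equivalently, within a single component of the complement no decisive-direction decision can flip, since such a flip would force a trace value to cross the critical locus where an edge becomes indecisive, which puts the character on the boundary of $\B'$. In either formulation, the descending paths from characters in $\gamma_m(U)$ and from characters in $\gamma_n(U)$ coincide as infinite paths in $\T$.

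The conclusion then comes from the equivariance of the construction. Writing $\vec P(u) = \big(v_0 \xrightarrow{f_0} v_1 \xrightarrow{f_1} \cdots\big)$ and recalling that $\gamma_k(v_0)=v_k$ for each $k\ge 0$, the $\Gamma$-action on the tree intertwines the flow assignment $u\mapsto\vT_u$, so that reading the descending path of $\gamma_k(u)$ by its sequence of tricolor labels gives precisely $j(k),j(k+1),j(k+2),\ldots$, namely the $k$-shift of the label sequence of $\vec P(u)$. Specializing to our $m$ and $n$, the equality of descending paths on $C$ forces $j(m+i)=j(n+i)$ for every $i\ge 0$, which exhibits $\vec P$ as periodic of period $n-m$ from position $m$ onward, hence eventually periodic.

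The main obstacle is the equivariance step: one must give a clean formulation of how $\Gamma$ acts simultaneously on characters and on $\T$ so that ``applying $\gamma_k$ to the character shifts the descending path by $k$ positions,'' and verify that the decisive/indecisive classification of edges transports faithfully under this action. Once this compatibility is in hand, the pigeonhole argument together with the constancy of $\vec P$ on components of the complement closes the proof.
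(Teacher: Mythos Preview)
Your proposal is correct and follows essentially the same approach as the paper: pigeonhole on the finitely many components to find $m<n$ with $\gamma_m(U)$ and $\gamma_n(U)$ in the same component, use constancy of the descending path on components, and conclude periodicity via the shift property. The paper's version is terser---it simply replaces $U$ by $\gamma_m(U)$ and observes that the resulting descending path must agree with its own shift by $N=n-m$---whereas you make the equivariance/shift mechanism explicit; but the underlying argument is the same.
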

\begin{proof}
Since there are only finitely many components, 
$\gamma_n(U)$ and $\gamma_m(U)$ lie in the same component of the complement for some $n>m$. 
Replacing $U$ by $\gamma_m(U)$, we may assume that $\gamma_N(U)$ and $U$ 
lie in the same component for some $N>1$. 
Since all elements of the same component of the complement have the same infinite descending path, 
the infinite descending path for $U$ must be periodic.  
%Equivalently, the words $\gamma_n$ are periodic. 
\end{proof}
\noindent
Thus we may write $\gamma_{mn} = (\II_{j_1}\II_{j_2}\dots\II_{j_m})^n$
where $\II_{j_1}\II_{j_2}\dots\II_{j_m}$ is the {\em period.\/} 
\begin{lem}  The descending path cannot consist entirely of imaginary edges.
That is, at least one  $j_n$ must equal $3$. \end{lem}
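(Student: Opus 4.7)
The plan is to derive a contradiction from the assumption that every $j_n\in\{1,2\}$, by combining the classification of alternating geodesics in Propositions~\ref{prop:mergesaboutZa}--\ref{prop:mergesaboutZc} with the positive-measure hypothesis $\area(U)>0$.

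First I would note that consecutive letters of a reduced word are distinct, so after rotating the period cyclically we may assume it has the form $(\II_1\II_2)^{m/2}$. The corresponding descending path therefore travels along an alternating geodesic $C(Z)$ for a single real region $Z\in\O_\R$, by Proposition~\ref{prop:ColoredEdges}; connectedness of $U$ together with continuity of $\vec{P}$ in the character makes $Z$ independent of $u\in U$.

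Next I would split into three cases according to how $|\mu(Z)|$ compares to $\sqrt{k+2}$, using Lemma~\ref{lem:zBiggerThan2} to ensure $|\mu(Z)|>2$. The case $2<|\mu(Z)|<\sqrt{k+2}$ is immediately ruled out: Proposition~\ref{prop:mergesaboutZa} together with Proposition~\ref{prop:GeneralizedFrickeSpaceOfCoo} would place $u$ inside $\gFO{\Coo}\subset\B$, violating $U\cap\B=\emptyset$. The case $|\mu(Z)|>\sqrt{k+2}$ is where the tricoloring enters: by Proposition~\ref{prop:mergesaboutZc} all edges $e_n$ of $C(Z)$ point toward a single merge $v_m$, so the descending path following $(\II_1\II_2)^{m/2}$ must reach $v_m$ and then leave $C(Z)$ via the third edge $\vedge{m}$ at $v_m$. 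The other two edges at $v_m$ both abut the real region $Z$ and are therefore imaginary (colored $1$ or $2$), so the tricoloring forces $j(\vedge{m})=3$. Continuation of the descending path past $v_m$ thus requires the letter $\II_3$, contradicting $j_n\in\{1,2\}$. The remaining case $|\mu(Z)|=\sqrt{k+2}$ is measure-theoretic: the equation $\mu(Z)^2=k+2$ cuts out a real-algebraic curve on the smooth surface $\kCk$ and hence a set of $dA_k$-measure zero by Lemma~\ref{lem:AreaForm}; since every $u\in U$ satisfies this equation for the same $Z$, we would have $U$ contained in this curve, contradicting $\area(U)>0$.

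I expect the principal obstacle to be the bookkeeping in the second case, where one must consistently translate between the tricoloring $j(e)\in\{1,2,3\}$ and the real/imaginary dichotomy of Definition~\ref{defn:RealImaginaryEdges} in order to identify $\vedge{m}$ as the unique real edge emanating from $v_m$. Once that identification is in place, the first case follows from the Bowditch/Fricke correspondence and the third from a dimension count, so the argument closes routinely.
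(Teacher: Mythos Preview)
Your proof is correct and follows essentially the same line as the paper's, only more explicitly. The paper's two-sentence argument asserts that if the period is $\II_1\II_2$ or $\II_2\II_1$ then $U$ must lie on the two lines bounding the Fricke space (that is, on $z^2=k+2$), and concludes by the measure-zero observation; your three-way case split on $\vert\mu(Z)\vert$ versus $\sqrt{k+2}$ is exactly what justifies that assertion, with your cases~1 and~2 spelling out via Propositions~\ref{prop:mergesaboutZa} and~\ref{prop:mergesaboutZc} why an infinite descending path confined to $C(Z)$ forces $\vert\mu(Z)\vert=\sqrt{k+2}$.
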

\begin{proof}
Otherwise the (minimal) period would be $\II_1\II_2$ or $\II_2\II_1$. 
% $\II_1\II_2\II_1$ or  $\II_2\II_1\II_2$. 
Then $U$ would be contained in the two lines which bound the Fricke space.
Since these lines have measure zero, this contradicts $\mu(U)>0$. 
\end{proof}
\noindent
Thus we may assume that $j_1 = 3$, that is, the period begins with $\II_3$ and the path begins with an
$\R$-edge. Equivalently the vertex corresponding to  $(x,y,z)\in U$ is {\em negative.}

%\\m\arginpar{b3/20/2016}
Choose $\delta > 0$ sufficiently small so that the lift 
%\[\Nd := \big(\Pixy\big)\inv \big(\mathfrak{N}_{\delta}(0,0,\sqrt{k+2}) \big) 
% corrected 3/20/2016
%!%%
\[\Nd := \big(\Pixy\big)\inv \big(\mathfrak{N}_{\delta}(0,0)) \big) \]
of the Euclidean $\delta$-ball 
${\mathfrak N}_\delta(0,0) \subset \R^2$ to $\kCk$ has area less than $\area(U)/2$.
Then
\[
\area(U\setminus\Nd) \ge \area(U) - \area(\Nd) > \area(\Nd). \]
Thus by replacing $U$ by $U\setminus\Nd$ we may suppose that
$U$ is disjoint from $\Nd$ and %\begin{equation}\label{eq:areaNd}
$\area(\Nd) < \area(U)$. %\end{equation}
%\\m\arginpar{b3202016:where do we use disjointness?}
%[8/24/16, 10:25:32 PM] Andy Tan: for the part before lemma 8.4.6, you asked if we used the disjointedness.
%[8/24/16, 10:26:45 PM] Andy Tan: the answer is yes, we want it disjointed from N_\delta so that we can say that the x and y values are uniformly bounded away from 0, which allows us to obtain uniform reductions of the traces until they get into N_\delta

\begin{lem}\label{lem:EventuallyDisjoint}
% $\gamma_n(U) \subset \Nd$ for $n$ sufficiently large. 
$\gamma_n(U) \setminus \Nd = \emptyset$ for $n$ sufficiently large. 
%$\gamma_n(U) \setminus \Nd = \emptyset$ for sufficiently large $n$.
\end{lem}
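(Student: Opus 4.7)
The plan is to establish pointwise convergence $(x_n(u), y_n(u)) := \Pixy(\gamma_n(u)) \to (0,0)$ for each $u \in U$, then upgrade to uniform convergence on any compact $K \subset U$ via Dini's theorem, so that $\gamma_n(K) \subset \Nd$ for all sufficiently large $n$; exhausting $U$ by such $K$ yields the lemma. Since $U$ lies in the interior of the complement of $\B$, its end invariant is irrational, and the primitive period $\phi = \II_{j_m} \circ \cdots \circ \II_{j_1}$ has length $m \geq 3$ (periodic paths of length $2$ correspond to alternating geodesics in $\T$ with rational end invariants, which are excluded). By cyclic reducedness and primitivity, any period of length $\geq 3$ must use all three generators $\II_1, \II_2, \sigma_1\circ\II_3$ of $\PGLtZc$: a period using only two would be alternating, hence either non-cyclically-reduced (odd length) or a proper power (even length).

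For each $u \in U$, the sequences $|x_n|, |y_n|, |z_n|$ are monotonically non-increasing by \eqref{eq:DescendingDecreasing} and converge to limits $a(u), b(u), c(u)$, with $c(u) \geq \sqrt{k+2}$ by Proposition~\ref{prop:twocomponents}. At each step, the descending condition is an algebraic inequality, and summing per-step decrements over the infinite path gives a finite total, so each decrement tends to zero. At a $\sigma_1\circ\II_3$ step, descending forces $xy$ and $z$ to have opposite signs with $|xy| \leq 2|z|$; combined with the level-set identity $z^2 + xyz = k+2+x^2+y^2 > 0$, this strengthens to $|xy| < |z|$, so the decrement in $|z|$ equals $|x_n y_n|$, yielding $a(u)b(u) = 0$ in the limit. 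At an $\II_1$ step, the decrement in $|x|^2$ equals $(y_n z_n)(2x_n - y_n z_n)$, giving $b(u)c(u)\bigl(b(u)c(u) - 2a(u)\bigr) = 0$, and an $\II_2$ step symmetrically yields $a(u)c(u)\bigl(a(u)c(u) - 2b(u)\bigr) = 0$. If $a(u) > 0$, then $b(u) = 0$ from the $\II_3$ equation, whereupon the $\II_2$ equation reduces to $a(u)^2 c(u)^2 = 0$, contradicting $a(u), c(u) > 0$. Hence $a(u) = 0$, and symmetrically $b(u) = 0$.

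Finally, pointwise convergence $|x_n(u)|, |y_n(u)| \to 0$ combined with continuity of each $\gamma_n$ in $u$ and monotonicity in $n$ gives uniform convergence on compact subsets of $U$ by Dini's theorem. The main obstacle is tracking the sign behavior of coordinates along the descending path (not just their absolute values) to ensure that the asymptotic limits of the signed expressions $(y_n z_n)(2x_n - y_n z_n)$ and its counterparts yield the correct algebraic equations in the limit absolute values $a(u), b(u), c(u)$; this requires carefully using that $|xy|$ is itself non-increasing along the full path (since each $\II_i$ step does not increase any of $|x|, |y|, |z|$) and exploiting the explicit sign constraint $xy \cdot z \leq 0$ at every $\sigma_1\circ\II_3$ step.
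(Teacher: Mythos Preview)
Your pointwise argument is correct and takes a route genuinely different from the paper's. You establish $(|x_n|,|y_n|)\to(0,0)$ by showing the per-step decrements in $|z|$, $|x|^2$, $|y|^2$ are summable and deriving algebraic constraints on the limits $a,b,c$ (with $c\ge\sqrt{k+2}>0$), then eliminating $a,b>0$ by contradiction. The extra ingredient you need---that the period uses all three generators---follows, as you note, from the end invariant being irrational (a period of length two is an alternating geodesic about some region, giving a rational end invariant and forcing $U$ into a measure-zero set). The paper instead uses the finitely-many-components lemma to produce a \emph{uniform} $\epsilon>0$ with $|x_n|,|y_n|\ge\epsilon$ whenever $\gamma_n(u)\notin\Nd$; then each $\II_3$ step outside $\Nd$ lowers $|z|$ by at least $\epsilon^2$, so a single bound $N\le mC/\epsilon^2$ drops every orbit into $\Nd$.

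There is, however, a gap in your final step. Dini gives uniform convergence only on a compact $K\subset U$, and the resulting $N_K$ depends on $K$. Exhausting the open set $U$ by compacta does not produce a single $N$ valid on all of $U$, so you have not proved the lemma as stated. What your pointwise convergence \emph{does} give---via dominated convergence, since all $\gamma_n(U)\subset\Box_R$ of finite area and the indicator of $\gamma_n(u)\notin\Nd$ tends to zero pointwise---is $\area\bigl(\gamma_n(U)\setminus\Nd\bigr)\to 0$. That weaker statement already suffices for the contradiction concluding Theorem~\ref{thm:Density}: for large $n$ one gets $\area\bigl(\gamma_n(U)\cap\Nd\bigr)>\area(U)/2>\area(\Nd)$, which is impossible. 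So your approach rescues the theorem, but to prove the lemma itself you need the paper's uniform $\epsilon$, which comes from the observation that the finitely many components visited are bounded away from the coordinate axes inside the compact $\Box_R$. Finally, your ``main obstacle'' about signs is not actually needed: the reverse triangle inequality $|2y_n-x_nz_n|\ge|x_nz_n|-2|y_n|\to ac$ already forces the $\II_2$-decrement away from zero when $a>0$, $b=0$, without any sign bookkeeping.
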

%\\m\arginpar{b:referee comment \# 21}
\begin{proof}
%%%!%
%It suffices to show that $\gamma_n(U) \subset \Nd$ for $n$ sufficiently large. 
%!%% To this end, 
First bound $\gamma_n(U)\setminus\Nd$ away from the coordinate axes:
%Recall that the  %!%%
The coordinate axes describe four components of the complement
of $\B$, so we may assume the infinite descending path misses these axes.
%Since %!%% 
Furthermore 
$\gamma_n(U)\setminus\Nd$ 
lie in finitely many closed subsets disjoint from these axes,
and inside the compact subset $\Box_R$. %!%%, 
Thus %!%% 
we may assume that the coordinates
of $\gamma_n(U)\setminus\Nd$ are bounded away from $0$. 
% That is, %there exists $\epsilon > 0$ such that  %!%%
That is, there exists $\epsilon > 0$ with the following properties:
%such that  %!%%
%\\m\arginpar{b3/20/2016}
For each $u_n\in \gamma_n(U)$, 
either \[
\vert x_n\vert \ge \epsilon, \quad
\vert y_n\vert \ge \epsilon, \]
%for some $\epsilon > 0$,
or $u_n\in \Nd$, in which case we are done.
Furthermore by Lemma~\ref{lem:zBiggerThan2}, $\vert z_n \vert > 2$ for all  $n$. 

Now \eqref{eq:zPlusOrMinus} implies that if $\vert x\vert, \vert y\vert \le R$,
then
\[ 
\vert z_\pm(x,y) \vert \le C \]
where
\[
C := \frac12 \Big( R^2 + \sqrt{(R^2+4)^2 + 4 (k-2)} \Big). \]
 The infinite descending path contains infinitely many words containing $\II_3$. Otherwise eventually it becomes an alternating sequence of 
 $\II_1$ and $\II_2$, which corresponds to totally odd rationals.  % integers, corrected 5/22 
For each $\gamma_n$, the value of the $z$-coordinate either remains constant or changes by:
%\\m\arginpar{b3/20/2016}
\[ 
\vert z_{n+1} \vert = 
\vert - x_n y_n - z_n \vert \le \vert z_n \vert - \epsilon^2. \] 
%\vert - x_n y_n - z_{n+1} \vert \le \vert z_n \vert - \epsilon^2. \]
%%!%
%%%
% new stuff which Ser added
%%%%
Furthermore, since the period contains an $\II_3$, the value of $z$ changes at least once over each period. 
The value of $\vert z_n\vert$ therefore decreases by at least $\epsilon^2$ over each period.
%Thus for  $n>N m$ where $m$ is  the period  and $N=  C/\epsilon^2$, and  $u \in U$, 
Thus, for  $n>mC/\epsilon^2$ where $m$ is  the period, %  and $N=  C/\epsilon^2$, and  $u \in U$, 
\[ \gamma_n(u) \in \Nd,\]
since  otherwise $\vert z_n \vert <0$, a contradiction.
%%%
%The value of $z_n$ therefore decreases by at least $\epsilon^2$, 
%and thus for $N > C/\epsilon^2$, either  $\gamma_n(U)$ lies in $\Nd$
%or $\vert z\vert < 0$, a contradiction. 
\end{proof}
\begin{proof}[Conclusion of proof of Theorem~\ref{thm:Density}]
Lemma~\ref{lem:EventuallyDisjoint} implies that 
%Since 
\[ \gamma_n(U) \subset \Nd \]
for sufficiently large $n$. Thus
\[
\area(U) = 
\area\big(\gamma_n(U)\big) \le \area(\Nd) 
< \area(U), \]
a contradiction. 
\end{proof}\let\qed\relax\end{proof} % the continuation of the proof doesn't need an extra qedsymbol

\begin{conj} If $k>2$, then $\area\big(\kCk\setminus\gFO{\Coo}\big) = 0$.
\end{conj}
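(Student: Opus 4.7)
The plan is to upgrade Theorem~\ref{thm:Density} from empty interior to zero area by combining its area-preservation argument with a fibration of the complement by end invariants. Suppose, for contradiction, that $E\subset\kCk\setminus\gFO{\Coo}$ is Lebesgue measurable with $\area(E)>0$. After discarding the null set of exceptional characters, each $\mu\in E$ has a well-defined end invariant $\lambda(\mu)\in\RPo$ and descending path $\vec P(\mu)$, both constant on every connected component of $\kCk\setminus\B$.

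First I would dispose of the rational end invariants. For each rational $\lambda\in\QPo$, the fiber $E_\lambda:=\{\mu\in E:\lambda(\mu)=\lambda\}$ is either a boundary arc of a component of $\Pixy(\gFO{\Coo})$ (when $\lambda$ is totally odd) or lies in the $\Gamma$-orbit of a coordinate axis (when $\lambda$ is partially even). In both cases $E_\lambda$ is a countable union of real-analytic arcs and has zero area, so $\bigcup_{\lambda\in\QPo}E_\lambda$ is null and may be deleted from $E$ without loss of generality.

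Next I would show that $\area(E_\lambda)=0$ for every irrational $\lambda$. When the shift orbit $\{\sigma^n\lambda\}_{n\ge 0}$ is infinite (which excludes only the countably many quadratic irrationals), the successive translates $\gamma_n(E_\lambda)$ lie in pairwise distinct components of $\kCk\setminus\B$, hence are pairwise disjoint. Since $\gamma_n$ preserves area and since $\vert x\vert,\vert y\vert$ are non-increasing along a descending path, each $\gamma_n(E_\lambda)$ has area $\area(E_\lambda)$ and (after intersecting $E_\lambda$ with the bounded box $\Box_R$) is contained in $\Box_R$; if $\area(E_\lambda\cap\Box_R)>0$ this forces $\sum_n\area(\gamma_n(E_\lambda\cap\Box_R))=\infty>\area(\Box_R)$, a contradiction, and letting $R\to\infty$ yields $\area(E_\lambda)=0$. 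For the countable family of quadratic-irrational $\lambda$, the minimal period of $\vec P(\lambda)$ necessarily contains an $\II_3$ (the only other possibility, period $\II_1\II_2$, has its two fixed points on $\RPo$ rational), so the uniform contraction estimate from the proof of Theorem~\ref{thm:Density} gives $\gamma_N(E_\lambda)\subset\Nd$ and hence $\area(E_\lambda)\le\area(\Nd)$ for $\delta$ arbitrarily small.

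The main obstacle is to pass from the fiberwise vanishing $\area(E_\lambda)=0$ to the global vanishing $\area(E)=0$. Countable additivity fails here because $\{E_\lambda\}_{\lambda\in\RPo}$ is an uncountable partition, and uncountable unions of null sets may still carry positive Lebesgue measure. The natural tool is a measurable disintegration of $dA_k|_E$ along the end-invariant map $\lambda\colon E\to\RPo$; once established, with fiber measures absolutely continuous with respect to an appropriate Hausdorff measure on each $E_\lambda$, the fiberwise vanishing would yield $\area(E)=0$. Constructing such a disintegration is delicate, because $\mu\mapsto\lambda(\mu)$ is discontinuous across the rational boundary curves and the transverse structure of $\kCk\setminus\B$ is Cantor-like. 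An alternative route is to prove ergodicity of the $\Gamma$-action on $\kCk\setminus\gFO{\Coo}$, in parallel with the $k<2$ case of the main theorem; but ergodicity for $k>2$ cannot exploit elliptic primitives, so a substitute (perhaps a positive Lyapunov exponent for the random product of Vieta involutions) appears necessary.
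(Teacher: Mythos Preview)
This statement is presented in the paper as a \emph{conjecture}, not a theorem; the authors do not prove it, and explicitly flag it as open both in the introduction (``we conjecture its complement has measure zero'') and immediately following Theorem~\ref{thm:Density}. There is therefore no paper proof to compare your attempt against.

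Your outline is a reasonable line of attack, and you have correctly located the essential obstruction yourself. The fiberwise steps are essentially sound: each $E_\lambda$ with $\lambda$ irrational is a single connected component of the complement of $\B$, so the descending-path word $\gamma_n$ is well-defined on all of it; for $\lambda$ not a quadratic surd the images $\gamma_n(E_\lambda\cap\Box_R)$ sit in the components indexed by the pairwise distinct points $\gamma_n(\lambda)$ and are therefore disjoint subsets of the finite-area box $\Box_R$; for quadratic $\lambda$ the period must contain an $\II_3$ (since $\II_1\II_2$ is parabolic with rational fixed point $1$), and the contraction estimate from the proof of Theorem~\ref{thm:Density} applies. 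But the step you call ``the main obstacle'' is precisely where the argument stalls: you have an uncountable partition $\{E_\lambda\}_{\lambda\in\RPo}$ into null sets and no mechanism to sum. A disintegration of $dA_k$ along $\mu\mapsto\lambda(\mu)$ would require transverse regularity that the Cantor-like structure of $\kCk\setminus\B$ does not visibly provide, and the ergodicity route lacks the elliptic primitives that power the $k<2$ argument. This is a genuine gap, not a technicality; as the paper itself indicates, the conjecture remains open.
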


%\subsection{Growth rates on the level surface}
%On the $xy$-plane $\R^2$, lift the line $y= mx$ to the curve
%\[
%\big(x,m x, z_+(x,m x)\big) \in \kCk. \]
%Then: 
%\begin{align*}
%2 z_+(x,m x) & = \sqrt{(x^2 + 4)(y^2 + 4)  - 4(k-2)}  - x y \\
%& = \sqrt{(x^2 + 4)(m^2x^2 + 4)  - 4(k-2)}  - m x^2 \end{align*}
%and 
%\[
%\vert z \vert \le 2\Big( \sqrt{m^2 + 1}\  \vert x \vert^2 \ +\  \sqrt{k+2} \Big). \]
%
%  

\section{Imaginary characters with $k<2$.}\label{sec:ImagCharkLessThanTwo}

For an imaginary trace labeling $\mu$ with $k<2$,  
possibly  $z=\mu(Z) \in (-2,2)$ for some $Z \in \O_{\R}$. 
Then the action of $\Gamma$ on an open subset of $\kCk$ is ergodic.
In particular if $-14 \le k < 2$, 
the $\Gamma$-action is ergodic on all of $\kCk$. 

If $k < -14$, then the Fricke orbit is a disjoint union of wandering domains,
and the action on the complement is ergodic.
For $k=-14$, the Fricke orbit is a countable discrete set, 
upon whose complement the action is ergodic.

\subsection{Existence of elliptics}

The first step will be to show that some primitive element is mapped
to an elliptic.  

\begin{prop}\label{prop:eliipticnearepsilon}
Suppose that $k<2$. 
Then  there exists $\epsilon = \epsilon(k) > 0$ 
with the following property:
For a non-exceptional trace labeling $\mu$ with 
\[ \vert\mu(Y)\vert =\vert iy\vert <\epsilon\]  
for some 
\[ Y\in \O \setminus \O_{\R}, \] 
then 
$Y$ has a neighbor $Z \in \O_{\R}$  such that 
\[ \vert \mu(Z)\vert = \vert z\vert\  < 2.\]
\end{prop}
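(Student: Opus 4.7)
The plan is to apply Proposition~\ref{prop:abut} to the alternating geodesic $C(Y)$ surrounding $Y$, producing an explicit formula for the traces on the neighbors of $Y$, and then to find an integer index at which the real-trace neighbor has absolute value less than $2$.

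Because $\mu$ is non-exceptional, $y\ne 0$. Since $Y\in\O_{i\R}$, Proposition~\ref{prop:ColoredEdges} implies that the neighbors $W_n$ of $Y$ along $C(Y)$ alternate between $\O_\R$ and $\O_{i\R}$; I label them so that $W_{2m}$ is the $m$-th real-trace neighbor. The roots of $\lambda+\lambda^{-1}=iy$ are $\lambda=i\alpha$ and $\lambda^{-1}=-i\alpha^{-1}$, where $\alpha>0$ satisfies $\alpha-\alpha^{-1}=y$ and $\alpha+\alpha^{-1}=\sqrt{y^2+4}$. Setting $\beta:=\alpha^2$ and $\omega_n:=\mu(W_n)$, Proposition~\ref{prop:abut} yields
\[ \omega_{2m}\;=\;(-1)^m\bigl(A\beta^m+B\beta^{-m}\bigr), \]
where a short calculation from the formulas of that proposition (using $\omega_0\in\R$ and $\omega_1\in i\R$) shows that $A,B\in\R$, with
\[ AB\;=\;\frac{y^2+k+2}{y^2+4}. \]

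The argument now splits on the sign of $AB$. After a possible sign change I may assume in the case $AB\ge 0$ that $A,B\ge 0$, and in the case $AB<0$ that $A>0>B$. Let $m^*\in\R$ be the real minimizer or zero of $m\mapsto A\beta^m+B\beta^{-m}$, and let $m$ be the nearest integer to $m^*$, so $|t|:=|m-m^*|\le\tfrac12$. Factoring the expression about $m^*$, when $AB\ge 0$ one gets $A\beta^{m^*+t}+B\beta^{-m^*-t}=\sqrt{AB}\,(\beta^t+\beta^{-t})$, yielding
\[ |\omega_{2m}|\;\le\;\sqrt{AB}\,(\alpha+\alpha^{-1})\;=\;\sqrt{y^2+k+2}, \]
which is $<2$ iff $y^2<2-k$. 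When $AB<0$ one gets $A\beta^{m^*+t}+B\beta^{-m^*-t}=\sqrt{-AB}\,(\beta^t-\beta^{-t})$, yielding
\[ |\omega_{2m}|\;\le\;\sqrt{-AB}\,|\alpha-\alpha^{-1}|\;=\;|y|\,\sqrt{-(y^2+k+2)/(y^2+4)}, \]
which is $<2$ iff $y^4+(k+6)y^2+16>0$. Both inequalities are satisfied at $y=0$ (the second since $16>0$), hence for all sufficiently small $|y|$.

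The only minor subtleties are the degenerate cases $A=0$ or $B=0$, where one of the two terms vanishes and $|\omega_{2m}|\to 0$ along $m\to\pm\infty$ (using $\alpha\ne 1$, which follows from $y\ne 0$), and the explicit choice of $\epsilon(k)$: for $-14\le k<2$ the quadratic $u^2+(k+6)u+16$ is positive on all of $\R$, so $\epsilon(k):=\sqrt{2-k}$ works uniformly with no case split on $y^2$; for $k<-14$ one restricts $\epsilon(k)$ further below the smaller positive root of that quadratic. I foresee no serious obstacle beyond this bookkeeping: the essential content is the factorization about $m^*$, which reduces the problem to elementary arithmetic involving $y^2+k+2$.
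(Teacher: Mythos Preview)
Your proof is correct and follows the same underlying strategy as the paper: apply the linear recursion of Proposition~\ref{prop:abut} to the alternating geodesic $C(Y)$ and show that for small $|y|$ the lattice of real-trace neighbors is fine enough that one of them lands in $(-2,2)$. The paper phrases this geometrically---viewing the pairs $(x_n,z_n)$ as a lattice on the hyperbola $\QQ_y'(x,z)=y^2+k+2$ with multiplicative increment $\beta$ (where $y=\beta-\beta^{-1}$), and arguing qualitatively that as $\beta\to 1$ the spacing shrinks so some lattice point meets the strip $|z|<2$---whereas you carry out the arithmetic explicitly. Your factorization about the real extremum/zero $m^*$ yields the clean bounds $|\omega_{2m}|\le\sqrt{y^2+k+2}$ when $y^2+k+2\ge 0$ and $|\omega_{2m}|\le |y|\sqrt{-(y^2+k+2)/(y^2+4)}$ otherwise, from which concrete values of $\epsilon(k)$ can be read off (in particular $\epsilon(k)=\sqrt{2-k}$ for $-14\le k<2$). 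This is a genuine sharpening of the paper's argument, which is comparatively sketchy on this point and contains an apparent sign slip in the displayed level value.
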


\begin{proof}
%We show that for 
For 
$y$ sufficiently small, % that 
some $\vert\mu(Z_n)\vert < 2$ 
%for some $n\in\Z$.
where 
\[ \dots, Z_{n-1}, X_n, Z_n, X_{n+1}, \dots \]
denotes the sequence of regions abutting $Y$.
Since $\Coloring(Z_n) = 0$ and $\Coloring(X_n) = 1$, the corresponding sequence
of traces alternates between real and purely imaginary:
\begin{align*}
\mu( X_{n} ) & = ix_n \\
\mu( Z_{n} ) & = z_n \end{align*}
where $x_n,z_n\in\R$.
(Compare Figure~\ref{fig:GeodesicAbuttingZ}.)
%\begin{figure}
%\centerline{\includegraphics[scale=.5]{GeodesicAbuttingZ.pdf}}
%\caption{The alternating regions along a geodesic}
%\label{fig:GeodesicAbuttingZ}
%\end{figure}
%
% moved to \S 6.3 "Peripheral Geodesics"

Analogously to \eqref{eq:KappaUpsilonInTermsOfz},
for fixed $y$, 
the $y$-level sets of $\kCk$ are hyperbolas  
\begin{align}\label{eq:KappaUpsilonInTermsOfy}
H_{k,y} :=\  &\kCk \cap \;
\big(\R\times\{y\}\times\R\big) \notag \\
 = & (\QQ_y')^{-1}( y^2 - k + 2)
\end{align}
where $\QQ_{y}'$ denotes the quadratic form
\begin{equation}\label{eq:yQuadraticForm}
 \QQ_y'(x,z) := z^2 + y x z - x^2. \end{equation}
Choose $\beta\in\R$ such that the trace $i y = (i\beta) + (i\beta)^{-1}$,
that is, \[   y = \beta - \beta^{-1}. \]
By Edge Relation \eqref{eq:edgerelation},
the trace sequence \[\dots,z_{n-1},i x_n, z_n, i x_{n+1},\dots\] satisfies:
%\begin{align}\label{eq:GeodesicForSmallk}
%x_n  &= \frac{(-1)^n}{\eta + \eta^{-1} } \Big\{
%(\eta^{-1} x_0 - z_0) \eta^{2n} +
%(\eta x_0 + z_0) \eta^{-2n} \Big\} \notag\\
%z_n  &= \frac{(-1)^n}{\eta + \eta^{-1} } \Big\{
%(z_0 - \eta^{-1} x_0 ) \eta^{2n+1} -
%(z_0 + \eta x_0 ) \eta^{-(2n+1)}\Big\}
%\end{align}
\begin{align}\label{eq:GeodesicForSmallk}
\bmatrix i x_{n+1} \\ z_{n+1} \endbmatrix &= 
\MM_y \bmatrix z_n \\ i x_{n+1} \endbmatrix  \notag\\
\bmatrix z_n \\ i x_{n+1} \endbmatrix &= 
\MM_y \bmatrix i x_n \\ z_n \endbmatrix  
\end{align}
%\bmatrix 0 & 1 \\ -1 & i y\endbmatrix 
%\bmatrix z_n \\ i x_{n+1} \endbmatrix  
%\notag \\
%\bmatrix z_n \\ i x_{n+1} \endbmatrix &= 
%\bmatrix 0 & 1 \\ -1 & i y\endbmatrix
where
\begin{equation}\label{eq:TwoByTwoMatrixy}
\MM_y := \bmatrix 0 & 1 \\ -1 & i y\endbmatrix.
\end{equation}  
(Compare Proposition~\ref{prop:abut}.)
Moreover  $\MM_y$ has eigenvalues $\pm i \beta$
and preserves the quadratic form $\QQ_y'$ defined in \eqref{eq:yQuadraticForm}.
Furthermore %this quadratic form factors:
$\QQ_y'$ factors as the product
\[ \QQ_y'(x,z) =   l_\beta(x,z) l_\beta'(x,z) \]
of homogenous linear functions
\begin{align*}
l_\beta(x,z) & := z + \beta x \\
l_\beta'(x,z) & := z - \beta^{-1} x.
\end{align*}
These linear factors are eigen-covectors under $(\MM_y)^2$:
\begin{align*}
l_\beta \circ (\MM_y)^2 &= -\beta^{-2} l_\beta \\
l_\beta' \circ (\MM_y)^2 &= -\beta^2 l_\beta' 
\end{align*}
The points $(x_n,z_n)$ form a lattice in the hyperbola $H_{k,y}$
(in the sense of \S\ref{sec:AlternatingGeodesics}) 
defined by \eqref{eq:KappaUpsilonInTermsOfy}.

For $\vert y \vert < \sqrt{k+2}$, 
this hyperbola  meets the strip defined by $\vert z \vert < 2$.
(Since $k < 2$, the level value is negative: $ 2 - k + y^2 < 0$.)
The multiplicative increment of the lattice formed by the points $(x_n,z_n)\in H_{k,y}$ 
% are {\em lattice points\/} on $H_{k,y}$ 
equals $\beta$. 
For $y$ small, that is, when $\beta\sim 1$,
the lattice points are so closely spaced that at least one lattice
point lies in the strip. % satisfies $\vert z \vert < 2$.

\end{proof}

%%%%% OLD VERSION:  
%\begin{prop}\label{prop:eliipticnearepsilon}
%For any $k<2$, there exists $\epsilon = \epsilon(k) > 0$ % depending only on $k$  
%such that % if 
%for a non-exceptional trace labeling $\mu$, 
%if there exists 
%\[ Y\in \O \setminus \O_{\R} \] 
%with
%\[ \vert\mu(Y)\vert =\vert iy\vert <\epsilon,\]  
%%\[ |\mu(Y)|=|iy|<\epsilon(k),\]  
%then 
%%there exists a neighbor $Z \in \O_{\R}$ of $Y$ such that $|\mu(Z)|=|z| <2$.
%$Y$ has a neighbor $Z \in \O_{\R}$  such that 
%\[ \vert \mu(Z)\vert = \vert z\vert\  < 2.\]
%
%\end{prop}

\subsection{Alternating geodesics %: Elliptics, merges and sinks}
for $k<2$}
%Proposition~\ref{prop:mergesaboutZtwo} 
%is analogous to Proposition~\ref{prop:mergesaboutZ}:
Suppose that $\mu$ is an unexceptional imaginary trace labeling with $k<2$ and let $Z\in \O_{\R}$.
We refine Proposition~\ref{prop:EdgesAboutX} 
when $k<2$. 
Propositions~\ref{prop:mergesaboutZtwoa},\ref{prop:mergesaboutZtwob} 
%are the analogues of 
sharpen
Propositions~\ref{prop:mergesaboutZa},\ref{prop:mergesaboutZb},\ref{prop:mergesaboutZc}.

\begin{prop} \label{prop:mergesaboutZtwoa}
%Case (a)] $ |z|<2$\newline
Suppose that $\vert z\vert < 2$.
Then $Z$ corresponds to an elliptic element and
$\vert y_n\vert$ is bounded for  all $n \in \Z$. 
The points $(y_n,y_{n+1})\in\R^2$ lie on an ellipse in $\R^2$.
They comprise a  dense subset of the ellipse 
if and only if $Z$ corresponds to an 
elliptic element of infinite order, that is, if $z =2 \cos(\alpha \pi)$ 
where $\alpha$ is irrational.
\end{prop}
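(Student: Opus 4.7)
The plan is to exploit the explicit formula for $y_n$ from Proposition~\ref{prop:abut} together with the fact that, when $\vert z\vert<2$, the eigenvalue $\lambda$ of the recursion matrix lies on the unit circle, forcing the sequence to oscillate boundedly on a positive-definite conic section.

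First, since $Z\in\O_{\R}$ we have $\Coloring(Z)=0$, so $\rho(Z)\in\SLtR$. The hypothesis $\vert z\vert=\vert\tr\rho(Z)\vert<2$ immediately gives that $\rho(Z)$ is elliptic, conjugate to a rotation by some angle $\theta$ with $z=2\cos\theta$. Setting $\lambda=e^{i\theta}$ and applying Proposition~\ref{prop:abut} yields $iy_n=A\lambda^n+B\lambda^{-n}$ for constants $A,B\in\C$. Realness of $y_n$ forces $\overline{iy_n}=-iy_n$; comparing coefficients (using linear independence of $\lambda^n$ and $\lambda^{-n}$, which holds precisely because $\theta\notin\pi\Z$) gives $B=-\overline{A}$. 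Writing $A=re^{i\phi}$, we obtain the explicit formula
\begin{equation*}
y_n \;=\; 2r\sin(n\theta+\phi),
\end{equation*}
which is manifestly bounded by $2\vert r\vert$. Note that $A\neq 0$ since otherwise $AB=0$ and Proposition~\ref{prop:abut} forces $\zeta^2=k+2$, but then $y_n\equiv 0$, making $\mu$ dihedral and hence exceptional.

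Next, I would verify that the consecutive pairs $(y_{n-1},y_n)$ lie on a common ellipse by observing that the quadratic form $\QQ_z$ is conserved along the alternating geodesic. Applying the Vertex Relation \eqref{eqn:vertexrelation} at $v_n=v(Y_{n-1},Y_n,Z)$ with $\mu(Y_{n-1})=iy_{n-1}$, $\mu(Y_n)=iy_n$, $\mu(Z)=z$, a direct substitution gives
\begin{equation*}
\QQ_z(y_{n-1},y_n)\;=\;z^2-k-2,
\end{equation*}
which is independent of $n$. (This conservation can also be verified directly from the Edge Relation $y_n+y_{n+2}=zy_{n+1}$.) Since $\vert z\vert<2$, Lemma~\ref{lem:PosDef} guarantees that $\QQ_z$ is positive definite, so this level set is a genuine ellipse $E\subset\R^2$.

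For the density claim, parametrize $E$ by the continuous map $t\mapsto\big(2r\sin t,\,2r\sin(t+\theta)\big)$ from $\R/2\pi\Z$ onto $E$; under this map the lattice $\{(y_{n-1},y_n):n\in\Z\}$ is the image of the arithmetic progression $\{n\theta+\phi\pmod{2\pi}\}$. By Weyl's equidistribution theorem, this progression is dense in $\R/2\pi\Z$ exactly when $\theta/\pi\notin\Q$, equivalently when $z=2\cos(\alpha\pi)$ with $\alpha$ irrational. Conversely, when $\alpha=p/q\in\Q$, the angle $\theta$ satisfies $q\theta\in\pi\Z$, so $\rho(Z)$ has finite order and the progression is periodic, producing only finitely many points on $E$. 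The main obstacle is simply keeping track of the reality constraint $B=-\overline{A}$ in the derivation of the sine formula; once that is in hand, boundedness, the ellipse, and the density dichotomy all follow from standard facts about rotations of the circle.
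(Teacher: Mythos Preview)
Your argument is correct and rests on the same key observation as the paper—namely that $|z|<2$ forces $|\lambda|=1$ in the recursion of Proposition~\ref{prop:abut}—but you supply considerably more detail than the paper does. The paper's entire proof is the one-line pointer ``Apply Corollary~\ref{cor:abutgrowth}(b),'' which establishes only the boundedness of $|y_n|$; the ellipse and density claims are left implicit. You make these explicit: the conserved quantity $\QQ_z(y_{n-1},y_n)=z^2-k-2$ follows directly from the Vertex Relation, Lemma~\ref{lem:PosDef} guarantees the level set is an ellipse, and your linear parametrization $t\mapsto(2r\sin t,\,2r\sin(t+\theta))$ reduces density to the standard rotation-number dichotomy. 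One small remark: your justification that $A\neq 0$ is slightly roundabout—once you have $B=-\overline{A}$, vanishing of $A$ forces $B=0$ and hence $y_n\equiv 0$ directly, without passing through $\zeta^2=k+2$—but the conclusion is the same.
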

\begin{proof}
Apply Corollary \ref{cor:abutgrowth}(b).
\end{proof}

\begin{prop} \label{prop:mergesaboutZtwob}
Suppose that $ \vert z\vert\ge 2$.
\begin{itemize}
\item $v_m$ is negative for a unique $m \in \Z$; %, such that $v_m$ is negative.
%There exists  a unique $m \in \Z$, such that $v_m$ is negative.
%Furthermore  all the $e_n$'s point towards $v_m$,   
\item Every $e_n$ points towards $v_m$;
\item Every $\vedge{i}$, with $i\neq m$,  points towards $Z$;
\item The edge $\vedge{m}$ points away from $Z$ if 
\[
\vert z \vert>\vert -y_my_{m+1}-z\vert,\] in which case $v_m$ is a merge.
\item
The edge $\vedge{m}$ points towards $Z$ if  
\[
\vert z \vert<\vert -y_my_{m+1}-z\vert,\]  
in which case $v_m$ is a sink. 
\item
If  \[ \vert z \vert=\vert -y_my_{m+1}-z\vert,\]
then $\vedge{m}$ is indecisive, with  two possible choices of the sink. 
\end{itemize}
\end{prop}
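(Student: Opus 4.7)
The proof will run in close parallel to Propositions~\ref{prop:mergesaboutZb} and~\ref{prop:mergesaboutZc} from the $k>2$ regime, with one new feature: for $k<2$ the vertex relation no longer forces the $\R$-edge $\vedge{m}$ at the unique negative vertex to point away from $Z$, and the stated trichotomy is exactly the trichotomy for this edge. After applying a sign-change automorphism to arrange $z>0$, I would first produce an explicit formula for the sequence $y_n$. When $|z|>2$, Proposition~\ref{prop:abut} gives $iy_n = A\lambda^n + B\lambda^{-n}$ with $\lambda+\lambda^{-1}=z$ and $AB=(z^2-k-2)/(z^2-4)$. Since $k<2$ and $|z|>2$, both the numerator and denominator are strictly positive, so $AB>0$; writing $A=ai$, $B=bi$, the identity $-ab=AB>0$ forces $a$ and $b$ to have opposite signs, so that $y_n = a\lambda^n+b\lambda^{-n}$ is the integer restriction of a strictly monotonic real-analytic function with $y_n\to\pm\infty$ as $n\to\pm\infty$. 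When $|z|=2$ the explicit linear formula of Proposition~\ref{prop:abutPar} supplies the same conclusion (after the sign-change making $z=2$, the formula $y_n = x + n(y-x)$ is strictly monotonic because $k<2$ forces $x\ne y$). Since $\mu$ is unexceptional, no $y_n$ vanishes, and there is a unique $m\in\Z$ with $y_m<0<y_{m+1}$. The vertex $v_m = v(Y_m,Y_{m+1},Z)$ then has $y_m y_{m+1} z < 0$ and is the unique negative vertex around $Z$; all other $v_n$ are positive.

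To direct the edges $e_n$ of the alternating geodesic $C(Z)$, recall that $e_n$ is decided by comparing $|y_{n-1}|$ and $|y_{n+1}|$. When both indices lie on the same side of the sign-change, strict monotonicity of $n\mapsto y_n$ supplies the required inequality at once, and the arrow on $e_n$ points toward $v_m$. For the two boundary indices at which $y_{n-1}$ and $y_{n+1}$ straddle zero, I would apply the Edge Relation $y_{n-1}+y_{n+1}=zy_n$, together with the known sign of $y_n$ and $z>0$, to extract the correct sign of $|y_{n+1}|-|y_{n-1}|$, exactly as in the proof of Proposition~\ref{prop:mergesaboutZc}. This establishes that every $e_n$ points toward $v_m$. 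For $n\ne m$ the $\R$-edge $\vedge{n}$ is then directed inwards at the positive vertex $v_n$ by Lemma~\ref{lem:PosNegVert}, i.e.\ toward $Z$.

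Finally, at the negative vertex $v_m$ the Edge Relation reads $z + z_m' = (iy_m)(iy_{m+1}) = -y_m y_{m+1}$, so $z_m' = -y_m y_{m+1} - z$. The direction of $\vedge{m}$ is then settled by the definition of a decisive edge: $\vedge{m}$ points away from $Z$ exactly when $|z|>|-y_my_{m+1}-z|$, toward $Z$ when the strict reverse inequality holds, and is indecisive under equality. In the first case $v_m$ has the two incoming arrows of $e_{m-1}$ and $e_m$ and emits $\vedge{m}$, hence is a merge; in the second all three incident edges point inwards, so $v_m$ is a sink; in the indecisive case either choice of orientation of $\vedge{m}$ yields one of the two outcomes, completing the remaining bullets. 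The main obstacle is the bookkeeping in the edge-direction step, verifying uniformly across $|z|>2$ and $|z|=2$ that the boundary indices behave correctly; beyond this, no tool is required beyond Propositions~\ref{prop:abut}, \ref{prop:abutPar}, Lemma~\ref{lem:PosNegVert}, and \eqref{eq:edgerelation}.
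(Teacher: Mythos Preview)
Your proposal is correct and follows essentially the same approach as the paper's own proof: reduce via a sign-change to $z>0$, use Proposition~\ref{prop:abut} (or Proposition~\ref{prop:abutPar} when $|z|=2$) together with $AB>0$ to get a strictly monotone sequence $y_n$ changing sign exactly once, invoke Lemma~\ref{lem:PosNegVert} at the positive vertices, and finish with the Edge Relation at the unique negative vertex. Your write-up is in fact slightly more explicit than the paper's in handling the two ``boundary'' edges $e_n$ where $y_{n-1}$ and $y_{n+1}$ straddle zero; the paper simply asserts that the Edge Relations force all $e_n$ to point decisively toward the negative vertex, while you spell out why via $y_{n-1}+y_{n+1}=zy_n$.
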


%NOTES TO DO:
%
%[11/20/14, 8:54:20 AM] Andy Tan: actually all you need is that $|\zeta|$ can be arbitrarily close to 1 if z sufficiently close to 0
%[11/20/14, 8:54:53 AM] Andy Tan: and then A and B can be chosen so $|A/B|$ is very close to 1
%
%so $|A+B|$ is less than 2
%
%\\m\arginpar{proofs of 4.5.1,4.5.2}

\begin{proof}
We only consider the case $\vert z\vert >2$. 
The case when $|z|=2$ is similar, 
using Proposition~\ref{prop:abutPar} instead of Proposition~\ref{prop:abut}. 
As in the proof of Proposition~\ref{prop:EdgesAboutX} (a) assume 
\[ 
z>2,\quad \lambda>1. \]
Since $AB>0$, further assume 
\[ A=ia,\qquad B=ib \]
where $a>0$ and $b<0$. 
Hence $y_n$ increases monotonically and  
%$y_n \rightarrow -\infty$ as $n \rightarrow -\infty$, 
%$y_n \rightarrow \infty$ as $n \rightarrow \infty$. 
\[
\lim_{n\to -\infty} y_n = - \infty, \qquad
\lim_{n\to \infty} y_n =  \infty. \]
\noindent
Re-indexing if necessary, assume  $y_n>0$ for all $n \ge 0$ and $y_n<0$ for $n <0$. 
Hence $v_0$ is negative and $v_n$ is positive for all $n\neq 0$.
%It follows easily from the edge relations 
Edge Relation~\eqref{eq:edgerelation} imply
that all the $e_n$'s point decisively towards $v_0$.
Lemma~\ref{lem:PosNegVert} implies that each 
$\vedge{n}$'s point decisively towards $Z$ for $n \neq 0$. 
For the case of $\vedge{0}$, % note that 
\begin{align*} 
z_0'\; & =\ -y_{-1}y_0-z, \\ 
-y_{-1}\ y_0\; & > \qquad 0. \end{align*}
%and that $-y_{-1}y_0>0$ 
If $z_0'<0$ then $\vert z \vert >\vert z_0'\vert$ so $\vedge{0}$ points towards $Z_0'$ and $v_0$ is a merge. Similarly, if \[ z>z_0'>0, \] 
then $\vedge{0}$ is also directed towards $Z_0'$ and $v_0$ is a merge. 
If $z_0'>z>0$ then $\vedge{0}$ points towards $Z$ and $v_0$ is a sink.  
%Note that this can only occur 
This only happens if $k< -14$, in the case of the Fricke space $\Fricke(C_{0,2})$ 
(compare \cite{{MR2705402}}).
%\\m\arginpar{maybe $k \le -18$? look in paper with George} 
Finally, if $z_0'=z$ then $\vedge{0}$ is indecisive and either vertex at its endpoints can be chosen to be sinks.
\end{proof}

\subsection{Descending Paths}
We use Proposition~\ref{prop:eliipticnearepsilon} to find a simple closed
curve $\gamma$ such that $\rho(\gamma)$ is elliptic. 
Equivalently, we find a complementary region
$\omega\in\Omega_\R$ such that $\mu(\omega) \in (-2,2)$. 
We accomplish this by following a descending path
to such an elliptic region. Otherwise, we fall into a sink.
Sinks correspond to the Fricke orbit of $C_{0,2}$.
This case is analogous to the case treated in Goldman~\cite{MR2026539} when
$k > 18$, and the Fricke space of $\Sigma_{0,3}$ appears.
The following key result implies that an {\em infinite\/} descending path eventually contains an elliptic or meets the Fricke orbit:

\begin{prop}\label{prop:descendingpath}
Suppose that $\mu$ is a trace labeling with $\kappa(\mu)<2$.
Let 
\[ \vec P = \big( v_0 \xrightarrow{f_0} v_1 \xrightarrow{f_1}  \cdots   \xrightarrow{f_{n-1}} v_n \xrightarrow{f_n} \cdots
 \big) \]
be a descending path in $\vec T_\mu$. Then there exists a vertex
$v_m = v(X,Y,Z)$ with either:
%A descending path in the directed tree $\vec\T_{\mu}$  either:
\begin{itemize}
\item %The descending path hits 
$\vert\mu(Z)\vert<2$, or: 
\item %The descending path ends 
$v(X,Y,Z)$ is a sink with $|\mu(Z)|\ge 2$. 
\end{itemize}
In the latter case, if 
\begin{align*}
\mu(X) &=ix, \\ 
\mu(Y) & =iy, \\ 
\mu(Z) &=z, \end{align*}
where $x,y,z \in \R$ with $z\ge 2$, then $-xy-z \ge z$.
\end{prop}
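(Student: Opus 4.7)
We follow the descending path $\vec P$ and split into cases depending on whether it terminates.

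\emph{Case A (finite path).} If $\vec P$ ends at some $v_m$, then $v_m$ has no outgoing edges and so is a sink. If $|\mu(Z_m)| < 2$, we are in the first alternative; otherwise $|\mu(Z_m)| \geq 2$, and after applying a sign-change we may assume $z := \mu(Z_m) \geq 2$. The key point for deriving $-xy - z \geq z$ is that, under the hypothesis $\kappa(\mu) < 2$, there are no \emph{positive} sinks with $z \geq 2$: at such a sink (with $x, y > 0$), the two incoming $i\R$-edges force $yz \geq 2x$ and $xz \geq 2y$ via the edge relation, so $xyz \geq 2\max(x^2, y^2) \geq x^2 + y^2$; substituting into the definition of $\kC$ yields $\kappa(\mu) \geq z^2 - 2 \geq 2$, a contradiction. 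Hence the sink must be negative, so $xy < 0$; then $z' := -xy - z > -z$, and combined with the sink condition $|z'| \geq z$ (which says the $\R$-edge at $v_m$ is incoming) we obtain $z' \geq z$.

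\emph{Case B (infinite path).} Assume $\vec P$ is infinite and $|z_n| \geq 2$ at every $v_n$ on the path; we derive the existence of a vertex $v_m$ with $|\mu(Z_m)|<2$. The sequences $|x_n|, |y_n|, |z_n|$ are non-increasing and bounded, hence converge to $a, b, c$ with $c \geq 2$. Since $\vec P$ is a geodesic in $\T$, its edge types form a word in $\{1,2,3\}$ with no two consecutive letters equal. If only two types occur after some point, then $\vec P$ lies on an alternating geodesic around a fixed region $W$, and Propositions~\ref{prop:abut} and \ref{prop:abutPar} force the trace values on such a geodesic to grow unboundedly (as $\mu$ is unexceptional), contradicting descent. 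Otherwise all three types occur infinitely often; the vanishing of increments $|z_{n+1}|-|z_n|$ at flip-$Z$ steps, combined with the analogous relations at flip-$X$ and flip-$Y$ steps and the $\kappa$-equation, yields either the degenerate configuration $c=2$, $a=b$, $k=-14$ (handled by a separate direct analysis using that $\Gamma\cdot\Fricke(C_{0,2})$ is discrete at this boundary value) or forces $x_n y_n \to 0$. In the latter generic case, say $|x_n| \to 0$, so $|\mu(X_n)| < \epsilon(k)$ for $n$ large, and Proposition~\ref{prop:eliipticnearepsilon} supplies a real neighbor $Z^*$ of $X_n$ with $|\mu(Z^*)| < 2$. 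One then continues $\vec P$ from $v_n$ along the alternating geodesic around $X_n$, whose trace values are bounded by Proposition~\ref{prop:abut}, to reach a vertex $v_m$ with $Z_m = Z^*$ satisfying the first alternative.

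The principal obstacle is the final step of the infinite-path case: verifying that the small-trace region supplied by Proposition~\ref{prop:eliipticnearepsilon} is actually reachable as a descending continuation of $\vec P$, which requires a careful check that the edges along the relevant segment of the alternating geodesic around $X_n$ are oriented outward from the vertices they leave (using the approximate-rotation dynamics of the matrix $\MM_{x_n}$ from the proof of Proposition~\ref{prop:eliipticnearepsilon}).
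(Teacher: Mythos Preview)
Your Case~A is correct and in fact cleaner than what the paper records. The paper's proof of this proposition jumps directly to the infinite-path case and never explicitly derives the inequality $-xy-z\ge z$; your argument that a sink with $z\ge 2$ cannot be positive (because $yz\ge 2x$ and $xz\ge 2y$ force $xyz\ge x^2+y^2$ and hence $\kappa\ge z^2-2\ge 2$) is the right way to nail this down.

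Case~B, however, follows a genuinely different route from the paper, and your version has real gaps. The paper does \emph{not} pass to limits of $|x_n|,|y_n|,|z_n|$. Instead it proves two lemmas: (i) infinitely many edges of the path are $\R$-edges (since an eventual tail of $i\R$-edges would lie on $\partial Z$ for some $Z\in\O_\R$ with $|z|\ge 2$, where Proposition~\ref{prop:mergesaboutZtwob} forces all edges to point toward a single vertex), and (ii) a \emph{definite decrease} lemma: each $\R$-edge $e^{X,Y}(Z\to W)$ with $|x|,|y|>\epsilon(k)$ and $|z|,|w|\ge 2$ satisfies $|w|<|z|-\epsilon(k)^2$, because the starting vertex is negative (Lemma~\ref{lem:PosNegVert}) and the ending vertex must be positive (else it would be a sink). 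These two lemmas immediately give that some vertex has $|z|<2$ or $\min(|x|,|y|)<\epsilon(k)$. Your limit argument instead asserts that vanishing increments ``combined with the $\kappa$-equation'' force $x_ny_n\to 0$, but this is not justified: for instance $(x,y,z)=(2,-2,2)$ is fixed by $\II_3$ with $|z'|=|z|$ and $xy=-4\ne 0$, and your aside about the ``degenerate configuration $c=2$, $a=b$, $k=-14$'' shows you have noticed such obstructions without handling them. The paper's $\epsilon(k)$-threshold argument avoids this entirely.

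For the final step you correctly identify the obstacle but do not resolve it. The paper's mechanism is this: once $|x_n|<\epsilon(k)$, take $v'$ to be the vertex on $\partial X_n$ nearest to $v_n$ with $|z'|<2$ (guaranteed by Proposition~\ref{prop:eliipticnearepsilon}). Every intermediate vertex on $\partial X_n$ has its real region satisfying $|z|\ge 2$, so by Proposition~\ref{prop:mergesaboutZtwob} it is a merge or a sink; if a sink occurs we are done, and otherwise each merge has a unique outgoing edge. Since the last edge before $v'$ visibly points toward $v'$ (its real endpoint has smaller $|z|$), back-propagation through merges shows the entire segment from $v_n$ to $v'$ is descending and is in fact the unique descending continuation. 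Hence $\vec P$ itself reaches $v'$. Your appeal to ``approximate-rotation dynamics of $\MM_{x_n}$'' is not the tool used here and would be harder to make rigorous.
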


%\begin{proof}
\noindent The proof breaks into a series of lemmas. 
Consider an infinite descending path $\vec P$ of \eqref{eq:Path}
%$P = (u_0,e_0,u_1,e_1,\dots)$.
%\[ P = \big( u_0 \xrightarrow{f_0} u_1 \xrightarrow{f_1} u_2 \dots   u_{n-1} \xrightarrow{f_n} u_n \big) \]
such that $\vert z_n\vert \ge 2$ for every vertex $u_n(X,Y,Z)$ on this path.
\begin{lem}\label{lem:InfManyColoringEdges}
Infinitely many $f_n$ are $\R$-edges.
%An infinite descending path such that 
%$\vert z_n \vert \ge 2$ for every vertex $v_n(X,Y,Z)$ on this path
%The path contains infinitely many $\R$-edges.
\end{lem}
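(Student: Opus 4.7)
The plan is to argue by contradiction. Suppose that only finitely many $f_n$ are $\R$-edges, so that for some index $N$ every $f_n$ with $n \geq N$ is an $i\R$-edge. The first step is the simple observation that traversing an $i\R$-edge at a vertex $u_n(X_n,Y_n,Z_n)$ leaves the real region unchanged: the two $i\R$-edges at $u_n$ are $e^{X_n,Z_n}$ and $e^{Y_n,Z_n}$, and in both cases $Z_n$ is still among the three regions at the next vertex. Consequently $Z_n = Z^*$ is constant for $n \geq N$, and the tail of the path lies on the alternating geodesic $C(Z^*)$; by hypothesis $|\mu(Z^*)| = |z^*| \geq 2$.

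The second step is to rule out an infinite descending ray along $C(Z^*)$ under the standing assumptions $k < 2$ and $|z^*| \geq 2$. If $|z^*| > 2$, Proposition~\ref{prop:EdgesAboutX} applies: in case (a) all directed edges on $C(Z^*)$ point toward a single vertex $v \in \partial Z^*$, so the descending path cannot continue beyond $v$; in case (b) one has $z^{*2} = k+2$, but combined with $|z^*| \geq 2$ this gives $k = z^{*2} - 2 \geq 2$, contradicting $k < 2$. If $|z^*| = 2$, Proposition~\ref{prop:abutPar} presents $\eta_n$ as a signed affine function of $n$, while the vertex relation at $v(X,Y,Z^*)$ reduces to $(\xi - \eta)^2 = k - 2 < 0$. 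Writing $\xi = ix$, $\eta = iy$ this becomes $(x-y)^2 = 2-k > 0$, so the affine function is genuinely non-constant; hence $|\eta_n|$ attains a finite minimum and grows without bound on both sides, precluding an infinite descending tail.

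The two steps combine to give the desired contradiction, so infinitely many $f_n$ must be $\R$-edges. The main obstacle I anticipate is careful bookkeeping around indecisive edges and the precise definition of a descending path: since one allows $|y_{n+1}| = |y_n|$ at an indecisive edge, one must check that Proposition~\ref{prop:EdgesAboutX} still forces the qualitative inward-pointing picture in case (a), and that the monotonicity conclusion in the $|z^*| = 2$ case genuinely terminates the descending ray once $|\eta_n|$ begins to strictly increase. Once these points are pinned down, the structure of the proof is routine: a pigeonhole reduction to a single alternating geodesic, followed by a direct contradiction from the behavior of traces along that geodesic recorded in the results of \S\ref{sec:AlternatingGeodesics}.
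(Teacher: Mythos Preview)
Your proof is correct and follows essentially the same approach as the paper: argue by contradiction, observe that the tail of the path lies on a single alternating geodesic $C(Z^*)$ with $|z^*|\ge 2$, and then show that the edges around $Z^*$ cannot support an infinite descending ray. The only difference is packaging: the paper invokes Proposition~\ref{prop:mergesaboutZtwob}, which already handles $|z|\ge 2$ uniformly for $k<2$ and asserts that all $e_n$ point toward a single vertex $v_m$, whereas you split into $|z^*|>2$ (via Proposition~\ref{prop:EdgesAboutX}, ruling out case~(b) since it would force $k\ge 2$) and $|z^*|=2$ (via Proposition~\ref{prop:abutPar} and the vertex relation). Your concern about indecisive edges is not a real obstacle here: at most one edge on $C(Z^*)$ can be indecisive, and past the focal vertex the edges point strictly inward, so the descending path still terminates.
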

\begin{proof}
Suppose  $P$ contains only finitely many $\R$-edges.
Then there exists $N$ such that edge $f_n$ is a $i\R$-edge for $n\ge N$. 
Furthermore, 
Proposition~\ref{prop:ColoredEdges} implies that 
$f_n$ lies in a geodesic bounding a complementary region $Z\in\O_\R$
for all $n\ge N$.

By assumption $\vert z \vert \ge 2$.
By Proposition~\ref{prop:mergesaboutZtwob}, 
each $f_m$ points towards some $u\in\V$,
a contradiction since $P$ is a descending path.
\end{proof}

\begin{lem}\label{lem:DefiniteDecrease}
Choose $\epsilon(k)>0$ as in Proposition~\ref{prop:eliipticnearepsilon}.
For each $\R$-edge $f_n$ as above,
let $X,Y,Z,W$ be the complementary regions so that
\[ f_n = e^{X,Y}(Z\to W) \]with corresponding trace labels $ix, iy, z, w$.
%  denote the trace labels of $X_n, Y_n, Z_n$ and $Z_{n+1}$ respectively. 
%\begin{lem}\label{lem:DefiniteDecrease}
%Let $e_n$ be a $\R$-edge on this path. 
%For every $\R$-edge $e_n$ on this path,
If $\vert x\vert, \vert y\vert > \epsilon(k)$ and 
$\vert z\vert, \vert w\vert \ge 2$, 
then 
\[ \vert w \vert < \vert z \vert - \epsilon(k)^2.\]
\end{lem}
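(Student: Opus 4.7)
The plan is to use the edge relation $z+w = -xy$ coming from the Fricke trace identity applied to the basic triples $(X,Y,Z)$ and $(X,Y,W)$ adjacent to the $\R$-edge $f_n$. Equivalently, $z$ and $w$ are the two roots of
\[
t^2 + xy\,t - (x^2+y^2+2+k) \;=\; 0
\]
obtained by fixing $x,y$ in $\kC(x,y,t)=k$, so $zw = -(x^2+y^2+2+k)$, and the discriminant identity (already computed in the topological analysis of $\kCk$) reads
\[
(z-w)^2 \;=\; (x^2+4)(y^2+4) + 4(k-2).
\]
I would then split the proof according to the sign of $zw$.

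When $z$ and $w$ have opposite signs, the directed edge $Z\to W$ gives $|z|\ge|w|$, forcing $z+w$ to share the sign of $z$; hence
\[
|z|-|w| \;=\; |z+w| \;=\; |xy| \;>\; \epsilon^2,
\]
directly from the hypothesis $|x|,|y|>\epsilon$. This disposes of the opposite-sign case in one line.

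When $z$ and $w$ have the same sign, $|z|-|w|=|z-w|$, and combining the discriminant identity with $|x|,|y|>\epsilon$ gives
\[
(|z|-|w|)^2 \;>\; (\epsilon^2+4)^2 + 4(k-2) \;=\; \epsilon^4 + 8\epsilon^2 + 4(k+2),
\]
which exceeds $\epsilon^4$ whenever $\epsilon^2 \ge -(k+2)/2$; in particular it is automatic for $k\ge -2$. Moreover the entire same-sign case is \emph{vacuous} under the hypothesis $|z|,|w|\ge 2$ unless $k\le -14$: the inequality $zw\ge 4$ forces $x^2+y^2\le -6-k$, while AM--GM together with $|xy|=|z+w|\ge 4$ forces $x^2+y^2\ge 8$, so $-6-k\ge 8$. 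Hence for $k\in(-14,2)$ only the opposite-sign case arises and the lemma is immediate.

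The delicate regime is $k\le -14$. There $\epsilon(k)$ must be chosen simultaneously small enough to guarantee the elliptic-neighbor conclusion of Proposition~\ref{prop:eliipticnearepsilon}, yet also large enough to satisfy $\epsilon^2 \ge -(k+2)/2$ so that the discriminant bound closes. The hard part will be verifying this compatibility by a quantitative version of the lattice argument in Proposition~\ref{prop:eliipticnearepsilon}: as $k\to -\infty$ the hyperbola $H_{k,y}$ meets the strip $|z|<2$ in an interval whose length grows on the order of $\sqrt{-k}$, while the lattice spacing along $H_{k,y}$ is controlled by $\beta=(y+\sqrt{y^2+4})/2$ and can be kept bounded independently of $-k$, leaving room to pick a single $\epsilon(k)$ satisfying both requirements.
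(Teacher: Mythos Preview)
Your opposite-sign case is exactly the paper's computation, and your observation that the same-sign case is vacuous for $-14<k<2$ is correct and illuminating.  The gap is entirely in the same-sign case for $k\le -14$.

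The compatibility you hope for fails.  A short computation with the null coordinates $u=z+\beta x$, $v=z-\beta^{-1}x$ on $H_{k,y}$ (so $uv=k+2+y^2$ and $M_y^2$ acts by $u\mapsto -\beta^{-2}u$) shows that the $M_y^2$-orbit of \emph{every} point meets the strip $|z|<2$ if and only if
\[
-k\ \le\ 6 + y^2 + \frac{16}{y^2}.
\]
Thus the largest $\epsilon$ that Proposition~\ref{prop:eliipticnearepsilon} allows satisfies roughly $\epsilon(k)\lesssim 4/\sqrt{-k-6}\to 0$ as $k\to -\infty$, while your discriminant bound needs $\epsilon(k)^2\ge -(k+2)/2\to\infty$.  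Already at $k=-15$ the two constraints are inconsistent: you would need $\epsilon>\sqrt{6.5}$, yet $y=2<\sqrt{6.5}$ gives $y^2+16/y^2=8<9$, so for that $y$ there are characters whose $Y$-neighbours all have $|z|\ge 2$.  So the quantitative upgrade you sketch cannot close the argument.

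The fix is not analytic but structural, and it uses the phrase ``as above'' in the lemma: $f_n$ lies on an \emph{infinite} descending path in $\vT_\mu$ with $|z_n|\ge 2$ throughout.  Since the $\R$-edge $f_n$ points away from $v_n(X,Y,Z)$, Proposition~\ref{prop:mergesaboutZtwob} forces $v_n$ to be the (unique) negative vertex on $\partial Z$, so $xy$ and $z$ have opposite signs.  If $w$ had the same sign as $z$, then $xyw<0$, so $v_{n+1}(X,Y,W)$ would be the negative vertex on $\partial W$; but then both $i\R$-edges about $W$ point to $v_{n+1}$ and $f_n$ itself points to $v_{n+1}$, making $v_{n+1}$ a sink and terminating the descent --- a contradiction.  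Hence $z$ and $w$ have opposite signs, and your one-line computation $|z|-|w|=|z+w|=|xy|>\epsilon(k)^2$ finishes the proof for all $k<2$.
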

\begin{proof}
%Applying Lemma~\ref{lem:InfManyColoringEdges}
%Because the infinite path is descending, 
%the ending vertex is not a sink. 
Since $f_n$ is a $\R$-edge directed away from 
the vertex $v_n(X,Y,Z)$,
Proposition~\ref{prop:mergesaboutZtwob} implies
$v_n(X,Y,Z)$ is negative. 
% Lemma~\ref{lem:PosNegVert} implies 
Therefore $xy$ and $z$ have opposite signs.

We claim that $v(X,Y,W)= v_{n+1}$ must be positive.
Otherwise, Proposition~\ref{prop:mergesaboutZtwob} 
applied to the neighbors of $W$ implies that
$v(X,Y,W)= v_{n+1}$ is a sink. This contradicts $P$
being an infinite descending path. 

Thus $xy$ and $w$ have the same sign, but $z$ has the opposite sign.
Applying a sign-change if necessary, assume $z > 0$. 
Then $xy < 0$ and $w < 0$. Since $w = -xy-z$, and $x y < - \epsilon(k)^2$,
\[
\vert w \vert =  -w = xy + z  < -\epsilon(k)^2 + z = -\epsilon(k)^2 + \vert z\vert 
\]
as claimed.
\end{proof}

\begin{lem}
The infinite descending path contains 
a vertex $v_n$ such that:
\begin{itemize}
\item $\vert z_n\vert <  2$,   or:
\item One of $\vert x_n\vert, \vert y_n\vert$ is $<  \epsilon(k)$.
\end{itemize}
\end{lem}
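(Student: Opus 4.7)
The plan is to argue by contradiction. Suppose every vertex $v_n(X_n,Y_n,Z_n)$ on the infinite descending path $\vec P$ satisfies \emph{both} $|z_n|\ge 2$ and $\min(|x_n|,|y_n|)\ge \epsilon(k)$. The first of these two conditions is precisely the hypothesis of Lemma~\ref{lem:InfManyColoringEdges}, so we obtain infinitely many $\R$-edges in $\vec P$; let $n_1<n_2<\cdots$ enumerate the indices of these $\R$-edges.

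Next I would show that each $\R$-edge traversal decreases $|z|$ by a definite amount. An $\R$-edge $f_n = e^{X_n,Y_n}(Z_n\to Z_{n+1})$ preserves the classes $[X_n]=[X_{n+1}]$ and $[Y_n]=[Y_{n+1}]$, so the imaginary trace labels at its two endpoints agree, and both have absolute value at least $\epsilon(k)$ by our standing assumption. Moreover $|z_n|\ge 2$ and $|z_{n+1}|\ge 2$ by hypothesis. Thus both hypotheses of Lemma~\ref{lem:DefiniteDecrease} are satisfied, yielding
\[
|z_{n+1}|\ \le\ |z_n|\ -\ \epsilon(k)^2.
\]
Between consecutive $\R$-edges, all intervening edges are $i\R$-edges (of type $\II_1$ or $\II_2$), which fix the complementary region labeled by $Z_n$ and hence leave $z_n$ unchanged. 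Consequently the sequence $\{|z_{n_j}|\}_{j\ge 1}$ is strictly decreasing in steps of size at least $\epsilon(k)^2$. After at most $\lceil(|z_0|-2)/\epsilon(k)^2\rceil$ such decrements, $|z|$ would fall strictly below $2$, contradicting $|z_n|\ge 2$ throughout $\vec P$.

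The main obstacle is essentially bookkeeping: the qualitative content follows immediately once Lemmas~\ref{lem:InfManyColoringEdges} and \ref{lem:DefiniteDecrease} are in hand. The only point deserving care is checking that the hypotheses of Lemma~\ref{lem:DefiniteDecrease} apply at \emph{every} $\R$-edge of the infinite path under the contradiction hypothesis, which in turn rests on the combinatorial fact that traversing an $\R$-edge preserves the two imaginary region-classes (and hence their trace labels), together with the observation that an $i\R$-edge leaves the real coordinate untouched.
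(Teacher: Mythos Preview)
Your argument is correct and follows essentially the same route as the paper's proof: assume both conclusions fail everywhere, invoke Lemma~\ref{lem:InfManyColoringEdges} to produce infinitely many $\R$-edges, and then apply Lemma~\ref{lem:DefiniteDecrease} at each of them to force $|z|$ below $2$. Your write-up is in fact slightly more explicit than the paper's, since you spell out that $i\R$-edges leave the real coordinate $z$ unchanged and that $\R$-edges preserve the two imaginary labels, which is exactly what justifies applying Lemma~\ref{lem:DefiniteDecrease} at every $\R$-edge under the contradiction hypothesis.
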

\begin{proof}
Suppose, for all vertices $v_n$, that 
$\vert x_n\vert, \vert y_n\vert \ge  \epsilon(k)$.
Lemma~\ref{lem:InfManyColoringEdges} 
guarantees an infinite set of $\R$-edges 
in this descending path. 
Apply   Lemma~\ref{lem:DefiniteDecrease} 
to this, %path of edges 
obtaining a subsequence $(x_n,y_n,z_n)$.
%to the infinite path of edges guaranteed by Lemma~\ref{lem:InfManyColoringEdges}:
%For $n$ in this infinite subsequence, 
Then 
$\vert z_n \vert$ decreases uniformly by $\epsilon(k)^2$.
Eventually $\vert z_n \vert < 2$ as desired.
\end{proof}

If $\vert z_n\vert < 2$, then we're done. 
Therefore we may assume that $\vert x_n\vert <  \epsilon(k)$
(the case that $\vert y_n\vert<\epsilon(k)$ being completely analogous).

%Therefore we may assume that at some $i\R$-edge 
%\[ f_n=f_n^{X_n, Y_n}(Z_n \rightarrow Z_{n+1}) \] 
%on the path, 
%\[ \vert ix_n \vert \le \epsilon(k).\] 
Proposition~\ref{prop:eliipticnearepsilon} implies that 
$X_n$ has a neighboring region $W$ such that $|w| <2$. 
Let $v'(X_n,Y',Z')$ be the vertex on $\partial X_n$ %which is nearest to
nearest to
$v(X_n,Y_n,Z_n)$, and such that $\vert z' \vert <2$. 
We claim that the path descends in a unique direction from $v(X_n,Y_n,Z_n)$ to 
$v'(X_n,Y',Z')$. 

First note that by construction, if $v(X,Y,Z)$ is a vertex between these two vertices, 
then $|z| \ge 2$ and hence $v(X,Y,Z)$ is a merge or a sink. 
Furthermore, the edge $e^{X_n,Y'}$ which lies on the path between these two  vertices  points towards $v'(X_n,Y',Z')$, again by construction. 
Therefore %It follows that 
all the edges from $v(X_n,Y_n,Z_n)$ to $v'(X_n,Y',Z')$ point towards 
$v'(X_n,Y',Z')$ 
and this descending path is unique from $v(X_n,Y_n,Z_n)$. 
Finally, this leads to a contradiction as the descending path now meets a region $Z'$ with 
$\vert z' \vert <2$.

\begin{prop}
Suppose that $(ix,iy,z)$ is a purely imaginary character with $k = \kappa(ix,iy,z) < 2$. 
Consider the corresponding directed tree $\vT$. 
If both $z$ and $z' :=  -xy - z$ are $ \ge 2$, then the edge $e$
between $Z$ and $Z'$ contains a sink. 
All the edges adjacent to $e$ are merges and point towards $e$.
\end{prop}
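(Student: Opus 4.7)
The plan is to apply Proposition~\ref{prop:mergesaboutZtwob} to both $Z$ and $Z'$ (each in $\O_\R$ with $|z|,|z'| \ge 2$) and patch the outputs at the two endpoints of $e = e^{X,Y}(Z,Z')$. First I would extract the basic arithmetic consequence of the Edge Relation \eqref{eq:edgerelation}: $z + z' = (ix)(iy) = -xy$, so $xy = -(z+z') \le -4 < 0$. Since $z > 0$ and $xy < 0$, we have $xyz < 0$, so $v_1 := v(X,Y,Z)$ is a negative vertex on $\partial Z$; similarly $v_2 := v(X,Y,Z')$ is a negative vertex on $\partial Z'$. Proposition~\ref{prop:mergesaboutZtwob} then identifies $v_1, v_2$ as the \emph{unique} negative vertices on $C(Z)$ and $C(Z')$ respectively. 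A key observation is that the perpendicular edge at $v_1$ relative to $C(Z)$ (the edge at $v_1$ not bounding $Z$) is precisely $e$, and symmetrically at $v_2$ relative to $C(Z')$.

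Second, I would read off which endpoint of $e$ is a sink from the last three bullets of Proposition~\ref{prop:mergesaboutZtwob}. Applied at $Z$: the perpendicular edge $e$ points away from $Z$ (making $v_1$ a merge and directing $e$ toward $v_2$) exactly when $|z| > |-xy - z| = |z'| = z'$, i.e.\ when $z > z'$; it points toward $Z$ (making $v_1$ a sink and directing $e$ toward $v_1$) exactly when $z < z'$; and when $z = z'$ the edge is indecisive and either choice of direction produces a sink at the corresponding endpoint. Applying the same at $Z'$ yields the mirror assertion for $v_2$, automatically consistent with the $Z$-analysis. The upshot is that exactly one of $\{v_1, v_2\}$ is a sink, and the edge $e$ contains a sink as claimed.

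Third, for the four edges adjacent to $e$: the two edges at $v_1$ other than $e$ bound $Z$, so they lie on $C(Z)$; by Proposition~\ref{prop:mergesaboutZtwob} every edge of $C(Z)$ points toward the unique negative vertex $v_1$, hence both of these edges are directed toward $v_1$, i.e.\ toward $e$. The symmetric argument at $v_2$ via $C(Z')$ handles the remaining two. To see that their far endpoints are merges, let $w$ be such a far endpoint, say on $C(Z)$ with $w \ne v_1$. The two edges of $C(Z)$ at $w$ both point toward $v_1$ (so one is outgoing from $w$ and the other incoming), while the perpendicular edge at $w$ (since $w$ is not the unique negative vertex on $\partial Z$) points toward $Z$, i.e.\ into $w$. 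Thus $w$ has two incoming arrows and one outgoing arrow, which makes it a merge.

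The only real subtlety, and the main obstacle, is pure bookkeeping: aligning the indexing $(v_m, y_m, y_{m+1})$ of Proposition~\ref{prop:mergesaboutZtwob} with our notation $e^{X,Y}(Z,Z')$ so that the abstract inequality between $|-y_m y_{m+1} - z|$ and $|z|$ correctly becomes the inequality between $z'$ and $z$, and double-checking that the arrow of $e$ assigned by the analysis at $Z$ agrees with the one assigned by the analysis at $Z'$. No additional mathematical input is required beyond Proposition~\ref{prop:mergesaboutZtwob} and the Edge Relation.
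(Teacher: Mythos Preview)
Your argument is correct, and it takes a somewhat different route from the paper's own proof. The paper proceeds by direct elementary computation: after deducing $xy = -(z+z') \le -4$ from the Edge Relation, it fixes one of the four adjacent edges (say $e^{X,Z}(Y,Y')$), sets $y' := xz - y$, and shows by hand that $|y'| > |y|$ using only sign analysis (after a sign-change to $y>0$, one has $x<0$ and hence $y' = xz - y < 0$, so $|y'| = y - xz > y$). By symmetry all four edges adjacent to $e$ point inward, and then whichever direction $e$ carries (or either, in the indecisive case) yields a sink at the corresponding endpoint.

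Your approach instead invokes Proposition~\ref{prop:mergesaboutZtwob} at both $Z$ and $Z'$, using the uniqueness of the negative vertex on each alternating geodesic to pin down $v_1$ and $v_2$ and to read off the direction of every relevant edge at once. This is cleaner and more structural, and it also gives the ``far endpoints are merges'' assertion directly, which the paper's proof leaves implicit. The trade-off is that Proposition~\ref{prop:mergesaboutZtwob} is stated under the standing hypothesis that $\mu$ is unexceptional, while the present statement does not assume this; however, $xy \le -4$ forces $x,y \neq 0$, which is all that is actually used from that hypothesis in the relevant parts of the proof of Proposition~\ref{prop:mergesaboutZtwob}. Your bookkeeping concern about matching the inequality $|{-}y_m y_{m+1} - z|$ versus $|z|$ with $z'$ versus $z$ is handled correctly: the perpendicular edge at $v_1$ is exactly $e$, and the two applications (at $Z$ and at $Z'$) assign it consistent directions.
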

\begin{proof}
By Edge Relation~ \eqref{eq:edgerelation}, 
$x y = - (z + z') \le -4$.  Furthermore $k < -2 - z z'  \le -6$.
In that case $v(ix,iy,z)$ is a {\em sink.\/} 
To this end, define $y' := xz - y$. 
We show that $\vert y' \vert > \vert y\vert$ as follows.
By a sign-change, we can assume that $y> 0$. 
Since  
\[ 4 \le z + z' = -xy, \]
$x < 0$ and
\[ y' = xz - y < 0.\]
Thus 
\[ \vert y' \vert = - y'  = y - x z > y = \vert y \vert \]
as claimed. 

This argument implies that the directed edge  $e = e^{X,Y}(Z, Z')$
points towards $v(X,Y,Z)$. 
Similarly the other three edges adjacent to the edge connecting Z and Z' point
towards $v(X,Y,Z)$. 
Thus all four edges adjacent to $e$ point inward.

If this edge is decisive, one of these two vertices must be a sink. 
Otherwise the edge is indecisive, and either of two vertices will
be a sink. Changing the direction establishes that other vertex as a sink. 
\end{proof}

\begin{proof}[Conclusion of proof of Proposition~\ref{prop:ProperAction2} when $k<2$]
\qquad\qquad\qquad\qquad
\smallskip

 \noindent
If $k < 2$ and $\vert\mu(Z)\vert \ge 2$ for a sink $v(X,Y,Z)$, 
Proposition~\ref{prop:mergesaboutZtwob} 
implies that 
$\vT$ is well-directed towards $v(X,Y,Z)$ and $\mu\in\B'$.

Otherwise, Proposition \ref{prop:descendingpath} %says that 
implies $\vert\mu(Z)\vert<2$ for some $Z \in \O_{\R}$, 
so % that 
$\mu \notin \B$. 
This completes the proof of Proposition~\ref{prop:ProperAction2} when $k<2$.
\end{proof} 
\subsection{Ergodicity}

We now complete the proof that the action of $\Gamma$ is ergodic
on the complement of the Fricke orbit $\FO{\Czt}$ when $k < 2$.
%arguments from Goldman~\cite{MR2026539} imply  $\Gamma$ acts ergodically
%on the complement of the wandering domains.

%\\m\arginpar{b:RefereeComment \#22: written in strange way?}
%\begin{prop}\label{prop:Ergodicity}
%If there exists $Z \in \O_{\R}$ such that $\vert z \vert <2$, 
%then the action of $\Gamma$ on the complement
%\[
%\kCk \setminus\FO{\Czt} \]
%is ergodic. 
%\end{prop}

\begin{prop}\label{prop:Ergodicity}
Suppose that  $\mu$ is a trace labeling with $k<2$.
Suppose that $Z \in \O_{\R}$ with  $\vert z \vert <2$.
Then $\Gamma$ acts ergodically on the complement
\[
\kCk \setminus\FO{\Czt}. \]
\end{prop}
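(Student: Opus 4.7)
The plan is to adapt Goldman's approach from \cite{MR2026539}: the hypothesis $|\mu(Z)|<2$ for some $Z\in\O_\R$ supplies an elliptic primitive and hence a ``Dehn twist'' in $\Gamma$ acting as an ergodic rotation on each elliptic slice. Combining slicewise ergodicity with a transverse twist and with Proposition~\ref{prop:descendingpath} will propagate ergodicity from a single elliptic open set to all of $\kCk\setminus\FO{\Czt}$.

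\emph{Step 1 (Covering by elliptic regions).}
For each $W\in\O_\R$ set
\[
\mathcal{E}_W \ :=\ \{\mu\in\kCk \mid |\mu(W)|<2\},
\qquad
\mathcal{E} \ :=\ \bigcup_{W\in\O_\R}\mathcal{E}_W .
\]
Proposition~\ref{prop:descendingpath} shows that every unexceptional $\mu\in\kCk\setminus\FO{\Czt}$ has some $\Gamma$-translate in $\mathcal{E}$: the associated descending path either meets an elliptic region directly, or else terminates at a sink whose traces realize the Fricke condition for $\Czt$. Exceptional characters form a $dA_k$-null set, so it suffices to prove ergodicity of $\Gamma$ on $\mathcal{E}$, and by $\Gamma$-equivariance on a single open set $\mathcal{E}_Z$.

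\emph{Step 2 (Rotation on slices).}
Fix a vertex $v(X,Y,Z)$ with $Z\in\O_\R$ and define $\tau_Z := \II_1\II_2\in\Gamma$. A direct computation in the coordinates $(x,y,z)$ gives
\[
\tau_Z(x,y,z)\ =\ \bigl((z^2-1)x - zy,\; zx - y,\; z\bigr),
\]
so $\tau_Z$ preserves each slice $S_{z_0} := \kCk\cap\{z=z_0\}$. For $|z_0|<2$ the slice $S_{z_0}$ is the ellipse $\QQ_{z_0}(x,y) = z_0^2 - 2 - k$, positive-definite by \eqref{eq:Qz}, and the restriction $\tau_Z|_{S_{z_0}}$ is an $\SL(2,\R)$ transformation of trace $z_0^2 - 2$. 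Writing $z_0 = 2\cos(\alpha\pi)$, the trace equals $2\cos(2\alpha\pi)$, so with respect to the symplectic coordinates provided by $dA_k$ (Lemma~\ref{lem:AreaForm}) the map $\tau_Z|_{S_{z_0}}$ is rotation by $2\alpha\pi$. For Lebesgue-almost every $z_0\in(-2,2)$ this rotation is irrational, hence ergodic on $S_{z_0}$.

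\emph{Step 3 (Propagation).}
Let $A\subset\mathcal{E}_Z$ be $\Gamma$-invariant and measurable. Disintegrating $dA_k$ along the projection $\pi_Z:\mathcal{E}_Z\to(-2,2)$ and applying Step~2 on a.e.\ fiber yields $A = \pi_Z^{-1}(B)$ modulo null sets, for some measurable $B\subset(-2,2)$. Now apply the conjugate twist $\tau_{Z'} := \II_3\tau_Z\II_3$, which preserves $\mu(Z') = -xy-z$ where $Z' := \II_3(Z)\in\O_\R$, and is a rotation on the slices of the projection $\pi_{Z'}$; on $\mathcal{E}_{Z'}$ the same argument gives $A = \pi_{Z'}^{-1}(B')$ modulo null. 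On the overlap $\mathcal{E}_Z\cap\mathcal{E}_{Z'}$ the set $A$ is therefore simultaneously a $\pi_Z$-cylinder and a $\pi_{Z'}$-cylinder, and since $\pi_Z$ and $\pi_{Z'}$ have transverse level-set foliations, a Fubini argument forces $B$ to be null or conull in $\pi_Z(\mathcal{E}_Z\cap\mathcal{E}_{Z'})$. Iterating with further twists $\II_1^n\tau_Z\II_1^{-n}$, $\II_3\II_1^n\tau_Z\II_1^{-n}\II_3$, $\ldots$ indexed by the $\Gamma$-orbit of $Z$ in $\O_\R$, the set $B$ is swept out to a null or conull subset of $(-2,2)$, giving ergodicity of $\Gamma$ on $\mathcal{E}_Z$. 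Step~1 then promotes this to ergodicity on all of $\kCk\setminus\FO{\Czt}$.

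\emph{Main obstacle.}
The delicate step is the transversality/sweep in Step~3: one must verify that the two slice foliations intersect transversely on a full-measure subset of $\mathcal{E}_Z\cap\mathcal{E}_{Z'}$, and that the resulting induced equivalence relation on $(-2,2)$ generated by the successive ``bi-cylinder'' identifications has no proper invariant measurable subset of positive measure. This is precisely where the hypothesis $k<2$ together with $|z|,|z'|<2$ is used, ensuring positive definiteness of $\QQ_z$ and $\QQ_{z'}$ on the relevant overlap strips so that the two ellipse foliations are genuinely transverse and their combined sweep covers $(-2,2)$.
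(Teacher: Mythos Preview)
Your overall strategy coincides with the paper's: reduce to a single elliptic region $\mathcal{E}_Z$ via Proposition~\ref{prop:descendingpath}, use a twist about $Z$ (the paper uses $\nu_Z=\mathscr{P}_{(12)}\II_2$, you use $\tau_Z=\II_1\II_2$; either works) to obtain slicewise irrational rotation on the ellipses $z=\text{const}$, and then bring in a second, transverse twist about the neighbouring $Z'\in\O_\R$ to kill the residual $z$-dependence. Steps~1 and~2 are fine.

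Step~3 is where your writeup falls short of a proof. Two concrete points:
\begin{itemize}
\item The notation $\II_1^{\,n}$ is vacuous, since $\II_1$ is an involution; presumably you intend words in the $\II_j$ corresponding to walking along $\partial Z$ or across $\R$-edges, but this needs to be said precisely.
\item More seriously, the assertion that ``the two ellipse foliations are genuinely transverse'' is exactly what has to be verified, and positive-definiteness of $\QQ_z$, $\QQ_{z'}$ alone does not give it. The paper handles this by an explicit computation of the Hamiltonian vector fields: from \eqref{eq:RealPoissonStructure} one has $\Ham(\mu_Z)=(-2y+zx)\,\partial_x+(2x-zy)\,\partial_y$ (no $\partial_z$-component), while $\Ham(\mu_{Z'})$ has $\partial_z$-coefficient $2(x^2-y^2)$. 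Hence $\Ham(\mu_Z)$ and $\Ham(\mu_{Z'})$ are linearly independent precisely off the null set $\{x=\pm y\}$, so any measurable function invariant under both is a.e.\ locally constant on $\mathcal{E}_Z\cap\mathcal{E}_{Z'}$. This replaces your Fubini/bi-cylinder sketch with a one-line tangent-space check, and the remaining sweep over $(-2,2)$ then follows from the paper's observation that within each ellipse $z=z_0$ the $\tau_Z$-orbit is dense, so one may push any point to have $\vert x\vert$ arbitrarily small and hence $\vert z'\vert=\vert{-xy-z}\vert<2$.
\end{itemize}
Carrying out that Hamiltonian computation (and fixing the conjugation notation) would make your Step~3 complete and essentially identical to the paper's argument.
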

%\\m\arginpar{b:Is this better? \#22}

\noindent
The complementary domain $Z$ corresponds to a primitive element in $\Ft$
and the corresponding Nielsen move (or Dehn twist) $\nu_Z$ defines an automorphism
of $\Ft$:
\begin{align*}
\Ft &\xrightarrow{~\nu_Z~} \Ft \\
X & \longmapsto  Z X = Y^{-1} \\
Y & \longmapsto  Y Z^{-1}  = Y^2 X \\
Z  & \longmapsto  Z  = Y^{-1} X^{-1} 
\end{align*}
which induces the polynomial automorphism of $\RepF$:
\[ 
\bmatrix x \\ y \\ z \endbmatrix \xmapsto{~(\nu_Z)_*}
\bmatrix x z - y \\ x \\ z \endbmatrix.  \] 
The induced map $(\nu_Z)_*$ preserves the Hamiltonian flow $\Ham(\mu_Z)$ of the trace function, whose orbits when $\vert z \vert < 2$ are ellipses. 
In terms of the parametrization of these orbits, 
$(\nu_Z)_*$ acts by translation by $2 \cos^{-1}(\mu_Z/2)$, 
and when $\cos^{-1}(\mu_Z/2)\notin \Q \pi$, acts by an irrational rotation of $S^1$.
In particular this action is ergodic with respect to the natural Lebesgue measure
on this orbit. 
By ergodic decomposition of the invariant area form with respect to this foliation,
every $(\nu_Z)_*$-invariant measurable function will be 
almost everywhere  $\Ham(\mu_Z)$-invariant. (This is the same technique used in 
\cite{MR1491446, %Erg,
MR2026539,%Mod,
MR2807844, %GoldmanXia 
MR2346275}  %GAFA
for related results.)

The orbits of $\Ham(\mu_Z)$ are ellipses on the level plane $z = z_0$ centered
at the origin. These ellipses contain the orbit of $\nu_Z$ as a countable dense
subset. Thus for each $\epsilon > 0$, for some $N$, 
the coordinates $(x_n,y_n,z_0)$ of $\nu_Z^n(x,y,z_0)$ satisfy
\begin{align*}
\vert x_n \vert &< \epsilon, \\
\vert y_n \vert &< M(k,z_0) 
\end{align*}
where $M = M(k,z_0)$ depends just on $k$ and $z_0$.

The trace function $\mu_{Z'}$ satisfies
\[
\vert \mu_{Z'} \vert = \vert \mu_{X} \mu_{Y} -\mu_{Z}  \vert = 
\vert - x_n y_n - z \vert < \vert z\vert + M \epsilon \]
so for some $n$ we may assume $Z'$ is elliptic.

Away from a nullset, the trace function $\mu_{Z'}$ also defines a 
Hamiltonian flow whose orbits are ellipses. 
We show that  $\Ham(\mu_Z)$ and $\Ham(\mu_{Z'})$ span the tangent space
to the level surface at such a point.

By \eqref{eq:RealPoissonStructure},  the Hamiltonian vector field 
of $z$ equals
\begin{align*}
\Ham(\mu_Z) &=  \BB_\Coloring \cdot dz \\ & = 
(2 x - z y) \dd{y} + (-2 y + z x) \dd{x} \end{align*}
and the Hamiltonian vector field of $z'$ equals
\begin{align*}
\Ham(\mu_{Z'}) & =  \BB_\Coloring \cdot dz' \\ & = 
\Big( z x + (x^2 + 2) y \Big) \dd{x} +
\Big(- z y - (y^2 + 2) x \Big) \dd{y}  \\
& \qquad \qquad+
2 (x^2 - y^2) \dd{z} \end{align*}
since $dz' = - y\ dx - x\ dy - dz$.

At a point $p=(x,y,z)$, the vector field  $\Ham(\mu_Z)$ is zero if and only
if \[-2y + zx  = - 2 x + y z = 0.\] 
As in the proof of Lemma~\ref{lem:CriticalPointsOfKappaColoring},
if $z = 0$, then $p = (0,0,0)$ and $k=-2$.

Thus assume $z\neq 0$. Then if either $x = 0$ or $y= 0$, then both
$x = y = 0$ and $p$ is the dihedral character $\big(0,0,\pm\sqrt{k+2}\big)$.

Thus we can assume $x\neq 0,y\neq 0,z\neq 0$. 
As in the proof of Lemma~\ref{lem:CriticalPointsOfKappaColoring} again,
$x/y = y/x = 2/z$ and 
\[  p\ \in\  \Big\{ (x, x, 2) \ \Big|\  x\in\R \Big\}\  \bigcup\  
\Big\{ (x, -x, -2) \ \Big|\  x\in\R \Big\} \] 
which implies $k = 2$. 

When $x \neq \pm y$, then $x^2 - y^2 \neq 0$, then 
the coefficient of $\dd{z}$ in  $\Ham(\mu_{Z'})$ is nonzero, whereas 
%the coefficient of $\dd{z}$ in  $\Ham(\mu_{Z'})$ is nonzero, whereas 
the coefficient of $\dd{z}$ in  $\Ham(\mu_{Z})$ is zero.
%the coefficient of $\dd{z}$ in  $\Ham(\mu_{Z})$ is zero.
Thus $\Ham(\mu_{Z})$  and $\Ham(\mu_{Z'})$ are linearly independent
unless $x = \pm y$, which describes a nullset in $\kCk$.
%%% 
%%%  put section 10 stuff here
\section{Imaginary characters with $k=2$.}\label{sec:ImagCharkEqualTwo}

When $k=2$, the level set $\kCtwo$ admits a 
{\em rational parametrization\/} as in 
Goldman~\cite{MR2497777}.
These characters correspond to {\em reducible\/} representations, 
which may be taken to be diagonal matrices.  
%Namely,  the 
The representation 
\begin{align}\label{eq:ReducibleRepresentation}
\rho(X) &:= 
i \bmatrix  e^{a/2} & 0 \\ 0 & -e^{-a/2} \endbmatrix \notag\\
\rho(Y) &:= 
i \bmatrix  e^{b/2} & 0 \\ 0 & -e^{-b/2} \endbmatrix \notag \\
\rho(Z) &:=  
\bmatrix  -e^{-(a+b)/2} & 0 \\ 0 & -e^{(a+b)/2} \endbmatrix 
\end{align}
corresponds to the character $(ix, iy, z)$ where 
\begin{align}\label{eq:ReducibleCharacter}
x & = 2 \sinh(a/2) %= A - A\inv 
\notag\\
y & = 2 \sinh(b/2) % = B - B\inv
\notag\\
z & = -2 \cosh\big((a+b)/2\big) %= - AB - (AB)\inv 
\end{align}
%where  \[ A:= e^{a/2}, \quad B:= e^{b/2} \]
are real and satisfy the defining equation:
\[ x^2 + y^2 - z^2 + x y z = 4. \]
The resulting mapping 
\begin{align}\label{eq:Parametrization}
\R^2 &\xrightarrow{~\Psi~} \kCtwo \notag\\
(a,b) &\longmapsto (ix,iy,z) \end{align}
defines a diffeomorphism of $\R^2$ onto the component $\graph(z_-)$ of
$\kCtwo$.
\big(The composition $\sigma_1\circ\Psi$ 
(or $\sigma_2\circ\Psi$) defines a diffeomorphism of $\R^2$ 
with the other component $\graph(z_+)$ of $\kCtwo$.\big)

Geometrically, these characters correspond to actions which stabilize a line $\ell\subset\Ht$.
When $x\neq 0$, then $\rho(X)$ is a glide-reflection about $\ell$. 
Otherwise $\rho(X)$ is reflection about $\ell$.
Similarly $y\neq 0$ (respectively $y=0$) corresponds to the cas that  $\rho(Y)$ is a glide-reflection 
(reflection) about $\ell$. In these cases $\rho(Z)$ is either $\Id$ or a transvection in $\ell$.

Ergodicity of the $\Gamma$-action on $\kCtwo$ 
follows from Moore's ergodicity theorem~\cite{MR0193188} (see also Zimmer~\cite{MR776417}) 
as in Goldman~\cite{MR2264541} in the orientation-preserving case.

The mapping $\Psi$ is equivariant with respect to an action of $\GLtZc$ on $\R^2$.
Furthermore $\GLtZc$ is a non-uniform lattice in the Lie group $\SLtRpm$,
and $\SLtRpm$ acts transitively on $\R^2$ with isotropy subgroup
\[
N := \Big\{ \bmatrix 1 & x  \\ 0 & \pm 1 \endbmatrix  \; \Big|\;   x\in\R \Big\}. 
\]
Since $N\subset \SLtRpm$ is noncompact and $\SLtRpm/\GLtZc$ has finite Haar measure,
Moore's theorem implies the action of $\GLtZ$ on $\R^2$ is ergodic.
It follows that $\Gamma$ acts ergodically on $\kCtwo$.

\clearpage
%\begin{figure}
%\centerline{\includegraphics[scale=.6]{TwoComponents1.pdf}}
%\bigskip
%\centerline{\includegraphics[scale=.6]{TwoComponents2.pdf}}
%\caption[Two views of level set $k>2$]{Two views of the two-component  level set for $k>2$.
%Each component projects diffeomorphically onto $\R^2$.}
%\end{figure}

\begin{figure}
\centerline{\includegraphics[scale=.4]{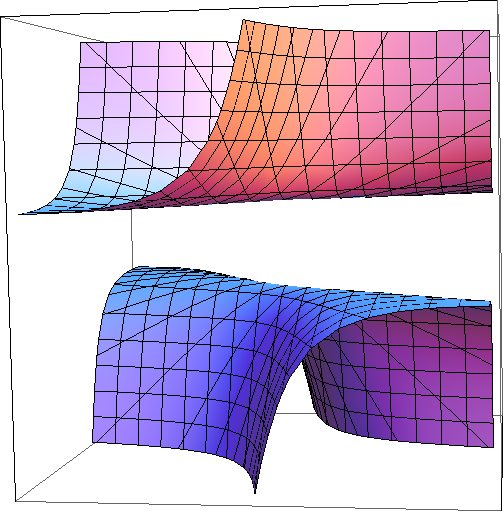}}
\bigskip
\centerline{\includegraphics[scale=.6]{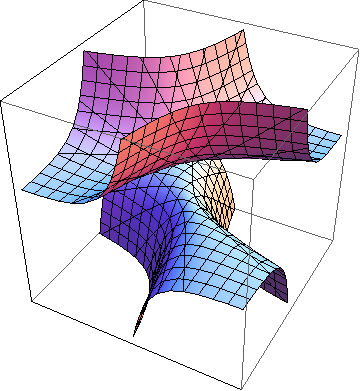}}
\caption[Two views of level set $k>2$]{
Two views of the two-component  level set for $k>2$.1
Each component projects diffeomorphically onto $\R^2$.}
\end{figure}

%\begin{figure}
%\centerline{\includegraphics[scale=.5]{OneComponent0.pdf}}
%\centerline{\includegraphics[scale=.5]{OneComponent1.pdf}}
%\caption
%[Level surfaces for $k<2$]
%{The first figure depicts the  connected, but singular,  level set, for $k = -2 <2$. This is the purely imaginary real form of the Markoff surface.
%The second figure depicts the level set for $k=-10 < 2$, which is connected
%and intersects the elliptic region $\vert z\vert < 2$ in an annulus.}
%\end{figure}

\begin{figure}
\centerline{\includegraphics[scale=.3]{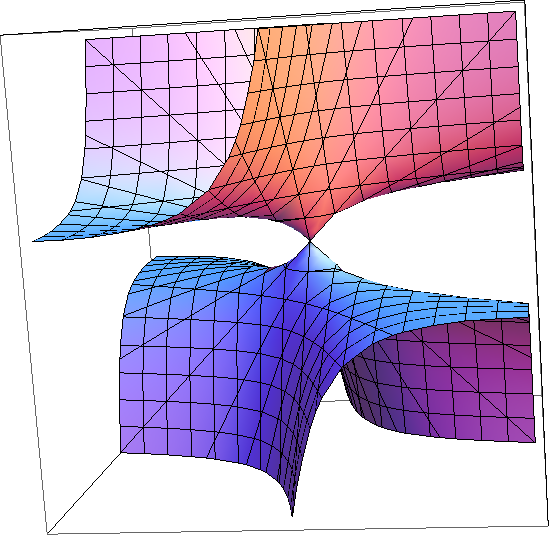}}
\centerline{\includegraphics[scale=.5]{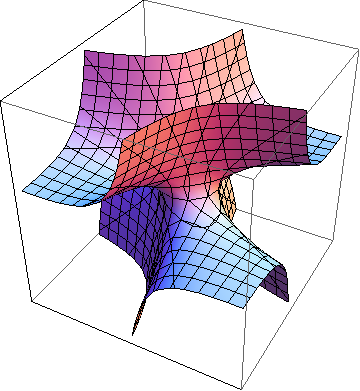}}
\caption
[Level surfaces for $k<2$]
{The first figure depicts the  connected, but singular,  level set, for $k = -2 <2$. This is the purely imaginary real form of the Markoff surface.
The second figure depicts the level set for $k=-10 < 2$, which is connected
and intersects the elliptic region $\vert z\vert < 2$ in an annulus.}
\end{figure}

\begin{figure}[ht]
\includegraphics[scale=.5]{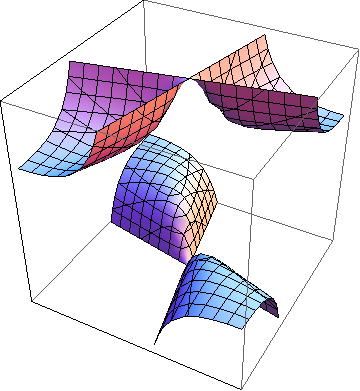}

\bigskip
\includegraphics[scale=.45]{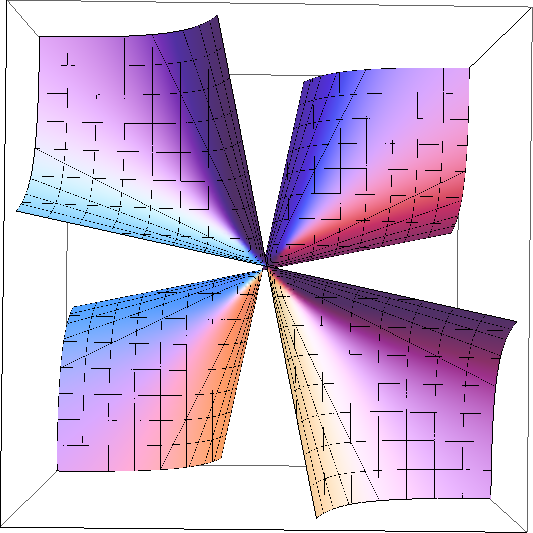}
\caption
[Four components of $\Fricke'(C_{1,1})\cap \kCk$ for $k > 2$]
{The four components of the generalized Fricke space 
$\Fricke'(C_{1,1})\cap \kCk$ for $k=23$.
These components are permuted by the group of sign-changes.
They are bounded by the singular hyperbolae on the planes
$z=\pm 5$.
In the second picture, the four components of the generalized Fricke space 
$\Fricke'(\Coo)\cap \kCk$ for $k=23$.
projected to the $xy$-plane, forming four triangles each
having a vertex at the origin.
}
\label{fig:GenFrickeSpaces}
\end{figure}
\clearpage
\providecommand{\bysame}{\leavevmode\hbox to3em{\hrulefill}\thinspace}
\providecommand{\MR}{\relax\ifhmode\unskip\space\fi MR }
% \MRhref is called by the amsart/book/proc definition of \MR.
\providecommand{\MRhref}[2]{%
  \href{http://www.ams.org/mathscinet-getitem?mr=#1}{#2}
}
\providecommand{\href}[2]{#2}

% uncomment this last section when you want everything back
%
%\bibliographystyle{amsplain}
%\bibliography{gmst}

\begin{thebibliography}{10}

\bibitem{MR1643429}
B.~H. Bowditch, \emph{Markoff triples and quasi-{F}uchsian groups}, Proc.
  London Math. Soc. (3) \textbf{77} (1998), no.~3, 697--736. \MR{1643429
  (99f:57014)}

\bibitem{MR2553877}
Serge Cantat, \emph{Bers and {H}\'enon, {P}ainlev\'e and {S}chr\"odinger}, Duke
  Math. J. \textbf{149} (2009), no.~3, 411--460. \MR{2553877 (2011f:37077)}

\bibitem{MR2649343}
Serge Cantat and Frank Loray, \emph{Dynamics on character varieties and
  {M}algrange irreducibility of {P}ainlev\'e {VI} equation}, Ann. Inst. Fourier
  (Grenoble) \textbf{59} (2009), no.~7, 2927--2978. \MR{2649343 (2011c:37105)}

\bibitem{CDG}
Virginie Charette, Todd Drumm, and William~M. Goldman, \emph{Proper affine
  deformations of the One-Holed Torus}, Transformation Groups (to appear),
  arXiv:1501.04535.

\bibitem{MR3180618}
Virginie Charette, Todd~A. Drumm, and William~M. Goldman, \emph{Finite-sided
  deformation spaces of complete affine 3-manifolds}, J. Topol. \textbf{7}
  (2014), no.~1, 225--246. \MR{3180618}

\bibitem{MR1478672}
John~H. Conway, \emph{The sensual (quadratic) form}, Carus Mathematical
  Monographs, vol.~26, Mathematical Association of America, Washington, DC,
  1997, With the assistance of Francis Y. C. Fung. \MR{1478672 (98k:11035)}

\bibitem{MR0183872}
Robert Fricke and Felix Klein, \emph{Vorlesungen \"uber die {T}heorie der
  automorphen {F}unktionen. {B}and 1: {D}ie gruppentheoretischen {G}rundlagen.
  {B}and {II}: {D}ie funktionentheoretischen {A}usf\"uhrungen und die
  {A}ndwendungen}, Bibliotheca Mathematica Teubneriana, B\"ande 3, vol.~4,
  Johnson Reprint Corp., New York; B. G. Teubner Verlagsgesellschaft, Stuttg
  art, 1965. \MR{0183872 (32 \#1348)}

\bibitem{MR1491446}
William~M. Goldman, \emph{Ergodic theory on moduli spaces}, Ann. of Math. (2)
  \textbf{146} (1997), no.~3, 475--507. \MR{1491446 (99a:58024)}

\bibitem{MR2026539}
\bysame, \emph{The modular group action on real {${\rm SL}(2)$}-characters of a
  one-holed torus}, Geom. Topol. \textbf{7} (2003), 443--486. \MR{2026539
  (2004k:57001)}

\bibitem{MR2264541}
\bysame, \emph{Mapping class group dynamics on surface group representations},
  Problems on mapping class groups and related topics, Proc. Sympos. Pure
  Math., vol.~74, Amer. Math. Soc., Providence, RI, 2006, pp.~189--214.
  \MR{2264541 (2007h:57020)}

\bibitem{MR2346275}
\bysame, \emph{An ergodic action of the outer automorphism group of a free
  group}, Geom. Funct. Anal. \textbf{17} (2007), no.~3, 793--805. \MR{2346275
  (2008g:57001)}

\bibitem{MR2497777}
\bysame, \emph{Trace coordinates on {F}ricke spaces of some simple hyperbolic
  surfaces}, Handbook of {T}eichm\"uller theory. {V}ol. {II}, IRMA Lect. Math.
  Theor. Phys., vol.~13, Eur. Math. Soc., Z\"urich, 2009, pp.~611--684.
  \MR{2497777 (2010j:30093)}

\bibitem{GoldmanStantchev}
William~M. Goldman and Gueorgui~M. Stantchev, \emph{Dynamics of the
  automorphism group of the {${\rm GL}(2,{\mathbb R})$}-characters of a
  once-punctured torus}, ArXiv:math/0309072v1[math.DG], 2003.

\bibitem{MR2807844}
William~M. Goldman and Eugene~Z. Xia, \emph{Ergodicity of mapping class group
  actions on {${\rm SU}(2)$}-character varieties}, Geometry, rigidity, and
  group actions, Chicago Lectures in Math., Univ. Chicago Press, Chicago, IL,
  2011, pp.~591--608. \MR{2807844 (2012k:22028)}

\bibitem{HuTanZhang}
Hengnan Hu, Ser~Peow Tan, and Ying Zhang, \emph{Polynomial automorphisms of
  {$\mathbb{C}^n$} preserving the {M}arkoff-{H}urwitz polynomial},
  ArXiv:math/{1501.06955}v2[math.GT], 2015.

\bibitem{MR283209}
Donghi Lee and Makoto Sakuma, \emph{Simple loops on 2-bridge spheres in
  2-bridge link complements}, Electron. Res. Announc. Math. Sci. \textbf{18}
  (2011), 97--111.

\bibitem{MR3109863}
\bysame, \emph{A variation of {M}c{S}hane's identity for 2-bridge links},
  Geometry and Topology \textbf{17} (2013), no.~4, 2061--2101.

\bibitem{MR558891}
Wilhelm Magnus, \emph{Rings of {F}ricke characters and automorphism groups of
  free groups}, Math. Z. \textbf{170} (1980), no.~1, 91--103. \MR{558891
  (81a:20043)}

\bibitem{MR3420542}
Sara Maloni, Fr{\'e}d{\'e}ric Palesi, and Ser~Peow Tan, \emph{On the character
  variety of the four-holed sphere}, Groups Geom. Dyn. \textbf{9} (2015),
  no.~3, 737--782. \MR{3420542}

\bibitem{MR3038545}
Yair~N. Minsky, \emph{On dynamics of {$Out(F_n)$} on {$PSL_2({\mathbb C})$}
  characters}, Israel J. Math. \textbf{193} (2013), no.~1, 47--70. \MR{3038545}

\bibitem{MR0193188}
Calvin~C. Moore, \emph{Ergodicity of flows on homogeneous spaces}, Amer. J.
  Math. \textbf{88} (1966), 154--178. \MR{0193188 (33 \#1409)}

\bibitem{nielsen1917}
J.~Nielsen, \emph{Die isomorphismengruppe der allgemeinen unendlichen gruppe
  mit zwei erzeugenden}, Math. Ann. \textbf{78} (1917), 385--397.

\bibitem{MR2399656}
Paul Norbury, \emph{Lengths of geodesics on non-orientable hyperbolic
  surfaces}, Geom. Dedicata \textbf{134} (2008), 153--176. \MR{2399656
  (2009b:32021)}

\bibitem{STY}
Caroline Series, Ser~Peow Tan, and Yasushi Yamashita, \emph{The diagonal slice
  of schottky space}, math.GT.1409.6863v1 (2014).

\bibitem{MR2705402}
Gueorgui~M. Stantchev, \emph{Dynamics of the modular group acting on
  {GL}(2,{R})-characters of a once-punctured torus}, ProQuest LLC, Ann Arbor,
  MI, 2003, Thesis (Ph.D.)--University of Maryland, College Park. \MR{2705402}

\bibitem{MR2191691}
Ser~Peow Tan, Yan~Loi Wong, and Ying Zhang, \emph{The {${\rm SL}(2,\mathbb C)$}
  character variety of a one-holed torus}, Electron. Res. Announc. Amer. Math.
  Soc. \textbf{11} (2005), 103--110. \MR{2191691 (2006k:57054)}

\bibitem{MR2247658}
\bysame, \emph{Necessary and sufficient conditions for {M}c{S}hane's identity
  and variations}, Geom. Dedicata \textbf{119} (2006), 199--217. \MR{2247658
  (2007e:57017)}

\bibitem{MR2405161}
\bysame, \emph{End invariants for {${\rm SL}(2,\mathbb C)$} characters of the
  one-holed torus}, Amer. J. Math. \textbf{130} (2008), no.~2, 385--412.
  \MR{2405161 (2009h:57034)}

\bibitem{MR2370281}
\bysame, \emph{Generalized {M}arkoff maps and {M}c{S}hane's identity}, Adv.
  Math. \textbf{217} (2008), no.~2, 761--813. \MR{2370281 (2008k:57035)}

\bibitem{MR1508833}
H.~Vogt, \emph{Sur les invariants fondamentaux des \'equations
  diff\'erentielles lin\'eaires du second ordre}, Ann. Sci. \'Ecole Norm. Sup.
  (3) \textbf{6} (1889), 3--71. \MR{1508833}

\bibitem{MR776417}
Robert~J. Zimmer, \emph{Ergodic theory and semisimple groups}, Monographs in
  Mathematics, vol.~81, Birkh\"auser Verlag, Basel, 1984. \MR{776417
  (86j:22014)}

\end{thebibliography}
\end{document}